\documentclass[letterpaper, 11pt]{amsart}
\usepackage{mathtools}
\usepackage{amsmath}
\usepackage[T1]{fontenc}
\usepackage{amssymb}
\usepackage{yhmath}
\usepackage{comment}
\usepackage{graphicx} 
\usepackage{ mathrsfs }
\usepackage{bbm}
\usepackage{xcolor}
\usepackage{tikz-cd}
\usepackage{tikz}
\usetikzlibrary{patterns}
\usepackage{hyperref}

\setcounter{tocdepth}{1}
\DeclareMathAlphabet{\mathpzc}{OT1}{pzc}{m}{it}

\usepackage{thmtools}
\usepackage{thm-restate}

\usepackage{caption}

\newtheorem{theorem}{Theorem}[section]

\newtheorem*{claim*}{Claim}

\newtheorem{lemma}[theorem]{Lemma}
\newtheorem{lem}[theorem]{Lemma}
\newtheorem{corollary}[theorem]{Corollary}

\newtheorem{cor}[theorem]{Corollary}
\newtheorem{question}[theorem]{Question}

\newtheorem{proposition}[theorem]{Proposition}

\newtheorem{prop}[theorem]{Proposition}%https://www.overleaf.com/project/616d911a46ea936b949fdb96

\newtheorem{fact}[theorem]{Fact}

\theoremstyle{definition}
\newtheorem{definition}[theorem]{Definition}

\newtheorem{example}[theorem]{Example}

\theoremstyle{remark}
\newtheorem{remark}[theorem]{Remark}

\numberwithin{equation}{section}

%    Absolute value notation
\newcommand{\abs}[1]{\lvert#1\rvert}
\newcommand{\norm}[1]{\lVert#1\rVert}
%    Blank box placeholder for figures (to avoid requiring any
%    particular graphics capabilities for printing this document).
\newcommand{\op}{\operatorname}

\newcommand{\be}{\begin{equation}}
\newcommand{\ee}{\end{equation}}
\newcommand{\Ga}{\Gamma}

\newcommand{\R}{\mathbb R}
\renewcommand{\H}{\mathbb H}
\newcommand{\Z}{\mathbb Z}
\newcommand{\N}{\mathbb N}
\newcommand{\ga}{\gamma}

\newcommand{\la}{\lambda}
\newcommand{\La}{\Lambda}
\newcommand{\inte}{\op{int}}
\newcommand{\ba}{\backslash}
\newcommand{\ov}{\overline}

\newcommand{\cal}{\mathcal}

\newcommand{\SO}{\op{SO}}
\newcommand{\Isom}{\op{Isom}}
\newcommand{\PSL}{\op{PSL}}

\newcommand{\stab}{\op{Stab}}

\newcommand{\diam}{\op{Diam}}

\newcommand{\BR}{\op{BR}}

\renewcommand{\L}{\mathcal L}

\renewcommand{\S}{\mathbb S}

\newcommand{\so}{\SO^\circ}

\newcommand{\id}{\op{id}}

\newcommand{\E}{\cal E}

\renewcommand{\P}{\mathbb{P}}
\newcommand{\ess}{\mathsf{E}}

\newcommand{\rank}{\op{rank}}
\newcommand{\Lie}{\op{Lie}}

\newcommand{\Hor}{\mathcal{H}}

\newcommand{\Lip}{\operatorname{Lip}}

\newcommand{\CAT}{\operatorname{CAT}}

\newcommand{\Spec}{\operatorname{Spec}}

\newcommand{\fa}{\mathfrak{a}}

\begin{document}

\title[Horospherical measures in higher rank: Teaser]{Classification of horospherical invariant measures in higher rank: Teaser}

\author{Inhyeok Choi}
\address{School of Mathematics, KIAS, Hoegi-ro 85, Dongdaemun-gu, Seoul 02455, South Korea}
\email{inhyeokchoi48@gmail.com}

\author{Dongryul M. Kim}
\address{Department of Mathematics, Yale University, New Haven, CT 06511}
\email{dongryul.kim@yale.edu}

\begin{abstract}
    Let $G$ be a product of rank-one simple real algebraic groups and let $\Gamma < G$ be a Zariski dense Anosov subgroup, or relatively Anosov subgroup. In this paper, we prove a complete classification of invariant Radon measures for the maximal horospherical action on $\Gamma \ba G$. In particular, when $\Gamma$ is Anosov, this solves the open problems proposed by Landesberg--Lee--Lindenstrauss--Oh for $\operatorname{rank} G \le 3$, and by Oh in general.

    More generally, we consider the horospherical foliation of a product of $\operatorname{CAT}(-1)$ spaces, and present a classification of Radon measures supported on a recurrent subfoliation that are invariant under the action of transverse subgroups.

    \bigskip

    \bigskip

    \bigskip

\begin{center}
{\bf {\large D}ECLARATION}
\end{center}

\bigskip

This paper is a teaser of the paper \cite{CK_endgame}, which is for arbitrary semisimple real algebraic groups. We decided to keep the current paper public on arXiv and our websites, as many parts of the argument in \cite{CK_endgame} are simplified, which might make this paper more accessible to readers who are not familiar with homogeneous dynamics but have some geometry background.

Importantly, we also decided {\bf not to  publish this paper in any  journal,} having \cite{CK_endgame} as our main mathematical result.

\end{abstract}

\maketitle
\tableofcontents

\section{Introduction}

Given a dynamical system, classifying invariant measures is a natural and important questions with many applications, as also indicated by the celebrated theorem of Ratner \cite{Ratner_measure}. We study this question for dynamical systems given by \emph{horospherical actions}.

Let $G$ be a connected semisimple real algebraic group and $P < G$ its minimal parabolic subgroup with a fixed Langlands decomposition $P = MAN$, where $A$ is a maximal real split torus of $G$, $M < P$ is a maximal compact subgroup commuting with $A$, and $N$ is the unipotent radical of $P$.

Let $\Ga < G$ be a Zariski dense discrete subgroup. The right multiplication of $N$ on $\Ga \ba G$ is called (maximal) \emph{horospherical action}. 
For a uniform lattice $\Ga < G$, the $N$-action on $\Ga \ba G$ is uniquely ergodic,\footnote{By unique ergodicity, we mean that there exists a unique invariant ergodic Radon measure up to a constant multiple.} with the Haar measure for $G$ as the ergodic measure. This was first shown for $G = \PSL(2, \R)$ by Furstenberg \cite{furstenberg1973the-unique}, and by Veech \cite{veech1977unique} in general. When $\Ga < G$ is a non-uniform lattice, Dani classified all $N$-invariant ergodic Radon measures on $\Ga \ba G$ (\cite{dani1978invariant}, \cite{dani1981invariant}).

We are mainly interested in the case that $\Ga < G$ is not a lattice, i.e., $\Ga$ has infinite covolume. The first classification of horospherical invariant measure in this setting is due to Burger \cite{Burger_horoc}, who considered the case that $G = \PSL(2, \R)$ and $\Ga < G$ is convex cocompact with critical exponent strictly bigger than $1/2$. 
More generally, when $G$ is of rank one and $\Ga < G$ is geometrically finite, Roblin classified all $NM$-invariant ergodic Radon measures on $\Ga \ba G$ \cite{Roblin2003ergodicite}. The main component of the works of Burger and Roblin is that the $NM$-action is uniquely ergodic on the \emph{recurrence locus}, the subset of $\Ga \ba G$ where the forward frame flow (or geodesic flow) is recurrent to a compact subset.
This unique ergodic measure is now called the \emph{Burger--Roblin measure}. Later, Winter showed that the Burger--Roblin measure is $N$-ergodic and provided the classification of $N$-invariant Radon measures \cite{Winter_mixing}. 
For geometrically infinite cases, Babillot and Ledrappier first discovered that there may be continuous family of $NM$-invariant ergodic Radon measures (\cite{Babillot_nilpotent}, \cite{BL_covers}); see also (\cite{Sarig_abelian}, \cite{Sarig_genus}, \cite{Ledrappier_invariant}, \cite{LS_periodic}, \cite{Winter_mixing}, \cite{OP_local}, \cite{LL_Radon}, \cite{Landesberg_horospherically}, \cite{LLLO_Horospherical})  for partial classification
results in the rank-one case.

We now move to the case that $G$ is of \emph{higher rank}. Edwards--Lee--Oh extended the notion of Burger--Roblin measure to higher rank, introducing higher-rank Burger--Roblin measures \cite{Edwards2020anosov}. Their ergodicity with respect to horospherical actions were proved for Zariski dense Borel Anosov subgroups by Lee--Oh (\cite{LO_invariant}, \cite{LO_ergodic}), and for a larger class of discrete subgroups by the second author \cite{kim2024conformal}.
Later in this paper, we will also generalize this ergodicity to horospherical foliations of products of $\CAT(-1)$ spaces using a different approach (Theorem \ref{thm:ergodicdiv}).

On the other hand, the only known result towards measure classification in higher-rank settings was the work of Landesberg--Lee--Lindenstrauss--Oh \cite{LLLO_Horospherical}. They considered 
\begin{equation} \label{eqn:standing}
G := \prod_{i = 1}^r G_i
\end{equation}
where $G_i$ is a simple real algebraic group of rank one. In this case, we have $r = \rank G$. We also assume Equation \eqref{eqn:standing} in the rest of the introduction.

They also considered the directionally recurrent set in $ \Ga \ba G$ for each \emph{1-dimensional diagonal flow} (or, directional flow). More precisely, denote by $\fa := \Lie A$ and fix a positive Weyl chamber $\fa^+ \subset \fa$. Then for each $v \in \inte \fa^+$, they showed that up to scaling, there exists at most one $N$-ergodic invariant Radon measure supported on $\mathcal{R}_{\Ga, v} \subset \Ga \ba G$ consisting of elements each of whose 1-dimensional $\exp (\R_{>0} v)$-orbit is recurrent to a compact subset. 

On the other hand, whether $\mathcal{R}_{\Ga, v}$ supports a nonzero, $N$-invariant Radon measure or not is understood only when $\Ga <G $ is an \emph{Anosov subgroup}. An Anosov subgroup is a higher-rank generalization of convex cocompact subgroups, introduced by Labourie \cite{Labourie2006anosov} for surface groups and generalized by Guichard--Wienhard \cite{Guichard2012anosov} for hyperblic groups. 

In the setting of product of rank-one Lie groups, $\Ga <G $ is (Borel)\footnote{i.e., with respect to a minimal parabolic subgroup. Throughout the paper, we only consider this case, and similarly for relatively Anosov and transverse subgroups.} Anosov if 
the projection $\Ga \to G_i$ has finite kernel and convex cocompact image for all $1 \le i \le r$. Based on the ergodicity results of Lee--Oh (\cite{LO_invariant}, \cite{LO_ergodic}) and Burger--Landesberg--Lee--Oh \cite{BLLO}, the rigidity result of \cite{LLLO_Horospherical} is as follows:

\begin{theorem}[{\cite{LLLO_Horospherical}}] \label{thm:LLLO}
    Let $\Ga <G $ be a Zariski dense Anosov subgroup and $v \in \inte \fa^+$. Let $\L_{\Ga} \subset \fa^+$ denote the limit cone\footnote{The limit cone of $\Ga$ is the asymptotic cone of the Cartan projections of $\Ga$ in $\fa$. We  will revisit this later.} of $\Ga$.
    \begin{enumerate}
        \item For $r \le 3$ and $v \in \inte \L_{\Ga}$, the $N$-action on $\mathcal{R}_{\Ga, v}$ is uniquely ergodic.
        \item For $r > 3$ or $v \notin \inte \L_{\Ga}$, there exists no non-zero, $N$-invariant measure Radon measure supported on $\mathcal{R}_{\Ga, v}$.
    \end{enumerate}
\end{theorem}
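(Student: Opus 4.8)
The plan is to reduce the statement to a Hopf--Tsuji--Sullivan type dichotomy for a single diagonal flow, and then to evaluate the associated Poincar\'e series using the growth indicator; the threshold $r\le 3$ will pop out of a local limit theorem. Since the ergodicity results of Lee--Oh and of Burger--Landesberg--Lee--Oh already give that there is \emph{at most one} $N$-invariant ergodic Radon measure on $\mathcal{R}_{\Ga,v}$ up to scaling, and that it is $N$-ergodic, the content of the theorem is an existence/non-existence statement, so I would first fix an $N$-invariant ergodic Radon measure $m$ on $\mathcal{R}_{\Ga,v}$ and analyze its structure. Using Poincar\'e recurrence of the $\exp(\R_{>0}v)$-flow on $\mathcal{R}_{\Ga,v}$ together with the fact that $N$ is uniformly expanded by $\exp(\R_{>0}v)$ for $v\in\inte\fa^+$, a by-now-standard argument for horospherical actions, adapting the rank-one case to the higher-rank $\fa$, shows that $m$ disintegrates as Haar measure along $m$-a.e.\ $N$-orbit. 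Quotienting by $N$ and using the Bruhat decomposition, $m$ then corresponds to a $\Ga$-invariant Radon measure on $G/N$; disintegrating this over the Furstenberg boundary $\mathcal{F}=G/P$ (whose fibers are $P/N=AM$) identifies it with a $(\Ga,\psi)$-conformal density $\{\nu_x\}$ on $\mathcal{F}$ for some linear form $\psi\in\fa^*$, together with an $e^{\psi}$-twisted Haar measure along the $AM$-fibers. Running this correspondence in reverse, any $(\Ga,\psi)$-conformal density produces an $N$-invariant Radon measure $m_\psi$ on $\Ga\ba G$ (a $\psi$-directional Burger--Roblin measure), which is automatically locally finite.

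Next I would pin down which $\psi$ and which densities are relevant. The support hypothesis $\supp m\subset\mathcal{R}_{\Ga,v}$ translates, under the correspondence, into $\nu_o$ being supported on the \emph{$v$-conical limit set} $\La_\Ga^v\subset\La_\Ga$, i.e.\ the set of $\xi\in\mathcal{F}$ approached by some $\ga_n o$ that stays within bounded distance of a ray in direction $v$. A shadow-lemma estimate for conformal densities then forces $\psi=\psi_v$, the unique supporting linear form of the concave function $\psi_\Ga$ at the interior direction $v$ (characterized by $\psi_v\ge\psi_\Ga$ on $\fa^+$ and $\psi_v(v)=\psi_\Ga(v)$): a form with $\psi(v)<\psi_\Ga(v)$ supports no conformal density, a form with $\psi(v)>\psi_\Ga(v)$ gives $\La_\Ga^v$ zero mass, and for $v\in\inte\L_\Ga$ differentiability of $\psi_\Ga$ at $v$ makes the supporting form at $v$ unique. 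Thus everything reduces to a single question: does the (essentially unique) $(\Ga,\psi_v)$-conformal density charge $\La_\Ga^v$? By a Hopf--Tsuji--Sullivan dichotomy read along the $v$-flow, proved from the $\psi_v$-shadow lemma and a Borel--Cantelli argument as in the rank-one case, the density gives $\La_\Ga^v$ full measure if $\Ga$ is of \emph{$v$-divergence type}, i.e.\ $\sum_{\ga\in\Ga}e^{-\psi_v(\mu(\ga))}=\infty$ (with $\mu$ the Cartan projection), and gives $\La_\Ga^v$ zero measure otherwise. In the divergent case the resulting $v$-directional Burger--Roblin measure $m_v$ is supported on $\mathcal{R}_{\Ga,v}$, yielding existence (and unique ergodicity, by the cited results); in the convergent case $m$ is forced to vanish, yielding non-existence.

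It remains to evaluate the Poincar\'e series for $v\in\inte\L_\Ga$. Here the Anosov and product hypotheses enter through a local limit theorem for Cartan projections, available via thermodynamic formalism since each factor projection $\Ga\to G_i$ is convex cocompact, hence a quasi-isometric embedding with a H\"older reparametrization cocycle: one obtains
\[
\#\{\ga\in\Ga:\ \mu(\ga)\in B(Tv,\,C)\}\ \asymp\ T^{-(r-1)/2}\,e^{\psi_\Ga(v)\,T}\qquad(T\to\infty),
\]
the factor $T^{-(r-1)/2}$ reflecting a non-degenerate Gaussian fluctuation of $\mu(\ga)$ in the $r-1$ directions transverse to $v$, the non-degeneracy being exactly the strict concavity of $\psi_\Ga$ at $v$. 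Since $\psi_v(v)=\psi_\Ga(v)$ while $\psi_v>\psi_\Ga$ off $\R_{\ge0}v$, group elements whose Cartan projection points away from $v$ contribute a geometrically convergent tail, so $\sum_{\ga\in\Ga}e^{-\psi_v(\mu(\ga))}\asymp\sum_{T\ge1}T^{-(r-1)/2}$, which diverges precisely when $(r-1)/2\le1$, i.e.\ $r\le3$. Combined with the previous paragraphs this gives part~(1). For part~(2): if $v\notin\L_\Ga$ then $\mathcal{R}_{\Ga,v}=\emptyset$; if $v\in\p\L_\Ga$ a boundary version of the same dichotomy shows $\La_\Ga^v$ is null for every conformal density; and if $r>3$ the series above converges, so $\La_\Ga^v$ is $\nu_o$-null — in every case there is no nonzero $N$-invariant Radon measure on $\mathcal{R}_{\Ga,v}$. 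Note that for $r>3$ the obstruction is \emph{not} local finiteness (the Burger--Roblin measure $m_v$ still exists as a Radon measure on $\Ga\ba G$) but that $m_v$ is then supported off $\mathcal{R}_{\Ga,v}$, on the non-recurrent part.

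I expect the main difficulty to lie in two places. The first is the $v$-directional Hopf--Tsuji--Sullivan dichotomy in higher rank: one needs sharp shadow lemmas for the $\psi_v$-conformal density read along a single diagonal direction, and the corresponding Borel--Cantelli argument, in order to equate ``$\La_\Ga^v$ is charged'' with ``$\Ga$ is of $v$-divergence type'' — this is the step that converts the dynamical hypothesis into a countable sum. The second is the local limit theorem with its precise $T^{-(r-1)/2}$ correction; it is this exponent, and hence the real-analyticity and non-degenerate strict concavity of $\psi_\Ga$ for Zariski dense Anosov subgroups of products of rank-one groups, that is responsible for the exact threshold $r\le3$. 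By contrast, the structural reduction of the first paragraph, while technical, should be a fairly routine unipotent-dynamics argument once the recurrence is exploited.
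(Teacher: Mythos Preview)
The paper does not prove this theorem: it is quoted from \cite{LLLO_Horospherical}, and the surrounding paragraph only records the ingredients, namely that the ergodic measures in~(1) are the higher-rank Burger--Roblin measures whose $N$-ergodicity is due to \cite{LO_ergodic}, whose support on $\mathcal{R}_{\Ga,v}$ (for $r\le 3$, $v\in\inte\L_\Ga$) is due to \cite{BLLO}, and that the dichotomy in~(2) holds because $\mathcal{R}_{\Ga,v}$ is Burger--Roblin null when $r>3$, again by \cite{BLLO}. Your proposal is a faithful reconstruction of exactly this chain of reductions, and in particular you have correctly identified that the threshold $r\le 3$ comes from the $T^{-(r-1)/2}$ factor in the local limit theorem for Cartan projections and the resulting divergence/convergence of the directional Poincar\'e series. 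So at the level of strategy your sketch matches what the paper attributes to the cited works.

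Two small corrections. First, the ``at most one'' statement on $\mathcal{R}_{\Ga,v}$ is precisely the contribution of \cite{LLLO_Horospherical}, not of \cite{LO_invariant}, \cite{LO_ergodic} or \cite{BLLO}; you invoke it as background, but it is the heart of the cited theorem, and its proof is not the disintegration you describe in your first paragraph but rather an argument that uses the $\exp(\R_{>0}v)$-recurrence to force $A$-quasi-invariance of the measure, after which \cite{LO_invariant} applies. Your line ``$m$ disintegrates as Haar measure along $m$-a.e.\ $N$-orbit'' is vacuous since $m$ is already $N$-invariant; the real work is the $A$-quasi-invariance. Second, once one knows the measure is a Burger--Roblin measure, the identification of the linear form is not done by your shadow-lemma dichotomy on $\psi(v)$ versus $\psi_\Ga(v)$ but simply by the classification of conformal measures on $\La(\Ga)$ for Anosov groups from \cite{LO_invariant}: each corresponds to a unique direction $u\in\inte\L_\Ga$, and \cite{BLLO} shows $\mu_{\nu}^{\BR}$ charges $\mathcal{R}_{\Ga,v}$ only when $u=v$ and $r\le 3$.
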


The ergodic measures in (1) above are higher-rank Burger--Roblin measures, whose ergodicity was proved in \cite{LO_ergodic}, and being supported on the directionally recurrent set was proved in \cite{BLLO}. Delaying their definitions, we  note that in contrast to rank-one settings, they come as a family of mutually singular measures, because higher-rank Patterson--Sullivan measures do so. The reason for the rank dichotomy in Theorem \ref{thm:LLLO}(2) is that $\mathcal{R}_{\Ga, v}$ has zero Burger--Roblin measures when $r > 3$ \cite{BLLO}.

A genuine region for the horospherical action is the unique $P$-minimal set
$$\E_{\Ga} \subset \Ga \ba G$$
where the uniqueness is due to Benoist \cite{Benoist1997proprietes}.
In view of Theorem \ref{thm:LLLO}, the following open problem was proposed by Landesberg--Lee--Lindenstrauss--Oh,  towards classifying horospherical invariant measures.

\begin{question}[{\cite[Open problem 1.8]{LLLO_Horospherical}}] \label{ques:LLLO}
Let $\Ga < G$ be a Zariski dense Anosov subgroup and suppose $r \le 3$. Is any $N$-invariant ergodic Radon measure on $\E_{\Ga}$ supported on $\mathcal{R}_{\Ga, v}$ for some $v \in \inte \fa^+$?
\end{question}

More generally, in a very recent preprint for the Proceedings of the ICM 2026, Oh asked for horospherical measure classification for Anosov subgroups without any rank assumption on $G$, i.e., on $r$.

\begin{question}[{\cite[Section 8.2]{oh2025dynamics}}] \label{ques:Anosov}
Let $\Ga < G$ be a Zariski dense Anosov subgroup. Is any $N$-invariant ergodic Radon measure on $\E_{\Ga}$ a Burger--Roblin measure?
\end{question}

\subsection{Main results for Anosov subgroups}

Main results of this paper are affirmative answers to Question \ref{ques:LLLO} and Question \ref{ques:Anosov}, resolving the open problem proposed by Landesberg--Lee--Lindenstrauss--Oh in \cite{LLLO_Horospherical} and by Oh in \cite{oh2025dynamics}. Indeed, we give a complete classification of horospherical invariant measures.

\begin{theorem} \label{thm:mainAnosov}
    Let $\Ga < G$ be a Zariski dense Anosov subgroup. Let $\mu$ be a non-zero, $N$-invariant ergodic Radon measure on $\Ga \ba G$. Then either
    \begin{enumerate}
        \item $\mu$ is supported on $\E_{\Ga}$ and is a constant multiple of a Burger--Roblin measure, or 
        \item $\mu$ is supported on a closed $NM$-orbit in $(\Ga \ba G) \smallsetminus \E_{\Ga}$. 
    \end{enumerate}
\end{theorem}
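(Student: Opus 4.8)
The plan is as follows; throughout, let $\mu$ be a non-zero, $N$-invariant, ergodic Radon measure on $\Ga\ba G$.

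\emph{Step 1: the basic dichotomy.} The set $\E_\Ga$, being the unique $P$-minimal set, is closed and $P$-invariant (in particular $N$-invariant), and it is exactly the preimage of the limit set $\Lambda_\Ga$ under $\Ga\ba G\to\Ga\ba(G/P)$. Hence $U:=(\Ga\ba G)\smallsetminus\E_\Ga$ is open and $N$-invariant, and by ergodicity either $\mu$ is supported on $\E_\Ga$ or $\mu(\E_\Ga)=0$, i.e.\ $\mu$ has full mass on $U$. I would first record that if $\Ga g\in U$, i.e.\ $gP\notin\Lambda_\Ga$, then the forward orbit $\{\Ga g\exp(tv):t\ge 0\}$ is divergent for \emph{every} $v\in\inte\fa^+$: recurrence of this orbit would force $gP$ to be a conical limit flag of $\Ga$, and for an Anosov group the conical limit set is precisely $\Lambda_\Ga$. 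In particular $\mathcal{R}_{\Ga,v}\subset\E_\Ga$ for all $v$, so the alternative ``$\mu$ lives on $U$'' is disjoint from every directionally recurrent set and Theorem \ref{thm:LLLO} does not apply to it.

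\emph{Step 2: measures off the minimal set.} Suppose $\mu(\E_\Ga)=0$; I want to show that $\mu$ is supported on a single closed $NM$-orbit contained in $U$. Since $\mu$ is ergodic, $\supp\mu=\overline{\Ga gN}$ for $\mu$-a.e.\ $g$, so it suffices to prove that a $\mu$-typical $N$-orbit in $U$ is closed. This I would get from the fact that an Anosov group acts properly discontinuously on its domain of discontinuity $\Om_\Ga\subset G/P$: if $\ga_k g n_k\to h$ in $G$ with $gP,hP\in\Om_\Ga$, proper discontinuity forces $(\ga_k)$ to be eventually constant and then $h\in\Ga gN$ (as $gN$ is closed in $G$), while if $hP\in\Lambda_\Ga$ then $h\in\E_\Ga$, a $\mu$-null set. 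The remaining case --- $gP$ or $hP$ lying in the ``bad locus'' $(G/P)\smallsetminus(\Lambda_\Ga\cup\Om_\Ga)$, which is a countable union of lower-dimensional subsets arising from repelling flags of individual elements --- I would handle using Step 1 (divergence of all regular one-parameter diagonal flows on $U$) together with the product structure of $G/P$, showing that this locus carries no $N$-invariant ergodic Radon measure other than those supported on closed $NM$-orbits. Together these give that $\supp\mu$ is a closed $N$-orbit, hence lies in a closed $NM$-orbit of $U$, which is alternative (2).

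\emph{Step 3: measures on the minimal set --- the main case.} Now $\mu$ is supported on $\E_\Ga$ and I must show it is a constant multiple of a Burger--Roblin measure; this is where I would invoke (and it is essentially the point of) the general measure-classification theorem for horospherical foliations of products of $\CAT(-1)$ spaces proved later in the paper, here applied with $X=\prod_i X_i$ the product of the rank-one symmetric spaces, the $N$-foliation as the horospherical foliation, and the Anosov $\Ga$ as the transverse subgroup. The mechanism is: disintegrate $\mu$ along the $N$-foliation, so that $N$-invariance makes the conditionals on the $N$-leaves Haar and $\mu$ is determined by a transverse measure, living on an $MA$-bundle over $\Ga\ba\Lambda_\Ga$, whose $\Lambda_\Ga$-component --- by the way the holonomy of the horospherical foliation transforms --- is a $\Ga$-conformal density. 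The core is to show this density is a Patterson--Sullivan density (and the $M$- and $A$-components are Haar and Lebesgue). For this I would restrict to a recurrent subfoliation inside $\E_\Ga$ --- morally the part over $\Lambda_\Ga^{(2)}$, on which the diagonal $A$-action is conservative and ergodic, the relevant $m^{\BMS}$ being finite precisely because $\Ga$ is Anosov --- and, using the ergodicity of the Burger--Roblin measures (Theorem \ref{thm:ergodicdiv}) and a Hopf-ratio / equidistribution argument along the $N$-leaves, identify the transverse data with a Patterson--Sullivan density; ergodicity of $\mu$ then propagates the identification from the recurrent subfoliation to all of $\E_\Ga$, yielding $\mu=c\,m^{\BR}$ for the corresponding density. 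When $r\le 3$ and the associated direction lies in $\inte\L_\Ga$, this is the uniquely ergodic measure on $\mathcal{R}_{\Ga,v}$ of Theorem \ref{thm:LLLO}(1).

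\emph{Where the difficulty lies.} The real obstacle is Step 3 when $r>3$ (or when the associated direction is outside $\inte\L_\Ga$): there no regular one-parameter diagonal flow is recurrent on $\supp\mu$, so neither Theorem \ref{thm:LLLO} nor the directional-recurrence arguments of \cite{LLLO_Horospherical} are available, and one must use the recurrent subfoliation as a genuine replacement for directional recurrence in order to produce the conformal density and run the equidistribution step on the $N$-leaves. A second, lower-order technical point is the treatment of the bad locus in Step 2.
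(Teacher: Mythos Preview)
Your Step 3 contains a genuine gap at the crucial point. You write that after disintegrating along the $N$-foliation, the $\Lambda_\Ga$-component of the transverse measure ``--- by the way the holonomy of the horospherical foliation transforms --- is a $\Ga$-conformal density,'' and that what remains is to identify \emph{which} Patterson--Sullivan density it is. This is backwards. Passing to the quotient $G/NM\simeq\mathcal{H}=\mathcal{F}\times\fa$, an $NM$-invariant measure on $\Ga\ba G$ becomes a $\Ga$-invariant Radon measure $\widehat\mu$ on $\mathcal{H}$; but $\Ga$-invariance alone does \emph{not} force $\widehat\mu$ to decompose as $e^{\psi(u)}d\nu(\xi)\,du$ for some conformal $\nu$. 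A priori the fibre measures over $\xi$ could be arbitrary Radon measures on $\fa$ transforming compatibly under the Busemann cocycle. Proving that they are (exponentially weighted) Lebesgue --- equivalently, that $\widehat\mu$ is $A$-quasi-invariant --- is exactly the content of the paper's main technical work, and your ``Hopf-ratio / equidistribution along $N$-leaves'' sketch does not supply this. Indeed, the paper explicitly avoids any flow or equidistribution argument: it shows (Theorem~\ref{thm:radonCharge}) that $\widehat\mu$ is concentrated on the $(\varphi,C)$-guided limit set for every loxodromic $\varphi\in\Ga$, and then (Theorem~\ref{thm:trbytrlengthqi}) uses the simultaneous squeezing of the axes of $\varphi$ in each factor to prove $T_{\tau_\varphi}^*\widehat\mu=e^{\lambda}\widehat\mu$; non-arithmeticity of $\Spec(\Ga)$ (Benoist) then yields full $A$-quasi-invariance. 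Only after this does conformality appear, and the identification with a Burger--Roblin measure is then immediate. Your appeal to Theorem~\ref{thm:ergodicdiv} is circular in this direction: that theorem produces ergodic measures, it does not help show that a given ergodic measure is one of them.

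Your Step 2 is also more complicated than necessary and has a loose end. There is no need to invoke a domain of discontinuity or to worry about a ``bad locus'' in $\mathcal{F}\smallsetminus(\La_\Ga\cup\Om_\Ga)$: the paper's Proposition~\ref{prop:closedorbits} shows in three lines that every $\Ga$-orbit in $\mathcal{H}\smallsetminus(\La(\Ga)\times\fa)$ is closed, simply because if $g_n(\xi,u)$ accumulates with $\{g_n\}$ infinite, then one coordinate of $\beta_\xi(g_n^{-1}z_0,z_0)$ diverges (as $\xi$ differs from $\lim g_n^{-1}z_0\in\La(\Ga)$ in at least one factor). This handles all of $\mathcal{F}\smallsetminus\La(\Ga)$ uniformly; the passage from $NM$- to $N$-invariant ergodic measures then goes via $M$-averaging and Winter's argument, not via closedness of individual $N$-orbits.
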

Note that the same holds for $NM$-invariant ergodic Radon measures (see Corollary~\ref{cor:mainAnosov} below).

\begin{remark}
    Under an extra assumption that the measure is $AM$-quasi-invariant, the measure classification was proved by Lee--Oh (\cite[Theorem 1.1]{LO_invariant}, \cite[Theorem 1.3]{LO_ergodic}) for Anosov subgroups as above, and by the second author \cite{kim2024conformal} for relatively Anosov subgroups and transverse subgroups as in Theorem \ref{thm:maintransverse} below. These are consequences of $N$-ergodicity of Burger--Roblin measures.

    The major part of the proof our main results is to show that any $NM$-invariant ergodic Radon measure on $\E_{\Ga}$ is $A$-quasi-invariant. Once we have the quasi-invariance, then the classification follows from \cite[Proposition 10.25]{LO_invariant}.
    See also \cite[0.1 Basic Lemma]{aaronson2002invariant} and \cite[Lemma 1]{Sarig_abelian} for this in a more abstract setting.
\end{remark}

In fact, we classify horospherical invariant measures for a more general class of discrete subgroups. Delaying this general result to the next subsection, we first describe higher-rank Burger--Roblin measures.

Fix a maximal compact subgroup $K < G$ so that the Cartan decomposition $G = K (\exp \fa^+) K$ holds. Then we have the Furstenberg boundary
$$
\mathcal{F} := K/M = G/P.
$$
Let $\Ga < G$ be a Zariski dense discrete subgroup.
For $\delta \ge 0$ and a linear form $\psi \in \fa^*$, a Borel probability measure $\nu$ on $\mathcal{F}$ is called a $\delta$-dimensional $\psi$-\emph{conformal measure} of $\Ga$ if
$$
\frac{d g_* \nu}{d \nu}(\xi) = e^{-\delta \cdot \psi(\beta_{\xi}(g, \id))} \quad \text{a.e.}
$$
where $\beta$ is the $\fa$-valued Busemann cocycle (Equation \eqref{eqn:Busedef}), each of whose components is a usual Busemann cocycle for a rank-one symmetric space. This notion of conformal measures was first introduced by Quint \cite{Quint2002Mesures}, generalizing the classical Patterson--Sullivan theory to higher rank.

In \cite{Edwards2020anosov}, Edwards--Lee--Oh extended the classical Burger--Roblin measure to higher rank. For a $\delta$-dimensional $\psi$-conformal measure $\nu$ of $\Ga$ on $\mathcal{F}$, the (higher-rank) \emph{Burger--Roblin measure} associated to $\nu$ is the Radon measure $\mu_{\nu}^{\BR}$ on $\Ga \ba G$ induced by the $\Ga$-invariant measure $\tilde \mu_{\nu}^{\BR}$ on $G$ defined as follows: for $g = k (\exp u) n \in K(\exp \fa) N$ in Iwasawa decomposition of $G$,
\begin{equation} \label{eqn:BRdef}
d \tilde \mu_{\nu}^{\BR}(g) := e^{\delta \cdot \psi(u)} d \tilde \nu (k) du dn
\end{equation}
where $\tilde \nu$ is the $M$-invariant lift of $\nu$ to $K$ and $du$ and $dn$ are Lebesgue measures on $\fa$  and $N$ respectively. The measure $\mu_{\nu}^{\BR}$ is $NM$-invariant.

We denote by $\La(\Ga) \subset \mathcal{F} = G/P$ the limit set of $\Ga$, which is the unique $\Ga$-minimal subset \cite{Benoist1997proprietes}. In terms of the limit set, we have
$$
\E_{\Ga} = \{ [g] \in \Ga \ba G : g P \in \La(\Ga) \}.
$$
Hence, $\mu_{\nu}^{\BR}$ is supported on $\E_{\Ga}$ if and only if $\nu$ is supported on $\La(\Ga)$, and in this case, the $NM$-ergodicity and $N$-ergodicity were proved by Lee--Oh (\cite{LO_invariant}, \cite{LO_ergodic}).
As a corollary of Theorem \ref{thm:mainAnosov} we conclude that Burger--Roblin measures are all such ergodic measures.

\begin{corollary} \label{cor:mainAnosov}
    Let $\Ga < G$ be a Zariski dense Anosov subgroup.  Then the following three sets are the same, up to constant multiples:
    \begin{enumerate}
        \item $\{ \mu_{\nu}^{\BR} : \nu \text{ is a conformal measure of $\Ga$ on $\La(\Ga)$}\}$.
        \item the set of all $NM$-invariant ergodic Radon measures on $\E_{\Ga}$.
        \item the set of all $N$-invariant ergodic Radon measures on $\E_{\Ga}$.
    \end{enumerate}
\end{corollary}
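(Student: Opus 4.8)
The plan is to obtain Corollary \ref{cor:mainAnosov} by combining Theorem \ref{thm:mainAnosov} with the $NM$- and $N$-ergodicity of Burger--Roblin measures due to Lee--Oh (\cite{LO_invariant}, \cite{LO_ergodic}); label the three sets $(1)$, $(2)$, $(3)$ as in the statement. First I would dispose of the easy inclusions $(1)\subseteq(2)$ and $(1)\subseteq(3)$. Given a conformal measure $\nu$ of $\Ga$ supported on $\La(\Ga)$, the measure $\mu_{\nu}^{\BR}$ is a non-zero Radon measure which, by its definition in Equation \eqref{eqn:BRdef}, is $NM$-invariant (hence $N$-invariant), and it is supported on $\E_{\Ga}$ exactly because $\nu$ is supported on $\La(\Ga)$. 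By \cite{LO_invariant} and \cite{LO_ergodic} it is moreover $NM$-ergodic and $N$-ergodic, so it belongs to $(2)$ and to $(3)$. One may also record that $\nu$ is recovered from $\mu_{\nu}^{\BR}$ by disintegrating in the $K/M$-direction, so that non-proportional conformal measures give non-proportional Burger--Roblin measures; this refinement is not needed for the asserted set equality.

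Next I would prove $(3)\subseteq(1)$, which is immediate from Theorem \ref{thm:mainAnosov}. Let $\mu\neq 0$ be an $N$-invariant ergodic Radon measure on $\E_{\Ga}$, regarded as such a measure on $\Ga\ba G$. Theorem \ref{thm:mainAnosov} offers two alternatives, and the alternative that $\mu$ is supported on a closed $NM$-orbit contained in $(\Ga\ba G)\smallsetminus\E_{\Ga}$ is incompatible with $\supp\mu\subseteq\E_{\Ga}$. Hence $\mu$ is a constant multiple of some $\mu_{\nu}^{\BR}$, and since $\supp\mu_{\nu}^{\BR}\subseteq\E_{\Ga}$ forces $\supp\nu\subseteq\La(\Ga)$, we conclude $\mu\in(1)$ up to a constant.

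The one step carrying genuine content is $(2)\subseteq(1)$, for which I would proceed in either of two ways. Most directly, the proof of Theorem \ref{thm:mainAnosov} proceeds by showing that any $NM$-invariant ergodic Radon measure on $\E_{\Ga}$ is $A$-quasi-invariant and then invoking \cite[Proposition 10.25]{LO_invariant}; that same argument, read off for $NM$ in place of $N$, classifies $NM$-invariant ergodic Radon measures on $\E_{\Ga}$ as constant multiples of Burger--Roblin measures associated to conformal measures on $\La(\Ga)$, i.e.\ yields $(2)\subseteq(1)$. Without re-entering that proof, one can instead take the $N$-ergodic decomposition $\mu=\int_{\Omega}\mu_{\omega}\,d\mathbb{P}(\omega)$ of a given $\mu\in(2)$ into Radon $N$-ergodic components, which is legitimate because the $N$-action on $\E_{\Ga}$ is conservative when $\Ga$ is Anosov: for $\mathbb{P}$-a.e.\ $\omega$, $\mu_{\omega}$ is a non-zero $N$-ergodic Radon measure supported in $\E_{\Ga}$, hence by $(3)\subseteq(1)$ a constant multiple of $\mu_{\nu_{\omega}}^{\BR}$ with $\nu_{\omega}$ on $\La(\Ga)$; but $\mu_{\nu_{\omega}}^{\BR}$ is $M$-invariant, so $\mu_{\omega}$ is $NM$-invariant and therefore $NM$-ergodic, since an $NM$-invariant set is in particular $N$-invariant. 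Uniqueness of the $NM$-ergodic decomposition of $\mu$ then forces $\mathbb{P}$ to be a point mass, so $\mu=\mu_{\omega}$ is itself $N$-ergodic and $(3)\subseteq(1)$ applies.

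Chaining $(2)\subseteq(1)\subseteq(2)$ and $(3)\subseteq(1)\subseteq(3)$ gives the equality of all three sets up to constant multiples. The substantive difficulty is entirely in Theorem \ref{thm:mainAnosov}; within the corollary the only delicate point is the interplay between $N$- and $NM$-invariance, which is handled either by noting that the proof of Theorem \ref{thm:mainAnosov} already operates at the level of $NM$-invariant measures, or by combining the conservativity of the $N$-action on $\E_{\Ga}$ with the $M$-invariance of Burger--Roblin measures.
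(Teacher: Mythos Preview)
Your proposal is correct and follows essentially the same route as the paper, which obtains Corollary \ref{cor:mainAnosov} as the Anosov special case of Theorem \ref{thm:maintransverse} (using $\E_{\Ga}=\mathcal{R}_{\Ga}$). The only cosmetic difference is the order: the paper first establishes $(1)=(2)$ directly---since the underlying rigidity result, Theorem \ref{thm:uniqueRadon}, is proved on $\mathcal{H}\simeq G/NM$ and hence naturally classifies $NM$-invariant measures---and then deduces $(1)=(3)$ from this together with the $N$-ergodicity of Burger--Roblin measures, as in \cite[Corollary 6.5]{Winter_mixing}; your Option A is exactly this observation, while your Option B (and the conservativity claim) is not needed.
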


The set of $N$-ergodic measures in Corollary \ref{cor:mainAnosov} can be described more explicitly. 
Denote by $\kappa : G \to \fa^+$ the Cartan projection, defined by the condition $g \in K (\exp \kappa(g)) K$ for all $g \in G$.
The \emph{limit cone} $\L_{\Ga} \subset \fa^+$ of $\Ga$ is the asymptotic cone of Cartan projections $\kappa(\Ga)$. Benoist showed that if $\Ga$ is Zariski dense, $\L_{\Ga}$ is convex and has non-empty interior \cite{Benoist1997proprietes}. For a Zariski dense Anosov subgroup $\Ga < G$, Lee--Oh classified conformal measures of $\Ga$ on $\La(\Ga)$ in \cite{LO_invariant}, and provided a natural homeomorphism
\begin{equation} \label{eqn:LObijection}
\P(\inte \L_{\Ga}) \quad \longleftrightarrow \quad \{ \mu_{\nu}^{\BR} : \nu \text{ is a conformal measure of $\Ga$ on $\La(\Ga)$}\}
\end{equation}
constructed using tangencies of the growth indicator of $\Ga$,  introduced by Quint \cite{Quint2002divergence}.
Corollary \ref{cor:mainAnosov} is now rephrased as follows:

\begin{corollary}  \label{cor:mainAnosovrephrase}
    Let $\Ga < G$ be a Zariski dense Anosov subgroup. Then the homeomorphism in Equation \eqref{eqn:LObijection} becomes homeomorphisms among the following three sets:
    $$\begin{tikzcd}[column sep = huge, row sep = 0.1em]
    & \left\{
        \ \begin{matrix}
            NM\text{-invariant ergodic}\\
            \text{non-zero Radon measures on } \E_{\Ga}
        \end{matrix} \ \right\} \arrow[dd, <->] \\
    \inte \L_{\Ga} \arrow[ur, <->] \arrow[dr, <->] & \\
    & \left\{
        \ \begin{matrix}
            N\text{-invariant ergodic}\\
            \text{non-zero  Radon measures on } \E_{\Ga}
        \end{matrix} \ \right\}
    \end{tikzcd}
    $$
    In particular, they are all homeomorphic to $\R^{r}$.
\end{corollary}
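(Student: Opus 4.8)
Since this is stated as a corollary, the plan is to assemble it from pieces already in hand rather than to prove anything substantially new: Corollary~\ref{cor:mainAnosov} identifies the three collections of measures with one another, the Lee--Oh parametrization \eqref{eqn:LObijection} identifies the collection of Burger--Roblin measures with $\P(\inte\L_\Ga)$, and an elementary fact about proper convex cones identifies the latter with $\R^{r-1}$. Concretely, I would first quote Corollary~\ref{cor:mainAnosov}: modulo multiplication by positive scalars, the Burger--Roblin measures $\mu_\nu^{\BR}$ with $\nu$ a conformal measure of $\Ga$ on $\La(\Ga)$, the $NM$-invariant ergodic nonzero Radon measures on $\E_\Ga$, and the $N$-invariant ergodic nonzero Radon measures on $\E_\Ga$ are one and the same subset of the space of Radon measures on $\Ga\ba G$. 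Equipping each with the weak-$*$ topology on scaling classes, the two ``measure'' vertices of the diagram are then literally equal as topological spaces, and the right-hand double arrow is the identity.

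Next I would recall that \eqref{eqn:LObijection}, proved by Lee--Oh \cite{LO_invariant} via tangencies of Quint's growth indicator $\psi_\Ga$ \cite{Quint2002divergence}, is a homeomorphism of $\P(\inte\L_\Ga)$ onto $\{\mu_\nu^{\BR} : \nu \text{ conformal on } \La(\Ga)\}$: a direction $[v]\in\P(\inte\L_\Ga)$ maps to the Burger--Roblin measure built from the $\psi_v$-conformal measure of $\Ga$, where $\psi_v\in\fa^*$ is the linear form tangent to $\psi_\Ga$ along $v$. Composing this homeomorphism with the two identifications of the previous paragraph yields the two slanted arrows, and the triangle commutes by construction; this is the first assertion. (Here I read the left vertex of the displayed diagram as the domain $\P(\inte\L_\Ga)$ of \eqref{eqn:LObijection}.)

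For the last sentence: as $\Ga$ is Zariski dense, Benoist's theorem \cite{Benoist1997proprietes} gives that $\L_\Ga$ is a convex cone with nonempty interior in $\fa^+\cong\R^r$. Since $\fa^+$, hence $\L_\Ga$, is a proper cone, choosing a linear form positive on $\L_\Ga\smallsetminus\{0\}$ and slicing at height $1$ realizes $\P(\L_\Ga)$ as a bounded convex subset with nonempty interior of an $(r-1)$-dimensional affine space; thus $\P(\inte\L_\Ga)=\inte\P(\L_\Ga)$ is a nonempty bounded open convex subset of $\R^{r-1}$, hence homeomorphic to $\R^{r-1}$, and so are the other two vertices by the homeomorphisms already produced. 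There is no genuinely hard step here; if one insists on a ``main obstacle'' it is the bookkeeping point that the topology transported along \eqref{eqn:LObijection} coincides with the weak-$*$ topology on scaling classes of the $NM$- and $N$-ergodic measures --- but this is exactly the force of the word ``homeomorphism'' in \eqref{eqn:LObijection} together with the set-theoretic equality supplied by Corollary~\ref{cor:mainAnosov}, so no additional argument is required.
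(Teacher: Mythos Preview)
Your proposal is correct and matches the paper's intent: the paper offers no separate proof of this corollary, treating it as an immediate rephrasing of Corollary~\ref{cor:mainAnosov} together with the Lee--Oh homeomorphism~\eqref{eqn:LObijection}, and your write-up fills in exactly those details (including the correct reading of the left vertex as $\P(\inte\L_\Ga)$ and the standard convexity argument for $\P(\inte\L_\Ga)\cong\R^{r-1}$).
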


Corollary \ref{cor:mainAnosov} and Corollary \ref{cor:mainAnosovrephrase} do not have any rank assumption, and hence gives an affirmative answer to Question \ref{ques:Anosov}, which was proposed by Oh in \cite{oh2025dynamics}. For Question \ref{ques:LLLO}, we note that $\mathcal{R}_{\Ga, v} \subset \E_{\Ga}$ is the same as the set of $[g] \in \Ga \ba G$ such that $gP \in \mathcal{F}$ is contained in the ``directional limit set for $v$,'' which is a subset of $\La(\Ga)$. In \cite{BLLO}, it was shown that when $r \le 3$, any conformal measure $\nu$ of a Zariski dense Anosov subgroup $\Ga < G$ supported on $\La(\Ga)$ is in fact supported on the directional limit set for some $v \in \inte \fa^+$. Therefore, Question \ref{ques:LLLO}, the open problem proposed in \cite{LLLO_Horospherical}, is resolved by Corollary \ref{cor:mainAnosov}.

\subsection{Beyond Anosov subgroups}

Our approach to measure classification applies to subgroups beyond Anosov ones, namely the transverse subgroups.

The notion of transverse subgroups of general Lie groups was introduced and studied by Canary--Zhang--Zimmer \cite{CZZ_transverse}. This notion extends rank-one discrete subgroups to higher rank, and Anosov subgroups are special examples of transverse subgroups. Their important property that they admit natural convergence group actions was proved earlier by Kapovich--Leeb--Porti \cite{KLP_Anosov}.

We now define transverse subgroups in our setting of Equation \eqref{eqn:standing}. Note that the associated Riemannian symmetric space $G/K$ and the Furstenberg boundary $\mathcal{F}$ can be written as 
$$
G/K = \prod_{i = 1}^r X_i \quad \text{and} \quad \mathcal{F} = \prod_{i = 1}^r  \partial X_i
$$
where $X_i$ is the rank-one symmetric space associated to $G_i$ and $\partial X_ii$ is its Gromov boundary, for each $1 \le i \le r$. Fix a basepoint $o = [\id] \in G/K$.

\begin{definition} \label{def:transverseintro}
A Zariski dense discrete subgroup $\Ga < G$ is called \emph{transverse} if
\begin{itemize}
    \item for any infinite sequence $\{g_n \}_{n \in \N} \subset \Ga$, we have that $g_n o \in G/K$ diverges as $n \to + \infty$ in each component $X_i$, $1 \le i \le r$, and 
    \item for any two distinct $(\xi_1, \dots, \xi_r), (\zeta_1, \dots, \zeta_r) \in \La(\Ga)$, we have $\xi_i \neq \zeta_i$ for all $1 \le i \le r$.
\end{itemize}
\end{definition}

A Zariski dense transverse subgroup $\Ga < G$ acts on the limit set $\La(\Ga) \subset \mathcal{F}$ as a convergence group. When the $\Ga$-action on $\La(\Ga)$ is a geometrically finite convergence action, we call $\Ga$ \emph{relatively Anosov}. If the $\Ga$-action on $\La(\Ga)$ is a uniform convergence action, then $\Ga$ is \emph{Anosov}, and vice versa.
As Anosov subgroups are higher-rank version of convex cocompact subgroups in rank one, relatively Anosov subgroups are higher-rank analogues of rank-one geometrically finite subgroups.

For this general class of discrete subgroups, we consider a subset $\mathcal{R}_{\Ga} \subset \E_{\Ga}$ which we call \emph{recurrence locus}, defined as follows:
\begin{equation} \label{eqn:recurrencelocus}
\mathcal{R}_{\Ga} := \{ x \in \Ga \ba G : x \cdot \exp \fa^+ \text{ is recurrent to a compact subset} \},
\end{equation}
i.e., $x \in \mathcal{R}_{\Ga}$ if and only if there exists a sequence $\{a_n\}_{n \in \N} \subset \fa^+$ diverging in each component of $\fa =\R^r$ such that $\{x a_n\}_{n \in \N}$ is contained in a fixed compact subset. The set $\mathcal{R}_{\Ga}$ is much larger than $\mathcal{R}_{\Ga, v}$, $v \in \inte \fa^+$, discussed before, because $\mathcal{R}_{\Ga}$ considers the full $\exp \fa^+$-orbits, not only a fixed 1-dimensional one given by $v$. Indeed, when $\Ga$ is Anosov, $\mathcal{R}_{\Ga} = \E_{\Ga} $ while $\mathcal{R}_{\Ga, v}$ is a proper subset of $\E_{\Ga}$.

For a linear form $\psi \in \fa^*$, we denote the associated Poincar\'e series by $\mathcal{P}_{\Ga, \psi}(s) := \sum_{g \in \Ga} e^{-s \psi(\kappa(g))}$. We also denote  its critical exponent by $\delta_{\psi}(\Ga) := \inf \{ s > 0 : \mathcal{P}_{\Ga, \psi}(s) < + \infty \}$.
We say that a conformal measure $\nu$ of $\Ga$ is of \emph{divergence type} if $\nu$ is a $\delta_{\psi}(\Ga)$-dimensional $\psi$-conformal measure of $\Ga$ for some $\psi \in \fa^*$ such that $\delta_{\psi}(\Ga) < + \infty$ and $\mathcal{P}_{\Ga, \psi}(\delta_{\psi}(\Ga)) = + \infty$. 
Note that in our setting, the Cartan projection $\kappa(g) \in \fa = \R^r$ is the vector whose $i$-th component is the displacement between $o, go \in G/K$ in the component $X_i$.

By the higher-rank Hopf--Tsuji--Sullivan dichotomy for transverse subgroups (\cite{CZZ_transverse}, \cite{KOW_PD}), the Burger--Roblin measure $\mu_{\nu}^{\BR}$ is supported on $\mathcal{R}_{\Ga}$ for a divergence-type conformal measure $\nu$. Moreover, in this case, the second author showed $NM$-ergodicity and  $N$-ergodicity of $\mu_{\nu}^{\BR}$ in \cite{kim2024conformal}. It turns out that they are the only ergodic measures.

\begin{theorem} \label{thm:maintransverse}
    Let $\Ga < G$ be a Zariski dense transverse subgroup. Then the following three sets are the same, up to constant multiples:
    \begin{enumerate}
        \item $\{ \mu_{\nu}^{\BR} : \nu \text{ is a divergence-type conformal measure of $\Ga$ on $\La(\Ga)$}\}$.
        \item the set of all $NM$-invariant ergodic Radon measures on $\mathcal{R}_{\Ga}$.
        \item the set of all $N$-invariant ergodic Radon measures on $\mathcal{R}_{\Ga}$.
    \end{enumerate}
\end{theorem}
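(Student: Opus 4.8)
\emph{Overall structure.} By the higher-rank Hopf--Tsuji--Sullivan dichotomy for transverse subgroups (\cite{CZZ_transverse}, \cite{KOW_PD}), a conformal measure $\nu$ of $\Ga$ on $\La(\Ga)$ is of divergence type precisely when $\mu_\nu^{\BR}$ is supported on $\mathcal{R}_\Ga$, and by \cite{kim2024conformal} such $\mu_\nu^{\BR}$ is $NM$-ergodic and $N$-ergodic; this gives the inclusions of $(1)$ into $(2)$ and into $(3)$. For the reverse inclusions it is enough to treat $NM$-invariant ergodic Radon measures $\mu$ on $\mathcal{R}_\Ga$: an $N$-invariant ergodic $\mu$ is reduced to this case by averaging over the compact group $M$ (which normalizes $N$) and comparing $\mu$ with the $NM$-ergodic decomposition of $\int_M m_*\mu\, dm$, using uniqueness of ergodic decompositions together with $N$-ergodicity of Burger--Roblin measures. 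So let $\mu$ be $NM$-invariant, ergodic, and supported on $\mathcal{R}_\Ga$. The crux is to show that $\mu$ is $A$-quasi-invariant; once this is known, $\mu$ is $N$-invariant and $AM$-quasi-invariant, hence a constant multiple of $\mu_\nu^{\BR}$ for some conformal measure $\nu$ by \cite[Proposition 10.25]{LO_invariant}, and since $\mu$ lives on $\mathcal{R}_\Ga\subset\E_\Ga$ the measure $\nu$ is supported on $\La(\Ga)$ and, by the same dichotomy, is of divergence type.

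\emph{The key step: set-up.} It remains to show that an $NM$-invariant ergodic Radon measure $\mu$ supported on $\mathcal{R}_\Ga$ is $A$-quasi-invariant. I would carry this out in the geometric framework announced in the abstract: write $G/K=\prod_{i=1}^r X_i$ with each $X_i$ a $\CAT(-1)$ space and $\mathcal{F}=\prod_{i=1}^r\partial X_i$, and reduce to the corresponding classification for Radon measures on the recurrent part of the horospherical foliation that are invariant under a transverse group. Since $NM$ is unimodular (a compact extension of the nilpotent group $N$), the conditional measures of $\mu$ along $NM$-orbits are $NM$-invariant, hence Haar; so $\mu$ is faithfully encoded by its transverse data, and in the Hopf parametrization --- a regular $g\in G$ corresponds to its attracting and repelling endpoints $(\xi^+,\xi^-)$ together with coordinates in $A$ and $M$ --- this transverse measure is a measure in the $\xi^+$- and $A$-directions which is already Lebesgue in the $\xi^-$-direction (the $N$-direction) by $N$-invariance. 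The goal becomes to show that it is also Lebesgue in the $A$-direction; $\Ga$-invariance will then force the $\xi^+$-marginal to be a conformal measure, which is the mechanism by which \cite[Proposition 10.25]{LO_invariant} identifies $\mu$ with a Burger--Roblin measure.

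\emph{The key step: the argument.} The support condition gives recurrence --- and, by a standard argument for the Radon measure $\mu$, conservativity --- of the $\exp\fa^+$-action. The geometric input is the contraction $a^{-1}na\to e$, valid along \emph{any} sequence all of whose coordinates in $\fa\cong\R^r$ tend to $+\infty$, which is precisely the form of divergence guaranteed by recurrence in $\mathcal{R}_\Ga$. Following the Babillot--Roblin scheme and its one-parameter version in \cite{LLLO_Horospherical}, I would compare, near a generic recurrent point $y$, the measure obtained by pushing forward under $\exp\fa^+$ the $N$-plaques based at two points $x$ and $xn$ of a common $N$-orbit: by $N$-invariance the two plaques carry the same Lebesgue conditional, the contraction property squeezes them onto the $A$-orbit through $y$, and a conservativity/density argument then pins the transverse measure to be Lebesgue along $A$.

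\emph{The main obstacle.} The genuine difficulty, and the point where the argument must go beyond the rank-one case and the fixed-direction theorem of \cite{LLLO_Horospherical}, is that a point of $\mathcal{R}_\Ga$ records recurrence only along \emph{some} sequence in $\fa^+$ that is not controlled in direction, and an $N$-ergodic measure on $\mathcal{R}_\Ga$ need not be carried by any directional recurrence set $\mathcal{R}_{\Ga,v}$ (this already fails for the Burger--Roblin measures of non-directional divergence-type conformal measures). Thus the contraction estimates and the measure comparison must be run uniformly, without fixing a chamber direction, and the Radon--Nikodym cocycle of the $A$-action extracted this way has to be identified --- using $NM$-invariance, ergodicity, and the growth indicator of $\Ga$ --- as a genuine additive character $\delta\psi$ on $\fa$ with $\psi\in\fa^*$. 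I expect this to be the technical heart of the proof; the fact that $\mathcal{R}_\Ga$ is governed by recurrence of the full $\fa^+$-orbit rather than of a single diagonal flow is precisely what makes the $\CAT(-1)$-product formalism the natural setting, and an induction on the number $r$ of $\CAT(-1)$ factors is a plausible alternative organisation.
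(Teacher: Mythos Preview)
Your overall architecture --- reduce to $NM$-invariant ergodic Radon measures on $\mathcal R_\Ga$, show $A$-quasi-invariance, then invoke \cite[Proposition 10.25]{LO_invariant} and the Hopf--Tsuji--Sullivan dichotomy --- is exactly the paper's. The reduction from $N$-ergodic to $NM$-ergodic via $M$-averaging and Burger--Roblin $N$-ergodicity is also what the paper does (following \cite[Corollary 6.5]{Winter_mixing}).

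The gap is in the key step. You propose to obtain $A$-quasi-invariance by a Babillot--Roblin/LLLO-style comparison of $N$-plaques pushed forward under $\exp\fa^+$, using the contraction $a^{-1}na\to e$. You correctly identify the obstacle: recurrence in $\mathcal R_\Ga$ gives return times along sequences in $\fa^+$ with no directional control, so the plaque-comparison cannot be run along a fixed one-parameter flow, and you offer no mechanism to overcome this beyond ``uniform estimates'' and a speculative induction on $r$. This is precisely the difficulty that the paper's introduction flags as unresolved by flow-based methods; indeed the paper states explicitly that its proof ``does not make use of any continuous flow on $\Ga\backslash G$'' nor of the multi-dimensional $\exp\fa^+$-action.

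What the paper actually does is orthogonal to your proposal. It passes to the $\Ga$-action on $\mathcal H=\mathcal F\times\fa$ (so $NM$-invariant measures on $\Ga\backslash G$ become $\Ga$-invariant measures on $\mathcal H$, and $\mathcal R_\Ga$ corresponds to $\La_c(\Ga)\times\fa$), and then proves translation-quasi-invariance one Jordan projection at a time: for each loxodromic $\varphi\in\Ga$ it shows $T_{\tau_\varphi}^*\mu=e^{\lambda(\varphi)}\mu$ by a covering argument on the \emph{guided limit set} $\La_{\varphi,C}(\Ga)$, exploiting the simultaneous squeezing of the axis of $\varphi$ in every $\CAT(-1)$ factor (this simultaneity, forced by the antipodality of transverse groups, is the new geometric input). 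Benoist's non-arithmeticity of $\Spec(\Ga)$ for Zariski dense $\Ga$ then upgrades quasi-invariance under the dense subgroup $\langle\Spec(\Ga)\rangle\subset\fa$ to full $A$-quasi-invariance with an exponential character $e^{\lambda}$, $\lambda\in\fa^*$. No recurrence times, no $\exp\fa^+$-dynamics, and no induction on $r$ appear.
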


As a corollary of Theorem \ref{thm:maintransverse}, we have the horospherical measure classification on $\Ga \ba G$, for relatively Anosov subgroups.

\begin{corollary} \label{cor:mainrelAnosov}
    Let $\Ga < G$ be a Zariski dense relatively Anosov subgroup. Let $\mu$ be a non-zero, $N$-invariant ergodic Radon measure on $\Ga \ba G$. Then either
    \begin{enumerate}
        \item $\mu$ is supported on $\mathcal{R}_{\Ga}$ and is a constant multiple of a Burger--Roblin measure, or 
        \item $\mu$ is supported on a closed $NM$-orbit in $(\Ga \ba G) \smallsetminus \mathcal{R}_{\Ga}$.
    \end{enumerate}
\end{corollary}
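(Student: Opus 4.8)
The strategy is to deduce Corollary~\ref{cor:mainrelAnosov} from Theorem~\ref{thm:maintransverse} by the same extension principle that upgrades a classification on the recurrent region to one on all of $\Ga\ba G$ (compare Theorem~\ref{thm:mainAnosov}), with one additional ingredient: the cuspidal region $\E_{\Ga}\smallsetminus\mathcal{R}_{\Ga}$, which is empty for Anosov $\Ga$ but not for merely relatively Anosov $\Ga$.

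First I would check that $\mathcal{R}_{\Ga}$ --- and likewise $\E_{\Ga}$ --- is invariant under right translation by $P=MAN$: whether $[g]\exp\fa^+$ returns to a compact set is unaffected by $M$ (compact and commuting with $A$), by $A$ (which shifts $\fa^+$ boundedly), and by $N$ (which fixes the forward endpoint $gP$ and contracts towards it along $\fa^+$). Thus $\mathcal{R}_{\Ga}$ and its complement are $N$-invariant, so by $N$-ergodicity of $\mu$, either $\mu$ is supported on $\mathcal{R}_{\Ga}$ or $\mu(\mathcal{R}_{\Ga})=0$. In the first case Theorem~\ref{thm:maintransverse} writes $\mu$, up to a constant, as $\mu_{\nu}^{\BR}$ for a divergence-type conformal measure $\nu$ supported on $\La(\Ga)$; and since for a relatively Anosov subgroup every conformal measure supported on the limit set is automatically of divergence type \cite{CZZ_relative}, this yields alternative~(1).

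Now assume $\mu(\mathcal{R}_{\Ga})=0$. Using the higher-rank Hopf--Tsuji--Sullivan dichotomy for transverse subgroups (\cite{CZZ_transverse}, \cite{KOW_PD}) and the Bowditch-type description of the geometrically finite convergence action of $\Ga$ on $\La(\Ga)$, I would record that $\La(\Ga)=\La_{\mathrm{rad}}\sqcup\La_{\mathrm{par}}$, with $\La_{\mathrm{par}}$ the set of bounded parabolic points and $\mathcal{R}_{\Ga}=\{[g]:gP\in\La_{\mathrm{rad}}\}$. Hence $\mathcal{R}_{\Ga}^{c}=\{[g]:gP\in\La_{\mathrm{par}}\}\sqcup\{[g]:gP\notin\La(\Ga)\}$, both pieces $N$-invariant, and by $N$-ergodicity $\mu$ lives on one of them. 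If $\mu$ lives over $\La_{\mathrm{par}}$, then since there are only countably many $\Ga$-orbits of bounded parabolic points, $\mu$ lives on a single Borel piece $\Ga_{\xi}\ba P$ where $\Ga_{\xi}$ is the associated peripheral subgroup; one checks $\Ga_{\xi}\subset MN$ (each element of $\Ga_{\xi}$ must be parabolic in every rank-one factor, as otherwise $gP$ would be an attracting, hence radial, fixed point, or would violate transversality of $\La(\Ga)$). As $N\trianglelefteq P$ with $P/N\cong MA$, I would push $\mu$ forward along $\Ga_{\xi}\ba P\to\Ga_{\xi}\ba MA$: $N$ acts trivially on the target, the pushed-forward measure is $\sigma$-finite since $\mu$ is Radon, so $N$-ergodicity of $\mu$ forces it to be a point mass. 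Since $\Ga_{\xi}$ preserves the $A$-parameter, $\mu$ is then supported on a single fibre, namely a parabolic horosphere, which is a closed subset of $\Ga\ba G$ by the standard cusp (thick--thin) structure; this is alternative~(2).

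It remains to treat $\mu$ supported over $\mathcal{F}\smallsetminus\La(\Ga)$, which is exactly the off-$\E_{\Ga}$ situation handled in the proof of Theorem~\ref{thm:mainAnosov} (there $\mathcal{R}_{\Ga}=\E_{\Ga}$), so I would run that argument. Over the domain of discontinuity $\prod_{i}(\p X_{i}\smallsetminus\La(\Ga_{i}))$, where $\Ga_{i}$ is the image of $\Ga$ in $G_{i}$, the $\Ga$-action is properly discontinuous with finite point stabilizers; over the complementary non-transverse locus a point stabilizer $\Ga_{gP}$ may be infinite but, since $gP\notin\La(\Ga)$, it contains no parabolic element, so every nontrivial element of it has nonzero $A$-component relative to $\Stab_{G}(gP)$. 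In both cases $\Ga_{gP}$ acts on the $A$-parameter either trivially or by a discrete nonzero translation, so the same $P/N\cong MA$ fibration argument pins $\mu$ to a single $N$-orbit; moreover transversality together with $gP\notin\La(\Ga)$ gives $\Ga\cap gNg^{-1}=\{e\}$, so this orbit is free, hence properly embedded by Radon-ness of $\mu$, and it lies in a closed $NM$-orbit. This gives alternative~(2) and finishes the deduction. The bulk of the difficulty lies in Theorem~\ref{thm:maintransverse}, which is already in hand; among the new steps, the one I expect to be most delicate is the treatment of the non-transverse part of $\mathcal{F}\smallsetminus\La(\Ga)$, where $\Ga$ fails to act properly discontinuously --- but this is precisely the step already carried out for Theorem~\ref{thm:mainAnosov}, while the cuspidal sub-case is a routine adaptation of the rank-one treatment of horospheres at cusps.
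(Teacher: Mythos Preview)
Your decomposition is right (split according to $gP \in \La_c(\Ga)$, $gP \in \La_p(\Ga)$, or $gP \notin \La(\Ga)$) and the first case correctly appeals to Theorem~\ref{thm:maintransverse}. But for the remaining two cases you take a substantially longer route than the paper, and part of your logic is circular.

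The paper handles both leftover cases uniformly and briefly by working in $\mathcal{H} = G/NM \cong \mathcal{F} \times \fa$: it simply observes that every $\Ga$-orbit in $\mathcal{H}$ over a point $\xi \notin \La_c(\Ga)$ is closed. For $\xi \notin \La(\Ga)$ this is exactly Proposition~\ref{prop:closedorbits}, whose proof is a three-line argument using antipodality: if $g_n \cdot (\xi, u)$ converged with $\{g_n\}$ infinite, then $g_n^{-1} z_0 \to \zeta \in \La(\Ga)$, and since $\xi \notin \La(\Ga)$ at least one coordinate satisfies $\xi_i \ne \zeta_i$, so $\beta_\xi^i(g_n^{-1} z_0, z_0)$ is unbounded --- contradiction. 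For $\xi \in \La_p(\Ga)$ one uses the bounded-parabolic characterization in Definition~\ref{def:Anosovdefbody}. Once these $\Ga$-orbits in $\mathcal{H}$ are closed, the corresponding $NM$-orbits in $\Ga\ba G$ are closed and alternative~(2) follows. You never invoke Proposition~\ref{prop:closedorbits}, and instead carry out a stabilizer-and-fibration analysis that, while plausible, is both longer and looser (for instance, your assertion that on the ``non-transverse locus'' the group $\Ga_{gP}$ acts on the $A$-parameter by a \emph{discrete} translation group is not justified).

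There is also a logical issue: you write that the case $gP \notin \La(\Ga)$ ``is exactly the off-$\E_{\Ga}$ situation handled in the proof of Theorem~\ref{thm:mainAnosov}'' and propose to rerun that argument. In the paper the dependency runs the other way: Theorem~\ref{thm:mainAnosov} is deduced from Corollary~\ref{cor:mainrelAnosov} (the Anosov case is the special case $\La_p(\Ga)=\emptyset$), so there is no prior proof of Theorem~\ref{thm:mainAnosov} to appeal to. Fortunately, the independent sketch you supply can be replaced wholesale by Proposition~\ref{prop:closedorbits}.
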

Note that the same holds for $NM$-invariant ergodic Radon measures by Theorem \ref{thm:maintransverse}.

\begin{remark}
    As we will see, we prove Theorem \ref{thm:maintransverse} for a product of general $\CAT(-1)$ spaces, when the vector-valued length spectrum is non-arithmetic, i.e., generates a dense additive subgroup. See Theorem \ref{thm:uniqueRadon} and Theorem \ref{thm:ergodicdiv}.
    
    To give a more concrete sense, we consider the following example. Let $\Gamma$ be a genus $3$ surface group acting simultaneously on two open disks $\tilde{\Sigma}, \tilde{\Sigma}'$ with pinched negative curvatures. Let $\Sigma$ and $\Sigma'$ be the resulting genus 3 surfaces. Then $\Gamma$ sits in $\Isom(\tilde{\Sigma} \times \tilde{\Sigma}')$ as a transverse and non-elementary subgroup. We claim that given $\Sigma$ and $\Sigma'$, one can slightly perturb the metrics on $\Sigma$ and $\Sigma'$ so that $\Gamma < \Isom(\tilde \Sigma \times \tilde \Sigma')$ admits non-arithmetic vector-valued length spectrum.
 
 Let $\gamma_1, \ldots, \gamma_4$ be disjoint simple closed curves on $\Sigma$ and let $\gamma'_{1}, \ldots, \gamma'_{4}$ be the corresponding ones on $\Sigma'$. Their lengths $\ell_{\Sigma}(\cdot)$ on $\Sigma$ and $\ell_{\Sigma'}(\cdot)$ on $\Sigma'$ are determined by the local choice of metrics on disjoint annular neighborhoods $A_1, \ldots, A_4$ and $A'_{1}, \ldots, A_{4}'$, as long as global CAT($-1$)-ness is guaranteed. Fixing the choices of metrics on $A_1$ and $A'_1$, thereby fixing a length vector $\mathbf{v}_{1} := (\ell_{\Sigma}(\gamma_1), \ell_{\Sigma'}(\gamma_{1}')) \in \R^2$, we perturb the metrics on $A_2$ and $A'_2$ slightly so that $\mathbf{v}_{2} := (\ell_{\Sigma}(\gamma_{2}), \ell_{\Sigma'}(\gamma_{2}')) \in \R^2$ is not commensurable to $\mathbf{v}_{1}$. If $\overline{\langle \mathbf{v}_{1}, \mathbf{v}_{2} \rangle}$ is the full $\mathbb{R}^{2}$, we can stop here. If not, we similarly perturb the metrics on $A_3$ and $A'_3$ so that $\mathbf{v}_{3} := (\ell_{\Sigma}(\gamma_{3}), \ell_{\Sigma'}(\gamma_{3}')) \in \R^2$ is not commensurable to the subgroup $\overline{\langle \mathbf{v}_{1}, \mathbf{v}_{2} \rangle}$. Do the same for $A_4$ and $A'_4$. The worst case is when we see subgroups isomorphic to $\Z$, $\Z^{2}$, $\Z \times \R$, or $\R^{2}$, and in every case we get a perturbed metric for which $\mathbf{v}_{1}, \ldots, \mathbf{v}_{4}$ generate a dense subgroup of $\R^{2}$. 
 
 Meanwhile, if $\tilde{\Sigma}$ and $\tilde{\Sigma}'$ are equipped with constant curvature $-1$, whence $\tilde{\Sigma} \times \tilde{\Sigma}'$ is a symmetric space $\H^2 \times \H^2$, then $\Gamma$ has non-arithmetic vector-valued length spectrum whenever $\Sigma$ and $\Sigma'$ are not isometric, as shown by Benoist \cite{Benoist2000proprietes} (see Theorem \ref{thm:nonarithmetic}). 
\end{remark}

\subsection{On the proof}

Our proof is rather geometric, and does not make use of any continuous flow on $\Ga \ba G$, such as one-dimensional diagonal flows given by $v \in \inte \fa^+$, or multi-dimensional action of $\exp \fa^+$. We also do not rely on the existence of Besicovitch-type covering.
These are major differences between our argument and previous literature, and enable us to classify horospherical invariant measures without restricting the supports of measures to smaller subsets.

More generally, we consider the product space $Z := \prod_{i = 1}^r X_i$, where $X_i$ is a proper geodesic $\CAT(-1)$ space, not necessarily a symmetric space for a Lie group. In this setting, the notion of \emph{transverse subgroup} $\Ga < \Isom(Z)$ is defined similarly. We then prove a measure classification 
for the $\Ga$-action on the horospherical foliation $\mathcal{H} := \partial Z \times \R^r$, where $\partial Z = \prod_{i = 1}^r \partial X_i$ and the $\Ga$-action on $\R^r$-component is given by Busemann cocycles for each $X_i$ componentwise. Then all results in the introduction are deduced from this.

The proof of this measure classification is based on extending the technique developed in our recent work \cite{CK_ML} to vector-valued cocycles in $\R^r$. In \cite{CK_ML}, so-called \emph{squeezing geodesics} were key players. While every geodesic in a $\CAT(-1)$ space is squeezing (Lemma \ref{lem:squeezingD}), it is no longer true in the product of $\CAT(-1)$ spaces due to the presence of flats. Our major technical difficulty lies in overcoming the presence of flats, by controling tuples of geodesics in each $X_i$'s and obtaining squeezing properties \emph{simultaneously} in each component.
We use geometric aspects of transverse subgroups for this.

We elaborate this further. Given a $\Ga$-invariant ergodic Radon measure $\mu$ on $\mathcal{H}$, we first show that for $\mu$-a.e. $(\xi, u) \in \mathcal{H} = \partial Z \times \R^r$, the point $\xi \in \partial Z$ is accumulated by a $\Ga$-orbit in $Z$, not just conically but ``fellow traveling'' the translates of the axis of a chosen loxodromic element of $\Ga$ in each component simultaneously (Theorem \ref{thm:radonCharge}). 
The ``fellow traveling'' property is based on the \emph{contracting} property of a geodesic in a $\CAT(-1)$ space which is weaker than squeezing, and we use the transverse property of $\Ga$ to guarantee the fellow traveling \emph{simultaneously} in each component.

Next, using the squeezing property of axes in each component \emph{simultaneoulsy}, we investigate the ``fellow traveling accumulations'' further and show that the measure $\mu$ is quasi-invariant under the translation by the vector-valued translation length in $\R^r$ of the chosen loxodromic element. Controlling this squeezing property and fellow traveling in each component simultaneously, we are able to precisely get the \emph{vector-valued} translation length (Theorem \ref{thm:trbytrlengthqi}). These compose the major step of the proof of our measure classification. We emphasize that we do not care about the ``speed'' of fellow traveling in each component, which might correspond to considering 1-dimensional diagonal flows.

\subsection{Organization} In Section \ref{section:prelim}, we present a brief review of the geometry of $\CAT(-1)$ spaces. We consider products of $\CAT(-1)$ spaces and prove simultaneous alignment property in Section \ref{sec:product}, which is one of the key observations in this paper. Section \ref{sec:UE} is devoted to the main rigidity result for measures on the horospherical foliations of product spaces. The ergodicity of such measures is proved in Section \ref{sec:ergodicity}. In Section \ref{section:homogeneous}, we consider higher-rank homogeneous spaces and deduce results stated in the introduction.

\subsection{Acknowledgements}
The authors would like to thank  Hee Oh for helpful conversations and useful comments on the earlier version of this paper. Kim extends his special gratitude to his Ph.D. advisor Hee Oh for her encouragement and guidance.

Choi was supported by the Mid-Career Researcher Program (RS-2023-00278510) through the National Research Foundation funded by the government of Korea, and by the KIAS individual grant (MG091901) at KIAS.

\subsection{Notation}
For reals $a, b, c$, we write the condition $|a-b| < c$ by $a=_{c} b$.

\section{Basic CAT(--1) geometry}\label{section:prelim}

In this section, we review basics of the geometry of CAT($-1)$ spaces. We refer the readers to classical references including \cite{gromov1987hyperbolic}, \cite{coornaert1990geometrie}, \cite{ghys1990bord}, and \cite{Bridson1999metric} for more details.

CAT($-1$) spaces are geodesic metric spaces where every geodesic triangle is no fatter than the corresponding comparison triangle in $\mathbb{H}^{2}$. Throughout this section, let  $(X, d)$ be a proper geodesic $\CAT(-1)$ space and let $x_{0} \in X$ be a basepoint. This forces that $X$ is \emph{uniquely geodesic}: for each $x, y \in X$, there exists a unique geodesic connecting $x$ to $y$, which we denote by $[x, y]$.

\subsection{Contracting property and squeezing property}\label{subsection:contracting}

We say that two geodesics $[x, y]$ and $[x', y']$ in $X$ are $C$-equivalent if $d(x, x') < C$ and $d(y, y') < C$. The $\CAT(-1)$ property implies the following: 

\begin{fact}[{\cite[Proposition 3.4.27]{ghys1990bord}}]\label{fact:CAT(-1)Thin}
Let $\gamma$ and $\gamma'$ be two compact geodesics that are $C$-equivalent. Then their Hausdorff distance is at most $C$.
\end{fact}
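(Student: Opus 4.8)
The plan is to obtain this directly from the convexity of the distance function on $\CAT(0)$ spaces (hence on $\CAT(-1)$ spaces), rather than from the comparison-triangle machinery. First I would fix compatible parametrizations of the two geodesics: write $\gamma = [x, y]$ and $\gamma' = [x', y']$ and reparametrize both proportionally to arc length over the unit interval, so that $\gamma, \gamma' \colon [0,1] \to X$ with $\gamma(0) = x$, $\gamma(1) = y$, $\gamma'(0) = x'$, and $\gamma'(1) = y'$. With this normalization, the hypothesis that $\gamma$ and $\gamma'$ are $C$-equivalent reads exactly $d(\gamma(0), \gamma'(0)) < C$ and $d(\gamma(1), \gamma'(1)) < C$.

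Next I would invoke the standard fact that, in a $\CAT(0)$ space, for any two geodesics parametrized proportionally to arc length on $[0,1]$ the function $t \mapsto d(\gamma(t), \gamma'(t))$ is convex (see, e.g., \cite[Ch.~II.2]{Bridson1999metric}); since $X$ is $\CAT(-1)$, it is in particular $\CAT(0)$, so this applies. Writing $t = (1-t)\cdot 0 + t \cdot 1$ and combining convexity with the two endpoint estimates gives
\[
d(\gamma(t), \gamma'(t)) \ \le\ (1-t)\, d(x, x') + t\, d(y, y') \ \le\ C
\qquad \text{for every } t \in [0,1].
\]

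Finally I would read off the Hausdorff bound: for each point $p = \gamma(t)$ of $\gamma$ the displayed inequality yields $d(p, \gamma') \le d(\gamma(t), \gamma'(t)) \le C$, so $\gamma$ lies in the closed $C$-neighborhood of $\gamma'$, and the symmetric argument shows $\gamma'$ lies in the closed $C$-neighborhood of $\gamma$; hence the Hausdorff distance between $\gamma$ and $\gamma'$ is at most $C$. There is no serious obstacle in this argument. The only points requiring a little care are to parametrize both geodesics proportionally to arc length so that their endpoints are matched at equal parameter values, and to use the convexity inequality in its genuine two-geodesic form rather than merely convexity of $t \mapsto d(p_0, \gamma(t))$ for a fixed basepoint $p_0$.
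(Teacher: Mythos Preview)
Your argument is correct. The convexity of the distance function between two constant-speed geodesics in a $\CAT(0)$ space is exactly the right tool here, and your reading-off of the Hausdorff bound is clean. One cosmetic point: since the hypothesis gives strict inequalities $d(x,x')<C$ and $d(y,y')<C$, the convex combination is actually strictly less than $C$; you wrote $\le C$, which is of course still fine for the stated conclusion.

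As for comparison: the paper does not give its own proof of this fact but simply cites \cite[Proposition~3.4.27]{ghys1990bord}. That reference works in the general setting of $\delta$-hyperbolic spaces and argues via thin triangles, which yields the result with a constant depending on $\delta$ and $C$ rather than the sharp bound $C$. Your approach exploits the stronger $\CAT(0)$ structure available here and is both more elementary and sharper in this setting; the trade-off is that it does not generalize to arbitrary Gromov hyperbolic spaces, though that generality is not needed in the paper.
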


Given a geodesic $\gamma \subset X$ and a point $x \in X$, there exists the unique closest point on $\gamma$ from $x$. We denote that point by $\pi_{\gamma}(x)$. The map $\pi_{\gamma}(\cdot)$ is distance-decreasing, i.e., $1$-Lipschitz and  continuous. In fact, we have:

\begin{lem}[Contracting property]\label{lem:CAT(-1)Fellow}
Let $\ga \subset X$ be a geodesic and let $x, y \in X$ be such that $d(\pi_{\gamma}(x), \pi_{\gamma}(y))> 2$. Then there exist points $p, q \in [x, y]$ with $d(x, p) < d(x, q)$ such that
\begin{itemize}
\item $\diam(\pi_{\ga}([x, p]) \cup \{p\}) \le 2$,
\item $\diam(\pi_{\ga}([q, y]) \cup \{q\}) \le 2$, and
\item $[\pi_{\gamma}(x), \pi_{\gamma}(y)]$  and $[p, q]$ are $2$-equivalent. 
\end{itemize} 
\end{lem}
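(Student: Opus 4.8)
The plan is to work in the comparison geometry of $\mathbb{H}^2$ via thin triangles, reducing everything to understanding how a geodesic segment $[x,y]$ behaves relative to $\gamma$ when its endpoints project far apart. First I would set $p_0 := \pi_\gamma(x)$ and $q_0 := \pi_\gamma(y)$, and recall the standard fact that in a $\CAT(-1)$ (hence $\CAT(0)$) space the nearest-point projection onto a geodesic is $1$-Lipschitz, and moreover that geodesic segments from a point $x$ to a point $z \in \gamma$ with $z$ far from $p_0$ must pass close to $p_0$. Concretely, I would first establish the auxiliary claim: if $z \in \gamma$ with $d(p_0, z) > 1$, then the geodesic $[x,z]$ comes within distance $1$ of $p_0$; this follows because the triangle $x, p_0, z$ is $\delta$-thin with a uniform $\delta$ (for $\CAT(-1)$ one can take the thinness constant arising in Fact~\ref{fact:CAT(-1)Thin}), and the angle at $p_0$ in the comparison triangle is at least $\pi/2$ (since $p_0$ is the closest point), forcing the side $[x,z]$ to fellow-travel $[x,p_0] \cup [p_0,z]$ and in particular to pass near $p_0$.

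Next I would use this to locate $p$ and $q$ on $[x,y]$. Walk along $[x,y]$ from $x$; by the auxiliary claim applied to a point $z \in \gamma$ slightly past $p_0$, the segment $[x,y]$ enters the ball of radius (a uniform constant) about $p_0$. Let $p$ be the first point of $[x,y]$ achieving $d(p, p_0) \le $ that constant, and symmetrically let $q$ be the last point of $[x,y]$ within the corresponding constant of $q_0$. The bound $d(p_0,q_0) > 2$ guarantees $p$ occurs before $q$ along $[x,y]$, i.e. $d(x,p) < d(x,q)$. For the first bullet, on the initial subsegment $[x,p]$ the projection $\pi_\gamma$ stays near $p_0$: any point $w \in [x,p]$ projects to a point $\pi_\gamma(w)$, and if $\pi_\gamma(w)$ were far from $p_0$, the auxiliary claim applied to the subsegment $[x,w]$ (whose projection endpoints are $p_0$ and $\pi_\gamma(w)$) would force $[x,w] \subset [x,p]$ to come near a point of $\gamma$ far from $p_0$, contradicting minimality/the definition of $p$; so $\diam(\pi_\gamma([x,p]) \cup \{p\})$ is bounded by a uniform constant. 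The symmetric argument handles $[q,y]$.

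The remaining point — that $[p_0,q_0]$ and $[p,q]$ are $2$-equivalent — reduces to showing $d(p,p_0)$ and $d(q,q_0)$ are small, which is exactly how $p,q$ were chosen, once one verifies the relevant uniform constants can all be taken to be $2$. This is the step I expect to require the most care: the various thin-triangle and $\CAT(-1)$-convexity estimates each produce their own constant, and the statement demands they all collapse to $2$. I would handle this by being quantitative with the $\mathbb{H}^2$ comparison triangles — in $\mathbb{H}^2$ a geodesic triangle with a right (or obtuse) angle at $p_0$ has the opposite side within a universal bounded distance of the two adjacent sides, and this bound can be made explicit and small; the exponential divergence of geodesics in curvature $-1$ is what forces the "entering" to happen quickly and the projection to be pinned. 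The constant $2$ in the hypothesis $d(\pi_\gamma(x),\pi_\gamma(y)) > 2$ is presumably chosen precisely so that these comparison estimates close up; so the real work is a careful but routine chase through the comparison triangles $x p_0 q_0$, $x p_0 p$, and $y q_0 q$, tracking that each estimate stays below $2$. The main obstacle is thus not conceptual but bookkeeping: making the constants uniform and matching the threshold $2$, rather than any genuinely new geometric input beyond Fact~\ref{fact:CAT(-1)Thin} and the contracting behavior of projections.
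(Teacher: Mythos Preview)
Your overall strategy---comparison triangles in $\mathbb{H}^2$ together with the right-angle property of nearest-point projection---is exactly the engine the paper uses. But the way you organize the argument has two genuine gaps, and the paper's route differs in a way that sidesteps both.

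First, your auxiliary claim concerns geodesics $[x,z]$ with $z \in \gamma$, yet you immediately conclude that $[x,y]$ enters a ball around $p_0$. That does not follow from the claim as stated: $y$ is not on $\gamma$. You would need an extra step (say, a thin-quadrilateral argument on $x,p_0,q_0,y$, or a comparison of $[x,y]$ with $[x,q_0]\cup[q_0,y]$) to transfer the conclusion from $[x,q_0]$ to $[x,y]$, and that step has its own constant to track. Second, your argument for the first bullet is circular as written: you invoke ``the auxiliary claim applied to the subsegment $[x,w]$'', but $w \notin \gamma$, so the claim does not apply; what you seem to want is the very lemma you are proving, applied to the pair $(x,w)$.

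The paper avoids both issues by choosing the auxiliary point differently. Rather than taking $p$ to be the first point of $[x,y]$ close to $p_0$, it takes $z \in [x,y]$ to be the \emph{earliest} point where the projection has moved by exactly $1$, i.e.\ $d(\pi_\gamma(x),\pi_\gamma(z))=1$. This definition gives the first bullet for free: by continuity and minimality of $z$, $\pi_\gamma([x,z])$ automatically stays within distance $1$ of $\pi_\gamma(x)$. All the comparison-triangle work then goes into showing that some $p \in [x,z]$ satisfies $d(p,\pi_\gamma(x)) \le 2$, which the paper does via two nested right-angled comparison triangles (one for $\triangle x\,\pi_\gamma(x)\,\pi_\gamma(z)$ and one for $\triangle z\,\pi_\gamma(z)\,x$), each contributing an explicit constant below $0.9$. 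Defining the auxiliary point via the projection rather than via proximity to $\gamma$ is what makes the constants close up at $2$ without a separate contradiction argument.
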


See Appendix \ref{appendix} for its proof.
As a consequence, any geodesic that is  far away from $\gamma$ cannot have large projection on $\gamma$. This is the so-called \emph{contracting property} of $\gamma$.

Up to changing the constant 2 above, this lemma follows from the classical tree approximations (\cite[Th{\'e}or{\`e}me 8.1]{coornaert1990geometrie}, \cite[Th{\'e}or{\`e}me 2.12]{ghys1990bord}). We give a proof in the appendix for completeness.

For every $x \in X$, every geodesic $\gamma \subset X$, and every $p \in \gamma$, the triangle $\triangle x \pi_{\gamma}(x) p$ is right-angled at $\pi_{\ga}(x)$. Hence, $\pi_{\gamma}(x)$ is $0.604$-close to $[x, p]$. This implies that:

\begin{cor}\label{cor:CAT(-1)Fellow}
Let $\gamma: \R \rightarrow  X$ be a geodesic, let $x \in X$ and let  $\gamma(t) = \pi_{\gamma}(x)$. Then for every $s \in \R$, we have 
\begin{equation}\label{eqn:approxDist}
d(x, \gamma(s)) =_{1.3} d(x, \gamma(t)) + |t-s|.
\end{equation}
\end{cor}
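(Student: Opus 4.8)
The plan is to reduce the statement to the contracting estimate packaged in Lemma \ref{lem:CAT(-1)Fellow} together with the elementary fact that $\pi_\gamma(x)$ lies uniformly close to any geodesic segment $[x,\gamma(s)]$. Write $t$ for the parameter with $\gamma(t)=\pi_\gamma(x)$ and fix an arbitrary $s\in\R$. The upper bound $d(x,\gamma(s))\le d(x,\gamma(t))+|t-s|$ is just the triangle inequality, since $d(\gamma(t),\gamma(s))=|t-s|$ as $\gamma$ is a geodesic. So the only content is the matching lower bound $d(x,\gamma(s))\ge d(x,\gamma(t))+|t-s|-1.3$.

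For the lower bound I would first observe, as already noted in the paragraph preceding the corollary, that the triangle $\triangle\,x\,\gamma(t)\,\gamma(s)$ is right-angled at $\gamma(t)=\pi_\gamma(x)$ (this is the defining property of the nearest-point projection in a $\CAT(-1)$ space). Using the comparison triangle in $\H^2$, a right angle at $\gamma(t)$ forces $\gamma(t)$ to be within a universal constant — the paper uses $0.604$ — of the opposite side $[x,\gamma(s)]$. Let $p\in[x,\gamma(s)]$ realize $d(\gamma(t),p)\le 0.604$. Then
\[
d(x,\gamma(s)) \;=\; d(x,p) + d(p,\gamma(s)) \;\ge\; \bigl(d(x,\gamma(t))-0.604\bigr) + \bigl(d(\gamma(t),\gamma(s))-0.604\bigr),
\]
where the first inequality splits the segment $[x,\gamma(s)]$ at $p$ and the two parenthesized estimates are triangle inequalities using $d(\gamma(t),p)\le 0.604$. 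Since $d(\gamma(t),\gamma(s))=|t-s|$, the right-hand side equals $d(x,\gamma(t))+|t-s|-1.208$, which is at least $d(x,\gamma(t))+|t-s|-1.3$. Combined with the triangle-inequality upper bound, this gives Equation \eqref{eqn:approxDist}.

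There is essentially no obstacle here: the corollary is a soft consequence of the thinness of $\CAT(-1)$ triangles, and the only thing to be slightly careful about is tracking the additive constant so that $2\times 0.604 = 1.208 \le 1.3$, leaving a small safety margin. If one preferred not to invoke the numerical constant $0.604$ directly, an alternative route is to apply Lemma \ref{lem:CAT(-1)Fellow} with the pair $x,y:=\gamma(s)$: when $d(\pi_\gamma(x),\pi_\gamma(\gamma(s)))=d(\gamma(t),\gamma(s))=|t-s|>2$ the lemma produces $q\in[x,\gamma(s)]$ with $d(\gamma(t),q)\le \diam(\pi_\gamma([x,q])\cup\{q\})\le$ a universal bound, and one argues as above; the case $|t-s|\le 2$ is then handled trivially by the triangle inequality since both sides of \eqref{eqn:approxDist} differ by at most $2$ from $d(x,\gamma(t))$. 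Either way, the estimate is purely two-dimensional hyperbolic comparison geometry applied to a single right triangle.
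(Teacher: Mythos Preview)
Your proposal is correct and follows exactly the argument the paper sketches in the sentence immediately preceding the corollary: the right angle at $\pi_\gamma(x)$ forces $\pi_\gamma(x)$ to lie within $0.604$ of $[x,\gamma(s)]$ by comparison with the ideal $\pi/2$-$0$-$0$ triangle in $\H^2$, and two triangle inequalities give the error $2\times 0.604<1.3$. The alternative route you mention via Lemma \ref{lem:CAT(-1)Fellow} is not needed and not what the paper does, but it would also work.
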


We now record a finer contracting behavior exhibited by geodesics in $X$, which we call the \emph{squeezing property}. See Figure \ref{figure.contractingsqueezing}.

\begin{lem}[Squeezing property]\label{lem:squeezingD}
Let $\gamma : \mathbb{R} \rightarrow X$ be a geodesic. Then for any $\epsilon>0$, there exists $L = L(\epsilon)>0$ such that for each $x, y \in X$ and $t \in \R$ with $\ga(t - a) =\pi_{\ga}(x)$ and $\ga(t + b)= \pi_{\ga}(y)$ for some $a, b \ge L$, we have
$$
d\big([x, y], \ga(t)\big) \le \epsilon.
$$
\end{lem}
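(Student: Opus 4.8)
The plan is to reduce the statement to Fact~\ref{fact:CAT(-1)Thin} via the contracting property Lemma~\ref{lem:CAT(-1)Fellow}. Given $x, y \in X$ with $\ga(t-a) = \pi_\ga(x)$ and $\ga(t+b) = \pi_\ga(y)$ for $a, b \ge L$, first note that $d(\pi_\ga(x), \pi_\ga(y)) = a + b > 2$ as soon as $L > 1$, so Lemma~\ref{lem:CAT(-1)Fellow} applies and produces points $p, q \in [x,y]$ with $\diam(\pi_\ga([x,p]) \cup \{p\}) \le 2$, $\diam(\pi_\ga([q,y]) \cup \{q\}) \le 2$, and $[\pi_\ga(x), \pi_\ga(y)]$ and $[p,q]$ being $2$-equivalent. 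The last condition says $d(p, \ga(t-a)) < 2$ and $d(q, \ga(t+b)) < 2$, so by Fact~\ref{fact:CAT(-1)Thin} the Hausdorff distance between the subsegment $\ga([t-a, t+b])$ and $[p,q]$ is at most $2$. In particular, there is a point $z \in [p,q] \subset [x,y]$ with $d(z, \ga(t)) \le 2$. This already gives the statement with $\epsilon$ replaced by $2$; the content of the lemma is that we can force $d([x,y], \ga(t)) \le \epsilon$ for \emph{arbitrarily small} $\epsilon$, at the cost of taking $a, b$ large.

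To sharpen the constant, the idea is to use Corollary~\ref{cor:CAT(-1)Fellow} to pin down where $[x,y]$ runs close to $\ga(t)$. Write $z \in [x,y]$ for a point realizing (up to an additive $2$) the distance from $[x,y]$ to $\ga(t)$, say $d(z, \ga(t)) \le 2$. Since $\pi_\ga$ is $1$-Lipschitz and $z$ is $2$-close to $\ga(t)$, we have $d(\pi_\ga(z), \ga(t)) \le 2$, i.e. $\pi_\ga(z) = \ga(t+s)$ with $|s| \le 2$. Now consider the two subsegments $[x,z]$ and $[z,y]$ of $[x,y]$. By Corollary~\ref{cor:CAT(-1)Fellow} applied to the geodesic $[x,z]$ — or more directly, by the thin-triangle/tree-approximation estimate — the Gromov product, and hence the geometry near $z$, is governed by the fact that $\pi_\ga$ of the endpoints are at distance roughly $a$ and $b$ from $\ga(t)$. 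Concretely, in the $\CAT(-1)$ comparison one has that a point $z$ on $[x,y]$ with $d(z, \ga) \le 2$ and $\pi_\ga(z)$ near $\ga(t)$ satisfies
\[
d(z, \ga(t)) \;\le\; F\big( \min(a,b) \big)
\]
for a function $F(L) \to 0$ as $L \to \infty$: the two "legs" of $[x,y]$ both enter the $2$-neighborhood of $\ga$ and travel alongside it for length at least $\approx a$ and $\approx b$ respectively, and the deviation of $[x,y]$ from $\ga$ in a $\CAT(-1)$ space decays exponentially in the overlap length. One then sets $L = L(\epsilon)$ to be large enough that $F(L) \le \epsilon$, with a small adjustment to absorb the $|s| \le 2$ shift.

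The cleanest way to make the decay explicit — and the route I would actually write — is to invoke the standard $\CAT(-1)$ fact that if a geodesic $[x,y]$ has its nearest-point projections to $\ga$ landing at $\ga(t-a)$ and $\ga(t+b)$, then for the midpoint parameter $\ga(t)$ one has $d(\ga(t), [x,y]) \le c_1 e^{-c_2 \min(a,b)}$ for universal constants $c_1, c_2 > 0$; this is exactly the quantitative version of "quasigeodesics fellow-travel geodesics" specialized to the geodesic $\ga$ and the geodesic $[x,y]$, and it follows from Fact~\ref{fact:CAT(-1)Thin} together with a telescoping/subdivision argument along $\ga$ (approximate $\ga([t-a,t])$ and $\ga([t,t+b])$ by the broken path through $p$, $z$, $q$ and iterate the thinness estimate on dyadic subsegments, where the curvature bound $-1$ produces the geometric ratio). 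Given this, the lemma is immediate: choose $L = L(\epsilon)$ with $c_1 e^{-c_2 L} \le \epsilon$.

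\medskip

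The main obstacle is the second and third paragraphs: getting from the crude constant $2$ (which Lemma~\ref{lem:CAT(-1)Fellow} and Fact~\ref{fact:CAT(-1)Thin} hand over essentially for free) down to an arbitrarily small $\epsilon$ requires the exponential convergence built into the $\CAT(-1)$ (as opposed to merely Gromov-hyperbolic) hypothesis. In a general hyperbolic space the statement is false — one only gets a uniform bounded deviation, not $\epsilon \to 0$ — so the proof must genuinely use the comparison with $\H^2$, presumably through the same tree-approximation / thin-triangle machinery cited after Lemma~\ref{lem:CAT(-1)Fellow}, iterated on a nested sequence of subsegments to extract the geometric decay rate. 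Quantifying $L(\epsilon)$ (it will be roughly $L(\epsilon) \sim |\log \epsilon|$ up to constants) is the one place a genuine computation is needed; everything else is bookkeeping with the projection map $\pi_\ga$ and the Lipschitz/thinness estimates already recorded in this section.
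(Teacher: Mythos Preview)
The paper does not actually prove Lemma~\ref{lem:squeezingD}: immediately after the statement it remarks that geodesics in $\CAT(-1)$ spaces enjoy an even stronger \emph{exponentially squeezing property} ``thanks to the comparison principle'' and explicitly leaves the proof to the reader. So there is no detailed argument to compare against; the paper's hint is precisely the mechanism you invoke in your third paragraph.

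Your outline is correct and matches that hint. The reduction via Lemma~\ref{lem:CAT(-1)Fellow} and Fact~\ref{fact:CAT(-1)Thin} to obtain $p,q\in[x,y]$ with $d(p,\ga(t-a))\le 2$ and $d(q,\ga(t+b))\le 2$ is exactly right, and you correctly isolate that the passage from the crude bound $2$ to an arbitrary $\epsilon$ is where the $\CAT(-1)$ (not merely Gromov-hyperbolic) hypothesis enters. Two small comments. First, your second paragraph, which tries to sharpen via Corollary~\ref{cor:CAT(-1)Fellow} and Gromov products, does not lead anywhere cleanly and can be dropped; go straight to the comparison argument. Second, the ``dyadic subdivision / telescoping'' you sketch is more elaborate than necessary: once you have $p,q$ as above, two applications of the $\CAT(-1)$ comparison for thin triangles suffice. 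Compare $\triangle\, p\,\ga(t-a)\,\ga(t+b)$ with its $\H^2$ model: since one side has length $\le 2$, the point $\ga(t)$ (at distance $a\ge L$ from the short side) lies within $O(e^{-a})$ of $[p,\ga(t+b)]$. A second comparison in $\triangle\, p\,q\,\ga(t+b)$ then brings that nearby point within $O(e^{-b})$ of $[p,q]\subset[x,y]$. Choosing $L$ with $O(e^{-L})\le\epsilon/2$ finishes the proof and gives the $L(\epsilon)\sim|\log\epsilon|$ you anticipated.
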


In fact, geodesics in CAT($-1$) spaces enjoy even stronger \emph{exponentially squeezing property} thanks to the comparison principle. We leave the proof to interested readers.

As we will see later, squeezing geodesics are well-suited for studying horofunctions due to the following lemma.

\begin{lem}[{\cite[Lemma 5.6]{CK_ML}}]\label{lem:squeezing}
Let $\gamma : \R \to X$ be a geodesic. Fix $\epsilon >0$ and let $L = L(\epsilon) > 0$ as in Lemma \ref{lem:squeezingD}. Let $x_{1}, x_{2}, y_{1}$, and $y_{2}$ be points in $X$ and let $t \in \R$ be such that   \[
\pi_{\gamma}(x_{i}) \in \gamma \left((-\infty, t-L]\right)  \quad \text{and} \quad \pi_{\gamma}(y_{i}) \in\gamma\left( [t+L, +\infty)\right)  \quad \text{for }i = 1, 2.
\]
Then we have
 \[
d(x_{1}, y_{1}) - d(x_{1}, y_{2}) =_{8\epsilon} d(x_{2}, y_{1}) - d(x_{2}, y_{2}).
\]
\end{lem}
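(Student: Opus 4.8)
The plan is to deduce the estimate directly from the squeezing property, Lemma~\ref{lem:squeezingD}; once that is invoked, only the triangle inequality remains. First I would check that the projection hypotheses are precisely what Lemma~\ref{lem:squeezingD} requires: writing $\pi_{\gamma}(x_i) = \gamma(t - a_i)$ and $\pi_{\gamma}(y_j) = \gamma(t + b_j)$, the assumptions give $a_i \ge L$ and $b_j \ge L$ for all $i, j \in \{1, 2\}$, so Lemma~\ref{lem:squeezingD} applies to each of the four pairs $(x_i, y_j)$ and produces a point $m_{ij} \in [x_i, y_j]$ with $d(m_{ij}, \gamma(t)) \le \epsilon$.

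Next, since $m_{ij}$ lies on the geodesic segment $[x_i, y_j]$, we have the exact splitting $d(x_i, y_j) = d(x_i, m_{ij}) + d(m_{ij}, y_j)$, and combining this with the triangle inequality and $d(m_{ij}, \gamma(t)) \le \epsilon$ gives
$$d(x_i, y_j) =_{2\epsilon} d(x_i, \gamma(t)) + d(\gamma(t), y_j) \qquad \text{for all } i, j \in \{1, 2\}.$$
Subtracting the $j = 2$ instance from the $j = 1$ instance for fixed $i$, the term $d(x_i, \gamma(t))$ cancels and we obtain $d(x_i, y_1) - d(x_i, y_2) =_{4\epsilon} d(\gamma(t), y_1) - d(\gamma(t), y_2)$; applying this for $i = 1$ and for $i = 2$ and combining the two yields the desired $d(x_1, y_1) - d(x_1, y_2) =_{8\epsilon} d(x_2, y_1) - d(x_2, y_2)$.

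I do not expect any genuine obstacle here: the only geometric input — that a geodesic from a point whose $\gamma$-projection lies well before $\gamma(t)$ to a point whose $\gamma$-projection lies well after $\gamma(t)$ must pass within $\epsilon$ of $\gamma(t)$ — is entirely contained in Lemma~\ref{lem:squeezingD}, and everything after that is bookkeeping with the triangle inequality together with the fact that $m_{ij}$ is a point of $[x_i, y_j]$. (This is the same argument as in \cite[Lemma 5.6]{CK_ML}.) If sharper constants were needed one could be more economical with the triangle-inequality estimates, but $8\epsilon$ is comfortably enough for the later applications to horofunctions.
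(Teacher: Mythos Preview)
Your proof is correct: applying Lemma~\ref{lem:squeezingD} to each of the four pairs $(x_i,y_j)$ yields a midpoint $m_{ij}\in[x_i,y_j]$ within $\epsilon$ of $\gamma(t)$, whence $d(x_i,y_j) =_{2\epsilon} d(x_i,\gamma(t))+d(\gamma(t),y_j)$ and the cancellation goes through exactly as you wrote. The paper does not give its own proof of this lemma but simply cites \cite[Lemma~5.6]{CK_ML}, and your argument is the natural one (and indeed the one you would find there).
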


\begin{figure}[h]
\begin{tikzpicture}[scale=0.65]
    \draw[thick] (-7.5, 0) -- (7.5, 0);
 %   \filldraw (-4, 0) circle(2pt);
 %   \filldraw (4, 0) circle(2pt);

    \draw[dashed, teal, thick] (-6, 2) .. controls (-5, 2) and (-3.5, 1) .. (-3.5, 0);
    \draw[dashed, teal, thick] (6.5, 3) .. controls (5, 3) and (3.5, 1) .. (3.5, 0);

    \filldraw[teal] (-3.5, 0) circle(2pt);
    \filldraw[teal] (3.5, 0) circle(2pt);

    \draw[red, thick] (-6, 2) .. controls (-3.5, 2)  and (-3.3, 0.3) .. (-2.7, 0.3) .. controls (-2.5, 0.2) and (2.5, 0.2) .. (2.8, 0.5) .. controls (3.3, 0.7) and (3.5, 3) .. (6.5, 3);

    \filldraw (-6, 2) circle(2pt);
    \filldraw (6.5, 3) circle(2pt); 

    \filldraw (-6, 2) node[left] {$x$};
    \filldraw (6.5, 3) node[right] {$y$};

    \filldraw (0, 0) circle(2pt);
    \filldraw[color=blue!80, opacity=0.3] (0, 0) circle(0.5);

    \draw[<->, teal, thick] (-3.5, -0.25) -- (0, -0.25);
        \draw[<->, teal, thick] (3.5, -0.25) -- (0, -0.25);

    \draw (2, -0.2) node[below] {\color{teal} $\ge L$};

    \draw (7.5, 0) node[right] {$\ga$};

\end{tikzpicture}
\caption{A squeezing geodesic $\gamma$} \label{figure.contractingsqueezing}
\end{figure}
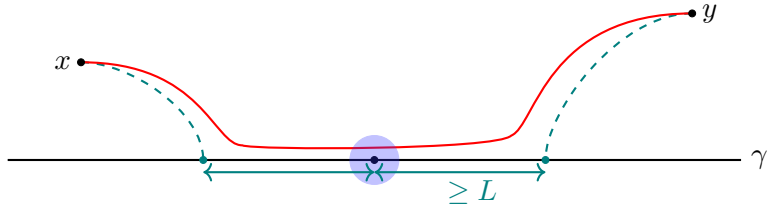

\subsection{Alignment}
We now define alignment between geodesics and points. 

\begin{definition}[Alignment]
Let $w, x, y, z \in X$. For a geodesic $[x, y] \subset X$ and $K \ge 0$, we say that the sequence $(w, [x, y])$ is \emph{$K$-aligned} if \[
d \big( \pi_{[x, y]}(w), \, x \big) < K.
\]
Similarly, we call that the sequence $([x, y], z)$ is \emph{$K$-aligned} if $(z, [y, x])$ is $K$-aligned.

Finally, we say that the sequence $(w, [x, y], z)$ is \emph{$K$-aligned} if both sequences $(w, [x, y])$ and $([x, y], z)$ are $K$-aligned. See Figure \ref{fig:alignment}.

\end{definition}

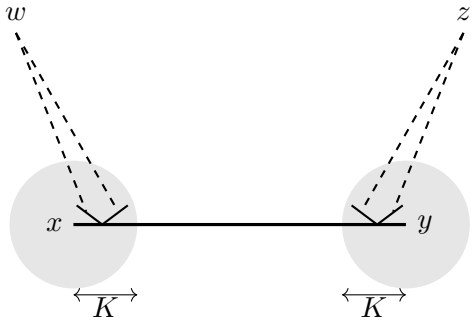
\begin{figure}[h]

\begin{tikzpicture}[scale=0.85]

\draw[very thick] (-2.6, 0) -- (2.6, 0);

\draw[dashed, thick] (-3.5, 3) -- (-1.9, 0.2);
\draw[dashed, thick] (-3.5, 3) -- (-2.4, 0.2);
\draw[thick] (-2.55, 0.3) -- (-2.15, 0) -- (-1.75, 0.3);

\begin{scope}[xscale=-1]

\draw[dashed, thick] (-3.5, 3) -- (-1.9, 0.2);
\draw[dashed, thick] (-3.5, 3) -- (-2.4, 0.2);
\draw[thick] (-2.55, 0.3) -- (-2.15, 0) -- (-1.75, 0.3);
\end{scope}

\draw(-2.9, 0) node {$x$};
\draw(2.9, 0) node {$y$};
\draw (-3.5, 3.3) node {$w$};
\draw (3.5, 3.3) node {$z$};

\fill[opacity=0.1] (-2.6, 0) circle (1);
\fill[opacity=0.1] (2.6, 0) circle (1);
\draw[<->] (-2.6, -1.1) -- (-1.6, -1.1);
\draw (-2.1, -1.3) node {$K$};

\begin{scope}[xscale=-1]
\draw[<->] (-2.6, -1.1) -- (-1.6, -1.1);
\draw (-2.1, -1.3) node {$K$};
\end{scope}
%\fill[opacity=0.1] (-2.6, 0.5) arc (90:270:0.5) -- (-2, -0.5) arc (-90:90:0.5) -- cycle;
%\fill[opacity=0.1, xscale=-1] (-2.6, 0.5) arc (90:270:0.5) -- (-2, -0.5) arc (-90:90:0.5) -- cycle;
\end{tikzpicture}
\caption{Alignment of geodesics and points.}
\label{fig:alignment}
\end{figure}

 The following is immediate.

\begin{lem}\label{lem:alignDich}
    Let $\ga \subset X$ be a geodesic of length $L \ge 0$, let $0 \le D \le L$ and let $x \in X$. Then $(\gamma, x)$ is not $D$-aligned or $(x, \gamma)$ is not $(L-D)$-aligned.
\end{lem}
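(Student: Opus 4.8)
The plan is to unwind the definition of alignment and reduce the statement to the fact that distance is additive along the geodesic $\gamma$. Write $\gamma = [a, b]$ with $d(a, b) = L$ and parametrize $\gamma$ by arclength so that $\gamma(0) = a$ and $\gamma(L) = b$. Since the nearest-point projection onto a geodesic does not depend on its orientation, $\pi_{[a, b]}(x) = \pi_{[b, a]}(x) = \pi_{\gamma}(x)$; write this point as $\gamma(t)$ with $t \in [0, L]$. By definition, $(x, \gamma) = (x, [a, b])$ is $K$-aligned precisely when $d(\pi_{\gamma}(x), a) = t < K$, while $(\gamma, x) = ([a, b], x)$ is $K$-aligned precisely when $(x, [b, a])$ is $K$-aligned, i.e. when $d(\pi_{\gamma}(x), b) = L - t < K$ (here I use that $\gamma(t)$ lies on the geodesic segment between $a$ and $b$, so $d(a, \gamma(t)) + d(\gamma(t), b) = L$).

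Next I would argue by contradiction. Suppose $(\gamma, x)$ is $D$-aligned and $(x, \gamma)$ is $(L - D)$-aligned. The first gives $L - t < D$ and the second gives $t < L - D$. Adding these two strict inequalities yields $L = (L - t) + t < D + (L - D) = L$, a contradiction. Hence at least one of the two alignments must fail, which is exactly the assertion of the lemma. The edge cases $D = 0$ or $L - D = 0$ are automatically handled, since then the corresponding alignment condition requires a distance to be strictly negative and so can never hold.

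I do not expect any genuine obstacle here: the only point requiring a moment's care is the orientation-reversal built into the definition of $([x, y], z)$-alignment, which is what turns the two hypotheses into complementary constraints $L - t < D$ and $t < L - D$ on the single parameter $t = d(\pi_{\gamma}(x), a)$. Everything else is the one-line observation that these two constraints cannot hold simultaneously.
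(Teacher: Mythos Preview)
Your proof is correct and is precisely the immediate argument the paper has in mind; the paper does not spell out a proof at all, simply noting before the lemma that ``the following is immediate.'' Your unwinding of the orientation convention and the one-line contradiction $L < L$ is exactly the intended reasoning.
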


In general, we can define the alignment between compact geodesics and boundary points in the same way (see Definition \ref{dfn:alignHoro}). We first need the following fact. See Appendix \ref{appendix} for its proof.

\begin{lem}\label{lem:alignExtCts}
Let $\gamma \subset X$ be a compact geodesic. Then the nearest-point projection $\pi_{\gamma}(\cdot) : X \rightarrow \gamma$ extends continuously to the boundary $\partial X$. More explicitly, for every sequence $\{z_{n}\}_{n \in \N} \subset X$ converging to $z \in X \cup \partial X$, the limit $\lim_{n \to + \infty} \pi_{\gamma}(z_{n})$ exists. 
\end{lem}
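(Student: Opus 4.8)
\medskip

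The plan is to handle the interior limit trivially and reduce the boundary case to a convergence-of-argmin statement over the compact parameter interval of $\gamma$. Parametrize $\gamma\colon[0,\ell]\to X$ by arclength and keep the basepoint $x_0$. If $z\in X$ then $d\big(\pi_\gamma(z_n),\pi_\gamma(z)\big)\le d(z_n,z)\to 0$ since $\pi_\gamma$ is $1$-Lipschitz on $X$, so from now on I assume $z\in\partial X$. For $w\in X$ set
\[
f_w(s):=d\big(w,\gamma(s)\big)-d(w,x_0),\qquad s\in[0,\ell].
\]
Each $f_w$ is $1$-Lipschitz in $s$, and the parameter of $\pi_\gamma(w)$ is exactly the minimizer of $f_w$ over $[0,\ell]$. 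Since $X$ is a proper $\CAT(-1)$ space, as $w\to z$ in $X\cup\partial X$ the functions $f_w$ converge pointwise on $[0,\ell]$ to $F(s):=b_z\big(\gamma(s),x_0\big)$, where $b_z$ is the Busemann function at $z$ based at $x_0$ (this is the standard identification of the visual and horofunction boundaries for proper hyperbolic/$\CAT(0)$ spaces, cf.\ \cite{Bridson1999metric}); the limit $F$ is again $1$-Lipschitz.

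The key point I would establish is that \emph{$F$ attains its minimum at a unique parameter $s_\ast\in[0,\ell]$}, and I expect this to be the main obstacle — it is precisely the strict ``horoconvexity'' of $\CAT(-1)$ spaces, and negative curvature is genuinely needed. Suppose $F(s_1)=F(s_2)=\min F$ with $s_1<s_2$; put $u:=\gamma(s_1)$, $v:=\gamma(s_2)$, $\lambda:=\tfrac{1}{2}(s_2-s_1)>0$, and let $p:=\gamma\big(\tfrac{1}{2}(s_1+s_2)\big)$ be the midpoint of $[u,v]$. Choosing $w_j\to z$ in $X$ and writing $C_j:=d(w_j,p)\to+\infty$ and $\delta:=F(p)-\min F\ge 0$, the limits $f_{w_j}(s_1)\to\min F$, $f_{w_j}\big(\tfrac{1}{2}(s_1+s_2)\big)\to\min F+\delta$, $f_{w_j}(s_2)\to\min F$ give, upon subtraction, $d(w_j,u)=C_j-\delta+o(1)$ and $d(w_j,v)=C_j-\delta+o(1)$. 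I would then compare the triangle $\triangle w_j u v$ with its comparison triangle $\triangle\bar w_j\bar u\bar v$ in $\mathbb{H}^2$: the hyperbolic median identity
\[
\cosh\!\big(d_{\mathbb{H}^2}(\bar w_j,\bar p)\big)=\frac{\cosh d(w_j,u)+\cosh d(w_j,v)}{2\cosh\lambda},
\]
where $\bar p$ is the midpoint of $[\bar u,\bar v]$, together with the $\CAT(-1)$ inequality $d(w_j,p)\le d_{\mathbb{H}^2}(\bar w_j,\bar p)$, yields after dividing by $\cosh C_j$ and letting $j\to\infty$ that $1\le e^{-\delta}/\cosh\lambda\le 1/\cosh\lambda<1$, a contradiction. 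The exponential growth of $\cosh$ is what absorbs the additive $o(1)$ errors against $C_j\to+\infty$; the Euclidean comparison is too weak here. (Alternatively one may simply cite strict convexity of Busemann functions along geodesics in $\CAT(-1)$ spaces.)

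Granting this, define $\pi_\gamma(z):=\gamma(s_\ast)$. For $z_n\to z$, the family $\{f_{z_n}\}$ is equicontinuous ($1$-Lipschitz) and converges pointwise to $F$ on the compact interval $[0,\ell]$, hence uniformly. Let $s_n\in[0,\ell]$ be the minimizer of $f_{z_n}$, so that $\pi_\gamma(z_n)=\gamma(s_n)$. For any subsequential limit $s'$ of $\{s_n\}$ we get
\[
F(s')=\lim_k f_{z_{n_k}}(s_{n_k})\le\lim_k f_{z_{n_k}}(s_\ast)=F(s_\ast)=\min F,
\]
the first equality by uniform convergence and continuity of $F$, the inequality by minimality of $s_{n_k}$; hence $s'=s_\ast$ by uniqueness. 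By compactness of $[0,\ell]$ this forces $s_n\to s_\ast$, so $\pi_\gamma(z_n)\to\gamma(s_\ast)=\pi_\gamma(z)$. Running the same argument with interior limit points shows $\pi_\gamma$ extends to a continuous map $X\cup\partial X\to\gamma$, which is the assertion of Lemma~\ref{lem:alignExtCts}.
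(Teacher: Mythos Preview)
Your proof is correct and takes a genuinely different route from the paper's. The paper argues by contraposition through a quantitative contracting estimate: it shows that if $d(\pi_\gamma(z),\pi_\gamma(w))=\epsilon>0$ then $[z,w]$ passes within an explicit distance (depending on $\epsilon$) of $\gamma$, via angle comparisons in $\mathbb{H}^2$; since $z_n,z_m\to z\in\partial X$ forces $d([z_n,z_m],\gamma)\to\infty$, the projections must Cauchy-converge. You instead pass to the horofunction picture, prove that the Busemann function $s\mapsto b_z(\gamma(s),x_0)$ has a unique minimum via the hyperbolic median inequality, and then invoke the standard argmin-stability under uniform convergence on the compact parameter interval. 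Your approach is more analytic and arguably cleaner---strict convexity of Busemann functions along geodesics in $\CAT(-1)$ is a well-known fact one could simply cite---while the paper's approach yields an explicit quantitative bound relating $d(\pi_\gamma(z),\pi_\gamma(w))$ to $d([z,w],\gamma)$, which is more in keeping with the coarse-geometric alignment machinery used throughout the paper.
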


Using this extended nearest-point projection, we can define the alignment between compact geodesics and boundary points.

\begin{definition}\label{dfn:alignHoro}

    Let $\xi \in \partial X$ and $\ga \subset X$ be a compact geodesic. For $K \ge 0$, we say that $(\xi, \ga)$ is \emph{$K$-aligned} if for every sequence $\{z_{i}\}_{i \in \N} \subset X$ converging to $\xi$, $(z_{i}, \gamma)$ is $K$-aligned eventually (i.e., for all large $i \in \N$). We define the alignment for $(\ga, \xi')$ and $(\xi, \ga, \xi')$ similarly for $\xi' \in X \cup \partial X$.
    
\end{definition}

\subsection{Shadows and alignment}

We make a useful elementary observation that the alignment can be interpreted in terms of shadows.

\begin{definition}
    For $x, y \in X$ and $R > 0$, we define the \emph{shadow} $O_R(x, y)$ of a ball of radius $R$ centered at $y$ viewed from $x$, as follows:
    $$
    O_R(x, y) := \{ w \in X \cup \partial X : d([x, w], y) < R \}.
    $$
\end{definition}

It is easy to see that for $x, y \in X$ and $R > 0$, if $\xi \in O_R(x, y)$, then
$$
d(x, y) - 2R \le \beta_{\xi}(x, y) \le d(x, y).
$$

We now interpret the alignment using shadows. First, note that  one can imagine that if $x, y, z, w \in X$ satisfy
$$
w \in O_R(x, y) \cap O_R(y, z),
$$
then $y$ comes earlier than $z$ along $[x, w]$. Indeed, one can show the following:

\begin{lemma} \label{lem:shadowandalignment}
    \
\begin{enumerate}
    \item For each $R > 1$ and $x, y, z, w \in X$, if  $w \in O_R(x, y) \cap O_R(y, z)$ holds, then
    $$
    (x, [y, z], w) \quad \text{is $6R$-aligned.}
    $$

    \item For each $R>1$ and $x, y, z, w \in X$, if $
    (x, [y, z], w)$ is $R$-aligned and $d(y, z) > 3R$, then 
    $$w \in O_{3R}(x, y)\cap O_{3R}(y, z).$$
\end{enumerate}
\end{lemma}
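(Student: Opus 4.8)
The plan is to prove both parts by unwinding the definitions of shadow and alignment and using the contracting property (Lemma~\ref{lem:CAT(-1)Fellow}) together with the elementary inequality relating shadows and Busemann cocycles already recorded just above the statement. The two parts are essentially converses of each other, so I would set up the geometry once and then read it in both directions.

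For part (1): assume $w \in O_R(x,y) \cap O_R(y,z)$. By definition this gives points $p \in [x,w]$ with $d(p,y) < R$ and $q \in [y,w]$ with $d(q,z) < R$; actually one should be careful that $w$ may be a boundary point, but $O_R(x,w)$ makes sense for $w \in X \cup \partial X$ and one gets a point on the (possibly infinite) geodesic close to $y$, resp.\ close to $z$. The key point is to locate $\pi_{[y,z]}(x)$ near $y$ and $\pi_{[y,z]}(w)$ near $z$. To see $(x,[y,z])$ is $6R$-aligned: since $d([x,w],y) < R$, the point $y$ is within $R$ of the geodesic $[x,w]$, hence by the contracting property (Lemma~\ref{lem:CAT(-1)Fellow}) the projection $\pi_{[y,z]}(x)$ cannot be far along $[y,z]$ from $y$ — more precisely, if $d(\pi_{[y,z]}(x), y)$ were large, then $[x,w]$ would have to come close to a point of $[y,z]$ far from $y$, but $[x,w]$ stays $R$-close to $y$ and the $1$-Lipschitz contracting behaviour of $\pi_{[y,z]}$ along $[x,w]$ bounds the projection displacement. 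I would make this quantitative by applying Lemma~\ref{lem:CAT(-1)Fellow} (or simply Corollary~\ref{cor:CAT(-1)Fellow}) to bound $d(\pi_{[y,z]}(x), y)$ by a small multiple of $R$, say $\le 6R$; the constant $6$ is generous enough to absorb the various additive $O(1)$ slacks and the factor from passing between $[x,w]$ and its projection. The alignment $([y,z],w)$ is $6R$-aligned is symmetric: $w \in O_R(y,z)$ says $d([y,w],z) < R$, so $z$ is $R$-close to $[y,w]$, giving $d(\pi_{[z,y]}(w), z)$ small by the same argument (here one uses the extended projection of Lemma~\ref{lem:alignExtCts} if $w \in \partial X$, checking that the alignment-via-sequences definition, Definition~\ref{dfn:alignHoro}, holds by continuity). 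Combining the two gives $(x,[y,z],w)$ is $6R$-aligned.

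For part (2): assume $(x,[y,z],w)$ is $R$-aligned with $d(y,z) > 3R$. Then $\pi_{[y,z]}(x)$ lies within $R$ of $y$ and $\pi_{[y,z]}(w)$ within $R$ of $z$. I want $w \in O_{3R}(x,y)$, i.e.\ $d([x,w],y) < 3R$. Since the projection $\pi_{[y,z]}(x)$ is $R$-close to $y$, the point $y$ is at most $R$ from $\pi_{[y,z]}(x)$, and since $\pi_{[y,z]}(x)$ is the closest point of $[y,z]$ to $x$, the triangle $\triangle x\, \pi_{[y,z]}(x)\, y$ is right-angled at $\pi_{[y,z]}(x)$, so $\pi_{[y,z]}(x)$ is within $0.604$ of $[x,y]$ (this is the right-angle fact invoked before Corollary~\ref{cor:CAT(-1)Fellow}); but I actually want $y$ close to $[x,w]$, not $[x,y]$. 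For that I use that $[y,z]$ ``goes the right way'': because $\pi_{[y,z]}(x)$ is near $y$ and $\pi_{[y,z]}(w)$ is near $z$, and $d(y,z) > 3R$ so these are genuinely separated along $[y,z]$, the contracting property of $[y,z]$ forces $[x,w]$ to fellow-travel $[y,z]$ on the segment between these projections — in particular $[x,w]$ passes within a bounded distance of $y$. Quantitatively: Lemma~\ref{lem:CAT(-1)Fellow} applied with the geodesic $[y,z]$ and endpoints $x, w$ (whose projections are more than $d(y,z) - 2R > R$ apart, in particular $> 2$) produces $p', q' \in [x,w]$ with $\diam(\pi_{[y,z]}([x,p']) \cup \{p'\}) \le 2$ and $\pi_{[y,z]}(x)$ within $2$ of $p'$, hence $p' \in [x,w]$ is within $2 + R + \text{(slack)}$ of $y$; taking $3R > R + 2$ (here $R > 1$ is used) gives $y \in O_{3R}(x,w) \supseteq$, i.e.\ $d([x,w],y) < 3R$. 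Symmetrically, from the $q'$-side, $d([y,w], z) < 3R$, so $w \in O_{3R}(y,z)$. Thus $w \in O_{3R}(x,y) \cap O_{3R}(y,z)$.

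The main obstacle I anticipate is \emph{bookkeeping the constants and handling the boundary case $w \in \partial X$ uniformly}. The geometric content is routine once one invokes Lemma~\ref{lem:CAT(-1)Fellow}, but I need to make sure the additive constants ($2$ from the contracting lemma, $0.604$ from the right-angle estimate, and the various $1$'s) all fit comfortably inside the stated $6R$ and $3R$ bounds using $R > 1$; I would choose to be deliberately lossy (never optimizing) so the inequalities are transparent. For $w \in \partial X$ one must phrase everything through sequences $z_i \to \xi$ and pass to the limit using Lemma~\ref{lem:alignExtCts} for the continuity of the extended projection and the definition of alignment with boundary points (Definition~\ref{dfn:alignHoro}); the shadow $O_R(x,w)$ is already defined for $w \in X \cup \partial X$ via $d([x,w],y) < R$ where $[x,w]$ is the geodesic ray, so no new definition is needed, but one should note that $d([x,z_i],y) \to d([x,\xi],y)$ by properness and the Hausdorff-convergence of the finite geodesics to the ray (Fact~\ref{fact:CAT(-1)Thin}), which lets the finite-point arguments pass to the limit. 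I would state a short preliminary remark to this effect and then give the two arguments only in the case $w \in X$, noting the general case follows by continuity.
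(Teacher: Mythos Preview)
Your Part~(2) is essentially the paper's argument: apply Lemma~\ref{lem:CAT(-1)Fellow} to $[y,z]$ with endpoints $x,w$ (projections more than $2$ apart since $d(y,z)>3R$ and $R>1$) to get $p'\in[x,w]$ with $d(p',\pi_{[y,z]}(x))\le 2$, whence $d(p',y)<R+2<3R$. One small slip: for the second inclusion $w\in O_{3R}(y,z)$ you invoke ``the $q'$-side'', but that $q'$ lies on $[x,w]$, not on $[y,w]$, so it only gives $d([x,w],z)<3R$. The paper instead applies Lemma~\ref{lem:CAT(-1)Fellow} a second time, to $[y,z]$ with endpoints $y$ and $w$ (whose projections $y$ and $\pi_{[y,z]}(w)$ are at least $d(y,z)-R>2R>2$ apart), to land a point of $[y,w]$ near $z$. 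This is an easy fix.

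Part~(1) has a genuine gap in the alignment $(x,[y,z])$. Your argument uses only the hypothesis $w\in O_R(x,y)$, i.e.\ that $y$ is $R$-close to $[x,w]$; but this alone says nothing about where $z$ sits relative to $x$, and without that one cannot bound $d(\pi_{[y,z]}(x),y)$. (Take $y$ on $[x,w]$ and $z$ near $x$: then $\pi_{[y,z]}(x)=x$ is at distance $d(x,y)$ from $y$.) The sentence ``$[x,w]$ would have to come close to a point of $[y,z]$ far from $y$, but $[x,w]$ stays $R$-close to $y$'' is not a contradiction: $[x,w]$ can pass within $R$ of $y$ \emph{and} fellow-travel a distant portion of $[y,z]$. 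What the paper does is use \emph{both} shadow conditions together via a chain of triangle inequalities: from $w\in O_R(x,y)$ and $w\in O_R(y,z)$ one gets
\[
d(x,w)\ge d(x,y)+d(y,z)+d(z,w)-4R,
\]
and then, taking $p\in[y,w]$ with $d(p,z)<R$ and using $d(x,p)\ge d(x,w)-d(p,w)$, one deduces $d(x,z)\ge d(x,y)+d(y,z)-6R$. Now Corollary~\ref{cor:CAT(-1)Fellow} (the right-angle estimate) forces $\pi_{[y,z]}(x)$ to lie within $3R+3<6R$ of $y$. The point is that locating $\pi_{[y,z]}(x)$ near $y$ is an \emph{asymmetric} statement that genuinely requires the second shadow $w\in O_R(y,z)$, and the clean way to extract it is the distance inequality $d(x,z)\ge d(x,y)+d(y,z)-O(R)$ rather than the contracting lemma.

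Finally, the lemma is stated for $x,y,z,w\in X$, so the boundary-point discussion is unnecessary here.
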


\iffalse
\begin{proof} 
Let us prove (1). 
By the assumption, $z$ is $R$-close to a point $p \in [y, w]$. This implies that $d(y, w) \ge d(y, z) + d(z, w) - 2R$. In view of Corollary \ref{cor:CAT(-1)Fellow}, $\pi_{[y, z]}(w)$ is $(R+3)$-close to $z$. 

Meanwhile, note that \[\begin{aligned}
d(x, w) &\ge d(x, y) + d(y, w) - 2 R & (\because w \in O_{R}(x, y) )\\
&\ge d(x, y) + d(y, z) + d(z, w) - 4R & (\because w \in O_{R}(y, z)) \\
&\ge d(x, y) + d(y, w) - 4R.
\end{aligned}
\]
Since $p$ is on $[y, w]$, we  then have \[
\begin{aligned}
d(x, p) & \ge d(x, w) - d(p, w) \ge d(x, y) + d(y, w) - d(p, w) - 4R \\
& = d(x, y) + d(y, p) - 4R.
\end{aligned}
\]
Since $p$ and $z$ is $R$-close, we then have $d(x, z) \ge d(x, y) + d(y, z) - 6R$. In view of Corollary \ref{cor:CAT(-1)Fellow}, $\pi_{[y, z]}(x)$ is $(3R+3)$-close to $y$.

Let us now prove (2). By the assumption, we have $\diam \pi_{[y, z]} \{x, w\} > 1$. We can thus apply Lemma \ref{lem:CAT(-1)Fellow} and conclude that $\pi_{[y, z]}(x)$ is $2$-close to $[x, w]$. Together with the $R$-alignment, $w \in O_{2 + R}(x, y)$. Next, since $d(y, z) > 3R$, $ y = \pi_{[y, z]}(y)$ and $\pi_{[y, z]}(w)$ are at least $2R > 2$-far. For the same reason, we conclude that $w \in O_{2 + R}(y, z)$.
\end{proof}
\fi

\subsection{Isometries}
We now turn to isometries of $X$. As a Gromov hyperbolic space, $X$ has the Gromov boundary $\partial X$. The isometries can be classified in terms of their fixed points in $X \cup \partial X$.
A non-trivial isometry $g \in \Isom(X)$ is either \emph{elliptic} (i.e., fixes a point in $X$), \emph{parabolic} (i.e., has a unique fixed point in $\partial X$), or \emph{loxodromic} (i.e., has a unique pair of two fixed points in $\partial X$). If $g \in \Isom(X)$ is of infinite order, it is either parabolic or loxodromic. 

Among them, a loxodromic element $g \in \Isom(X)$ preserves a unique geodesic $\gamma : \R \to X$ connecting two fixed points of $g$, called the \emph{axis} of $g$, and acts on it as a translation by $\tau_{g} > 0$. We call $\tau_g$ the \emph{translation length} of $g$.

Given a loxodromic $g \in \Isom(X)$, note that 
$$
\tau_g = \lim_{n \to +\infty} \frac{d(x, g^n x)}{n} > 0 \quad \text{for each } x \in X.
$$
Then we can observe the following:
$$
\tau_g  = \inf_{x \in X} d(x, g x) \quad \text{and} \quad \tau_{g^k} = |k| \tau_g \quad \text{for each } k \in \Z.
$$

Note that in the CAT($-1$) space $X$, every geodesic is squeezing (Lemma \ref{lem:squeezingD}) and hence every loxodromic isometry $g$ possesses a squeezing axis, which is unique up to reparametrization. Ideally, it is the most convenient to capture the squeezing property of $g$ in terms of the nearest-point projection onto the axis of $g$. However, the chosen basepoint $x_0 \in X$ might not be on the axis $\gamma$, and one often needs to relate the nearest-point projections onto $\gamma$ and $[x_0, g^k x_0 ]$ for various $k \in \Z$. The following lemma serves this purpose, whose proof can be found in \cite[Lemma 5.9]{CK_ML}.

\begin{lem}\label{lem:BGIPHeredi}
Let $g \in \Isom(X)$ be a loxodromic isometry, $\gamma: \R \rightarrow X$ its axis, and $x_{0} \in X$. Then there exists $C=C(g, \gamma, x_{0})>0$ such that the following holds. 
\begin{enumerate}
\item $d\left(g^{k} x_{0}, \gamma(\tau_{g} k)\right) < C$ for all $k \in \Z$.
\item Let $k \in \N$,  $x \in X$, and  $K \ge C$. Then 
$$
\left(x, [x_{0}, g^{k} x_{0}]\right) \text{ is not $K$-aligned} \quad \Longrightarrow \quad \pi_{\gamma}(x) \in  \gamma \left( [K-C, +\infty) \right).
$$
\item Let $k\in \N$, $x \in X$, and  $0 \le K \le \tau_{g} k - C$. Then
$$
\left(x, [x_{0}, g^{k} x_{0}]\right) \text{ is $K$-aligned} \quad \Longrightarrow \quad \pi_{\gamma}(x) \in \gamma \left( (-\infty,  K + C]\right).
$$
\end{enumerate}
Moreover, $C$ can be chosen so that $C(g, \ga, x_0) = C(g^{k}, \ga, x_0)$ for all $k \in \N$ and $C(g^{-1}, \widehat{\ga}, x_0) = C(g, \ga, x_0)$ where $\widehat{\ga}$ is the inversion of $\ga$.
\end{lem}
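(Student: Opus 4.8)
The plan is to convert each alignment condition in (2) and (3) into a statement about the arc-length position of the nearest-point projection of $x$ onto the segment $[x_0, g^k x_0]$, and then to compare that position with the arc-length position of $\pi_\gamma(x)$ on the axis, using the contracting property in the form of Corollary~\ref{cor:CAT(-1)Fellow} together with the $g$-equivariance of $\gamma$. First I would fix a parametrization of the axis with $\gamma(0) = \pi_\gamma(x_0)$ and $g\,\gamma(s) = \gamma(s+\tau_g)$, and set $D_0 := d(x_0,\gamma(0))$. Since $\gamma$ is $g$-invariant, $\pi_\gamma$ commutes with $g$, so $\pi_\gamma(g^k x_0) = g^k\gamma(0) = \gamma(\tau_g k)$, and hence
\[
d\big(g^k x_0, \gamma(\tau_g k)\big) \;=\; d\big(x_0, \gamma(0)\big) \;=\; D_0 \qquad \text{for all } k \in \Z,
\]
which already proves (1) for any $C > D_0$. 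By the triangle inequality the length $L := d(x_0, g^k x_0)$ of the segment satisfies $L =_{2D_0} \tau_g k$, and, by Fact~\ref{fact:CAT(-1)Thin}, $[x_0, g^k x_0]$ lies within Hausdorff distance $D_0$ of $\gamma([0,\tau_g k])$.

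For (2) and (3) I would write $\pi_\gamma(x) = \gamma(s)$ and let $\ell$ denote the arc-length coordinate, measured from $x_0$, of $p := \pi_{[x_0,g^k x_0]}(x)$. Applying Corollary~\ref{cor:CAT(-1)Fellow} to $\gamma$ at the parameters $0$ and $\tau_g k$ (absorbing the error $D_0$ at the two endpoints), and then to the segment $[x_0, g^k x_0]$ at its two endpoints, one computes $d(x, x_0) - d(x, g^k x_0)$ in two different ways to obtain, with a constant $c_1$ depending only on $D_0$,
\[
2\ell - L \;=_{c_1}\; |s| - |s-\tau_g k|.
\]
Splitting into the three cases $s<0$, $0 \le s \le \tau_g k$, $s>\tau_g k$ and inserting $L =_{2D_0}\tau_g k$, this says that $\ell$ agrees with $0$, with $s$, and with $\tau_g k$ respectively, up to an additive constant $C'$ depending only on $D_0$; in short, $\ell =_{C'}\max\bigl(0,\min(s,\tau_g k)\bigr)$.

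It then remains to read off (2) and (3), using that $(x,[x_0,g^k x_0])$ is $K$-aligned exactly when $\ell < K$. For (2): if it is not $K$-aligned, then $\ell \ge K \ge C$; taking $C \ge C'$ makes the case $s<0$ (where $\ell < C'$) impossible, gives $s > \ell - C' \ge K-C'$ in the middle case, and gives $s > \tau_g k > \ell - C' \ge K-C'$ in the last case, so in every case $\pi_\gamma(x) \in \gamma([K-C,+\infty))$. For (3): if it is $K$-aligned with $0 \le K \le \tau_g k - C$, then the case $s>\tau_g k$ would force $\ell > \tau_g k - C' \ge (K+C)-C' \ge K$, contradicting $\ell < K$; hence $s \le \tau_g k$, and then either $s < 0 \le K+C$ or $s < \ell + C' < K+C$, so $\pi_\gamma(x) \in \gamma((-\infty,K+C])$. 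Thus $C := C'+1$, which depends only on $\gamma$ and $x_0$ through $D_0$, works for (1)--(3). For the ``moreover'' clause, observe that $C$ depends only on $D_0 = d(x_0,\pi_\gamma(x_0))$ and universal constants, and this quantity is unchanged when $g$ is replaced by $g^k$ (same axis $\gamma$, same closest point to $x_0$) or by $g^{-1}$ with $\gamma$ replaced by its inversion $\widehat{\gamma}$.

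The step I expect to be the main nuisance is the three-case extraction of $\ell =_{C'}\max(0,\min(s,\tau_g k))$ together with the bookkeeping of which ranges of $K$ make the hypotheses nonvacuous: in (2), $\ell \le L \approx \tau_g k$ already forces $K \lesssim \tau_g k$ for ``not $K$-aligned'' to be realizable at all, and in (3) the upper constraint $K \le \tau_g k - C$ is exactly what keeps $p$ away from the far endpoint $g^k x_0$ and thereby rules out the otherwise-problematic case $s > \tau_g k$. Normalizing $\gamma(0) = \pi_\gamma(x_0)$ at the outset is what keeps every additive constant uniform in $k$.
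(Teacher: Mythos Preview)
The paper does not give its own proof of this lemma; it simply refers to \cite[Lemma 5.9]{CK_ML}. Your argument is a complete and correct self-contained proof. The normalization $\gamma(0)=\pi_\gamma(x_0)$ and the computation of $d(x,x_0)-d(x,g^kx_0)$ in two ways via Corollary~\ref{cor:CAT(-1)Fellow} (once along $\gamma$, once along $[x_0,g^kx_0]$) cleanly yield $\ell=_{C'}\max(0,\min(s,\tau_gk))$, and your case analysis for (2) and (3) is accurate, including the use of the hypothesis $K\le\tau_gk-C$ in (3) to exclude the case $s>\tau_gk$. The only cosmetic point is that the lemma, as stated, takes $\gamma$ as given rather than letting you reparametrize; but since $C$ is allowed to depend on $\gamma$, any shift of origin is absorbed into $C$, so your normalization is harmless. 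Note also that Corollary~\ref{cor:CAT(-1)Fellow} is stated for bi-infinite geodesics, but as the paper's own justification (the right-angled triangle argument) shows, the same $1.3$-estimate holds for compact segments, so your application to $[x_0,g^kx_0]$ is legitimate.
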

We often write $C(g) = C(g, \ga, x_0)$ by implicitly choosing its axis $\ga$.

\subsection{Non-elementary subgroups of isometries}

We call $\Ga < \Isom(X)$ \emph{discrete} if it acts properly on $X$. The class of subgroups of $\Isom(X)$ we are interested in is as follows:

\begin{definition} \label{def:noneltsubgp}
    A discrete subgroup $\Ga < \Isom(X)$ is called  \emph{non-elementary}~if
    \begin{itemize}
        \item $\Ga$ is not virtually cyclic, and
        \item $\Ga$ contains a loxodromic isometry.
    \end{itemize}
\end{definition}

We can characterize non-elementary subgroups in terms of their limit sets:

\begin{definition}
    Let $\Ga < \Isom(X)$ be a discrete subgroup. Its \emph{limit set} $\La(\Ga) \subset \partial X$ is the set of all accumulation points of $\Ga x \subset X$ on $\partial X$, for any fixed $x \in X$. One can see that $\La(\Ga)$ is compact and $\Ga$-invariant.
\end{definition}

Since $X$ is a Gromov hyperbolic space, the $\Ga$-action on $X \cup \partial X$ is a convergence action, and the limit set $\La(\Ga)$ is also the limit set as a convergence group. It is a fact that a discrete subgroup $\Ga < \Isom(X)$ is non-elementary if and only if $\# \La(\Ga) \ge 3$, and in this case the $\Ga$-action on $\La(\Ga)$ is minimal.

Given a loxodromic isometry $g \in \Isom(X)$, we denote by $g^{+}$ and $g^{-}$ the attracting and the repelling fixed points on the boundary $\partial X$ of $g$, respectively. We  say that two loxodromic isometries $g, h \in \Isom(X)$ are \emph{independent} if $\{g^+, g^{-} \}$ and $\{h^{+}, h^{-}\}$ are disjoint.

\begin{lem}\label{lem:indep}
Let $\Gamma < \Isom(X)$ be a non-elementary subgroup. For a loxodromic isometry $g \in \Ga$, there exists $h \in \Ga$ such that $hgh^{-1}$ and $g$ are independent. Moreover,  there are infinitely many pairwise independent loxodromic isometries in $\Ga$.
\end{lem}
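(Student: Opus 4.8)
The plan is to reduce everything to a single conjugation and then bootstrap via ping-pong. Observe first that a loxodromic $hgh^{-1}$ has attracting and repelling fixed points $hg^{+}$ and $hg^{-}$, so $g$ and $hgh^{-1}$ are independent exactly when $h\{g^{+},g^{-}\}\cap\{g^{+},g^{-}\}=\emptyset$; thus the first assertion is precisely the statement that such an $h\in\Ga$ exists. I would begin by noting that $\La(\Ga)$ is infinite: picking any $\eta\in\La(\Ga)\smallsetminus\{g^{+},g^{-}\}$ (possible since $\#\La(\Ga)\ge3$), the North--South dynamics of $g$ gives $g^{n}\eta\to g^{+}$ with $g^{n}\eta\neq g^{+}$ for all $n$, so $\{g^{n}\eta\}_{n}$ is an infinite subset of $\La(\Ga)$. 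By minimality of the $\Ga$-action on $\La(\Ga)$, the orbit $\Ga g^{+}$ is then dense in the infinite set $\La(\Ga)$, hence infinite, so $\Stab_\Ga(g^{+})$ has infinite index in $\Ga$; likewise $\Stab_\Ga(g^{-})$.

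Next I would identify the ``bad'' elements. An $h\in\Ga$ fails to work only if $hg^{+}\in\{g^{+},g^{-}\}$ or $hg^{-}\in\{g^{+},g^{-}\}$, and each of the four conditions $hg^{\pm}=g^{\pm}$, $hg^{\pm}=g^{\mp}$ confines $h$ to $\Stab_\Ga(g^{+})$, to $\Stab_\Ga(g^{-})$, or to a single left coset of one of these. Hence the bad set is contained in a union of at most four cosets of two subgroups of infinite index, so by B.~H.~Neumann's lemma --- a group is never a finite union of cosets of infinite-index subgroups --- it is a proper subset of $\Ga$. Any $h$ outside it makes $hgh^{-1}$ independent from $g$, proving the first claim.

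For the ``moreover'' part I would run a standard ping-pong. With $g_{1}:=g$ and $g_{2}:=hgh^{-1}$ independent, choose pairwise disjoint neighbourhoods in $\partial X$ of the four fixed points $g_{i}^{\pm}$; North--South dynamics (which follows from the contracting/squeezing estimates of Section~\ref{section:prelim}) makes the ping-pong hypotheses hold for $a:=g_{1}^{N}$ and $b:=g_{2}^{N}$ once $N$ is large, so $\langle a,b\rangle<\Ga$ is free of rank $2$ and Schottky: its limit set is $\Ga$-equivariantly a copy of $\partial\langle a,b\rangle$ and every nontrivial element of it is loxodromic. Any pairwise non-commensurable family in the free group $\langle a,b\rangle$, for instance $\{ab^{k}:k\ge1\}$, then consists of pairwise independent loxodromic isometries of $X$, since in a Schottky group two loxodromics sharing a fixed point in $\La(\Ga)$ must lie in a common cyclic subgroup.

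I expect no single hard step; the only things needing care are the convergence-group inputs --- North--South dynamics of loxodromics on $X\cup\partial X$ and the identification of a Schottky limit set with $\partial F_{2}$, both classical for proper $\CAT(-1)$ spaces and extractable from Section~\ref{section:prelim} --- together with the routine but fiddly verification of the ping-pong inequalities in choosing $N$.
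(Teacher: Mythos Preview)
The paper does not actually prove Lemma~\ref{lem:indep}; it is stated without proof as a standard fact about non-elementary groups acting on Gromov hyperbolic spaces, so there is nothing to compare your argument against. Your proposal is correct and follows the usual route: the B.~H.~Neumann covering lemma handles the first assertion cleanly, and the ping-pong construction of a Schottky subgroup for the ``moreover'' part is standard. One small slip: you write that the limit set of $\langle a,b\rangle$ is ``$\Ga$-equivariantly'' a copy of $\partial\langle a,b\rangle$, but you mean $\langle a,b\rangle$-equivariantly. Otherwise the argument is sound; the inputs you flag (North--South dynamics, minimality of the $\Ga$-action on $\La(\Ga)$, Schottky groups in $\CAT(-1)$ spaces) are all classical and available in the references the paper cites.
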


The following is a variant of the so-called \emph{extension lemma} of Yang, which can be regarded as the coarse-geometric version of the Anosov closing lemma (cf. \cite[Lemma 3.8]{bowen2008equilibrium}).

\begin{lem}[Extension lemma {\cite[Lemma 1.13]{yang2019statistically}}]\label{lem:extension} \label{lem:extensionHoro}
Let $\Gamma < \Isom(X)$ be a non-elementary subgroup. Then for each loxodromic isometry $\varphi \in \Ga$, there exist $a_{1}, a_{2}, a_{3}\in \Gamma$ and $\alpha = \alpha(\varphi)>0$  such that  for each $x, y \in X \cup \partial X$, there exists $a\in \{a_{1}, a_{2}, a_{3}\}$ that  makes 
$$
(x, a \cdot [x_{0}, \varphi^{n}x_{0}], a \varphi^{n} a \cdot y) \quad \text{$\alpha$-aligned for all } n \in \N.
$$
Moreover, $\alpha$ can be chosen so that $\alpha(\varphi^k) = \alpha(\varphi)$ for all $k \in \Z$.
\end{lem}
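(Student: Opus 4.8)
The plan is to reduce the simultaneous alignment of the broken path, over \emph{all} powers $n$, to two one-sided conditions relative to the axis of $\varphi$, and then to arrange these by a short pigeonhole argument fed by North--South dynamics. Let $\gamma$ denote the axis of $\varphi$, let $\widehat\gamma$ be the reversed axis (an axis of $\varphi^{-1}$), and let $C=C(\varphi,\gamma,x_{0})$ be as in Lemma~\ref{lem:BGIPHeredi}. Applying the isometry $a^{-1}$ to the first part of the path and $(a\varphi^{n})^{-1}$ to the second part, one sees that $\big(x,\ a\cdot[x_{0},\varphi^{n}x_{0}],\ a\varphi^{n}a\cdot y\big)$ is $\alpha$-aligned if and only if both $\big(a^{-1}x,\ [x_{0},\varphi^{n}x_{0}]\big)$ and $\big(ay,\ [x_{0},\varphi^{-n}x_{0}]\big)$ are $\alpha$-aligned. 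Now by Lemma~\ref{lem:BGIPHeredi}(2)--(3) — extended to $X\cup\partial X$ via Lemma~\ref{lem:alignExtCts}, and using that the constant $C$ there is the same for every power of $\varphi$ — the first of these holds for \emph{all} $n\ge 1$ provided $\pi_{\gamma}(a^{-1}x)\in\gamma\big((-\infty,\alpha-C)\big)$, and the second holds for all $n\ge 1$ provided $\pi_{\widehat\gamma}(ay)\in\widehat\gamma\big((-\infty,\alpha-C)\big)$. This is exactly where the uniformity over $n$, the genuine subtlety of the lemma, is dealt with: Lemma~\ref{lem:BGIPHeredi} converts ``aligned for all $n$'' into a single membership condition on a nearest-point projection.

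Set $V_{\alpha}:=\{w\in X\cup\partial X:\pi_{\gamma}(w)\in\gamma((-\infty,\alpha-C))\}$ and define $\widehat V_{\alpha}$ identically with $\widehat\gamma$ in place of $\gamma$. These sets are open by Lemma~\ref{lem:alignExtCts}, and by the contracting property (Corollary~\ref{cor:CAT(-1)Fellow}) their complements are closed neighborhoods of $\varphi^{+}$, resp.\ $\varphi^{-}$, that shrink to these single points as $\alpha\to+\infty$. By the previous paragraph it suffices to produce $a_{1},a_{2},a_{3}\in\Gamma$ and $\alpha>0$ so that for all $x,y\in X\cup\partial X$ there is an index $i$ with $x\in a_{i}V_{\alpha}$ and $y\in a_{i}^{-1}\widehat V_{\alpha}$. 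To choose the $a_{i}$, use Lemma~\ref{lem:indep} to pick loxodromic $g_{1},g_{2},g_{3}\in\Gamma$ that are pairwise independent and independent from $\varphi$ (all but finitely many members of an infinite pairwise independent family are independent from $\varphi$, so this is possible). Then $g_{1}^{+},g_{2}^{+},g_{3}^{+}$ are distinct, as are $g_{1}^{-},g_{2}^{-},g_{3}^{-}$; and since $\{\varphi^{+},\varphi^{-}\}\cap\{g_{i}^{+},g_{i}^{-}\}=\emptyset$, North--South dynamics of $g_{i}$ on $X\cup\partial X$ shows that for $N$ large both $g_{i}^{N}\varphi^{+}$ and $g_{i}^{N}\varphi^{-}$ lie near $g_{i}^{+}$, while both $g_{i}^{-N}\varphi^{+}$ and $g_{i}^{-N}\varphi^{-}$ lie near $g_{i}^{-}$. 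Fix such an $N$ and put $a_{i}:=g_{i}^{N}$. Then the three points $a_{i}\varphi^{+}$ are distinct and the three points $a_{i}^{-1}\varphi^{-}$ are distinct, so — since the complement of $V_{\alpha}$ shrinks to $\{\varphi^{+}\}$ as $\alpha\to\infty$, hence that of $a_{i}V_{\alpha}$ to $\{a_{i}\varphi^{+}\}$ — we may fix $\alpha$ large enough that the three neighborhoods $(X\cup\partial X)\setminus a_{i}V_{\alpha}$ are pairwise disjoint, and likewise the three neighborhoods $(X\cup\partial X)\setminus a_{i}^{-1}\widehat V_{\alpha}$ are pairwise disjoint.

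The pigeonhole then finishes it. Given $x,y\in X\cup\partial X$, by disjointness there is at most one index $i_{0}$ with $x\notin a_{i_{0}}V_{\alpha}$ and at most one index $j_{0}$ with $y\notin a_{j_{0}}^{-1}\widehat V_{\alpha}$; choosing any index $i\notin\{i_{0},j_{0}\}$ (possible since there are three), we get $x\in a_{i}V_{\alpha}$ and $y\in a_{i}^{-1}\widehat V_{\alpha}$, hence by the reduction $\big(x,\ a_{i}[x_{0},\varphi^{n}x_{0}],\ a_{i}\varphi^{n}a_{i}y\big)$ is $\alpha$-aligned for all $n\ge 1$, as required. For the final assertion: for $k\neq 0$ the element $\varphi^{k}$ has axis $\gamma$ (or $\widehat\gamma$), satisfies $(\varphi^{k})^{\pm}=\varphi^{\pm}$, and has $C(\varphi^{k},\gamma,x_{0})=C(\varphi,\gamma,x_{0})$ by the ``moreover'' clause of Lemma~\ref{lem:BGIPHeredi}; since the choices of $g_{i}$, of $N$, and of $\alpha$ used only these data, the same $a_{i}$ and the same $\alpha$ serve for $\varphi^{k}$, whence $\alpha(\varphi^{k})=\alpha(\varphi)$.

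The main obstacle is the uniformity over all $n\ge 1$, handled as above through Lemma~\ref{lem:BGIPHeredi} and its power-independent constant. The one further point needing care is that $V_{\alpha}$ is a ``half-space'' rather than a small neighborhood; what makes the pigeonhole go through is that its \emph{complement} is a small neighborhood of $\varphi^{+}$ that can be shrunk by enlarging $\alpha$, which is what upgrades the mere distinctness of the finitely many points $a_{i}\varphi^{+}$ (and $a_{i}^{-1}\varphi^{-}$) into the disjointness of the relevant bad sets.
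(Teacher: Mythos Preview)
Your argument is correct and follows the standard route for this type of extension lemma: conjugate to reduce the two-sided alignment to a pair of one-sided projection conditions on the axis, invoke Lemma~\ref{lem:BGIPHeredi} to make these conditions independent of $n$, and then run a pigeonhole over three pairwise independent directions. The paper itself does not give a proof but defers to \cite[Lemma~5.12, Lemma~5.15]{CK_ML}; since that reference is by the same authors and is built around the same Lemma~\ref{lem:BGIPHeredi}, your approach is almost certainly the intended one.

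One small point worth tightening: you cite Lemma~\ref{lem:alignExtCts} for the openness of $V_{\alpha}$ and the continuous extension of $\pi_{\gamma}$ to $\partial X$, but that lemma is stated only for \emph{compact} geodesics, whereas your $\gamma$ is the bi-infinite axis. What you actually need---continuity of $\pi_{\gamma}$ on $(X\cup\partial X)\setminus\{\varphi^{+},\varphi^{-}\}$ and the fact that $\{w:\pi_{\gamma}(w)\in\gamma([\alpha-C,+\infty))\}\cup\{\varphi^{+}\}$ shrinks to $\{\varphi^{+}\}$ as $\alpha\to\infty$---holds in $\CAT(-1)$ and follows from Lemma~\ref{lem:CAT(-1)Fellow} together with the shadow description of the Gromov topology, but it is not literally Lemma~\ref{lem:alignExtCts}. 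Your appeal to Corollary~\ref{cor:CAT(-1)Fellow} for the shrinking is correct in substance. If you prefer to stay strictly within the paper's stated lemmas, you can truncate $\gamma$ to a long compact segment $[\varphi^{-M}x_{0},\varphi^{M}x_{0}]$, apply Lemma~\ref{lem:alignExtCts} there, and use Lemma~\ref{lem:BGIPHeredi}(1) to pass between the two projections. This is cosmetic; the logic of your proof is sound.
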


The proof can be found in \cite[Lemma 5.12, Lemma 5.15]{CK_ML}.

\subsection{Horofunctions} \label{subsection:boundary}

We now discuss the boundaries of $X$. Recall that $X$ is  proper and CAT($-1$). Hence, its visual compactification, Gromov compactification and the horofunction compactification all coincide, i.e., 
$$
\partial_{vis} X =  \partial X = \partial^{h}X .
$$
In particular, for each $\xi \in \partial X$ the \emph{Busemann cocycle} $\beta_{\xi} : X \times X \to \R$ is well-defined: for every $x, y \in X$ and every sequence $\{z_n\}_{n \in \N} \subset X$ converging to $X$ in the Gromov compactification $X \cup \partial X$,
$$
\beta_{\xi} (x, y) := \lim_{n \to + \infty} d(x, z_n) - d(y, z_n)
$$
is well-defined. Furthermore, $\xi$ is \emph{visible}, i.e., the sequence $\{z_{n}\}_{n \in \N}$ for $\xi$ above can be taken along a geodesic.

We now give more detailed description of the horofunction compactification.
Let $\Lip^{1}(X)$ be the space of $\R$-valued 1-Lipschitz functions on $X$ and let $\Lip^{1}_{x_{0}}(X)$ be its subspace vanishing at the basepoint $x_{0} \in X$, i.e., \[\begin{aligned}
\Lip^{1}(X) &:= \{ f : X \rightarrow \mathbb{R} : \textrm{$f$ is $1$-Lipschitz}\},\\
\Lip^{1}_{x_{0}}(X) &:= \{ f \in \Lip^{1}(X) : f(x_{0}) = 0\},
\end{aligned}
\]
equipped with the compact-open topology. Here, $\Lip^{1}_{x_0}$ is closed in $\Lip^{1}(X)$.

Recall that $X$ is separable as it is given a proper metric. Therefore, $\Lip^{1}_{x_{0}} (X)$ is compact, Hausdorff, and second countable \cite[Proposition 3.1]{maher2018random}. Hence, it is completely metrizable and is  Polish. We identify $\Lip^1(X)$ and $\Lip_{x_0}^1(X) \times \R$ via the homeomorphism
\begin{equation} \label{eqn:lipandlip1}
f \in \Lip^1(X) \mapsto \left( f - f(x_{0}),  f(x_{0})\right).
\end{equation}

We also identify $\Lip_{x_0}^1(X)$ with the space of $\R$-valued 1-Lipschitz cocycles on $X$, i.e., $c : X \times X \to \R$ such that $\abs{c(x, y)} \le d(x, y)$ and $c(x, z) = c(x, y) + c(y, z)$ for all $x, y, z \in X$.
For each $f \in \Lip^1(X)$, we define the associated cocycle
$
\beta_f : X \times X \to \R
$
by
$$
\beta_f(x, y) = f(x) - f(y).
$$
Its restriction to $\Lip_{x_0}^1(X)$ gives the homeomorphism between $\Lip_{x_0}^1(X)$ and the space of all $\R$-valued continuous cocycles. Then the identifiaction $\Lip^1(X) \simeq \Lip_{x_0}^1(X) \times \R$ in Equation \eqref{eqn:lipandlip1} can be rephrased as
$$
f \mapsto (\beta_f, f(x_0)).
$$
The $\Isom(X)$-action on $\Lip^1(X)$ is now given as follows: for $g \in \Isom(X)$ and $f \in \Lip^1(X)$,
$$
g \cdot (\beta_f, f(x_0)) = (\beta_{g \cdot f}, f(x_0) + \beta_f (g^{-1}x_0, x_0)).
$$
Note that on the first component, which corresponds to $\Lip^1_{x_0}(X)$, we have $\beta_f \mapsto \beta_{g \cdot f}$.

There is a natural embedding $\iota : X \hookrightarrow \Lip_{x_0}^{1}(X)$, defined by \[
\iota : z \in X \quad \mapsto \quad  \left[ f_{z}( \cdot) := d(\cdot, z) - d(x_{0}, z) \right].
\] 
The closure of $\iota (X) \subset \Lip_{x_0}^{1}(X)$ is called the \emph{horofunction compactification} of $X$ and is denoted by $\overline{X}^{h}$. The complement $\overline{X}^{h} \smallsetminus \iota(X)$ is called the \emph{horofunction boundary} (or \emph{horoboundary}) of $X$  and is denoted by $\partial^{h} X$. As explained above, $\partial^{h} X$ is naturally identified with $\partial X$.

In terms of the identification $\Lip^{1}(X) \simeq \Lip_{x_{0}}^{1}(X) \times \mathbb{R}$, the subspace of $\Lip^{1}(X)$ corresponding to $\partial^{h} X$ is the space
\begin{equation} \label{eqn:horofol}
\mathcal{H} := \partial^{h} X \times \R,
\end{equation}
which is  $\Isom(X)$-invariant.

We call elements of $\partial^{h} X \times \mathbb{R}$ \emph{horofunctions}. They are 1-Lipschitz functions that are limits of sequences of the form $\{f_{z_{n}}( \cdot) + c_{n}\}_{n \in \N}$ for some $z_{n} \in X$ escaping to infinity and $c_{n} \in \R$.

Both $\partial^{h}X$ and $\mathcal{H} = \partial^{h} X \times \mathbb{R}$ are Polish. Hence, every locally finite Borel measure on these spaces is Radon, i.e., it is both inner and outer regular on Borel subsets.

\subsection{Conical limit sets}

We define conical limit sets using Busemann cocycles, which are also called radial limit sets. Fix a basepoint $x_0 \in X$, while the conical limit sets do not depend on the choice of the basepoint.

\begin{definition} \label{def:conical}
    Let $\Ga < \Isom(X)$ be a subgroup acting properly on $X$. A point $\xi \in \partial X$ is called a \emph{conical limit point} of $\Ga$ if there exist $K > 0$ and an infinite sequence $ \{g_n \}_{n \in \N} \subset \Ga $ such that
    $$
    \beta_{\xi}(x_0, g_n x_0) \ge d(x_0, g_n x_0) - K  \quad \text{for all } n \in \N.
    $$
    We denote the \emph{conical limit set} by $\La_c(\Ga) \subset \partial X$.
\end{definition}

Geometrically, $\xi$ is a conical limit point if and only if some (equivalently, every) geodesic ray $\gamma \subset X$ converging to $\xi$ has a $R$-neighborhood that contains infinitely many points in the $\Gamma$-orbit, for some $R > 0$. Equivalently, $\xi \in \La_{c}(\Ga)$ if and only if there exist $R > 0$ and a sequence $\{g_n\}_{n \in \N} \subset \Ga$ such that $\xi \in O_R(x_0, g_n x_0)$ for all $n \in \N$. The conical limit set $\La_c(\Ga)$ is $\Ga$-invariant.

\subsection{Guided limit sets}

In \cite{CK_ML}, we introduced the notion of guided and guided limit sets, which are variants of 
Coulon's contracting limit sets \cite{coulon2024patterson-sullivan} and Yang's $(L, \mathscr{F})$-limit sets \cite{yang2024conformal}.

\begin{definition} \label{def:guidedlimitset}
    Let $\Ga < \Isom(X)$ be a non-elementary subgroup. Let $\varphi \in \Ga$ be  a loxodromic isometry and let $C(\varphi) > 0$ be as in Lemma \ref{lem:BGIPHeredi} and fix $K\ge C(\varphi)$. We say that $\xi \in \partial X$ is a \emph{$(\varphi, K)$-guided limit point} of $\Ga$ 
if for each sufficiently large $n \in \N$, there exists $h \in \Gamma$ such that 
$$
(x_{0}, h[x_{0}, \varphi^{n} x_{0}], \xi) \quad \text{is $K$-aligned.}
$$
The collection of $(\varphi, K)$-guided limit points of $\Ga$ called the \emph{$(\varphi, K)$-guided limit set} of $\Gamma$. We denote it by $\La_{\varphi, K}(\Ga)$.

\end{definition}

The role of $K$ in the definition of $(\varphi, K)$-guided limit set is quite flexible:

\begin{lem}[{\cite[Lemma 6.4]{CK_ML}}]\label{lem:squeezedInv}
    Let $\Ga < \Isom(X)$ be a non-elementary subgroup. Let $\varphi \in \Ga$ be  a loxodromic isometry and let $C = C(\varphi) > 0$ be as in Lemma \ref{lem:BGIPHeredi}.  Then for each $K>C$, 
    $$
    \La_{\varphi, K}(\Ga) = \La_{\varphi, C}(\Ga).
    $$
     Moreover, $\La_{\varphi, C}(\Ga)$ is $\Gamma$-invariant.
\end{lem}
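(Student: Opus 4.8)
The plan is to establish the two inclusions $\La_{\varphi, C}(\Ga) \subseteq \La_{\varphi, K}(\Ga)$ and $\La_{\varphi, K}(\Ga) \subseteq \La_{\varphi, C}(\Ga)$ and then deduce $\Ga$-invariance of $\La_{\varphi, C}(\Ga)$ from the second. The first inclusion is immediate from the definition, since a $C$-aligned sequence is $K$-aligned whenever $K > C$. All the work lies in the reverse inclusion. Throughout, write $\tau := \tau_\varphi$, $C := C(\varphi)$, and let $\gamma$ be the axis of $\varphi$ parametrized so that $\varphi \cdot \gamma(t) = \gamma(t+\tau)$; Lemma \ref{lem:BGIPHeredi}(1) gives $d(\varphi^k x_0, \gamma(\tau k)) < C$ for all $k \in \Z$.

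For the reverse inclusion I will use a \emph{trimming} argument. Fix $m \in \N$ with $\tau m > K + C$. Let $\xi \in \La_{\varphi, K}(\Ga)$ and let $N$ be a large integer; set $n := N + 2m$ and, using the definition of $\La_{\varphi, K}(\Ga)$, pick $h \in \Ga$ with $(x_0, h[x_0, \varphi^n x_0], \xi)$ $K$-aligned. The proposed witness for $\xi \in \La_{\varphi, C}(\Ga)$ is $h' := h\varphi^m$ with exponent $N$; the geometry behind this is that $h'[x_0, \varphi^N x_0] = h[\varphi^m x_0, \varphi^{n-m}x_0]$ is the segment $h[x_0, \varphi^n x_0]$ with a piece of length $\approx \tau m$ removed from each end, and since $\tau m$ exceeds the alignment slack $K$ by more than $C$, this trimming upgrades $K$-alignment to $C$-alignment. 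To make this precise, applying the isometry $(h')^{-1} = \varphi^{-m}h^{-1}$ reduces the goal to showing that
$$(\varphi^{-m}h^{-1}x_0,\ [x_0, \varphi^N x_0],\ \varphi^{-m}h^{-1}\xi) \quad \text{is $C$-aligned.}$$
For the $x_0$-endpoint: from $(h^{-1}x_0, [x_0, \varphi^n x_0])$ being $K$-aligned, Lemma \ref{lem:BGIPHeredi}(3) (applicable since $K \le \tau n - C$ for $N$ large) gives $\pi_\gamma(h^{-1}x_0) \in \gamma((-\infty, K+C])$; since $\pi_\gamma$ commutes with $\varphi^{-m}$ and $\varphi^{-m}\gamma = \gamma$, this yields $\pi_\gamma(\varphi^{-m}h^{-1}x_0) \in \gamma((-\infty, K + C - \tau m]) \subseteq \gamma((-\infty, 0))$, and then the contrapositive of Lemma \ref{lem:BGIPHeredi}(2) (with constant $C$, for the segment $[x_0, \varphi^N x_0]$) shows $(\varphi^{-m}h^{-1}x_0, [x_0, \varphi^N x_0])$ is $C$-aligned. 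The $\xi$-endpoint is handled symmetrically by passing to $\varphi^{-1}$, whose axis $\widehat\gamma$ (the reversal of $\gamma$) satisfies $C(\varphi^{-1}, \widehat\gamma, x_0) = C$: rewriting the alignment at that end in terms of the segments $[x_0, (\varphi^{-1})^k x_0]$, Lemma \ref{lem:BGIPHeredi}(3) for $\varphi^{-1}$ places $\pi_\gamma(\varphi^{-n}h^{-1}\xi)$ in $\gamma([-(K+C), +\infty))$, hence $\pi_\gamma(\varphi^{-m}h^{-1}\xi) \in \gamma([\tau N + \tau m - K - C, +\infty)) \subseteq \gamma((\tau N, +\infty))$, and the contrapositive of Lemma \ref{lem:BGIPHeredi}(2) for $\varphi^{-1}$ gives that $([x_0, \varphi^N x_0], \varphi^{-m}h^{-1}\xi)$ is $C$-aligned. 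Pushing the resulting $C$-aligned triple forward by $h'$ shows $(x_0, h'[x_0, \varphi^N x_0], \xi)$ is $C$-aligned; as $N$ was an arbitrary large integer, $\xi \in \La_{\varphi, C}(\Ga)$.

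Finally, $\Ga$-invariance of $\La_{\varphi, C}(\Ga)$ follows from the equality just proved: for $g \in \Ga$ and $\xi \in \La_{\varphi, C}(\Ga)$, pushing a witnessing $C$-aligned triple $(x_0, h[x_0, \varphi^n x_0], \xi)$ forward by $g$ gives that $(gx_0, gh[x_0, \varphi^n x_0], g\xi)$ is $C$-aligned, and since the nearest-point projection is $1$-Lipschitz, replacing $gx_0$ by $x_0$ in the first slot costs at most $d(x_0, gx_0)$, so $g\xi \in \La_{\varphi, C + d(x_0, gx_0)}(\Ga) = \La_{\varphi, C}(\Ga)$ (the last equality being trivial if $gx_0 = x_0$). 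I expect the main obstacle to be the bookkeeping in the trimming step — tracking the nearest-point projection onto the axis at \emph{both} endpoints at once, transporting it correctly through $\varphi^{\pm m}$ and through the reversal $\gamma \leftrightarrow \widehat\gamma$ — together with the routine but slightly delicate extension of Lemma \ref{lem:BGIPHeredi} to the boundary point $\xi$ (justified by approximating $\xi$ by interior sequences and invoking the extended projection of Lemma \ref{lem:alignExtCts}; the degenerate cases $h^{-1}\xi \in \{\gamma(+\infty), \gamma(-\infty)\}$ either contradict $K$-alignment for large $n$ or make the required $C$-alignment obvious).
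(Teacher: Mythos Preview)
The paper does not give its own proof of this lemma; it simply cites \cite[Lemma 6.4]{CK_ML}. Your trimming argument is correct and is the natural approach: the easy inclusion is immediate, and for the reverse inclusion, given a $K$-aligned witness $(x_0, h[x_0,\varphi^{N+2m}x_0], \xi)$ you chop $m$ steps off each end to obtain $(x_0, h\varphi^m[x_0,\varphi^N x_0], \xi)$, and the bookkeeping via Lemma~\ref{lem:BGIPHeredi}(2)--(3) (pushed through the $\varphi^{\pm m}$-translation on the axis, and through the reversal $\gamma\leftrightarrow\widehat\gamma$ at the far endpoint) upgrades the alignment constant to $C$. Your deduction of $\Gamma$-invariance from the equality is also clean.

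The one place that deserves a bit more care is exactly the spot you flag: applying Lemma~\ref{lem:BGIPHeredi} at the boundary point $\xi$. As stated, that lemma is for $x\in X$, and Lemma~\ref{lem:alignExtCts} only extends $\pi_\eta$ for \emph{compact} $\eta$, not for the bi-infinite axis $\gamma$. The cleanest fix is to avoid $\pi_\gamma(\xi)$ altogether: work with an approximating sequence $z_j\to h^{-1}\xi$ in $X$, so that $([x_0,\varphi^n x_0], z_j)$ is $K$-aligned for large $j$ (this is how boundary alignment is defined), apply Lemma~\ref{lem:BGIPHeredi}(3) and then (2) to each $z_j$, and pass to the limit using Lemma~\ref{lem:alignExtCts} on the compact segment $[x_0,\varphi^N x_0]$. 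The endpoints $\varphi^\pm$ of $\gamma$ are handled exactly as you indicate. This is routine, but since you explicitly write ``$\pi_\gamma(\varphi^{-n}h^{-1}\xi)$'' in the text, you should either define that projection or rephrase via the sequence argument.
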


 For a non-elementary subgroup $\Ga < \Isom(X)$, isometries $g, \varphi \in \Ga$, constants $C > 0$, and $n \in \N$, we set
$$
U_C ( g; \varphi, n) := \left\{ \xi \in \La(\Ga) : \textrm{$(x_{0}, g[x_{0}, \varphi^{n} x_{0}], \xi)$ is $C$-aligned}\right\}.
$$
In \cite{CK_ML}, we observed that they form a basis for the  topology on the guided limit set.

\begin{lem}[{\cite[Lemma 7.9]{CK_ML}}]\label{lem:nbdBasis}
    Let $\Ga < \Isom(X)$ be a non-elementary subgroup containing a loxodromic isometry $\varphi \in \Ga$, and let $C=C(\varphi) > 0$ be as in Lemma \ref{lem:BGIPHeredi}. Then 
    $$\{U_{C}(g; \varphi, n) : g \in \Gamma, n \in \N \}$$ 
    forms a basis for the topology of $\La_{\varphi, C}(\Ga) \subset \partial X$.

In other words, for each  $\xi \in \La_{\varphi, C}(\Ga)$, for each open set $O \subset \partial X$ with $\xi \in O$ and for each $N \in \N$, there exist $g \in \Ga$, $n > N$, and an open set $V \subset \partial X$ such that
\[
\xi \in V  \cap \Lambda (\Gamma ) \subset U_{C}(g; \varphi, n) \subset O.
\]
\end{lem}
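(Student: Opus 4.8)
The plan is to establish, for each $\xi\in\La_{\varphi,C}(\Ga)$, each open $O\ni\xi$ in $\partial X$, and each $N\in\N$, the two inclusions in the ``in other words'' reformulation: I must produce $g\in\Ga$, $n>N$, and an open $V\subset\partial X$ with $\xi\in V\cap\La(\Ga)\subseteq U_C(g;\varphi,n)$ and with $U_C(g;\varphi,n)\subseteq O$. Throughout I fix a visual metric $d_{x_0}$ on $\partial X$ based at $x_0$. I also make one harmless normalization first: Lemma~\ref{lem:BGIPHeredi} only asserts the existence of the constant $C(\varphi)$, so I may replace it by a larger value, and by Lemma~\ref{lem:squeezedInv} this does not change $\La_{\varphi,C}(\Ga)$; hence I may assume $C=C(\varphi)$ is large enough that $\La_{\varphi,C}(\Ga)=\La_{\varphi,C-1}(\Ga)$, which buys a unit of slack that will be needed to match constants at the boundary.

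The first step is openness: I claim that if $(x_0,g[x_0,\varphi^n x_0])$ is $C$-aligned, then $U_C(g;\varphi,n)$ contains the relatively open set $V\cap\La(\Ga)$, where $V:=\{\zeta\in\partial X:d(\pi_{g[x_0,\varphi^n x_0]}(\zeta),g\varphi^n x_0)<C\}$. Indeed, the clause ``$(x_0,g[x_0,\varphi^n x_0])$ is $C$-aligned'' does not involve the third entry, so unwinding Definition~\ref{dfn:alignHoro} and using that $\pi_{g[x_0,\varphi^n x_0]}(\cdot)$ extends continuously to $X\cup\partial X$ (Lemma~\ref{lem:alignExtCts}) one obtains
\[
\{\zeta\in\La(\Ga):d(\pi_{g[x_0,\varphi^n x_0]}(\zeta),g\varphi^n x_0)<C\}\ \subseteq\ U_C(g;\varphi,n)\ \subseteq\ \{\zeta\in\La(\Ga):d(\pi_{g[x_0,\varphi^n x_0]}(\zeta),g\varphi^n x_0)\le C\},
\]
and $V$ is open in $\partial X$ as the preimage of an open ball under the continuous extended projection.

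Next I choose $g$ and $n$. Since $\xi\in\La_{\varphi,C}(\Ga)=\La_{\varphi,C-1}(\Ga)$, for every sufficiently large $n$ there is $h_n\in\Ga$ with $(x_0,h_n[x_0,\varphi^n x_0],\xi)$ being $(C-1)$-aligned; in particular $(x_0,h_n[x_0,\varphi^n x_0])$ is $C$-aligned and $d(\pi_{h_n[x_0,\varphi^n x_0]}(\xi),h_n\varphi^n x_0)\le C-1<C$, so by the first step (with $g:=h_n$) we have $\xi\in V\cap\La(\Ga)\subseteq U_C(h_n;\varphi,n)$. It remains to show $U_C(h_n;\varphi,n)\subseteq O$ for $n$ large. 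Fix any $\zeta\in U_C(h_n;\varphi,n)$. The $C$-alignment of $(x_0,h_n[x_0,\varphi^n x_0],\zeta)$ forces $\pi_{h_n[x_0,\varphi^n x_0]}(x_0)$ to lie within $C$ of $h_n x_0$ and $\pi_{h_n[x_0,\varphi^n x_0]}(\zeta)$ within $C$ of $h_n\varphi^n x_0$; by Corollary~\ref{cor:CAT(-1)Fellow} the former gives $d(x_0,h_n\varphi^n x_0)\ge d(x_0,\varphi^n x_0)-C-2\ge n\tau_\varphi-C-2$, while the shadow/alignment dictionary together with the contracting property (Lemma~\ref{lem:shadowandalignment}, Lemma~\ref{lem:CAT(-1)Fellow}, Fact~\ref{fact:CAT(-1)Thin}), applied to a geodesic ray from $x_0$ to $\zeta$, shows that this ray passes within a bounded multiple of $C$ of $h_n\varphi^n x_0$. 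Hence the Gromov product satisfies $(\zeta\mid h_n\varphi^n x_0)_{x_0}\ge n\tau_\varphi-O(C)$ for every $\zeta\in U_C(h_n;\varphi,n)$, and by hyperbolicity of $X$ this forces $(\zeta\mid\zeta')_{x_0}\ge n\tau_\varphi-O(C)$ for all $\zeta,\zeta'\in U_C(h_n;\varphi,n)$, so $\diam_{d_{x_0}}U_C(h_n;\varphi,n)\to 0$. Since $\xi\in U_C(h_n;\varphi,n)$ and $O$ contains a $d_{x_0}$-ball about $\xi$, we get $U_C(h_n;\varphi,n)\subseteq O$ for all large $n$; picking such $n>N$, setting $g:=h_n$, and taking $V$ from the first step completes the proof, since then $\xi\in V\cap\La(\Ga)\subseteq U_C(g;\varphi,n)\subseteq O$.

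The hard part is really the constant bookkeeping in the first step. Under the extended nearest-point projection, ``$(h_n[x_0,\varphi^n x_0],\zeta)$ is $C$-aligned'' only controls $d(\pi(\zeta),h_n\varphi^n x_0)$ by the non-strict inequality $\le C$, which is not enough to place $\xi$ inside the open set $V$; this is exactly what the preliminary enlargement of $C(\varphi)$ — giving $\La_{\varphi,C}(\Ga)=\La_{\varphi,C-1}(\Ga)$ and hence $(C-1)$-alignment for $\xi$ — is designed to absorb. The remaining inputs, namely the shadow/alignment dictionary, the contracting and squeezing properties, and the escape estimate $d(x_0,h_n\varphi^n x_0)\gtrsim n\tau_\varphi$, are routine $\CAT(-1)$ geometry already developed in this section.
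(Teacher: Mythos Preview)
The paper does not give its own proof of this lemma; it is quoted verbatim from \cite[Lemma 7.9]{CK_ML}. So there is nothing to compare against in this document, and I have simply checked your argument on its merits.

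Your proof is correct. The two ingredients---continuity of the extended nearest-point projection (Lemma~\ref{lem:alignExtCts}) to produce the open set $V$, and the contracting property (Lemma~\ref{lem:CAT(-1)Fellow}, Corollary~\ref{cor:CAT(-1)Fellow}) to force $\diam U_C(h_n;\varphi,n)\to 0$ via the Gromov product estimate---are exactly the right tools, and the bookkeeping is sound. The one nonstandard move is your preliminary enlargement of $C(\varphi)$ to arrange $\La_{\varphi,C}(\Ga)=\La_{\varphi,C-1}(\Ga)$; this is legitimate because the conclusions of Lemma~\ref{lem:BGIPHeredi} persist under increasing $C$ (each of (1)--(3) becomes weaker), so the enlarged constant is still a valid $C(\varphi)$, and by Lemma~\ref{lem:squeezedInv} the guided limit set is unchanged. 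That unit of slack is precisely what converts the nonstrict inequality $d(\pi(\xi),h_n\varphi^n x_0)\le C-1$ coming from the boundary alignment into the strict inequality $<C$ needed to place $\xi$ inside the open set $V$, and without it the argument would stall at that point.
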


\section{Product spaces} \label{sec:product}

We now consider a product of $\CAT(-1)$ spaces.
Let  $X_1, \dots, X_r$ be proper geodesic $\CAT(-1)$ spaces. Abusing notations, we use the same notation $d$ for the metric on each $X_i$. We consider the product space
$$
Z := X_1 \times \cdots \times X_r
$$
and set its boundary as
$$
\partial Z := \partial X_1 \times \cdots \times \partial X_r.
$$
One can see that $\partial Z$ is not the same as the geometric boundary of $Z$.
We define the convergence of sequences in $Z$ to $\partial Z$ as follows:

\begin{definition} \label{def:convergence}
    We say that a sequence $\{z_n = (x_{1, n}, \dots, x_{r, n}) \}_{n \in \N} \subset Z$ converges to $\xi = (\xi_1, \dots, \xi_r) \in \partial Z$ if for each $1 \le i \le r$,
    $$
    x_{i, n} \to \xi_i \quad \text{as } n \to + \infty.
    $$
    In this case, we also write $z_n \to \xi$.
\end{definition}

We also set
$$
\Isom(Z) := \Isom(X_1) \times \cdots \times \Isom(X_r).
$$
We call $\Ga < \Isom(Z)$ discrete if its action on $Z$ is proper.
With the above notion of convergence, we also define the limit set on $\partial Z$.

\begin{definition} \label{def:limitproduct}
    Let $\Ga < \Isom(Z)$ be a discrete subgroup. The \emph{limit set} $\La(\Ga) \subset \partial Z$ of $\Ga$ is the set of all accumulation points of a $\Ga$-orbit in $Z$, in the sense of convergence defined in Definition \ref{def:convergence}. One can see that $\La(\Ga)$ is a compact $\Ga$-invariant subset of $\partial Z$. 
\end{definition}

In this product case, we consider \emph{vector-valued Busemann cocycles}. For $\xi = (\xi_1, \dots, \xi_r) \in \partial Z$ and $z = (x_1, \dots, x_r), z' = (x_1', \dots, x_r') \in Z$, we set
\begin{equation}  \label{eqn:prodBuse}
\beta_{\xi}(z, z') := \left( \beta_{\xi_1}(x_1, x_1'), \dots, \beta_{\xi_r}(x_r, x_r') \right).
\end{equation}
Similarly, we consider the \emph{vector-valued distance}
\begin{equation}  \label{eqn:proddistance}
\kappa(z, z') := \left( d(x_1, x_1'), \dots, d(x_r, x_r') \right).
\end{equation}
For simplicity, we also use the notation 
$$
\beta_{\xi}^{i}(z, z') := \beta_{\xi_i}(x_i, x_i') \quad \text{and} \quad d_i (z, z') := d(x_i, x_i') \quad \text{for } 1 \le i \le r.
$$

\subsection{Transverse subgroups}

We mainly consider discrete subgroups with certain transversality.

\begin{definition} \label{def:transprod}
    We say that $\Ga < \Isom(Z)$ is \emph{transverse} if
    \begin{itemize}
        \item (divergent) for any infinite sequence $\{g_n\}_{n \in \N} \subset \Ga$ and any fixed $z \in Z$, we have for each $1 \le i \le r$ that
        $$
         d_i(g_n z, z) \to + \infty \quad \text{as } n \to + \infty,
        $$
        and
        \item (antipodal) for any distinct $\xi = (\xi_1, \dots, \xi_r), \zeta = (\zeta_1, \dots, \zeta_r) \in \La(\Ga)$,
        $$
        \xi_i \neq \zeta_i \quad \text{for all } 1 \le i \le r.
        $$
    \end{itemize}
\end{definition}

Note that for a transverse subgroup $\Ga < \Isom(Z)$, each of its projection $\Ga_i < \Isom(X_i)$ is a discrete subgroup. Moreover, its limit set $\La(\Ga_i) \subset \partial X_i$ is the same as the projection of $\La(\Ga) \subset \partial Z$. The following is an easy observation.

\begin{lemma} \label{lem:homeoprojection}
    Let $\Ga < \Isom(Z)$ be a transverse subgroup. Then for each $1 \le i \le r$, the projection $\La(\Ga) \to \La(\Ga_i)$ is an equivariant homeomorphism.
\end{lemma}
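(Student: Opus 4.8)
The plan is to show that the map in question --- write it $p_i : \La(\Ga) \to \partial X_i$, the restriction of the coordinate projection $\partial Z \to \partial X_i$ --- is a continuous bijection onto $\La(\Ga_i)$, and then upgrade this to a homeomorphism by a compactness argument. First I would record the easy structural facts: $\partial Z = \prod_j \partial X_j$ is compact because each $X_j$ is proper and hence $X_j \cup \partial X_j$ is compact; consequently $\La(\Ga) \subset \partial Z$ is compact; and $p_i$ is continuous, which is essentially the definition of convergence in $\partial Z$ (Definition \ref{def:convergence}). Equivariance of $p_i$ --- with respect to the $\Ga$-action on $\La(\Ga)$ and the $\Ga$-action on $\La(\Ga_i)$ induced through the coordinate homomorphism $\Ga \to \Ga_i$ --- is immediate, since $\Isom(Z)$ acts diagonally on $\partial Z$.

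Next I would identify the image, $p_i(\La(\Ga)) = \La(\Ga_i)$ (this is the remark preceding the statement, which I would make precise here). The inclusion $\subseteq$ is direct: if $g_n z \to \xi$ in $\partial Z$ then $(g_n)_i z_i \to \xi_i$, and the divergence hypothesis gives $d_i(g_n z, z) \to +\infty$, so $\xi_i \in \La(\Ga_i)$. For $\supseteq$, given $\xi_i \in \La(\Ga_i)$ choose $g_n \in \Ga$ with $(g_n)_i z_i \to \xi_i$; divergence gives $d_j(g_n z, z) \to +\infty$ for every $j$, so after passing to a subsequence --- using compactness of each $X_j \cup \partial X_j$ --- the points $(g_n)_j z_j$ converge to some $\zeta_j$, which must lie in $\partial X_j$ precisely because $d_j(g_n z, z) \to +\infty$. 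Then $g_n z \to \zeta = (\zeta_1, \dots, \zeta_r) \in \La(\Ga)$ with $p_i(\zeta) = \xi_i$.

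For injectivity I would invoke the antipodality hypothesis directly: if $\xi, \zeta \in \La(\Ga)$ satisfy $p_i(\xi) = p_i(\zeta)$, i.e.\ $\xi_i = \zeta_i$, then $\xi$ and $\zeta$ fail to have distinct $i$-th coordinates, so antipodality forces $\xi = \zeta$. Finally, a continuous bijection from the compact space $\La(\Ga)$ onto the Hausdorff (indeed metrizable) space $\La(\Ga_i) \subset \partial X_i$ is automatically a homeomorphism, completing the proof.

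As for difficulty: this lemma is genuinely elementary and I do not anticipate a real obstacle. The only point requiring a little care is the diagonal subsequence extraction in the surjectivity argument, where one must use the divergence hypothesis in \emph{each} coordinate to guarantee the subsequential limits land in $\partial X_j$ rather than in $X_j$ --- but this is routine. The content that makes the statement non-trivial is entirely carried by the two defining properties of a transverse subgroup: divergence delivers surjectivity, and antipodality delivers injectivity.
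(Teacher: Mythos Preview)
Your proposal is correct and follows essentially the same approach as the paper: antipodality gives injectivity, divergence gives surjectivity (which the paper states as a remark just before the lemma), and the compact-to-Hausdorff argument finishes it. The paper's proof is a two-sentence sketch that only names the injectivity step explicitly, so your version simply fills in the routine details the paper leaves implicit.
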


\begin{proof}
Equivariance is clear. So it remains to prove that the projection is injective. This is a direct consequence of the antipodality.
\end{proof}

Since each projection $\Ga_i < \Isom(X_i)$ acts on $X_i \cup \partial X_i$ as a convergence group with the limit set $\La(\Ga_i) \subset \partial X_i$, we have the following corollary:

\begin{corollary} \label{cor:convergenceaction}
    Let $\Ga < \Isom(Z)$ be a transverse subgroup. Then the $\Ga$-action on $\La(\Ga)$ is a convergence action.
\end{corollary}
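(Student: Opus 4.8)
The plan is to deduce Corollary~\ref{cor:convergenceaction} directly from Lemma~\ref{lem:homeoprojection} together with the fact that each projection $\Ga_i < \Isom(X_i)$ acts on its limit set as a convergence group. Recall that $X_i$ is a proper geodesic $\CAT(-1)$ space, hence Gromov hyperbolic, so the $\Isom(X_i)$-action on $X_i \cup \partial X_i$ is a convergence action, and since $\Ga_i$ is discrete (acts properly on $X_i$), the $\Ga_i$-action on $X_i \cup \partial X_i$ is a convergence action; restricting to the closed invariant subset $\La(\Ga_i)$ keeps it a convergence action. So the only real content is transporting the convergence property through the homeomorphism $\La(\Ga) \to \La(\Ga_i)$ given by projection to the first coordinate, say $i = 1$.

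First I would fix the identification: by Lemma~\ref{lem:homeoprojection}, projection $p_1 \colon \La(\Ga) \to \La(\Ga_1)$ is an equivariant homeomorphism, where the $\Ga$-action on $\La(\Ga_1)$ is via the quotient homomorphism $\Ga \to \Ga_1$. A convergence action of a group $H$ on a compact metrizable space $Y$ means: for every sequence of distinct elements $\{h_n\}$ in $H$ there is a subsequence and points $a, b \in Y$ (a ``sink'' and ``source'') such that $h_n \to a$ uniformly on compact subsets of $Y \smallsetminus \{b\}$. To verify this for the $\Ga$-action on $\La(\Ga)$, take a sequence of distinct $\{g_n\} \subset \Ga$. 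Their images $\{g_{n,1}\} \subset \Ga_1$ need not be distinct a priori, but this is harmless: if infinitely many $g_n$ share the same image in $\Ga_1$, then (since the antipodality/divergence forces the kernel of $\Ga \to \Ga_1$ to be finite — indeed any $g$ in the kernel fixes every point of $\La(\Ga_1) \cong \La(\Ga)$ pointwise, and a non-elementary convergence group has trivial pointwise stabilizer of its limit set, while if $\Ga$ is elementary there is nothing of substance to prove) we get a contradiction; alternatively one simply passes to a subsequence along which the images $\{g_{n,1}\}$ are distinct, which is possible unless some image is repeated infinitely often, handled as above. So WLOG $\{g_{n,1}\}$ is a sequence of distinct elements of $\Ga_1$.

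Next, apply the convergence property of the $\Ga_1$-action on $\La(\Ga_1)$: pass to a subsequence so that $g_{n,1} \to a_1$ uniformly on compacta of $\La(\Ga_1) \smallsetminus \{b_1\}$ for some $a_1, b_1 \in \La(\Ga_1)$. Set $a := p_1^{-1}(a_1)$ and $b := p_1^{-1}(b_1)$ in $\La(\Ga)$. Since $p_1$ is a homeomorphism, it carries compact subsets of $\La(\Ga) \smallsetminus \{b\}$ to compact subsets of $\La(\Ga_1) \smallsetminus \{b_1\}$, and uniform convergence $g_{n,1} \to a_1$ pulls back, via the equivariance $p_1(g_n \xi) = g_{n,1}\, p_1(\xi)$ and continuity of $p_1^{-1}$, to uniform convergence $g_n \to a$ on compact subsets of $\La(\Ga) \smallsetminus \{b\}$. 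This is exactly the convergence-action condition for $\Ga$ acting on $\La(\Ga)$, completing the proof. The main (and only mild) obstacle is the bookkeeping about distinctness of the images $g_{n,1}$ and the finiteness of the kernel $\Ga \to \Ga_1$; everything else is a formal transport of structure through the equivariant homeomorphism of Lemma~\ref{lem:homeoprojection}. In fact, since the paper only needs a convergence action and not faithfulness, one can sidestep the kernel discussion entirely by noting that a sequence $\{g_n\}$ with finitely many distinct images in $\Ga_1$ can, after dropping to a subsequence, be assumed to have constant image $g_{n,1} = h$; then $g_n = \gamma_n$ with $\gamma_n$ in the kernel $\cdot$ (a fixed lift), and one checks the kernel elements act trivially enough on $\La(\Ga)$ that the convergence condition holds with $a = b$ chosen arbitrarily—but the cleanest write-up just invokes discreteness of $\Ga$ and properness to rule this out, or simply passes to a genuinely distinct subsequence.
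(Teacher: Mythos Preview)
Your proposal is correct and follows the same approach as the paper: transport the convergence property of $\Ga_1$ acting on $\La(\Ga_1)$ through the equivariant homeomorphism $p_1 : \La(\Ga) \to \La(\Ga_1)$ of Lemma~\ref{lem:homeoprojection}. The paper's proof is just the one-sentence remark preceding the statement, and you have supplied the routine verification; your discussion of the kernel is slightly roundabout (the divergence condition in Definition~\ref{def:transprod} directly forces $d_1(z_0, g_n z_0) \to \infty$ for any infinite sequence $\{g_n\}\subset\Ga$, so the projections $g_{n,1}$ automatically form an infinite sequence in $\Ga_1$), but the argument is sound.
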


Lemma \ref{lem:homeoprojection} also induces a $\Ga$-equivariant homeomorphism
$$
\La(\Ga_i) \to \La(\Ga_j) \quad \text{for each } i, j = 1, \dots, r.
$$
This yields componentwise ``type-preserving'' phenomenon for transverse subgroups.

\begin{corollary} \label{cor:typepreserving}
    Let $\Ga < \Isom(Z)$ be a transverse subgroup. Then each projection $\Ga \to \Ga_i < \Isom(X_i)$ has a finite kernel. Moreover, for any $(g_1, \dots, g_r) \in \Ga$, if $g_i \in \Isom(X_i)$ is loxodromic for some $1 \le i \le r$, then $g_j \in \Isom(X_j)$ is loxodromic for all $1 \le j \le r$.
\end{corollary}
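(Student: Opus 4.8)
The plan is to prove the two assertions in turn, using only the two defining properties of a transverse subgroup (divergence and antipodality) together with the classification of isometries of a proper geodesic $\CAT(-1)$ space recalled in Section \ref{section:prelim}. For the finiteness of the kernel $N_i$ of the projection $\Ga \to \Ga_i$, I would argue directly: any $g = (g_1, \dots, g_r) \in N_i$ has $g_i = \id$, so $d_i(gz, z) = 0$ for every $z \in Z$; if $N_i$ were infinite, picking an infinite sequence $\{g_n\} \subset N_i \subset \Ga$ and invoking the divergence condition would force $d_i(g_n z, z) \to +\infty$, a contradiction. The same finiteness shows that $g = (g_1,\dots,g_r)\in\Ga$ has infinite order as soon as one of its components $g_j$ does, since a power of $g$ lying in the finite kernel of $\Ga\to\Ga_j$ must have finite order.

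For the second assertion, fix $g = (g_1, \dots, g_r) \in \Ga$ with $g_i$ loxodromic and a basepoint $z_0 = (x_{1,0},\dots,x_{r,0})$. Then $g$ has infinite order, hence so does each $g_j$ by the previous remark, and by the classification each $g_j$ is either parabolic or loxodromic; it remains to rule out the parabolic case. Suppose $g_j$ is parabolic with unique fixed point $\eta_j \in \partial X_j$. Applying the divergence condition to the infinite sequences $\{g^n\}_{n\ge 1}$ and $\{g^{-n}\}_{n\ge 1}$ shows that, in each coordinate $k$, the orbit point $g_k^n x_{k,0}$ leaves every compact subset of $X_k$ as $n\to+\infty$ (resp. $n\to-\infty$). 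In the $i$-th coordinate this forces $g_i^n x_{i,0}\to g_i^+$ as $n\to+\infty$ and $g_i^n x_{i,0}\to g_i^-$ as $n\to-\infty$, where $g_i^\pm$ are the two distinct fixed points of the loxodromic $g_i$. In the $j$-th coordinate, any subsequential limit $\zeta$ of $\{g_j^n x_{j,0}\}$ must be fixed by $g_j$: the sequence $g_j^n(g_j x_{j,0})$ stays at bounded distance from $g_j^n x_{j,0}$, hence converges to the same $\zeta$, and continuity of the boundary extension of $g_j$ gives $g_j\zeta=\zeta$; therefore $\zeta=\eta_j$, so $g_j^n x_{j,0}\to\eta_j$ as $n\to\pm\infty$.

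Finally, by compactness of $\partial Z=\prod_k\partial X_k$ I would extract subsequences along which $g^{n_l}z_0\to\xi^+$ and $g^{m_l}z_0\to\xi^-$ with $n_l\to+\infty$ and $m_l\to-\infty$; since every coordinate of these orbits escapes to infinity, $\xi^\pm$ lie in $\La(\Ga)$ in the sense of Definition \ref{def:limitproduct}. The coordinatewise computations above give $\xi^+_i=g_i^+\neq g_i^-=\xi^-_i$, so $\xi^+\neq\xi^-$, while $\xi^+_j=\eta_j=\xi^-_j$, contradicting the antipodal property of $\Ga$. Hence $g_j$ is loxodromic for every $j$. The whole argument is essentially bookkeeping with the two transversality axioms; the only point requiring a little care---and the place I expect the (mild) main obstacle---is justifying that the parabolic orbit $g_j^n x_{j,0}$ genuinely converges to $\eta_j$, which I would handle as above via the boundary extension of $g_j$ together with the fact that divergence forces the orbit to leave every compact set.
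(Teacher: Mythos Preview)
Your proof is correct. The paper does not spell out a proof but indicates the intended route: use the $\Gamma$-equivariant homeomorphism $\La(\Ga_i)\to\La(\Ga_j)$ from Lemma~\ref{lem:homeoprojection}. With that in hand, the two distinct fixed points $g_i^\pm\in\La(\Ga_i)$ of the loxodromic $g_i$ are carried to two distinct $g_j$-fixed points in $\La(\Ga_j)\subset\partial X_j$; since $g_j$ has infinite order (by your finite-kernel argument), the classification of isometries then forces $g_j$ to be loxodromic.

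Your argument is really the same idea unrolled: instead of invoking the packaged homeomorphism, you manufacture the two limit points $\xi^\pm\in\La(\Ga)$ by hand from the orbits $\{g^{\pm n}z_0\}$ and then read off the contradiction with antipodality directly. The difference is that the paper's packaging sidesteps entirely what you flag as the ``mild main obstacle'' (that the parabolic orbit $g_j^n x_{j,0}$ must accumulate only on $\eta_j$): once one has the equivariant homeomorphism, one never needs to analyze orbit convergence in the $j$-th coordinate at all. Your handling of that point via the bounded-displacement/continuity argument is fine, but it is worth noting that the lemma-based route makes it unnecessary.
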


We now define the non-elementary property.

\begin{definition}
    We say that a transverse subgroup $\Ga < \Isom(Z)$ is \emph{non-elementary} if $\# \La(\Ga) \ge 3$.
\end{definition}

By Corollary \ref{cor:convergenceaction}, non-elementary transverse subgroup $\Ga < \Isom(Z)$ acts minimally on $\La(\Ga) \subset \partial Z$.

\subsection{Simultaneous alignment}

For each $1 \le i \le r$, let $x_i, x'_i \in X_i$ and let $\ga_i \subset X_i$ be a geodesic. Writing tuples $z = (x_1, \dots, x_r), z' = (x_1', \dots, x_r') \in Z$ and $\ga = (\ga_1, \dots, \ga_r)$, we say that
$$
(z, \ga, z') \quad \text{is \emph{$K$-aligned} for $K \ge 0$}
$$
if $(x_i, \ga_i, x_i')$ is $K$-aligned for all $1 \le i \le r$. We also write
$$
[z, z'] := ( [x_1, x_1'], \dots, [x_r, x_r']).
$$

Divergence and antipodality in the definition of transverse groups imply that the projections of a $\Gamma$-orbit to different factors are somehow synchronized. For example, the divergence implies the following.

\begin{prop}\label{prop:divFactors}
Let $\Gamma < \Isom(Z)$ be a transverse subgroup. Let $z =(x_1, \ldots, x_r ) \in Z$. Then for each $R>0$, there exists $R' = R'(R, z)>0$ such that, for every $(g_1, \ldots, g_r ) \in \Gamma$ with $d(x_1, g_1 x_1) > R'$, we have $d(x_i, g_i x_i ) > R$ for each $1 \le i \le r$. 
\end{prop}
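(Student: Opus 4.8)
\textbf{Proof plan for Proposition \ref{prop:divFactors}.}

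The plan is to argue by contradiction using the divergence axiom of transverse subgroups, which is the only hypothesis available that constrains the joint behavior of the factor projections. Suppose the statement fails for some $z = (x_1, \dots, x_r) \in Z$ and some $R > 0$. Then for every $n \in \N$ there exists $\gamma^{(n)} = (g_1^{(n)}, \dots, g_r^{(n)}) \in \Gamma$ with $d(x_1, g_1^{(n)} x_1) > n$ but $d(x_{i(n)}, g_{i(n)}^{(n)} x_{i(n)}) \le R$ for some index $i(n) \in \{1, \dots, r\}$. Since there are only finitely many indices, after passing to a subsequence we may assume $i(n) = i_0$ is constant, so we have a single offending factor $i_0$ with $d(x_{i_0}, g_{i_0}^{(n)} x_{i_0}) \le R$ for all $n$, while $d(x_1, g_1^{(n)} x_1) \to +\infty$.

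The key point is that the sequence $\{\gamma^{(n)}\}$ is infinite in $\Gamma$: indeed $d(x_1, g_1^{(n)} x_1) \to +\infty$ forces the $g_1^{(n)}$ to be pairwise distinct, hence the $\gamma^{(n)}$ are pairwise distinct (and in particular, after discarding finitely many, we get an honest infinite sequence in the group). Now I apply the divergence axiom of Definition \ref{def:transprod} to this infinite sequence $\{\gamma^{(n)}\}_{n \in \N} \subset \Gamma$ and the fixed point $z \in Z$: it asserts that $d_i(\gamma^{(n)} z, z) \to +\infty$ as $n \to +\infty$ for \emph{each} $1 \le i \le r$, in particular for $i = i_0$. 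But $d_{i_0}(\gamma^{(n)} z, z) = d(x_{i_0}, g_{i_0}^{(n)} x_{i_0}) \le R$ for all $n$, a contradiction. This proves that such an $R'$ exists.

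I do not expect any genuine obstacle here; the statement is essentially an unwinding of the divergence axiom together with the pigeonhole principle over the finite index set $\{1, \dots, r\}$. The only mild subtlety worth spelling out in the write-up is that one must first extract a subsequence to fix the offending index $i_0$ before invoking divergence, and one must check that the chosen subsequence is genuinely infinite in $\Gamma$ (which is immediate from $d(x_1, g_1^{(n)} x_1) \to +\infty$ and properness, or even just from the distinctness of the $g_1^{(n)}$). The dependence of $R'$ on $z$ as well as $R$ is inherent since the divergence axiom is stated for a fixed basepoint; one could alternatively note via the triangle inequality that $R'$ depends on $z$ only through, say, $d(x_i, z'_i)$ for a reference point, but this refinement is not needed for the applications.
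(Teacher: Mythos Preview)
Your proposal is correct and follows essentially the same approach as the paper: a contradiction argument producing a sequence in $\Gamma$ with $d(x_1, g_1^{(n)} x_1) \to +\infty$ but one factor displacement bounded, violating the divergence axiom. The paper simplifies by assuming $r=2$ (so the pigeonhole step is implicit), while you spell out the general case with the pigeonhole extraction of $i_0$ and the check that the sequence is infinite; these are the same argument.
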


\begin{proof}
For simplicity, we suppose $r = 2$. Suppose to the contrary that there exists a sequence $\{g_{n} = (g_{1, n}, g_{2, n})\}_{n \in \N} \subset \Gamma$ such that $d(x_2, g_{2, n} x_{2} ) \le R$ but $d(x_1, g_{1, n} x_{1}) > n$ for all $n \in \N$. Then $\{g_{n}\}_{n \in \N}$ is indeed an infinite sequence but the projection of the orbit on $X_{2}$ does not diverge. This contradicts the divergence condition.  
\end{proof}

A key observation in this paper is that for a transverse subgroup, alignment occurs simultaneously at each component. By the interpretation of alignment using shadows given in Lemma \ref{lem:shadowandalignment}, this is a consequence of the following:

\begin{proposition} \label{prop:componentwiseshadow}
    Let $\Ga < \Isom(Z)$ be a transverse subgroup. Let $z = (x_1, \dots, x_r) \in Z$. Then for any $R > 0$, there exists $R' = R'(R, z) > 0$ such that if $g = (g_1, \dots, g_r), h = (h_1, \dots, h_r) \in \Ga$ satisfy
    $
    h_1 x_1 \in O_R (x_1, g_1 x_1)
    $, then 
    $$
    h_i x_i \in O_{R'}(x_i, g_i x_i) \quad \text{for all } 1 \le i \le r.
    $$
\end{proposition}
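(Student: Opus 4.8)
The plan is to anchor everything at the fixed basepoint by translating by $g^{-1}$, reducing to a statement about Gromov products that only involves the $\Ga$-orbit of $z$, where the divergence and antipodality of a transverse subgroup directly apply. First I record the elementary dictionary between shadows and Gromov products in a proper $\CAT(-1)$ space $X$: writing $(x \mid w)_y := \tfrac12\big(d(x,y)+d(y,w)-d(x,w)\big)$, one has (a) if $w \in O_s(x,y)$ then $(x\mid w)_y \le s$, and (b) if $(x\mid w)_y \le s$ then $w \in O_{s+2}(x,y)$. Part (a) is immediate by picking $p\in[x,w]$ with $d(p,y)<s$ and applying the triangle inequality; part (b) follows from Corollary \ref{cor:CAT(-1)Fellow} applied to the geodesic $[x,w]$, which gives $d\big(y,[x,w]\big) =_{1.3} (x\mid w)_y$. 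Since $(x_i \mid h_ix_i)_{g_ix_i} = \big(g_i^{-1}x_i \mid g_i^{-1}h_ix_i\big)_{x_i}$ by applying the isometry $g_i^{-1}$, the hypothesis $h_1x_1 \in O_R(x_1, g_1x_1)$ gives, via (a), the bound $\big(g_1^{-1}x_1 \mid g_1^{-1}h_1x_1\big)_{x_1} \le R$, and via (b) it suffices to produce $C'=C'(R,z)>0$ with
\[
\big(g_i^{-1}x_i \mid g_i^{-1}h_ix_i\big)_{x_i} \le C' \qquad \text{for all } 1 \le i \le r,
\]
after which the proposition holds with $R'=C'+2$. Thus it is enough to establish: for every $R>0$ there is $C'=C'(R,z)>0$ such that for all $\alpha=(\alpha_1,\dots,\alpha_r),\beta=(\beta_1,\dots,\beta_r)\in\Ga$, if $(\alpha_1 x_1 \mid \beta_1 x_1)_{x_1}\le R$ then $(\alpha_i x_i \mid \beta_i x_i)_{x_i}\le C'$ for all $i$; one then applies this with $\alpha=g^{-1}$, $\beta=g^{-1}h$.

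I would prove this reduced statement by contradiction. If it fails for some $R$, there are $\alpha^{(n)},\beta^{(n)}\in\Ga$ with $(\alpha^{(n)}_1 x_1 \mid \beta^{(n)}_1 x_1)_{x_1}\le R$ while, after relabelling the components and passing to a subsequence, $(\alpha^{(n)}_2 x_2 \mid \beta^{(n)}_2 x_2)_{x_2}\to+\infty$. A Gromov product at $x_2$ is bounded above by each of $d_2(z,\alpha^{(n)}z)$ and $d_2(z,\beta^{(n)}z)$, so both of these tend to $+\infty$; in particular $\{\alpha^{(n)}\}$ and $\{\beta^{(n)}\}$ are infinite sequences in $\Ga$, so the divergence condition in Definition \ref{def:transprod} gives $d_i(z,\alpha^{(n)}z)\to+\infty$ and $d_i(z,\beta^{(n)}z)\to+\infty$ for every $i$. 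By compactness of each $\partial X_i$ we may pass to a further subsequence so that $\alpha^{(n)}z\to a=(a_1,\dots,a_r)$ and $\beta^{(n)}z\to b=(b_1,\dots,b_r)$ in the sense of Definition \ref{def:convergence}; then $a,b\in\La(\Ga)$. Since $(\alpha^{(n)}_2 x_2 \mid \beta^{(n)}_2 x_2)_{x_2}\to+\infty$ with $\alpha^{(n)}_2 x_2\to a_2$ and $\beta^{(n)}_2 x_2\to b_2$, we get $a_2=b_2$; by the antipodality of $\Ga$ this forces $a=b$, hence $a_1=b_1$. But then $\alpha^{(n)}_1 x_1$ and $\beta^{(n)}_1 x_1$ both converge to the single boundary point $a_1\in\partial X_1$, so $(\alpha^{(n)}_1 x_1 \mid \beta^{(n)}_1 x_1)_{x_1}\to+\infty$, contradicting the uniform bound $\le R$. (The two facts about boundary limits used here --- that $(u_n\mid v_n)_{x_0}\to\infty$ with $u_n\to u$ and $v_n\to v$ forces $u=v$, and that $u_n\to u$, $v_n\to u$ forces $(u_n\mid v_n)_{x_0}\to\infty$ --- are standard for proper $\CAT(-1)$ spaces and follow from visibility.) Note that uniformity of $C'$ in $\alpha,\beta$ (hence of $R'$ in $g,h$) is automatic from this contradiction setup.

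The one point that requires care is conceptual rather than computational: one should not try to argue directly with the points $g_ix_i$ and $h_ix_i$ and send them to $\partial X_i$. The hypothesis and the conclusion both describe shadows seen from the fixed point $x_i$ with the pivot $g_ix_i$ lying ``in the middle'', and $g_ix_i$ itself escapes to infinity; convergence of $g_ix_i$ and $h_ix_i$ to the same boundary point does not by itself bound $d\big([x_i,h_ix_i],g_ix_i\big)$ --- this genuinely fails already in $\H^2$, where a sequence can converge to a boundary point while drifting arbitrarily far from the geodesic ray to that point. Translating by $g^{-1}$ first re-anchors the picture at the honest basepoint $z$, where the $\Ga$-orbit lives and where both the divergence and the antipodality of a transverse subgroup can be invoked simultaneously; this re-anchoring is exactly the step that makes the argument close, and it is the only nontrivial idea in the proof.
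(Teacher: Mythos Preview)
Your proof is correct and follows essentially the same route as the paper's: both translate by $g^{-1}$ to anchor at the basepoint, argue by contradiction with sequences, and use divergence to get boundary limits in $\La(\Ga)$ and antipodality to derive the contradiction. The only cosmetic difference is that you convert shadows to Gromov products via the dictionary (a)--(b), whereas the paper works directly with shadows; the underlying logic is identical.
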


\begin{proof}
    For simplicity, we assume $r = 2$. Suppose to the contrary that there exist sequences $\{ (g_{1, n}, g_{2, n}) \}_{n \in \N}, \{ (h_{1, n}, h_{2, n}) \}_{n \in \N} \subset \Ga$ such that 
$$
h_{1, n} x_1 \in O_R( x_1, g_{1, n} x_1) \quad \text{and} \quad h_{2, n} x_2 \notin O_n (x_2, g_{2, n} x_2) \quad \text{for all } n \in \N.
$$
Then for all $n \in \N$,
$$
g_{1, n}^{-1}h_{1, n} x_1 \in O_R( g_{1, n}^{-1} x_1,  x_1) \quad \text{and} \quad g_{2, n}^{-1}h_{2, n} x_2 \notin O_n (g_{2, n}^{-1} x_2, x_2).
$$
In particular, both $\{g_{2, n}^{-1} \}_{n \in \N}$ and $\{g_{2, n}^{-1} h_{2, n}\}_{n \in \N}$ are infinite sequences.
This implies that, after passing to a subsequence,
$$
\lim_{n \to + \infty} g_{1, n}^{-1} h_{1, n} x_1 \neq \lim_{n \to + \infty} g_{1, n}^{-1} x_1 \quad \text{and} \quad \lim_{n \to + \infty} g_{2, n}^{-1} h_{2, n} x_2 = \lim_{n \to + \infty} g_{2, n}^{-1} x_2.
$$
Note that both
$$
\lim_{n \to + \infty} (g_{1, n}^{-1} h_{1, n}, g_{2, n}^{-1} h_{2, n})(x_1, x_2) \quad \text{and} \quad \lim_{n \to + \infty} (g_{1, n}^{-1}, g_{2, n}^{-1})(x_1, x_2)
$$
are points in $\La(\Ga)$.
However, their first components are different while their second components are the same. This contradicts  the antipodality.
\end{proof}

Proposition \ref{prop:componentwiseshadow} and Lemma \ref{lem:shadowandalignment} say that once we have an alignment on one component, we have it for all other components.

\begin{proposition} \label{prop:simultaneousalign}
    Let $\Ga < \Isom(Z)$ be a transverse subgroup and $z = (x_1, \dots, x_r) \in Z$. Then for any $K > 0$, there exists $\widehat{C} = \widehat{C}(K, z) > 0$ such that if $g = (g_1, \dots, g_r), h = (h_1, \dots, h_r), k = (k_1, \dots, k_r) \in \Ga$ satisfy that $(x_1, [g_1 x_1, h_1 x_1], k_1 x_1)$ is $K$-aligned, then
    $$
    (z, [gz, hz], kz) \quad \text{is $\widehat{C}$-aligned.}
    $$
\end{proposition}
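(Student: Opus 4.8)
The plan is to feed the one-factor hypothesis into two tools already established: the dictionary between alignment and shadows (Lemma \ref{lem:shadowandalignment}), and the transversality statement that a shadow containment in one factor forces shadow containments in all factors (Proposition \ref{prop:componentwiseshadow}). Concretely: translate the given $K$-alignment in $X_1$ into a pair of shadow containments in $X_1$, push each of those containments to every factor $X_i$ via Proposition \ref{prop:componentwiseshadow}, and translate the resulting shadows in each $X_i$ back into alignment, again by Lemma \ref{lem:shadowandalignment}. Since a $K$-aligned triple is $K'$-aligned for every $K' \ge K$, we may assume $K \ge 2$; set $R := 3K$. A degenerate case, in which the middle geodesic $[g_1 x_1, h_1 x_1]$ is too short to be fed into Lemma \ref{lem:shadowandalignment}(2), must be peeled off first.

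\emph{Degenerate case: $d(g_1 x_1, h_1 x_1) \le R$.} Here the shortness of the middle geodesic is exactly what rescues us. Applying the evident symmetric form of Proposition \ref{prop:divFactors} (its proof only uses that an infinite sequence in $\Ga$ diverges in every component, and is thus symmetric among the factors) to the element $g^{-1} h \in \Ga$, shortness of $[g_1 x_1, h_1 x_1]$ forces $d(g_i x_i, h_i x_i) = d(x_i, g_i^{-1} h_i x_i) \le R''$ for all $i$, where $R'' = R''(R, z)$ is uniform in $g, h, k$. Since the nearest-point projection of $k_i x_i$ lies on the geodesic $[g_i x_i, h_i x_i]$, which has length $\le R''$, the triple $(x_i, [g_i x_i, h_i x_i], k_i x_i)$ is automatically $R''$-aligned for every $i$; thus $(z, [gz, hz], kz)$ is $R''$-aligned.

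\emph{Main case: $d(g_1 x_1, h_1 x_1) > R = 3K$.} Then Lemma \ref{lem:shadowandalignment}(2), applied with its constant equal to $K$, converts the hypothesis into $k_1 x_1 \in O_{3K}(x_1, g_1 x_1) \cap O_{3K}(g_1 x_1, h_1 x_1)$. The first containment already has the chosen basepoint as observer, so Proposition \ref{prop:componentwiseshadow}---with $(g, h)$ there taken to be $(g, k)$ here---gives a constant $R' = R'(3K, z)$, which we enlarge so that $R' > 1$, such that $k_i x_i \in O_{R'}(x_i, g_i x_i)$ for all $i$. For the second containment, whose observer $g_1 x_1$ is not the basepoint, we first translate by the isometry $g_1^{-1}$, using $O_{3K}(g_1 x_1, h_1 x_1) = g_1 \cdot O_{3K}(x_1, g_1^{-1} h_1 x_1)$, to get $g_1^{-1} k_1 x_1 \in O_{3K}(x_1, g_1^{-1} h_1 x_1)$; since $g^{-1} h, g^{-1} k \in \Ga$, Proposition \ref{prop:componentwiseshadow} with $(g, h)$ replaced by $(g^{-1} h, g^{-1} k)$ gives $g_i^{-1} k_i x_i \in O_{R'}(x_i, g_i^{-1} h_i x_i)$, hence $k_i x_i \in O_{R'}(g_i x_i, h_i x_i)$ for all $i$ after translating back by $g_i$. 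Combining, each factor satisfies $k_i x_i \in O_{R'}(x_i, g_i x_i) \cap O_{R'}(g_i x_i, h_i x_i)$, so Lemma \ref{lem:shadowandalignment}(1) (legitimate as $R' > 1$) yields that $(x_i, [g_i x_i, h_i x_i], k_i x_i)$ is $6R'$-aligned for every $i$, i.e.\ $(z, [gz, hz], kz)$ is $6R'$-aligned. Taking $\widehat{C} := \max\{ R''(3K, z),\, 6 R'(3K, z) \}$ covers both cases.

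I do not anticipate a genuine obstacle: all the substance is absorbed into Proposition \ref{prop:componentwiseshadow} (which encodes antipodality) and Proposition \ref{prop:divFactors} (which encodes divergence), and the remainder is bookkeeping. The two points needing a moment's care are (i) not forgetting the short-geodesic case, where one must invoke divergence in the ``reverse'' direction---that is, the symmetry of Proposition \ref{prop:divFactors} among the factors---and (ii) normalizing the shadow $O_{3K}(g_1 x_1, h_1 x_1)$, whose observer is not the chosen basepoint, by an element of $\Ga$ before Proposition \ref{prop:componentwiseshadow} can be applied.
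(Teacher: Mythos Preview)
Your proof is correct and follows essentially the same strategy as the paper: split into the short-geodesic and long-geodesic cases, handle the former via divergence (you invoke the symmetric form of Proposition \ref{prop:divFactors}, the paper the equivalent finiteness in Corollary \ref{cor:typepreserving}), and handle the latter by passing through shadows via Lemma \ref{lem:shadowandalignment} and Proposition \ref{prop:componentwiseshadow}. Your write-up is in fact more explicit than the paper's, particularly in the $g^{-1}$-translation needed to normalize the observer before the second application of Proposition \ref{prop:componentwiseshadow}.
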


\begin{proof}
For simplicity, assume that $r = 2$. Fix $z = (x_1, x_2) \in Z$ and $K > 1$. Then by Lemma \ref{lem:shadowandalignment} and Proposition \ref{prop:componentwiseshadow}, there exists $\widehat{C} > 0$ so that if $(g_1, g_2), (h_1, h_2), (k_1, k_2)  \in \Ga$ satisfy that $(x_1, [g_1 x_1, h_1 x_1], k_1 x_1)$ is $K$-aligned and $d(g_1 x_1, h_1 x_1) > 3K$, then $(z, [gz, hz], kz) $ is $\widehat{C}$-aligned.

Now by Corollary \ref{cor:typepreserving}, $\# \{ (g_1, g_2) \in \Ga : d(x_1, g_1x_1) \le 3K \} < + \infty$. Hence, we can take $\widehat{C} > 0$ large enough so that  if $(g_1, g_2), (h_1, h_2), (k_1, k_2)  \in \Ga$ satisfy  $d(g_1 x_1, h_1 x_1) \le 3K$, then $(z, [gz, hz], kz) $ is $\widehat{C}$-aligned.
\end{proof}

We are now ready to define the subset of $\partial Z$ that captures the dynamics of $\Gamma$. We first define shadows in $Z$. For $R > 0$ and $z = (x_1, \dots, x_r), z' = (x_1', \dots, x_r') \in Z$, we set
$$
O_R(z, z') := \prod_{i = 1}^r O_R(x_i, x_i') \subset Z \cup \partial Z.
$$

\begin{definition}\label{dfn:conical}
Let $\Gamma < \Isom(Z)$ be a transverse subgroup. 
We define the \emph{conical limit set} $\La_{c}(\Ga) \subset \partial Z$ by
$$
\La_{c}(\Ga) := \left\{ \xi \in \partial Z : \begin{matrix}
    \exists R > 0, z \in Z, \text{ an infinite sequence } \{g_n\}_{n \in \N}\subset \Ga \\
    \text{s.t. } \xi \in O_R(z, g_n z) \text{ for all } n \in \N.
\end{matrix}
\right\}
$$
\end{definition}

Proposition \ref{prop:componentwiseshadow}  says that $\La_{c}(\Ga)$ is precisely the homeomorphic preimage of  $\La_{c}(\Ga_i) \subset \partial X_i$ under the homeomorphism $\La(\Ga) \to \La(\Ga_i)$ in Lemma \ref{lem:homeoprojection}. This is again the same as the conical limit set of $\Ga$, for its convergence action on $\La(\Ga)$.

\subsection{Patterson--Sullivan theory}

We revisit the Patterson--Sullivan theory for this product space $Z$. In this generality, one can use recent theory of Blayac--Canary--Zhu--Zimmer \cite{BCZZ_PS}. As our Busemann cocycles take vector values, a choice of linear form $\psi : \R^r \to \R$ is involved in defining conformal density. We fix a basepoint $z_0 \in Z$.

\begin{definition}
    Let $\Ga < \Isom(Z)$ be a subgroup. For $\delta \ge 0$ and a linear form $\psi :~\R^r \to \R$, a family of Borel measures $\{ \nu_z \}_{z \in Z}$ on $\La(\Ga) \subset \partial Z$ is called a \emph{$\delta$-dimensional $\psi$-conformal density} of $\Ga$ if 
    \begin{itemize}
\item {\rm ($\Gamma$-invariance)} for every $g \in \Ga$ and $z \in Z$,
$$g_{*} \nu_{z} = \nu_{gz},$$
\item {\rm (conformality)} for every $z, w \in Z$, two measures $\nu_z$ and $\nu_w$ are in the same class and \[
\frac{d\nu_{z}}{d\nu_{w}}(\xi) = e^{-\delta \cdot \psi( \beta_{\xi}(z, w))} \quad \textrm{a.e., and}
\]
\item {\rm (normalization)} $\nu_{z_0}(\partial Z) = 1$.
\end{itemize}
\end{definition}

Similarly, we also choose a linear form to define a Poincar\'e series of $\Ga$: for a linear form $\psi : \R^r \to \R$ and $s \in \R$,
$$
\mathcal{P}_{\Ga, \psi}(s) := \sum_{g \in \Ga} e^{-s \psi(\kappa(z_0, g z_0))}.
$$
The associated \emph{critical exponent} is defined as 
$$
\delta_{\psi}(\Ga) := \inf \{ s > 0 : \mathcal{P}_{\Ga, \psi}(s) < + \infty \} \in [0, + \infty].
$$
\begin{definition} \label{def:divtypegpandmeasure}
    We say that a transverse subgroup $\Ga < \Isom(Z)$ is of \emph{$\psi$-divergence type} if $\delta_{\psi}(\Ga) < + \infty$ and $\mathcal{P}_{\Ga, \psi}(\delta_{\psi}(\Ga)) = + \infty$. 
    We also say that a conformal density $\nu$ of $\Ga$ is of \emph{divergence type} if $\Ga$ is of $\psi$-divergence type where $\psi$ is a  linear form associated to $\nu$.
\end{definition}

As a special case of results in \cite{BCZZ_PS}, we obtain the following. 
Theorems stated below were proved in (\cite{CZZ_transverse}, \cite{KOW_PD}) when each $X_i$ is a rank one Riemannian symmetric space.

\begin{theorem}[{\cite[Theorem 4.1]{BCZZ_PS}}] 
    Let $\Ga < \Isom(Z)$ be a non-elementary transverse subgroup and let $\psi : \R^r \to \R$ be a linear form. If $\delta_{\psi}(\Ga) < + \infty$, then there exists a $\delta_{\psi}(\Ga)$-dimensional $\psi$-conformal density of $\Ga$.
\end{theorem}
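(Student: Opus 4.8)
The plan is to run the classical Patterson--Sullivan construction, adapted to the vector-valued cocycle $\beta$ via the scalar quantity $\psi\circ\kappa$. Set $\delta:=\delta_{\psi}(\Ga)$ and work inside the compact metrizable space $\widehat Z:=\prod_{i=1}^r (X_i\cup\partial X_i)$, which contains $Z$; note that by the divergence condition in the definition of a transverse subgroup, every accumulation point of an orbit $\Ga z_0$ in $\widehat Z$ already lies in $\prod_{i=1}^r\partial X_i=\partial Z$, hence in $\La(\Ga)$. I would first dispose of the convergence-type case: if $\mathcal P_{\Ga,\psi}(\delta)=+\infty$ we proceed directly, and otherwise fix, by Patterson's trick, a non-decreasing slowly varying function $h:\R\to(0,+\infty)$ (so $h(t+c)/h(t)\to 1$ as $t\to+\infty$ for each $c$) such that $\mathcal P^h_{\Ga,\psi}(s):=\sum_{g\in\Ga}h\big(\psi(\kappa(z_0,gz_0))\big)\,e^{-s\psi(\kappa(z_0,gz_0))}$ still converges for $s>\delta$ and diverges at $s=\delta$; here $\psi\circ\kappa$ is bounded below on $\Ga z_0$ precisely because $\delta_\psi(\Ga)<+\infty$. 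Writing $w_s(z,y):=h\big(\psi(\kappa(z,y))\big)\,e^{-s\psi(\kappa(z,y))}$, I would define for each $z\in Z$ and $s>\delta$ the finite measure on $\widehat Z$
$$
\mu_z^s:=\frac{1}{\mathcal P^h_{\Ga,\psi}(s)}\sum_{g\in\Ga}w_s(z,gz_0)\,\mathcal D_{gz_0},
$$
where $\mathcal D_{gz_0}$ is the Dirac mass at $gz_0$; then $\mu_{z_0}^s$ is a probability measure, and a reindexing of the sum (using that elements of $\Ga$ act as isometries in each factor, so that $\kappa(z,g^{-1}\gamma z_0)=\kappa(gz,\gamma z_0)$) gives $g_*\mu_z^s=\mu_{gz}^s$ for all $g\in\Ga$.

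Next I would extract the limit density. On each atom $y=gz_0$ one has $\mu_z^s(\{y\})=\rho_z^s(y)\,\mu_{z_0}^s(\{y\})$ with $\rho_z^s(y):=w_s(z,y)/w_s(z_0,y)$, and the point is that $\rho_z^s$ extends to a continuous function on $\widehat Z$: each coordinate of $\kappa(z,y)-\kappa(z_0,y)$ extends continuously to $X_i\cup\partial X_i$ (its boundary value being $\beta^i_{\eta}(z,z_0)$) and is bounded by $d_i(z,z_0)$, while $h$ is continuous. The slowly varying property of $h$ together with this uniform bound yields $\rho_z^s\to e^{-\delta\psi(\beta_{\cdot}(z,z_0))}$ uniformly on $\widehat Z$ as $s\downarrow\delta$. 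By compactness of $\widehat Z$, I would pick $s_k\downarrow\delta$ with $\mu_{z_0}^{s_k}\to\nu_{z_0}$ in the weak-$*$ topology; since $\mu_z^{s_k}=\rho_z^{s_k}\,\mu_{z_0}^{s_k}$ with $\rho_z^{s_k}$ converging uniformly and the total masses of $\mu_z^{s_k}$ uniformly bounded, for every $z\in Z$ the measures $\mu_z^{s_k}$ converge weak-$*$ along the same subsequence to $\nu_z:=e^{-\delta\psi(\beta_{\cdot}(z,z_0))}\,\nu_{z_0}$.

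It then remains to verify the three axioms. Conformality is immediate from $\nu_z=e^{-\delta\psi(\beta_{\cdot}(z,z_0))}\nu_{z_0}$ and the additive cocycle identity $\beta_\xi(z,z_0)-\beta_\xi(w,z_0)=\beta_\xi(z,w)$, which gives $d\nu_z/d\nu_w=e^{-\delta\psi(\beta_{\cdot}(z,w))}$. Equivariance passes to the limit: $g_*\nu_z=\lim_k g_*\mu_z^{s_k}=\lim_k\mu_{gz}^{s_k}=\nu_{gz}$. For support and normalization, I would observe that for any compact $B\subset Z$ the set $\{g\in\Ga:gz_0\in B\}$ is finite by properness of the $\Ga$-action, so $\mu_{z_0}^{s_k}(B)\le C_B/\mathcal P^h_{\Ga,\psi}(s_k)\to 0$ since $\mathcal P^h_{\Ga,\psi}(s)\to+\infty$ as $s\downarrow\delta$; applying the same estimate to the complement of any open neighborhood of $\La(\Ga)$ in $\widehat Z$ — which meets only finitely many atoms, as $\La(\Ga)$ is the accumulation set of $\Ga z_0$ — shows $\supp\nu_{z_0}\subseteq\La(\Ga)$. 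Finally $\nu_{z_0}$ is a probability measure, being a weak-$*$ limit of probability measures on the compact space $\widehat Z$, so $\nu_{z_0}(\partial Z)=\nu_{z_0}(\La(\Ga))=1$. Hence $\{\nu_z\}_{z\in Z}$ is a $\delta_\psi(\Ga)$-dimensional $\psi$-conformal density of $\Ga$.

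The only genuinely delicate ingredient is Patterson's modification: it is needed exactly when $\Ga$ fails to be of $\psi$-divergence type, and it is what forces the limit measure to escape to $\partial Z$ instead of retaining mass inside $Z$. Beyond that, the vector-valued setting causes no new difficulty, since $\psi\circ\kappa$ is scalar and the slowly varying correction is applied to this scalar exactly as in the rank-one theory; transversality enters only through its divergence condition, guaranteeing that $\Ga z_0$ accumulates inside the full corner $\partial Z=\prod_i\partial X_i$ rather than on a mixed stratum of $\widehat Z$, while non-elementarity and antipodality are not actually needed for mere existence.
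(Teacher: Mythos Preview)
The paper does not supply its own proof of this statement; it is quoted as a special case of \cite[Theorem~4.1]{BCZZ_PS}. Your argument---running the classical Patterson--Sullivan construction with the scalar quantity $\psi\circ\kappa$ and Patterson's slowly varying modification---is correct and is exactly the method behind the cited reference, specialized to products of $\CAT(-1)$ spaces.

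One phrasing is worth tightening: the ratio $h(\psi(\kappa(z,\cdot)))/h(\psi(\kappa(z_0,\cdot)))$ does not literally extend to a continuous function on $\widehat Z$, since $\psi(\kappa(z_0,y))$ need not tend to $+\infty$ as $y$ approaches a general boundary point. What you actually need, and what your argument provides, is that this ratio is uniformly close to $1$ on cofinite subsets of $\Ga z_0$ (using that $\psi(\kappa(z_0,gz_0))\to+\infty$ along infinite sequences in $\Ga$, which follows from $\delta_\psi(\Ga)<+\infty$), while the finitely many exceptional atoms carry mass tending to $0$. With this adjustment the weak-$*$ limit argument goes through as written. Your closing remark that antipodality and non-elementarity play no role in the existence argument is correct; only the divergence condition in Definition~\ref{def:transprod} is used, to force accumulation of $\Ga z_0$ into $\partial Z$ rather than a mixed stratum of $\widehat Z$.
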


Indeed, existence of conformal density is equivalent to finiteness of the critical exponent. The following was proved for transverse subgroups of Lie groups, but the same proof works in our setting.

\begin{theorem}[{\cite[Proposition 10.1]{BCZZ_2}}] \label{thm:BCZZ_proper}
Let $\Ga < \Isom(Z)$ be a non-elementary transverse subgroup and let $\psi : \R^r \to \R$ be a linear form. If there exists a $\delta$-dimensional $\psi$-conformal density of $\Ga$, then 
$$
\delta_{\psi}(\Ga) \le \delta.
$$
In particular, $\delta_{\psi}(\Ga) < + \infty$.
\end{theorem}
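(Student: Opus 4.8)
The plan is to prove the stronger statement that $\mathcal{P}_{\Ga,\psi}(s)<+\infty$ for every $s>\delta$; since $\delta_{\psi}(\Ga)=\inf\{s>0:\mathcal{P}_{\Ga,\psi}(s)<+\infty\}$, this yields $\delta_{\psi}(\Ga)\le\delta$, and in particular $\delta_{\psi}(\Ga)<+\infty$ because $\delta<+\infty$. Fix a $\delta$-dimensional $\psi$-conformal density $\{\nu_z\}_{z\in Z}$ of $\Ga$, write $\kappa(g):=\kappa(z_0,gz_0)\in\R_{\ge0}^r$, and note we may assume $\delta>0$ (if $\delta=0$ then $\nu_{z_0}$ is $\Ga$-invariant, impossible for the non-elementary convergence action on $\La(\Ga)$).

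The main tool is a Sullivan-type shadow lemma: there are $R>0$ and $c\ge1$ with
$$ c^{-1}\,e^{-\delta\psi(\kappa(g))}\ \le\ \nu_{z_0}\big(O_R(z_0,gz_0)\big)\ \le\ c\,e^{-\delta\psi(\kappa(g))}\qquad\text{for all }g\in\Ga. $$
For $\xi$ in the shadow $O_R(z_0,gz_0)$ one has $\beta_{\xi}(z_0,gz_0)=\kappa(g)-\epsilon_{\xi}$ with $\epsilon_{\xi}\in[0,2R]^r$, so integrating the conformality relation $d\nu_{z_0}=e^{-\delta\psi(\beta_{\xi}(z_0,gz_0))}\,d\nu_{gz_0}$ over the shadow and using $\nu_{gz_0}(\partial Z)=1$ gives the upper bound (with $c=e^{2R\delta\|\psi\|}$). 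For the lower bound, first observe that $\supp\nu_{z_0}=\La(\Ga)$: the support is closed, nonempty and $\Ga$-invariant by $\Ga$-quasi-invariance of $\{\nu_z\}$, and $\Ga$ acts minimally on $\La(\Ga)$ (Corollary~\ref{cor:convergenceaction} and non-elementarity); moreover $\nu_{z_0}$ is non-atomic. Choosing three pairwise independent loxodromic elements of $\Ga$ (Lemma~\ref{lem:indep}), one obtains three pairwise disjoint compact subsets of $\La(\Ga)$, separated by a definite distance, each of positive $\nu_{z_0}$-mass; let $c_0>0$ be the minimum of these masses. Since $O_R(g^{-1}z_0,z_0)=g^{-1}O_R(z_0,gz_0)$, and for $R$ large the reverse shadow $O_R(g^{-1}z_0,z_0)$ is the complement of an arbitrarily small neighborhood of a single point of $\partial Z$, it contains one of the three sets; hence $\nu_{gz_0}(O_R(z_0,gz_0))=\nu_{z_0}(O_R(g^{-1}z_0,z_0))\ge c_0$, and feeding this back into the conformality relation yields $\nu_{z_0}(O_R(z_0,gz_0))\ge c_0\,e^{-2R\delta\|\psi\|}e^{-\delta\psi(\kappa(g))}$. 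The one product-specific point is that $O_R(g^{-1}z_0,z_0)=\prod_i O_R(g_i^{-1}x_i,x_i)$ must be controlled in all $r$ factors simultaneously — because of the flats in $Z$ the factors are a priori unrelated — and this is exactly what the transversality of $\Ga$ supplies, through Proposition~\ref{prop:componentwiseshadow}, which promotes a shadow containment in one factor to all factors at once. (This is the argument of \cite[Proposition~10.1]{BCZZ_2}; only the geometric inputs recorded in Section~\ref{section:prelim} are used, so replacing rank-one symmetric spaces by $\CAT(-1)$ factors causes no change.)

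Granting the shadow lemma, the counting is routine. By properness of the $\Ga$-action on $Z$, the quantity $N:=\#\{h\in\Ga:\|\kappa(h)\|_{\infty}\le 2R+2\}$ is finite; and if a point of $\partial Z$ lies in $O_R(z_0,gz_0)$ for some $g$ with $\kappa(g)$ in a fixed unit cube $n+[0,1)^r$, $n\in\Z_{\ge0}^r$, then $gz_0$ lies in a ball of radius $R+1$ (in the sup metric) determined by $n$ and that point, so the point lies in at most $N$ such shadows. Combining this bounded multiplicity with the lower bound of the shadow lemma and $\nu_{z_0}(\partial Z)=1$ gives a constant $C$ with $\#\{g\in\Ga:\kappa(g)\in n+[0,1)^r\}\le C\,e^{\delta\psi(n)}$ for $n\in\Z_{\ge0}^r$. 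On the other hand, applying the shadow lemma to the powers $\varphi^k$ of a loxodromic $\varphi\in\Ga$ — whose shadows $O_R(z_0,\varphi^k z_0)$ decrease to $\{\varphi^+\}$, which has zero $\nu_{z_0}$-mass — forces $\psi(\kappa(\varphi^k))=k\,\psi(\tau_{\varphi})+O(1)\to+\infty$, i.e.\ $\psi(\tau_{\varphi})>0$, where $\tau_{\varphi}$ is the vector-valued translation length of $\varphi$; since $\psi$ is thereby positive on the limit cone of $\Ga$ (the set of accumulation directions of $\kappa(\Ga)$, which are limits of directions of such $\tau_{\varphi}$), a compactness argument gives $c_1>0$ with $\psi(\kappa(g))\ge c_1\|\kappa(g)\|$ for all but finitely many $g\in\Ga$. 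Putting these together,
$$ \mathcal{P}_{\Ga,\psi}(s)\ \le\ e^{O(1)}\sum_{n\in\Z_{\ge0}^r}e^{\delta\psi(n)}\,e^{-s\psi(n)}\ \le\ e^{O(1)}\sum_{n\in\Z_{\ge0}^r}e^{-(s-\delta)\,c_1\|n\|/2}\ <\ +\infty $$
for every $s>\delta$, which completes the proof.

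The main obstacle is the lower bound of the shadow lemma in this generality. In a single $\CAT(-1)$ space it is the classical argument of Sullivan (needing only that $\nu_{z_0}$ have full support on $\La(\Ga)$, which follows from minimality and quasi-invariance), but in the product $Z$ one must make the shadow estimates hold in every factor simultaneously, which fails for an arbitrary discrete subgroup of $\Isom(Z)$; this is precisely the role of the transverse hypothesis, via Propositions~\ref{prop:divFactors}--\ref{prop:componentwiseshadow}. A secondary point, absent in rank one, is that $\psi$ need not be positive on $\R_{\ge0}^r$; this is circumvented by first extracting positivity of $\psi$ on the limit cone from the shadow lemma itself.
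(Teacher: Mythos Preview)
The paper does not supply its own proof of this statement; it simply cites \cite[Proposition~10.1]{BCZZ_2} and remarks that ``the same proof works in our setting.'' Your sketch is therefore not being compared against a proof in the paper, but against the standard argument it alludes to. Your outline (shadow lemma plus a multiplicity-and-counting estimate) is indeed that standard argument, and the geometric inputs you invoke from Sections~\ref{section:prelim}--\ref{sec:product} are the right ones.

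That said, two steps are not justified and one of them is a genuine gap. First, you assert that $\nu_{z_0}$ is non-atomic without proof; this is not automatic for an arbitrary $\delta$-dimensional $\psi$-conformal density (it follows once one knows $\psi(\tau_\varphi)>0$ for all loxodromics, but you use non-atomicity to \emph{deduce} that inequality, so the reasoning is circular as written). Second, and more seriously, the final summation
\[
\sum_{n\in\Z_{\ge0}^r} e^{-(s-\delta)\psi(n)}
\]
only converges after you restrict to those $n$ hit by some $\kappa(g)$ and then invoke $\psi(\kappa(g))\ge c_1\|\kappa(g)\|$. Your justification for this inequality is that ``the limit cone is the set of limits of directions $\tau_\varphi$''; in the Lie-group setting this is Benoist's density theorem, but in the general $\CAT(-1)$ product setting it is not available for free and you give no argument. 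Without it you have not shown that $\psi\circ\kappa$ is proper on $\Gamma$, and the series estimate collapses. (A route to close this gap within the paper's toolbox is to use the extension lemma, Lemma~\ref{lem:extension}, to produce for each $g\in\Gamma$ a loxodromic element whose Jordan projection approximates $\kappa(g)$ up to bounded error; combined with a careful non-atomicity argument via conjugation of loxodromics, this yields $\psi(\tau_\varphi)>0$ and then the desired properness.)

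A smaller point: for the lower shadow bound you correctly identify that the product shadow $O_R(g^{-1}z_0,z_0)$ has large complement in $\partial Z$, and you appeal to Proposition~\ref{prop:componentwiseshadow}. But that proposition is stated only for orbit points $hz_0$, not for limit points $\xi\in\Lambda(\Gamma)$; you need to pass to the limit along $h_n z_0\to\xi$ to conclude that $\Lambda(\Gamma)\cap\big(O_R(g_1^{-1}x_1,x_1)\times\prod_{i\ge2}\partial X_i\big)\subset O_{R'}(g^{-1}z_0,z_0)$, and this step should be made explicit.
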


When $\delta_{\psi}(\Ga) < +\infty$, we have $\psi(\kappa(z_0, g_n z_0)) \to + \infty$ for any infinite sequence $\{g_n \}_{n \in \N} \subset \Ga$. Then a classical construction of ``Schottky subgroup'' of $\Ga < \Isom(Z)$ implies $\delta_{\psi}(\Ga) > 0$ as well.

As part of their generalization of Hopf--Tsuji--Sullivan dichotomy, Blayac--Canary--Zhu--Zimmer proved the following:

\begin{theorem}[{\cite[Theorem 1.3]{BCZZ_PS}}] \label{thm:BCZZHTS}
    Let $\Ga < \Isom(Z)$ be a non-elementary transverse subgroup and $\{\nu_z\}_{z \in Z}$ a $\delta$-dimensional $\psi$-conformal density of $\Ga$, for a linear form $\psi : \R^r \to \R$. 
    
    Then the following are equivalent:
    \begin{enumerate}
    \item $\delta = \delta_{\psi}(\Ga) < + \infty$ and $\Ga$ is of $\psi$-divergence type.
    \item the conical limit set $\La_{c}(\Ga)$ is $\nu_z$-conull for all $z \in Z$.\end{enumerate}
    Moreover, in this case, the $\Ga$-action on $(\La(\Ga), \nu_z)$ is ergodic for all $z \in Z$.
\end{theorem}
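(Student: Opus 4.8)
The plan is to run the classical Hopf--Tsuji--Sullivan argument, with the product geometry handled through the simultaneous-alignment results of Section~\ref{sec:product}. The starting point is a \emph{shadow lemma} in the spirit of Sullivan: for the basepoint $z_0$ there is $R_0>0$ so that for every $R\ge R_0$ there is $C=C(R)\ge 1$ with
\[
C^{-1}\,e^{-\delta\,\psi(\kappa(z_0,gz_0))}\;\le\;\nu_{z_0}\big(O_R(z_0,gz_0)\big)\;\le\;C\,e^{-\delta\,\psi(\kappa(z_0,gz_0))}\qquad(g\in\Ga),
\]
and the analogous estimate at any $z\in Z$ follows since $\tfrac{d\nu_z}{d\nu_{z_0}}$ is continuous, hence bounded on compacta. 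For the upper bound one uses that $\xi\in O_R(z_0,gz_0)$ forces $0\le d_i(z_0,gz_0)-\beta_\xi^i(z_0,gz_0)\le 2R$ for each $i$, so $\lvert\psi(\kappa-\beta_\xi)\rvert\le 2R\,\norm{\psi}_1$, and $\nu_{z_0}(O_R(z_0,gz_0))=\int_{O_R(z_0,gz_0)}e^{-\delta\psi(\beta_\xi(z_0,gz_0))}\,d\nu_{gz_0}(\xi)$. For the lower bound one writes $O_R(z_0,gz_0)=g\cdot O_R(g^{-1}z_0,z_0)$, reduces via the same conformality estimate to a uniform lower bound $\inf_{h\in\Ga}\nu_{z_0}\big(O_R(hz_0,z_0)\big)>0$, and obtains the latter from $\operatorname{supp}\nu_{z_0}=\La(\Ga)$ (as $\La(\Ga)$ is $\Ga$-minimal and $\nu_{z_0}$ is $\Ga$-quasi-invariant) together with Proposition~\ref{prop:componentwiseshadow}, which lets one control all factors of $O_R(hz_0,z_0)$ once one factor is controlled. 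Throughout one uses that $\delta_\psi(\Ga)<+\infty$ forces $\psi(\kappa(z_0,g_nz_0))\to+\infty$ along any infinite sequence, so these estimates are not vacuous.

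Next I would prove $(2)\Rightarrow(1)$. If $\La_c(\Ga)$ is $\nu_z$-conull then $\Ga$ is infinite, so $0<\delta_\psi(\Ga)$, and $\delta_\psi(\Ga)\le\delta<+\infty$ by Theorem~\ref{thm:BCZZ_proper}. If $(1)$ failed we would have $\mathcal{P}_{\Ga,\psi}(\delta)<+\infty$ --- either because $\delta>\delta_\psi(\Ga)$, or because $\delta=\delta_\psi(\Ga)$ and $\Ga$ is not of $\psi$-divergence type. Then by the shadow lemma $\sum_{g\in\Ga}\nu_{z_0}(O_R(z_0,gz_0))\asymp\mathcal{P}_{\Ga,\psi}(\delta)<+\infty$ for every integer $R$, so by the first Borel--Cantelli lemma $\nu_{z_0}$-a.e.\ $\xi$ lies in only finitely many $O_R(z_0,gz_0)$ for each fixed $R$; a union over $R\in\N$ gives $\nu_{z_0}(\La_c(\Ga))=0$, contradicting $(2)$. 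Hence $(1)$ holds, and this also settles the bookkeeping that $\delta=\delta_\psi(\Ga)$.

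For $(1)\Rightarrow(2)$ I would run the divergent half of Borel--Cantelli. By the shadow lemma and $\psi$-divergence type, $\sum_{g\in\Ga}\nu_{z_0}(O_R(z_0,gz_0))=+\infty$. The crucial input --- and the point at which the product structure genuinely enters --- is a bounded-multiplicity (quasi-independence) estimate: there is $D=D(R)$ such that, writing $\Ga_T:=\{g\in\Ga:\psi(\kappa(z_0,gz_0))\le T\}$,
\[
\textstyle\sum_{g,h\in\Ga_T}\nu_{z_0}\big(O_R(z_0,gz_0)\cap O_R(z_0,hz_0)\big)\;\le\;D\,\big(\textstyle\sum_{g\in\Ga_T}\nu_{z_0}(O_R(z_0,gz_0))\big)^2 .
\]
In a single $\CAT(-1)$ factor this is classical: if two comparably sized shadows overlap then the corresponding elements are aligned along a common geodesic (Lemma~\ref{lem:shadowandalignment}), which bounds the overlap count at a given scale. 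In $Z$ one transfers this to all factors simultaneously using Proposition~\ref{prop:componentwiseshadow} and the simultaneous-alignment Proposition~\ref{prop:simultaneousalign}, so overlap counting in one well-behaved factor controls overlap counting in $Z$. With the divergent sum and the quasi-independence in hand, the Kochen--Stone inequality gives $\nu_{z_0}\big(\limsup_g O_R(z_0,gz_0)\big)>0$, that is, $\nu_{z_0}(\La_c(\Ga))>0$. To upgrade positivity to conullity --- and simultaneously to prove ergodicity --- I would use the \emph{Hopf argument}: let $A\subseteq\La(\Ga)$ be $\Ga$-invariant with $\nu_{z_0}(A)>0$, choose a $\nu_{z_0}$-density point $\xi_0\in A$ which is conical, take $g_n\in\Ga$ with $\xi_0\in O_R(z_0,g_nz_0)$, so that (along a subsequence) $g_n^{-1}\xi_0\to\eta$ and $g_n^{-1}z_0\to\eta'$ with $\eta\neq\eta'$ by antipodality; conformality plus the shadow lemma show that the renormalized pushforwards $(g_n^{-1})_*\big(\nu_{z_0}|_{O_R(z_0,g_nz_0)}\big)$ have Radon--Nikodym derivative with respect to $\nu_{z_0}$ bounded above and below on a fixed-size shadow about $\eta$, so ``zooming in'' at $\xi_0$ the density of $A=g_n^{-1}A$ tends to $1$ against a fixed multiple of $\nu_{z_0}$, forcing $\nu_{z_0}(\La(\Ga)\smallsetminus A)=0$. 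Taking $A=\La_c(\Ga)$ completes $(1)\Rightarrow(2)$, and for general invariant $A$ this is the asserted ergodicity; changing $z$ only rescales by a bounded continuous factor, so all conclusions are independent of $z$.

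The main obstacle is the quasi-independence estimate for shadows in the divergent direction. In the product $Z$ geodesics need not be contracting --- there are flats --- so the classical overlap-counting argument does not apply directly; one must exploit the transversality of $\Ga$, concretely via Propositions~\ref{prop:componentwiseshadow} and \ref{prop:simultaneousalign}, to synchronize alignment across all factors and carry out the counting inside a single $\CAT(-1)$ component. This is exactly the ``overcoming the presence of flats'' difficulty flagged in the introduction; the shadow lemma's lower bound, which also relies on Proposition~\ref{prop:componentwiseshadow}, is a milder instance of the same phenomenon.
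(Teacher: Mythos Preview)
The paper does not prove this statement: it is quoted verbatim from \cite[Theorem 1.3]{BCZZ_PS}, which establishes the Hopf--Tsuji--Sullivan dichotomy in the abstract setting of a convergence group equipped with an expanding cocycle. The only work the paper does is the unnamed lemma immediately following the theorem, which shows that the intrinsic convergence-group shadows $\mathcal{S}_\epsilon(\gamma)$ used in \cite{BCZZ_PS} are comparable to the geometric product shadows $O_R(z_0,\gamma z_0)$, so that the two notions of conical limit set coincide and the citation applies to the present setting.

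Your proposal is therefore a genuinely different route: rather than invoking the black box and matching shadows, you rerun the classical Sullivan argument directly for the product geometry, using Propositions~\ref{prop:componentwiseshadow} and~\ref{prop:simultaneousalign} to synchronize the factors. The outline is sound --- shadow lemma, Borel--Cantelli for $(2)\Rightarrow(1)$, Kochen--Stone plus a Hopf-type density argument for $(1)\Rightarrow(2)$ and ergodicity --- and you correctly locate the only nontrivial step in the quasi-independence estimate. Two places would need real work to make this self-contained: first, the overlap counting for shadows in $Z$ must be reduced to a single factor (your instinct to go through the homeomorphism $\La(\Ga)\to\La(\Ga_1)$ of Lemma~\ref{lem:homeoprojection} and do the CAT$(-1)$ argument there is the right one, but the bookkeeping of which of $g,h$ is ``inner'' must be shown to be consistent across factors, which uses Proposition~\ref{prop:divFactors}); second, the density-point step in your Hopf argument needs a Vitali-type covering for the shadows restricted to $\La(\Ga)$, which again one obtains by transferring the single-factor nesting property through Lemma~\ref{lem:homeoprojection}. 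The paper's route is shorter because the heavy lifting is outsourced to \cite{BCZZ_PS}; your route is more self-contained and stays entirely within the geometric framework of Section~\ref{sec:product}.
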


In fact, the conical limit set considered by Blayac--Canary--Zhu--Zimmeer has a slightly different form, because their result is for an arbitrary convergence group, not necessarily induced by an isometric action, and they introduced shadows defined intrinsically to the convergence group action. We first describe their shadows in our setting. For a non-elementary transverse subgroup $\Ga < \Isom(Z)$, noting that $\Ga$ acts on $\La(\Ga)$ as a convergence group, fix a metric $\mathsf{d}$ on the compactification $\Ga \cup \La(\Ga)$ \cite[Proposition 2.3]{BCZZ_PS}. Then for $\epsilon > 0$ and $\ga \in \Ga$, the set
$$
\mathcal{S}_{\epsilon}(\ga) := \ga ( \La(\Ga) \smallsetminus B_{\epsilon}(\ga^{-1}))
$$
is the associated shadow defined in \cite{BCZZ_PS}, where $B_{\epsilon}(\ga^{-1})$ denotes the open ball centered at $\ga^{-1}$ of radius $\epsilon$ with respect to the metric $\mathsf{d}$. Then they showed that a point $\xi \in \La(\Ga)$ is a conical limit point in the sense of convergence action if and only if there exists $\epsilon > 0$ and an infinite sequence $\{\ga_n\}_{n \in \N} \subset \Ga$ such that $\xi \in \mathcal{S}_{\epsilon}(\ga_n)$ for all $n \in \N$ \cite[Lemma 5.4]{BCZZ_PS}.

To apply their results to our setting, as in Theorem \ref{thm:BCZZHTS}, we record the following comparability of their shadows and the shadows we consider.

\begin{lemma}
    Let $\Ga < \Isom(Z)$ be a non-elementary transverse subgroup and $z \in Z$.
    \begin{enumerate}
        \item For any $\epsilon > 0$, there exists $R = R(\epsilon, z) > 0$ such that 
        $$
\mathcal{S}_{\epsilon}(\ga) \subset O_R(z, \ga z) \quad \text{for all } \ga \in \Ga.
        $$

        \item For any $R > 0$, there exists $\epsilon = \epsilon(R, z) > 0$ such that 
        $$
 O_R(z, \ga z) \cap \La(\Ga) \subset \mathcal{S}_{\epsilon}(\ga)  \quad \text{for all } \ga \in \Ga.
        $$
    \end{enumerate}

\end{lemma}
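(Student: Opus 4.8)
The plan is to prove both inclusions by contradiction, using compactness of the compactification $\Ga \cup \La(\Ga)$ together with the convergence property of the action, and to pass all information to a single factor $X_i$ via the transversality of $\Ga$. The preliminary observation I would isolate first is the following \emph{compatibility of limits}. Let $\{\ga_n\}_{n\in\N}\subset\Ga$ be a sequence of distinct elements. By the divergence property, $d(x_i,\ga_{i,n}x_i)\to+\infty$ in every factor, so after passing to a subsequence we may assume $\ga_{i,n}x_i\to p_i^+$ and $\ga_{i,n}^{-1}x_i\to p_i^-$ in $X_i\cup\partial X_i$ for each $i$, and, by compactness of $\Ga\cup\La(\Ga)$, that $\ga_n\to a$ and $\ga_n^{-1}\to b$ there, with $a=(a_1,\dots,a_r),\,b=(b_1,\dots,b_r)\in\La(\Ga)\subset\partial Z$. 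I claim $p_i^+=a_i$ and $p_i^-=b_i$ for all $i$. Indeed, by Lemma \ref{lem:homeoprojection} and antipodality there is at most one point of $\La(\Ga)$ whose $i$-th coordinate equals $p_i^+$; since $\#\La(\Ga)\ge3$ we may pick $\zeta\in\La(\Ga)$ with $\zeta\neq a$ and $\zeta_i\neq p_i^+$. The convergence property of the $\Ga$-action on $\La(\Ga)$ (compatible with the metric $\mathsf{d}$) gives $\ga_n^{-1}\zeta\to b$, hence $\ga_{i,n}^{-1}\zeta_i\to b_i$; the convergence property of the discrete group $\Ga_i<\Isom(X_i)$ acting on $X_i\cup\partial X_i$ gives $\ga_{i,n}^{-1}\zeta_i\to p_i^-$. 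Thus $b_i=p_i^-$, and the identity $a_i=p_i^+$ is symmetric.

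\emph{Proof of (1).} Suppose it fails: there are $\epsilon>0$, $\ga_n\in\Ga$ and $\xi_n\in\mathcal{S}_\epsilon(\ga_n)$ with $\xi_n\notin O_n(z,\ga_n z)$. If the $\ga_n$ took only finitely many values we would be done, since $\La(\Ga)\subset O_R(z,\ga z)$ once $R>\max_i d(x_i,\ga_i x_i)$; so we may take the $\ga_n$ distinct and pass to the subsequences above. Pigeonholing, fix $i$ so that $d\big([x_i,\xi_{i,n}],\ga_{i,n}x_i\big)\ge n$ for all $n$ (after a further subsequence). Put $\eta_n:=\ga_n^{-1}\xi_n\in\La(\Ga)$, so $\mathsf{d}(\eta_n,\ga_n^{-1})\ge\epsilon$; since $\ga_n^{-1}\to b$ we get, after a subsequence, $\eta_n\to\eta_\infty\in\La(\Ga)$ with $\eta_\infty\neq b$, hence $\eta_{i,n}\to\eta_{i,\infty}\neq b_i=p_i^-$ by antipodality. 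Applying the isometry $\ga_{i,n}^{-1}$ of $X_i$ (which maps the ray $[x_i,\xi_{i,n}]$ to $[\ga_{i,n}^{-1}x_i,\eta_{i,n}]$) yields
\[
d\big([\ga_{i,n}^{-1}x_i,\ \eta_{i,n}],\ x_i\big)\ \ge\ n .
\]
But $\ga_{i,n}^{-1}x_i\to p_i^-$ and $\eta_{i,n}\to\eta_{i,\infty}\neq p_i^-$, so by continuity of geodesics in the proper $\CAT(-1)$ space $X_i$ with respect to their endpoints in $X_i\cup\partial X_i$, the geodesics $[\ga_{i,n}^{-1}x_i,\eta_{i,n}]$ converge to the bi-infinite geodesic with endpoints $p_i^-$ and $\eta_{i,\infty}$, which lies at finite distance from $x_i$ --- a contradiction.

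\emph{Proof of (2).} Suppose it fails: there are $R>0$, $\ga_n\in\Ga$ and $\xi_n\in O_R(z,\ga_n z)\cap\La(\Ga)$ with $\xi_n\notin\mathcal{S}_{1/n}(\ga_n)$, i.e.\ $\mathsf{d}(\ga_n^{-1}\xi_n,\ga_n^{-1})<1/n$. Since $\Ga$ is open and discrete in $\Ga\cup\La(\Ga)$ while $\La(\Ga)$ is closed and disjoint from $\Ga$, if some $g$ occurred infinitely often among the $\ga_n$ we would get $\ga_n^{-1}\xi_n\to g^{-1}$ with $\ga_n^{-1}\xi_n\in\La(\Ga)$, which is impossible; so the $\ga_n$ are distinct and we pass to the subsequences above. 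Put $\eta_n:=\ga_n^{-1}\xi_n\in\La(\Ga)$; then $\mathsf{d}(\eta_n,\ga_n^{-1})\to0$ and $\ga_n^{-1}\to b$, hence $\eta_n\to b$ and $\eta_{i,n}\to b_i=p_i^-$ for every $i$. For each $i$, applying $\ga_{i,n}^{-1}$ to the relation $\xi_n\in O_R(z,\ga_n z)$ gives
\[
d\big([\ga_{i,n}^{-1}x_i,\ \eta_{i,n}],\ x_i\big)\ <\ R .
\]
But $\ga_{i,n}^{-1}x_i\to p_i^-$ and $\eta_{i,n}\to p_i^-$, so the Gromov product $(\ga_{i,n}^{-1}x_i\mid\eta_{i,n})_{x_i}\to+\infty$, and since $d(x_i,[p,q])=(p\mid q)_{x_i}+O(1)$ in the hyperbolic space $X_i$ we get $d\big([\ga_{i,n}^{-1}x_i,\eta_{i,n}],x_i\big)\to+\infty$ --- a contradiction.

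\emph{Main obstacle.} The only genuinely delicate point is the compatibility step in the first paragraph: identifying the $\mathsf{d}$-limits $a,b\in\La(\Ga)$ of $\ga_n,\ga_n^{-1}$ with the factorwise geometric limits $p_i^\pm\in\partial X_i$. This is exactly where all three features of a non-elementary transverse subgroup enter --- divergence (to extract boundary limits in each factor), antipodality (to match them with the $\mathsf{d}$-limits and to propagate "separatedness" from one coordinate to the rest), and $\#\La(\Ga)\ge3$ (to supply the auxiliary point $\zeta$) --- because a priori the intrinsic metric $\mathsf{d}$ on $\Ga\cup\La(\Ga)$ records nothing about the geometry of the individual factors $X_i$. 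Once this bridge is in place, both parts are routine $\CAT(-1)$ geometry (continuity of geodesics in their endpoints, the coarse identity $d(x,[p,q])\approx(p\mid q)_x$) combined with the convergence-group property applied in $\Ga\cup\La(\Ga)$ and in each $X_i\cup\partial X_i$.
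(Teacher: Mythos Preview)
Your proof is correct and follows essentially the same contradiction strategy as the paper. The paper, after translating by $\gamma_n^{-1}$, simply asserts that the $\mathsf{d}$-limit of $\gamma_n^{-1}$ in $\Gamma\cup\Lambda(\Gamma)$ and the geometric limit of $\gamma_n^{-1}z$ in $Z\cup\partial Z$ coincide, then derives the contradiction directly; you make this compatibility explicit in your opening paragraph (using an auxiliary point $\zeta$ and the convergence property in both $\Lambda(\Gamma)$ and in each factor $X_i$), and you organise each contradiction in the reverse direction---using the $B_\epsilon$/$B_{1/n}$ condition to control the limit and then contradicting the shadow condition via factorwise $\CAT(-1)$ geometry, rather than the other way around. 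The extra care you take with the compatibility step is a genuine clarification of something the paper glosses over, but the underlying argument is the same.
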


\begin{proof}
We first show (1). Suppose to the contrary that for some $\epsilon > 0$, there exist sequences $\{ \ga_n \}_{n \in \N} \subset \Ga$ and $\{\xi_n\}_{n \in \N} \subset \partial Z$ such that
$$
\xi_n \in \mathcal{S}_{\epsilon}(\ga_n) \smallsetminus O_n(z, \ga_n z) \quad \text{for all } n \in \N.
$$
Here, the sequence $\{\ga_n\}_{n \in \N}$ must be infinite.
For each $n \in \N$ we have
$$
\ga_n^{-1} \xi_n \notin  B_{\epsilon}(\ga_n^{-1}) \cup  O_n(\ga_n^{-1} z, z).
$$
Since the $\Ga$-action on $Z \cup \partial Z$ is convergence action (Corollary \ref{cor:convergenceaction}), after passing to a subsequence, there exists $\xi \in \La(\Ga)$ so that $\ga_n^{-1} \to \xi$ in the compactifiaction $\Ga \cup \La(\Ga)$ and $\ga_n^{-1} z \to \xi$ in $Z \cup \partial Z$. Since $\ga_n^{-1} \xi_n \notin O_n(\ga_n^{-1} z, z)$ for all $n \in \N$, we have $\ga_n^{-1} \xi_n \to \xi$ as well. On the other hand, $B_{\epsilon/2}(\xi) \subset B_{\epsilon}(\ga_n^{-1})$ for all large $n \in \N$, and hence this contradicts that $\ga_n^{-1} \xi_n \notin B_{\epsilon}(\ga_n^{-1})$ for all $n \in \N$.

To see (2), suppose that for some $R > 0$, there exist sequences $\{ \ga_n \}_{n \in \N} \subset \Ga$ and $\{\xi_n\}_{n \in \N} \subset \La(\Ga)$ such that
$$
\xi_n \in O_R(z, \ga_n z) \smallsetminus \mathcal{S}_{1/n}(\ga_n) \quad \text{for all } n \in \N.
$$
Again, $\{\ga_n\}_{n \in \N}$ is an infinite sequence, and we have that for each $n \in N$,
$$
\ga_n^{-1} \xi_n \in O_R(\ga_n^{-1} z,  z) \cap B_{1/n}(\ga_n^{-1}) \quad \text{for all } n \in \N.
$$
After passing to a subsequence, we denote by $\xi \in \La(\Ga)$ the limit of sequences $\{ \ga_n^{-1} z \}_{n \in \N}$ and $\{\ga_n^{-1}\}_{n \in \N}$. Since $\ga_n^{-1} \xi_n \in O_R(\ga_n^{-1} z,  z)$ for all $n \in \N$, we have $\lim_{n \to + \infty} \ga_n^{-1} \xi_n \neq \xi$ after passing to a subsequence. On the other hand, this contradicts that $\ga_n^{-1} \xi_n \in B_{1/n}(\ga_n^{-1})$ for all $n \in \N$.
\end{proof}

\section{Rigidity of ergodic invariant Radon measures} \label{sec:UE}

We continue the setting of Section \ref{sec:product}.
In this section, we prove a measure rigidity on horospherical foliations.

In the rest of this section, we fix a basepoint $z_0 \in Z$.
The \emph{horospherical foliation} of $Z$ is the space
\begin{equation} \label{eqn:horofoliation}
\mathcal{H} := \partial Z \times \R^r
\end{equation}
and $\Isom(Z)$ acts on $\mathcal{H}$ as follows: for $g \in \Isom(Z)$ and $(\xi, u) \in \mathcal{H}$,
$$
g\cdot (\xi, u) := (g \xi, u + \beta_{\xi}(g^{-1} z_0, z_0)).
$$

We define a Radon measure on $\mathcal{H}$ as follows:

\begin{definition} \label{def:candidateergodicmeasure}
    Let $\Ga < \Isom(Z)$ be a non-elementary transverse subgroup and $\nu := \{ \nu_z \}_{z \in Z}$ be a $\delta_{\psi}(\Ga)$-dimensional $\psi$-conformal density of $\Ga$, for a linear form $\psi : \R^r \to \R$. We define a Radon measure $\mu_{\nu}$ on $\mathcal{H} = \partial Z \times \R^r$ by 
    $$
    d\mu_{\nu}(\xi, u) := e^{\delta_{\psi}(\Ga) \cdot \psi(u)} \cdot d \nu_{z_0} (\xi) \, du
    $$
    where $du$ is the Lebesgue measure on $\R^r$. If $\Ga$ is of $\psi$-divergence type, then we write
    $$
    \mu_{\psi} := \mu_{\nu}.
    $$
\end{definition}

\begin{remark}
It follows from the conformality of $\nu$ that $\mu_{\nu}$ is $\Ga$-invariant. If $\Ga$ is of $\psi$-divergence type,  then  there exists a unique $\delta_{\psi}(\Ga)$-dimensional $\psi$-conformal density of $\Ga$ by Theorem \ref{thm:BCZZHTS}. This is a reason for writing $\mu_{\psi} = \mu_{\nu}$ in this case. Moreover, by Theorem \ref{thm:BCZZHTS}, $\mu_{\psi}$ is supported on $\La_{c}(\Ga) \times \R^r$.
\end{remark}

To present the precise statement of our rigidity result, we also consider the following notion for the distribution of translation lengths of loxodromic elements. We say that an element $g = (g_1, \dots, g_r) \in \Isom(Z)$ is \emph{loxodromic} if $g_i \in \Isom(X_i)$ is loxodromic for all $1 \le i \le r$. In this case, we write its vector-valued translation length as
$$
\tau_g := (\tau_{g_1}, \dots, \tau_{g_r}) \in \R^r.
$$

\begin{definition} \label{def:spectrum}
    For $\Ga < \Isom(Z)$, its (vector-valued) \emph{length spectrum} is defined as 
    $$
    \Spec(\Ga) := \{ \tau_g \in \R^r : g \in \Ga \text{ is loxodromic.}\}
    $$
    We say that $\Spec(\Ga)$ is \emph{non-arithmetic} if it generates a dense additive subgroup of $\R^r$.
\end{definition}

\subsection{Rigidity of measures}

The following is our main rigidity theorem.

\begin{theorem} \label{thm:uniqueRadon}
Let $\Ga < \Isom(Z)$ be a non-elementary transverse subgroup with non-arithmetic length spectrum. Suppose that there exists a $\Ga$-invariant ergodic Radon measure $\mu$ on $\mathcal{H}$.
\begin{enumerate}
    \item If $\mu$ is supported on $\La_c(\Ga) \times \R^r$, then $\Ga$ is of $\psi$-divergence type for some linear form $\psi : \R^r \to \R$ and
    $$
    \mu \text{ is a constant multiple of $\mu_{\psi}$.}
    $$
    \item If $\mu$ is supported on $\mathcal{H} \smallsetminus (\La(\Ga) \times \R^r$), then $\mu$ is a constant multiple of 
    $$
    \sum_{g \in \Ga} D_{g \cdot \xi} \quad \text{for some $\xi \in \mathcal{H} \smallsetminus (\La(\Ga) \times \R^r)$}
    $$
    where $D_{g \cdot \xi}$ is the Dirac measure at $g \cdot \xi$.
\end{enumerate}

\end{theorem}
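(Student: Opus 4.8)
The plan is to split according to where the ergodic measure $\mu$ sits. By $\Ga$-invariance and ergodicity, $\mu$ is supported on exactly one of the disjoint $\Ga$-invariant Borel sets $\La_c(\Ga)\times\R^r$, $(\La(\Ga)\smallsetminus\La_c(\Ga))\times\R^r$, or $(\partial Z\smallsetminus\La(\Ga))\times\R^r$; the two assertions of the theorem concern the first and the last, which we handle separately. For case~(2), I would first show that $\Ga$ acts properly discontinuously on $\mathcal{U}:=\mathcal{H}\smallsetminus(\La(\Ga)\times\R^r)$. If not, there are $g_n\to\infty$ in $\Ga$ and $(\xi_n,u_n)\to(\xi,u)\in\mathcal{U}$ with $\{g_n\cdot(\xi_n,u_n)\}$ bounded; by the divergence property $d_i(g_nz_0,z_0)\to+\infty$ for all $i$, so after a subsequence each component of $g_n^{-1}z_0$ converges to some $\eta_i\in\partial X_i$ with $(\eta_1,\dots,\eta_r)\in\La(\Ga)$, whence $\xi_i\ne\eta_i$ for some $i$; for that $i$ the Busemann values $\beta^i_{\xi_n}(g_n^{-1}z_0,z_0)\to+\infty$ (the relevant Gromov products stay bounded since $\xi_i\ne\eta_i$), and the $i$-th coordinate of the $\R^r$-part of $g_n\cdot(\xi_n,u_n)=(g_n\xi_n,\,u_n+\beta_{\xi_n}(g_n^{-1}z_0,z_0))$ escapes, a contradiction. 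Proper discontinuity then forces an ergodic $\Ga$-invariant Radon measure on $\mathcal{U}$ to be supported on a single (closed, discrete, finite-stabilizer) orbit, i.e.\ a constant multiple of $\sum_{g\in\Ga}D_{g\cdot\xi}$, which is conclusion~(2).

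For case~(1), fix a loxodromic $\varphi=(\varphi_1,\dots,\varphi_r)\in\Ga$ --- each $\varphi_i$ being loxodromic by Corollary~\ref{cor:typepreserving} --- with axes $\ga_i\subset X_i$, vector translation length $\tau_\varphi=(\tau_{\varphi_1},\dots,\tau_{\varphi_r})$, and $C=C(\varphi)$ as in Lemma~\ref{lem:BGIPHeredi}. The first step (the content of Theorem~\ref{thm:radonCharge}) is to upgrade conicality --- which holds $\mu$-a.e.\ by the hypothesis and Theorem~\ref{thm:BCZZHTS} --- to the fellow-traveling statement that $\mu$-a.e.\ $(\xi,u)$ has $\xi\in\La_{\varphi,C}(\Ga)$: one uses the contracting property (Lemma~\ref{lem:CAT(-1)Fellow}) and the extension lemma (Lemma~\ref{lem:extension}) to produce, for arbitrarily large $n$, elements $h\in\Ga$ with $(z_0,h[z_0,\varphi^nz_0],\xi)$ aligned, the transverse hypothesis in the guise of simultaneous alignment (Propositions~\ref{prop:componentwiseshadow}, \ref{prop:simultaneousalign}) being what makes these alignments hold in all $r$ factors at once, and ergodicity being used to fix a single $\varphi$ on a conull set. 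The second step (the content of Theorem~\ref{thm:trbytrlengthqi}, and the heart of the proof) is to show that $\mu$ is quasi-invariant under the translation $S_{\tau_\varphi}\colon(\xi,u)\mapsto(\xi,u+\tau_\varphi)$, which commutes with the $\Ga$-action. Around a guided point one takes a box $V\times Q\subset\mathcal{H}$ and, for each large $n$ and aligning $h$ as above, compares $\mu$ on this box with its push-forward under elements of the form $h\varphi^nh'$; Lemma~\ref{lem:BGIPHeredi} and the squeezing estimates (Lemmas~\ref{lem:squeezingD}, \ref{lem:squeezing}) show that such elements shift the $\R^r$-coordinate by $n\tau_\varphi$ up to a bounded error, simultaneously in all $r$ factors, and letting $n\to+\infty$ and invoking $\Ga$-invariance of $\mu$ one reads off that $\mu$ and $(S_{\tau_\varphi})_*\mu$ are mutually absolutely continuous. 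Repeating for every loxodromic $g\in\Ga$ yields $S_{\tau_g}$-quasi-invariance of $\mu$ for all such $g$.

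The third step turns $\{S_{\tau_g}\}$-quasi-invariance into the Burger--Roblin form. Non-arithmeticity of $\Spec(\Ga)$ means $\{\tau_g:g\in\Ga\text{ loxodromic}\}$ generates a dense subgroup of $\R^r$; combining this density with $\Ga$-invariance and ergodicity --- a cocycle/density argument along the lines of \cite[Proposition~10.25]{LO_invariant}, \cite[0.1]{aaronson2002invariant}, \cite[Lemma~1]{Sarig_abelian} --- upgrades the quasi-invariance to quasi-invariance of $\mu$ under the full $\R^r$-translation on the second factor of $\mathcal{H}=\partial Z\times\R^r$. Given that, I would disintegrate $\mu$ over $\partial Z$: each fiber measure is then equivalent to Lebesgue, the Radon--Nikodym cocycle of the $\R^r$-action is a $\Ga$-invariant function, hence $\mu$-a.e.\ constant by ergodicity, hence a continuous character $v\mapsto e^{-\delta\psi(v)}$ for some $\delta\ge0$ and linear $\psi\colon\R^r\to\R$; this forces $d\mu(\xi,u)=e^{\delta\psi(u)}\,d\nu_{z_0}(\xi)\,du$ with $\nu=\{\nu_z\}$ a $\delta$-dimensional $\psi$-conformal density of $\Ga$ (by $\Ga$-invariance of $\mu$). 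Since $\nu$ is carried by $\La_c(\Ga)$ and is $\Ga$-ergodic, Theorem~\ref{thm:BCZZHTS} gives $\delta=\delta_\psi(\Ga)$ and $\Ga$ of $\psi$-divergence type, and uniqueness of the divergence-type density identifies $\nu$ with the one defining $\mu_\psi$; thus $\mu$ is a constant multiple of $\mu_\psi$, which is conclusion~(1).

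The step I expect to be the main obstacle is the second step of case~(1): getting quasi-invariance under the \emph{exact} vector $\tau_\varphi$. In a single $\CAT(-1)$ space every geodesic is squeezing, but in $Z$ the flat $\ga_1\times\cdots\times\ga_r$ is not, so there is no single ``axis'' onto which one can project; instead one must simultaneously control the tuple of nearest-point projections onto $\ga_1,\dots,\ga_r$ and make the squeezing estimates hold in all factors for one and the same element of $\Ga$, so as to recover the whole vector $\tau_\varphi$ rather than only some diagonal direction --- and this synchronization is exactly what the transverse hypothesis (Propositions~\ref{prop:componentwiseshadow}, \ref{prop:simultaneousalign}) is for. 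This is the point where the argument genuinely goes beyond the rank-one case of \cite{CK_ML}.
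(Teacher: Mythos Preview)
Your overall architecture is the paper's: case~(2) via closedness/proper discontinuity of orbits off the limit set (Proposition~\ref{prop:closedorbits}), and case~(1) via the chain Theorem~\ref{thm:radonCharge} $\Rightarrow$ Theorem~\ref{thm:trbytrlengthqi} $\Rightarrow$ density of $\Spec(\Ga)$ $\Rightarrow$ extension of the Radon--Nikodym character to a linear form $\Rightarrow$ product decomposition $\Rightarrow$ Theorem~\ref{thm:BCZZHTS}. A small misattribution: in Step~1 conicality is the hypothesis of case~(1); Theorem~\ref{thm:BCZZHTS} concerns conformal measures and is not yet available at that stage.

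Your sketch of Step~2, however, does not work as written. You propose to act by elements of the form $h\varphi^{n}h'$, observe they shift the $\R^{r}$-coordinate by $n\tau_{\varphi}$ up to bounded error, and then ``let $n\to+\infty$''; but that produces quasi-invariance under arbitrarily large multiples of $\tau_{\varphi}$ with a fixed additive error, which says nothing about quasi-invariance under $\tau_{\varphi}$ itself (the error does not divide out). The paper's mechanism is different: one acts by \emph{conjugates} $g\varphi g^{-1}$ (a single power of $\varphi$), so the intended shift is exactly $\tau_{\varphi}$; the role of the large $n$ is not a power applied to $\varphi$ in the acting element, but the length of the guiding segment $g[z_{0},\varphi^{n}z_{0}]$ along which one squeezes. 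The cover $\{U_{C}(g;\varphi,n)\}$ of the guided limit set, refined to a disjoint subfamily, lets one apply $F_{l}=g_{i(l)}\varphi g_{i(l)}^{-1}$ on each piece; the squeezing estimate (Lemma~\ref{lem:squeezing}) on the long guide, pulled to all factors by simultaneous alignment (Proposition~\ref{prop:simultaneousalign}), forces the Busemann shift to equal $\tau_{\varphi}$ up to $\epsilon$. This yields $(T_{\varphi}^{*}\mu)(E)\ge\mu(E)$ for all Borel $E$, and then ergodicity gives a constant Radon--Nikodym derivative. In short, the ``$n$'' belongs to the guide, not to the group element; once you make that correction your Step~2 becomes the paper's Theorem~\ref{thm:trbytrlengthqi}.
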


The rest of this section is devoted to the proof of Theorem \ref{thm:uniqueRadon}. 
We prove the theorem by establishing a robust relation between invariant Radon measures and guided limit sets. Note that due to ergodic decompositions, Theorem \ref{thm:uniqueRadon} can be regarded as the classification of $\Ga$-invariant Radon measures on $\mathcal{H}$. 

\subsection{Concentration on guided limit sets}
We first show that invariant ergodic Radon measures on $\mathcal{H}$ are charged on guided limit sets. Let 
$$\Psi : \La(\Ga_1) \to \La(\Ga)$$ 
be the $\Ga$-equivariant homeomorphism give in Lemma \ref{lem:homeoprojection}.
For a loxodromic $\varphi \in \Ga$ and $C > 0$, denote by $\varphi_i \in \Ga_i < \Isom(X_i)$ the $i$-th component of $\varphi \in \Ga$ and set 
$$
\La_{\varphi, C}(\Ga) := \Psi(\La_{\varphi_1, C}(\Ga_1)).
$$

\begin{theorem}\label{thm:radonCharge}

Let $\Ga < \Isom(Z)$ be a non-elementary transverse subgroup, let $\varphi \in \Ga$ be a loxodromic element, and let $C = C(\varphi_1)$ be as in Lemma \ref{lem:BGIPHeredi}. 
Let $\mu$ be a $\Ga$-invariant ergodic Radon measure on $\mathcal{H}$ supported on $\La_{c}(\Ga) \times~\R^r$. Then the measure $\mu$ is supported on
$$
 \La_{\varphi, C}(\Ga) \times \R^r \subset \mathcal{H}.
$$

\end{theorem}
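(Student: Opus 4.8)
The plan is the following. Set $\La_{\varphi, C}(\Ga) := \Psi(\La_{\varphi_1, C}(\Ga_1))$ as in the statement. This is a $\Ga$-invariant subset of $\partial Z$ — by $\Ga_1$-invariance of $\La_{\varphi_1, C}(\Ga_1)$ (Lemma \ref{lem:squeezedInv}) together with equivariance of $\Psi$ (Lemma \ref{lem:homeoprojection}) — and it is Borel: from Definition \ref{def:guidedlimitset}, $\La_{\varphi_1, C}(\Ga_1) = \bigcup_{N \in \N} \bigcap_{n \ge N} \bigcup_{g \in \Ga_1} U_C(g; \varphi_1, n)$, each $U_C(g; \varphi_1, n)$ being relatively open in $\La(\Ga_1)$ by Lemma \ref{lem:nbdBasis}. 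Hence $\La_{\varphi, C}(\Ga) \times \R^r$ is a $\Ga$-invariant Borel subset of $\mathcal{H}$, and by ergodicity of $\mu$ it suffices to show $\mu\big(\La_{\varphi, C}(\Ga) \times \R^r\big) \neq 0$; in fact we shall show that $\mu$-a.e.\ $(\xi, u) \in \mathcal{H}$ has $\xi \in \La_{\varphi, C}(\Ga)$.

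Since $\mu$ is a nonzero Radon measure, fix a compact set $Q \subset \partial Z \times [-T, T]^r$ with $\mu(Q) > 0$ (recall $\partial Z$ is compact). We may assume $\mu$ is conservative — otherwise $\mu$ is the counting measure on a single $\Ga$-orbit, a case which is elementary and which we omit — so that for $\mu$-a.e.\ $(\xi, u)$ there is an infinite set $\{g_m\}_{m \in \N} \subset \Ga$ with $g_m \cdot (\xi, u) \in Q$ for every $m$. Fix such a point $(\xi, u)$, write $\xi = (\xi_1, \dots, \xi_r)$ and $g_m = (g_{1,m}, \dots, g_{r,m})$, and let $x_0 \in X_1$ be the first coordinate of $z_0$. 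Since $\{g_m\}$ is infinite and the kernel of $\Ga \to \Ga_1$ is finite (Corollary \ref{cor:typepreserving}), $g_{1,m}^{-1} x_0$ leaves every compact set, so after passing to a subsequence $g_{1,m}^{-1} x_0 \to \eta$ and $g_{1,m} \xi_1 \to \zeta$ for some $\eta, \zeta \in \partial X_1$. The $\R^r$-coordinate of $g_m \cdot (\xi, u)$ is bounded, hence so is $\beta_{\xi_1}(x_0, g_{1,m}^{-1} x_0)$; were $\eta \neq \xi_1$, this Busemann term would grow like $d(x_0, g_{1,m}^{-1} x_0) \to +\infty$. Therefore $g_{1,m}^{-1} x_0 \to \xi_1$, and consequently the Gromov product $(g_{1,m}^{-1} x_0 \mid \xi_1)_{x_0} \to +\infty$: the segment $[x_0, g_{1,m}^{-1} x_0]$ fellow-travels the ray $[x_0, \xi_1)$ for lengths tending to $+\infty$.

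It remains to promote this tracking to the guided structure of Definition \ref{def:guidedlimitset}. Choose $b \in \Ga_1$ with $b \varphi_1^{\pm} \neq \zeta$, and for each $m$ consider the loxodromic element $\psi_m := g_{1,m}^{-1} (b \varphi_1 b^{-1}) g_{1,m} \in \Ga_1$, whose axis is the translate $g_{1,m}^{-1} b \cdot \gamma_{\varphi_1}$ of the axis $\gamma_{\varphi_1}$ of $\varphi_1$. By Lemma \ref{lem:BGIPHeredi} this axis stays within a uniform constant of $g_{1,m}^{-1} b x_0$, which — as in the previous paragraph — lies on the horosphere at $\xi_1$ through $x_0$ at distance comparable to $d(x_0, g_{1,m}^{-1} x_0)$; and its two endpoints $g_{1,m}^{-1} b \varphi_1^{\pm}$ both converge to $\xi_1$ as $m \to +\infty$. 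Hence this axis fellow-travels the ray $[x_0, \xi_1)$, in the direction of $\xi_1$, along lengths tending to $+\infty$. Consequently, given any large $\ell \in \N$ one may choose $m$ so that $[x_0, \xi_1)$ fellow-travels a length-$\ell \tau_{\varphi_1}$ subsegment of this axis which, by Lemma \ref{lem:BGIPHeredi}, is $C$-equivalent to $h[x_0, \varphi_1^{\ell} x_0]$ for a suitable $h \in \Ga_1$; converting between fellow-traveling and alignment by Lemma \ref{lem:shadowandalignment}, this gives that $(x_0, h[x_0, \varphi_1^{\ell} x_0], \xi_1)$ is $C'$-aligned for a constant $C'$ independent of $\ell$. (The auxiliary element $b$ — equivalently, the three choices in the Extension Lemma \ref{lem:extensionHoro} — is what dodges the finitely many degenerate positions of $\zeta$ relative to the axis of $\varphi_1$, while Lemma \ref{lem:alignExtCts} ensures the alignments pass to the boundary point $\xi_1$.) Thus $\xi_1 \in \La_{\varphi_1, C'}(\Ga_1) = \La_{\varphi_1, C}(\Ga_1)$ by Lemma \ref{lem:squeezedInv}, and since $\La_{\varphi, C}(\Ga) = \Psi(\La_{\varphi_1, C}(\Ga_1))$ by definition we conclude $\xi = \Psi(\xi_1) \in \La_{\varphi, C}(\Ga)$, as desired.

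The main obstacle is precisely this last step. Recurrence alone says only that \emph{some} $\Ga$-translate of $(\xi, u)$ returns to $Q$, which is far weaker than the ray $[x_0, \xi_1)$ carrying long $\varphi_1$-axis copies — indeed a conical limit point need not be guided. The key is that the \emph{bounded}-Busemann nature of the return forces the return elements $g_{1,m}^{-1}$ to track the ray towards $\xi_1$ for ever longer times, so that conjugates of $\varphi_1$ anchored along these return paths fellow-travel $[x_0, \xi_1)$ near $\xi_1$ itself; and in the product setting one must arrange all of this compatibly in every factor, which is where the presence of flats must be controlled through the simultaneous-alignment results of Section \ref{sec:product} (Propositions \ref{prop:componentwiseshadow} and \ref{prop:simultaneousalign}).
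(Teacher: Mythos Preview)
Your proposal has a genuine gap at the key geometric step. You correctly observe that recurrence $g_m \cdot (\xi,u) \in Q$ forces $\beta_{\xi_1}(x_0, g_{1,m}^{-1} x_0)$ to be bounded and that $g_{1,m}^{-1} x_0 \to \xi_1$ with Gromov product $(g_{1,m}^{-1} x_0 \mid \xi_1)_{x_0} \to +\infty$. But bounded Busemann means precisely that $g_{1,m}^{-1} x_0$ stays on a fixed \emph{horosphere} at $\xi_1$; it does \emph{not} say $g_{1,m}^{-1} x_0$ is conically close to the ray $[x_0,\xi_1)$. The Gromov product tending to infinity only tells you that the \emph{initial} portion of $[x_0, g_{1,m}^{-1} x_0]$ tracks the ray; the terminal point $g_{1,m}^{-1} x_0$ --- and hence $g_{1,m}^{-1} b x_0$, where you anchor the translated axis --- can be arbitrarily far from that ray. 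Concretely, in the upper half-plane with $\xi_1 = \infty$ and $x_0 = i$, the points $g_{1,m}^{-1} x_0 = (a_m, 1)$ with $a_m \to +\infty$ satisfy all of your conditions, yet the axis $g_{1,m}^{-1} b\,\gamma_{\varphi_1}$ is a semicircle with both feet near $x = a_m$ and never comes close to the imaginary axis. So the assertion that ``this axis fellow-travels the ray $[x_0,\xi_1)$ \dots\ along lengths tending to $+\infty$'' is unjustified, and in fact false in this example. (As a secondary point, the right condition on $b$ is $b\varphi_1^{\pm} \neq \lim g_{1,m} x_0$, not $\neq \zeta = \lim g_{1,m}\xi_1$.) Your closing paragraph identifies exactly the right obstacle and then asserts that bounded Busemann overcomes it; it does not. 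A further issue: the dissipative case you omit as ``elementary'' would require showing that every single conical limit point is $(\varphi_1,C)$-guided, which is not true pointwise.

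The paper's proof is structurally different: it does not use recurrence of $\mu$. It starts from the hypothesis $\supp \mu \subset \La_c(\Ga)\times\R^r$ to obtain, for each $\xi$ in the support, elements $g_j \in \Ga$ with $g_j z_0$ approaching $\xi$ \emph{conically} in the first factor (so $g_j x_0$ lies in a tube around $[x_0,\xi_1)$). The extension lemma (Lemma~\ref{lem:extensionHoro}) is then applied at these conical points via the map $\Xi \mapsto g_\Xi a_\Xi \varphi^n a_\Xi g_\Xi^{-1} \Xi$, which is uniformly finite-to-one; $\Ga$-invariance of $\mu$ transfers positive measure to a set of points aligned with a translate of $[z_0,\varphi^n z_0]$ at depth $\ge k$, and the Radon property lets one pass to $k\to\infty$ before invoking ergodicity. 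The point is that the conical approximation --- not mere horospherical return --- is what places the inserted $\varphi$-segment on the ray to $\xi$.
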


\begin{proof}
    Applying Lemma \ref{lem:extension} to $\varphi_1 \in \Ga_1 <  \Isom(X_1)$, we get $\alpha(\varphi_1) > 0$ and $a_{1}, a_{2}, a_{3} \in \Gamma$ whose first components satisfy the conclusion of Lemma \ref{lem:extension} for $\varphi_1$ and $\Ga_1 < \Isom(X_1)$. Let $ C(\varphi_1) > 0$ be as in Lemma \ref{lem:BGIPHeredi} for $g=\varphi_1$. We set $C_0 := 10(\alpha(\varphi_1) +C(\varphi_1))$. 

For each $K>0$ let 
\[
\La_{K} :=  \left\{ \xi \in \partial Z : \begin{matrix}
\exists \text{ an infinite sequence } \{g_j\}_{j \in \N} \subset \Ga \text{ s.t.}\\
\beta_{\xi}^1(z_0, g_{j} z_{0}) \ge d_1(z_{0}, g_{j}z_{0}) - K \text{ for all } j \in \N
\end{matrix}  \right\}.
\]
Then $\Gamma \cdot (\La_K \times \R^r) \subset \mathcal{H}$ is  $\Gamma$-invariant. Moreover,
 \[
\Lambda_{c} (\Gamma) \times \mathbb{R}^r = \bigcup_{K > 0} \Gamma \cdot (\La_K \times \R^r) 
\]
since $\La_{c}(\Ga) = \Psi(\La_{c}(\Ga_1))$ by Proposition \ref{prop:componentwiseshadow}.
Since $\Lambda_c (\Gamma) \times \R^r$ has positive $\mu$-value,
$$\Gamma \cdot (\La_K \times \R^r) \quad \text{has positive $\mu$-value for all large $K > 0$.}
$$
We fix such $K > 100 C_{0} + 2 \sum_{i = 1}^r \sum_{j = 1}^3 d_i(z_0, a_j z_0)$. Then it follows from the $\Ga$-invariance of $\mu$ that $\mu(\La_K \times \R^r)  > 0$. 
For each $R > 0$, we set 
$$\mathcal{H}_{K, R} := \La_K \times [-R, R]^r.$$ 
Since $\La_K \times \R^r = \cup_{R=1}^{\infty} \mathcal{H}_{K, R}$, $$
\mu(\mathcal{H}_{K, R}) > 0 \quad \text{for all large } R > 0.
$$
We fix such $R>0$. 

Now we pick $n>\frac{100(C_{0}+K+1)}{\min_{i}\tau_{\varphi_i}}$ and $k > 0$. We define a map
$$
F = F_{n, k}  : \mathcal{H}_{K, R} \to \mathcal{H}
$$
as follows. For each $\Xi = (\xi, u) \in \mathcal{H}_{K, R}$, there exists $g \in \Gamma$ such that 
\begin{equation} \label{eqn:defofgXi}
d_1(z_0, gz_0) > k \quad \text{and} \quad \beta_{\xi}^1(z_0, g z_0) \ge d_1(z_0, g z_0) - K.
\end{equation} Among many such $g$'s, take the one with minimal  $d_1(z_0, g z_0)$ and call it $g_{\Xi}$.\footnote{There exists a technicality when several candidates tie. An easy rescue is to first enumerate $\Gamma = \{g^{(1)}, g^{(2)}, \ldots\}$, and we choose the earliest whenever there is a tie.} Then the map $\Xi \in \mathcal{H}_{K, R} \mapsto g_{\Xi}$ is Borel measurable.
By Lemma \ref{lem:extensionHoro}, there exists $a_{\Xi} \in \{a_{1}, a_{2}, a_{3}\}$ such that \footnote{Again, when more than one of $\{a_1, a_2, a_3\}$ do the job we choose the earliest.}
\begin{equation} \label{eqn:defofFmap}
    \begin{matrix}
        \text{the first component of}\\
        \left(z_0, g_{\Xi} \cdot a_{\Xi} [z_0, \varphi^{n} z_{0}], g_{\Xi} \cdot a_{\Xi} \varphi_1^{n} a_{\Xi} \cdot g_{\Xi}^{-1} \xi \right)\\
        \text{is $C_{0}$-aligned}.
    \end{matrix}
\end{equation}
 This map $\Xi \mapsto a_{\Xi}$ is also Borel measurable. 
We now set 
 \[
F (\Xi) := g_{\Xi} \cdot a_{\Xi} \varphi^{n} a_{\Xi} \cdot g_{\Xi}^{-1} \Xi.
\]
Let \[
D := 100\left(C_{0} +  n \cdot \max_{i} \tau_{\varphi_i} + \sum_{i = 1}^{r} \sum_{j=1}^{3} d_i(z_0, a_j z_0) \right).
\]

By \cite[Claim in the proof of Theorem 7.5]{CK_ML}, 
\begin{equation} \label{eqn:finitetoone}
F \text{ is at most } 3 \cdot \# \{ g \in \Gamma : d_1(z_0, g z_0) \le D\}\text{-to-one}.
\end{equation}
We simply write $M := 3 \cdot \# \{ g \in \Gamma : d_1(z_0, g z_0) \le D\}$, which is finite by Corollary \ref{cor:typepreserving}. Then we have
\[\begin{aligned}
\mu(F(\mathcal{H}_{K, R})) &= \mu \left( \bigcup_{g \in \Gamma, a \in \{a_1, a_2, a_3\}} F \left( \{\Xi \in \mathcal{H}_{K, R} : g_{\Xi} = g, a_{\Xi} = a\} \right) \right) \\
&\ge \frac{1}{M}\sum_{g \in \Gamma, a \in \{a_1, a_2, a_3\}} \mu \left( F \left( \{\Xi \in \mathcal{H}_{K, R} : g_{\Xi} = g, a_{\Xi} = a\} \right) \right) \\
&=  \frac{1}{M}\sum_{g \in \Gamma, a \in \{a_1, a_2, a_3\}} \mu \left( ga\varphi^n a g^{-1} \{\Xi \in \mathcal{H}_{K, R} : g_{\Xi} = g, a_{\Xi} = a\} \right) \\
&= \frac{1}{M}\sum_{g \in \Gamma, a \in \{a_1, a_2, a_3\}} \mu \left( \{\Xi \in \mathcal{H}_{K, R} : g_{\Xi} = g, a_{\Xi} = a\} \right) \\
&= \frac{1}{M}\mu(\mathcal{H}_{K, R}).
\end{aligned}
\]

Now to see the image of $F$, let $\Xi = (\xi, u) \in \mathcal{H}_{K, R}$. For simplifity, write $g := g_{\Xi}$ and $a := a_{\Xi}$. Then
$$
F(\Xi) = ( ga \varphi^n a g^{-1} \xi, u + \beta_{\xi} ( (g a \varphi^n a g^{-1})^{-1} z_0, z_0) )
$$
Fixing a sequence $\{z_j\}_{j \in \N} \subset \Ga z_0 \subset Z$ converging to  $ga \varphi^n a g^{-1} \xi \in \partial Z$, we have 
$$\begin{aligned}
\beta_{\xi} ( (g a \varphi^n a g^{-1})^{-1} z_0, z_0) & = \beta_{g a \varphi^n a g^{-1}\xi}( z_0, g a \varphi^n a g^{-1} z_0) \\
& = \lim_{j \to + \infty} \kappa(z_0, z_j ) - \kappa( g a \varphi^n a g^{-1} z_0, z_j).
\end{aligned}
$$
By Equation \eqref{eqn:defofFmap} and Proposition \ref{prop:simultaneousalign}, there exists $\widehat{C} = \widehat{C}(C_0, z_0) > C_0 + 2$ such that 
$$
\left(z_0, ga [z_0, \varphi^{n} z_{0}], z_j \right) \quad \text{is $\widehat{C}$-aligned for all large $j \in \N$}.
$$
Lemma \ref{lem:CAT(-1)Fellow} then tells us that, for each large $j$, there exist $p, q \in [z_0, z_j]$ with $p$ coming first (as tuples of points) such that $d_i(p, gaz_0) \le \widehat{C} + 2$ and $d_i(q, g a \varphi^n z_0) \le \widehat{C} + 2$. 
It follows that for each $1 \le i \le r$ and all sufficiently large $j \in \N$,
$$\begin{aligned}
\beta_{\xi}^i ( (g a \varphi^n a g^{-1})^{-1} z_0, z_0) & =_{15\widehat{C}} d_i(z_0, g a z_0) + d_i(z_0, \varphi^{n} z_0) + d_i(g a \varphi^n z_0, z_j) \\
& \qquad \quad - d_i( g a \varphi^n a g^{-1} z_0, z_j) \\
& =_{\widehat{C}} d_i(z_0, g a z_0) + d_i(z_0, \varphi^{n} z_0) \\
& \qquad \quad + \beta_{g a \varphi^n a g^{-1}\xi}^i ( g a \varphi^n z_0, g a \varphi^n a g^{-1} z_0) \\
& =_{d_i(z_0, a z_0)} d_i(z_0, g z_0) +  d_i(z_0, \varphi^{n} z_0)   \\
& \qquad \quad + \beta_{\xi}^{i}(g a^{-1} z_0, g z_0) + \beta_{\xi}^{i}(g z_0, z_0) \\
& =_{d_i(z_0, a z_0)} d_i(z_0, g z_0) +  d_i(z_0, \varphi^{n} z_0) +  \beta_{\xi}^{i}(g z_0, z_0).
\end{aligned}
$$
By Equation \eqref{eqn:defofgXi} and Proposition \ref{prop:componentwiseshadow}, there exists $\widehat{K} = \widehat{K}(K, z_0)$ such that
$$
d_i(z_0, g z_0) - \widehat{K} \le \beta_{\xi}^{i} (z_0, g z_0) \le d_i(z_0, g z_0) \quad \text{for all } 1 \le i \le r.
$$
Hence, setting $\widehat{D} := D + 16 \widehat{C} + \widehat{K} + \max_i d_i(z_0, \varphi^{n} z_0)$, we have 
$$
\abs{\beta_{\xi}^i ( (g a \varphi^n a g^{-1})^{-1} z_0, z_0)} \le \widehat{D} \quad \text{for all } 1 \le i \le r.
$$
Therefore,
$$
u + \beta_{\xi} ( (g a \varphi^n a g^{-1})^{-1} z_0, z_0) \in \left[-R - \widehat{D}, R + \widehat{D} \right]^r.
$$
In addition, by Equation \eqref{eqn:defofFmap}, we have $d_1(z_0, ga z_0) > k - \sum_{j = 1}^3 d_1(z_0, a_j z_0)$ and that the first component of $(z_0, ga [z_0, \varphi^n z_0], g a \varphi^n a g^{-1}\xi)$ is $C_{0}$-aligned.

This implies that $F(\mathcal{H}_{K, R})$ is contained in
 \[
B_{k;n} := \left\{ (\zeta, v) \in \mathcal{H} : 
\begin{matrix}
v \in \left[-R - \widehat{D}, R+ \widehat{D} \right]^r \text{ and } \exists h \in \Ga \text{ such that}\\
d_1(z_0, h z_0) > k - \sum_{j = 1}^3 d_1(z_0, a_j z_0) \text{ and} \\ 
\text{first component of } (z_0, h [z_0, \varphi^n z_0], \zeta) \text{ is $C_{0}$-aligned}
\end{matrix}\right\}.
\]
Hence, we have
$$
\mu (B_{k;n}) \ge \mu(\mathcal{H}_{K, R})/M > 0.
$$
Note that the set $B_{k;n}$ is decreasing in $k$. Since $\mu$ is a Radon measure and $B_{k;n} \subset \partial Z \times \left[-R - \widehat{D}, R + \widehat{D}\right]^r$ which is \emph{compact}, we have $\mu(B_{k;n}) < + \infty$. Therefore, setting
\begin{equation} \label{eqn:defofBn}
B_n := \bigcap_{k > 0} B_{k;n},
\end{equation}
we have
\begin{equation} \label{eqn:positiveBn}
\mu(B_n) = \lim_{k \to + \infty} \mu(B_{k;n}) \ge \mu(\mathcal{H}_{K, R})/M > 0,
\end{equation}
noting that $M$ does not depend on $k$.

Now, $\Gamma \cdot B_{n}$ is a $\Gamma$-invariant set of positive $\mu$-measure. Hence, by the $\Ga$-ergodicity of $\mu$, we have that $\Ga \cdot B_n$ is $\mu$-conull, and therefore
$$
\bigcap_{n} \Ga \cdot B_n \quad \text{is $\mu$-conull}.
$$
We then show that for each $(\zeta, v) \in \bigcap_{n} \Ga \cdot B_n $, we have $\zeta \in \La_{\varphi, C_{0}+2}(\Ga)$. This finishes the proof by Lemma \ref{lem:squeezedInv}.

Let $(\zeta, v) \in \bigcap_{n} \Ga \cdot B_n$. Then for each large enough $n \in \N$, there exists  $h_0 \in \Ga$ so that
the first component of
$(z_0, h [z_0, \varphi^n z_0], h_0^{-1}\zeta)$ is $C_{0}$-aligned for infinitly many $h \in \Ga$.
In other words,
$$\begin{matrix}
\text{the first component of } ( h_0 z_0, h_0 h [z_0, \varphi^n z_0], \zeta)\\
\text{is $C_{0}$-aligned for infinitely many } h \in \Ga.
\end{matrix}
$$
Among infinitely many such $h \in \Ga$, we can choose one such that
$$d_1(h_0 z_0, h_0 h [z_0, \varphi^n z_0]) > d_1(z_0, h_0 z_0) + 2$$
and hence 
$$
d_1([h_0 z_0, z_0], h_0 h [z_0, \varphi^n z_0]) > 2.
$$
Lemma \ref{lem:CAT(-1)Fellow} tells us that $\pi_{h_0 h [x_0, \varphi^n z_0 ]}([h_0 z_0, z_0 ])$ has diameter at most 2. Therefore,
$$
\text{the first component of }
( z_0, h_0 h [z_0, \varphi^n z_0], \zeta) \quad \text{is $(C_{0}+2)$-aligned.}
$$
Since this holds for all large $n \in \N$, we conclude $\zeta \in \La_{\varphi, C_{0} + 2}(\Ga)$.
\end{proof}

\subsection{Quasi-invariance under translations} \label{subsec:trqi}

For $a \in \R^r$, consider a map $T_a : \mathcal{H} \to \mathcal{H}$ given by 
$
(\xi, u) \mapsto (\xi, u + a)
$.
For a Radon measure $\mu$ on $\mathcal{H}$, we consider its pullback  measure $T_a^*\mu$: for each Borel subset $E \subset \mathcal{H}$,
$$T_a^* \mu (E) := \mu(T_a E).$$
For a loxodromic  $g \in \Isom(Z)$, we simply write $T_{g} := T_{\tau_g}$.
We show that invariant ergodic measures on $\mathcal{H}$ are quasi-invariant under this translation.

\begin{theorem} \label{thm:trbytrlengthqi}
    Let $\Ga < \Isom(Z)$ be a non-elementary transverse subgroup. Let $\mu$ be a $\Ga$-invariant ergodic Radon measure on $\mathcal{H}$ supported on $\La_c(\Ga) \times \R^r$. Then for a loxodromic $\varphi \in \Ga$, there exists $\la \ge 0$ such that
    $$
    \frac{ d T_{\varphi}^* \mu}{d \mu} = e^{\la} \quad \text{a.e.}
    $$

\end{theorem}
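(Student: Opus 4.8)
The plan is to reduce the statement to a non‑singularity assertion and then read off the exponent from ergodicity. Write $a := \tau_\varphi \in \R^r$. Since $T_a$ commutes with the $\Ga$-action on $\mathcal{H}$, the pushforward $T_a^*\mu$ is again a nonzero $\Ga$-invariant Radon measure, and it is ergodic: for a $\Ga$-invariant Borel $E$ the set $T_a E$ is again $\Ga$-invariant, hence $\mu$-null or $\mu$-conull, and so is $E$ for $T_a^*\mu$. As two ergodic $\Ga$-invariant (hence $\sigma$-finite) measures are either mutually singular or proportional — the Radon–Nikodym derivative of the absolutely continuous part of $T_a^*\mu$ with respect to $\mu$ is $\Ga$-invariant, hence a.e.\ a constant $e^{\lambda}$, and ergodicity of $T_a^*\mu$ then forces the singular part to vanish — it suffices to prove that $T_a^*\mu \not\perp \mu$, after which $T_a^*\mu = e^{\lambda}\mu$. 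The inequality $\lambda \ge 0$ will be read off at the end from the fact that the construction below only ever displaces fibres in the positive-orthant direction $+\tau_\varphi$.

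Next I would localize using Theorem~\ref{thm:radonCharge}: $\mu$ is concentrated on $\La_{\varphi,C}(\Ga)\times\R^r$, and I would reuse the machinery of its proof — the compact positive-measure sets $\mathcal H_{K,R}\subset \La_K\times[-R,R]^r$ and the finite-to-one Borel coding maps $F_{n,k}\colon \mathcal H_{K,R}\to\mathcal H$. The two facts to carry over are: (i) on each Borel piece $P_{g,a}:=\{\Xi\in\mathcal H_{K,R} : g_\Xi=g,\ a_\Xi=a\}$ the map $F_{n,k}$ is implemented by the \emph{single} element $g a \varphi^{n} a g^{-1}\in\Ga$, hence is a measure-preserving Borel isomorphism onto its image; and (ii) for $\Xi=(\xi,u)\in P_{g,a}$ the $i$-th fibre coordinate is displaced by $d_i(z_0,\varphi^{n}z_0)$ up to an additive error bounded by a constant independent of $n,k,g,a$ \emph{and of $i$}. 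Uniformity over the $r$ coordinates is exactly where Propositions~\ref{prop:componentwiseshadow} and~\ref{prop:simultaneousalign} are used: alignment and shadow-containment established in one factor are promoted, with uniform constants, to all factors.

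The heart of the argument would then be to compare the codings at levels $n$ and $n+1$. On a common piece $P_{g,a}$ one has $g a \varphi^{n+1} a g^{-1} = \eta_{g,a}\cdot (g a \varphi^{n} a g^{-1})$ with $\eta_{g,a}:=g a \varphi a^{-1} g^{-1}\in\Ga$ a loxodromic conjugate of $\varphi$ of translation length $\tau_\varphi$, so $F_{n+1,k}(P_{g,a})=\eta_{g,a}\cdot F_{n,k}(P_{g,a})$; subtracting the two fibre-displacement formulas and using $|d_i(z_0,\varphi^{m}z_0)-m\tau_{\varphi_i}|\le 2C(\varphi_i)$ shows that $\eta_{g,a}$, restricted to $F_{n,k}(P_{g,a})$, moves the fibre by $\tau_\varphi$ up to a \emph{uniformly} bounded $\ell^\infty$-error in every coordinate at once. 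In other words, modulo measure-preserving $\Ga$-moves and a uniformly bounded fibre correction, the maps $F_{n,k}$ realize the translation $T_{\tau_\varphi}$ on a subset of positive $\mu$-measure ($\mu\bigl(\bigcup_{g,a}F_{n,k}(P_{g,a})\bigr)\ge \mu(\mathcal H_{K,R})/M>0$, uniformly in $n,k$). Feeding this into the Hopf-type dichotomy of \cite{CK_ML} — if $T_{\tau_\varphi}^*\mu\perp\mu$ there would be a $\mu$-null set whose $T_{\tau_\varphi}$-translate is $\mu$-conull, which is incompatible with such a $\Ga$-equivariant, boundedly-corrected realization of $T_{\tau_\varphi}$ on the (conull) guided limit set — rules out mutual singularity, so $T_{\tau_\varphi}^*\mu=e^{\lambda}\mu$. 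The at-most-$M$-to-one bound on $F_{n,k}$ gives a one-sided comparison between $\mu$ and its forward image which, together with $\tau_\varphi$ lying in the positive orthant, pins $e^{\lambda}\ge 1$.

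The main difficulty I expect is precisely the product geometry: a single geodesic in $Z$ need not be squeezing because of flats, so squeezing of axes, the fellow-traveling of the conical accumulations, and the bounded-error fibre-displacement formula must all be arranged to hold \emph{simultaneously} in all $r$ factors with constants independent of $n$. This is exactly what the simultaneous-alignment results of Section~\ref{sec:product} — resting on the divergence and antipodality of a transverse subgroup — are for, and it is what lets the honest vector $\tau_\varphi=(\tau_{\varphi_1},\dots,\tau_{\varphi_r})$, rather than a mere scalar ``speed'' along a one-parameter flow, appear as the net fibre displacement.
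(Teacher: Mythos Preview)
Your reduction to showing $T_{\tau_\varphi}^*\mu \not\perp \mu$ via ergodicity is sound and matches the paper's concluding paragraph. The gap is in how you try to rule out singularity.

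You propose to compare the coding maps $F_{n,k}$ and $F_{n+1,k}$ of Theorem~\ref{thm:radonCharge}, noting that on each piece $P_{g,a}$ they differ by the $\Gamma$-element $\eta_{g,a}=ga\varphi a^{-1}g^{-1}$, and that the resulting fibre discrepancy from $\tau_\varphi$ is \emph{uniformly bounded}. But a merely bounded error does not rule out mutual singularity: a priori nothing forbids $\mu$ to sit on $\partial Z\times S$ for some fibrewise Lebesgue-null $S$, and then $T_{\tau_\varphi}^*\mu\perp\mu$ is perfectly compatible with the existence of $\Gamma$-elements moving pieces by $\tau_\varphi + O(1)$. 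Your sentence invoking a ``Hopf-type dichotomy of \cite{CK_ML}'' is exactly where the gap hides --- there is no black box that turns a fixed $O(1)$ error into absolute continuity. Likewise, the at-most-$M$-to-one bound only compares $\mu(\mathcal H_{K,R})$ with $\mu$ of a set that is a bounded fibre-perturbation of $T_{\tau_\varphi}(\mathcal H_{K,R})$; it says nothing about $T_{\tau_\varphi}^*\mu$ itself.

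What the paper does instead is to obtain an \emph{arbitrarily small} error, and from it the honest inequality $(T_\varphi^*\mu)(E)\ge\mu(E)$ for every Borel $E$. Given $\epsilon>0$, it covers a compact $K\subset\La_{\varphi,C}(\Ga)$ by the basis elements $U_C(g;\varphi,n)$ of Lemma~\ref{lem:nbdBasis} with $n$ taken large (beyond $(2L(0.001\epsilon)+100\widehat C)/\min_i\tau_{\varphi_i}$), extracts a disjoint subcollection $\{U_{i(l)}\}$, and on each covering set $C_l$ applies the \emph{single} conjugate $F_l=g_{i(l)}\varphi g_{i(l)}^{-1}$ --- not $ga\varphi^n ag^{-1}$. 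The large $n$ is used only to place $\xi$ deep along the translated axis so that the squeezing lemma, run simultaneously in all factors via Proposition~\ref{prop:simultaneousalign}, forces $\|\beta_\xi(g\varphi^{-1}g^{-1}z_0,z_0)-\tau_\varphi\|_\infty<\epsilon$. The images land in the pairwise disjoint sets $U_{i(l)}\times(\epsilon\text{-nbhd of }I+\tau_\varphi)$, and after summing, letting $\epsilon\to 0$ and $O\downarrow K$, and pushing through inner/outer regularity, one gets $T_\varphi^*\mu\ge\mu$; $\lambda\ge 0$ is then immediate.
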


\begin{proof}
Let $\varphi \in \Ga$ be a loxodromic element and let $C= C(\varphi) > 0$ be the constant satisfying Lemma \ref{lem:BGIPHeredi} for each component $\varphi_i \in \Isom(X_i)$, with the choice of axis $\ga_i : \R \to X_i$. As in \cite[Proof of Theorem 7.10]{CK_ML}, we may assume that 
$$
\tau_{\varphi_1} > 100 \widehat{C}
$$
where $\widehat{C} = \widehat{C}(C, z_0) > C$ is the constant given in Proposition \ref{prop:simultaneousalign}.

We first aim to show that 
\begin{equation} \label{eqn:trabscont}
(T_{\varphi}^{*} \nu)(E) \ge \nu(E)
\end{equation}
for each Borel subset $E \subset \mathcal{H}$. Note that by Theorem \ref{thm:radonCharge}, $\mu$ is supported on $\La_{\varphi, C}(\Ga) \times \R^r$.

\medskip
{\bf \noindent Step 1.} First consider the case that $E = K \times I$ for a compact subset $K \subset \La_{\varphi, C}(\Ga)$ and a compact box $I \subset \R^r$. 

We fix some open subset $O \subset \La(\Ga)$ such that $K \subset O$ and $\epsilon > 0$. Let $L = L(0.001\epsilon)>0$ be as in Lemma \ref{lem:squeezing} for $\gamma_1 $. 

Recall that $\La(\Ga) = \Psi(\La(\Ga_1))$. For $h = (h_1, \dots, h_r) \in \Ga$ and $n \in \N$, we simply write
$$
U_C(h; \varphi, n) := \Psi(U_C(h_1; \varphi_1, n)).
$$

Recall that  $\widehat{C} = \widehat{C}(C, z_0) > C$ is the constant given in Proposition \ref{prop:simultaneousalign}.
By Lemma \ref{lem:nbdBasis}, for each $\xi \in K$, there exist $g(\xi) \in \Gamma$ and $n(\xi) > \frac{2L+100 \widehat{C}}{\min_i \tau_{\varphi_i}}  + 4$ such that 
  \[
\xi \in U_{C} \left(g(\xi); \varphi, n(\xi) \right) \subset O.
\]
Let $\mathcal{U} := \left\{ U_C \left(g(\xi); \varphi, n(\xi)\right) : \xi \in K \right\}$, which is a countable collection of sets. For convenience, let us enumerate $\mathcal{U}$ based on their $d_1$-distances from $z_0$, i.e, let \[
\mathcal{U} = \{U_{1}, U_{2}, \ldots\}
\]
where 
$
U_j := U_C (g_j; \varphi, n_j)
$
for each $j \in \N$
so that 
 \[
d_1(z_0, g_1 \varphi^{n_1} z_0) \le 
d_1(z_0, g_2 \varphi^{n_2} z_0) \le \cdots.
\]

We will now define a subcollection 
$$\mathcal{V} := \{U_{i(1)}, U_{i(2)}, \ldots\} \subset \mathcal{U}$$ by inductively defining $i(1), i(2), \ldots$. We let $i(1) = 1$. Now, having defined $i(1), \ldots, i(N)$, define $i(N+1)$ as the smallest $j \in \N$ such that $U_{j}$ is disjoint from $U_{i(1)} \cup \cdots \cup U_{i(N)}$.

For each $l \in \N$, we set 
\begin{equation} \label{eqn:defofCl}
C_{l} := U_{i(l)} \cup \bigcup \left\{ U_{k} : k \ge i(l), U_{k} \cap U_{i(l)} \neq \emptyset \right\}.
\end{equation}
Then $\{C_{l} : l \in \N \}$ is a covering of $K$ contained in $O$.

Via the homeomorphism $\Psi : \La(\Ga_1) \to \La(\Ga)$, it follows from \cite[First claim in the proof of Theorem 7.10]{CK_ML} that 
for each $l \in \N$, 
\begin{equation} \label{eqn:trqiclaimClinU}
    C_{l} \subset U_C \left(g_{i(l)}; \varphi, n_{i(l)} - 1\right).
\end{equation}

Now for each  $l \in \N$, we  define a map $F_{l} : C_{l} \times I \rightarrow  \mathcal{H}$ as follows: for $g= g_{i(l)}$, we set 
\begin{equation} \label{eqn:defofFl}
F_{l} : \Xi \mapsto g \varphi g^{-1} \Xi.
\end{equation}
Then we have $\mu \left( F_{l}(C_{l} \times I) \right) = \mu(C_l \times I)$ as $\mu$ is $\Gamma$-invariant. 

\begin{claim*}
    We have 
    \begin{equation} \label{eqn:claimforFl}
    F_{l} (C_{l}\times I) \subset  U_{i(l)} \times \text{$(\epsilon$-neighborhood of $I + \tau_{\varphi})$}.
    \end{equation}
\end{claim*}
To see this, we simply write $g = g_{i(l)}$ and $n = n_{i(l)} - 1$. We then fix $\Xi = (\xi, u) \in C_l \times I$. Note that
$$
F_l (\Xi) = (g \varphi g^{-1} \xi, u + \beta_{\xi}(g \varphi^{-1} g^{-1} z_0, z_0)).
$$
The inclusion for the first component is due to \cite[Second claim in the proof of Theorem 7.10]{CK_ML}. Hence, we now show the inclusion for the second component.

For the second component, it suffices to show 
\begin{equation} \label{eqn:trqiclaim2goal2}
\abs{\beta_{\xi}^i(g \varphi^{-1} g^{-1} z_0, z_0) - \tau_{\varphi_i}} < \epsilon \quad \text{for all } 1 \le i \le r.
\end{equation} 
Let $\{z_j\}_{j \in \N} \subset \Ga z_0 \subset Z$ be a sequence converging to $\xi$.
Then
$$
\beta_{\xi}(g \varphi^{-1} g^{-1} z_0, z_0) = \lim_{j \to + \infty} \kappa(g \varphi^{-1} g^{-1} z_0, z_j) - \kappa(z_0,  z_j).
$$
By Equation \eqref{eqn:trqiclaimClinU} and Proposition \ref{prop:simultaneousalign},
$$
(z_0, g [z_0, \varphi^{n} z_0 ], z_j ) \quad \text{is $\widehat{C}$-aligned for all large $j \in \N$.}
$$

In the rest of this proof, write $\ga = (\ga_1, \dots, \ga_r)$ and consider the nearest-point projection and parametrization of $\ga$ componentwisely. Then for all large $j \in \N$, it follows from Lemma \ref{lem:BGIPHeredi}(3) that
\begin{equation} \label{eqn:trqiclaim2secondcomp}
    \pi_{g \gamma} (z_0) \subset g \gamma \left( \left(-\infty, 2\widehat{C} \right] \right) \quad \text{and} \quad \pi_{g \gamma} (z_{j}) \subset g \gamma \left( \left[ n \tau_{\varphi} - 2\widehat{C}, +\infty \right) \right).
\end{equation}
Since  $n \cdot \min_i \tau_{\varphi_i} - 4 \widehat{C} > 2L $ and each component geodesic of $g \ga$ is squeezing  (Lemma \ref{lem:squeezing}), there exists $p \in [z_0, z_j]$ such that
$$
\norm{\kappa(p, g\ga(n \tau_{\varphi}/2))}_{\infty} \le 0.001 \epsilon.
$$

Meanwihle, note that $\left( g\varphi^{-1} g^{-1} z_0, g [z_0, \varphi^{n} z_0] \right)$ is also $\widehat{C}$-aligned; otherwise, one component of $\pi_{g \ga}(g \varphi^{-1} g^{-1} z_0)$ belongs to $g \ga([0, + \infty))$ by Lemma \ref{lem:BGIPHeredi}(2), and therefore one component of $\pi_{g \ga}(z_0)$ is contained in  $ g \ga([\tau_{\varphi}, + \infty))$ which contradicts Equation \eqref{eqn:trqiclaim2secondcomp}. Hence, it follows from Lemma \ref{lem:BGIPHeredi}(3) that 
$$
\pi_{g\gamma} (g\varphi^{-1} g^{-1} z_0) \subset g\gamma \left( \left(-\infty, 2 \widehat{C} \right] \right). 
$$
Together with Equation \eqref{eqn:trqiclaim2secondcomp} and $n \tau_{\varphi_i} - 4 \widehat{C} > 2 L + 2\tau_{\varphi_i}$ for all $1 \le i \le r$, the squeezing property of each component geodesic of $g \ga$ implies that there exist  $q_{1},q_{2}\in [g\varphi g^{-1} z_0, z_j]$, with $q_1$ coming earlier than $q_2$, such that \[
\norm{\kappa\left(q_1, g \gamma(n\tau_{\varphi}/2 - \tau_{\varphi})\right)}_{\infty}, \norm{\kappa\left(q_2, g \gamma(n\tau_{\varphi}/2)\right)}_{\infty} < 0.001\epsilon.
\]
 
Now we have for each $1 \le i \le r$ that
\[
\begin{aligned}
d_i(g\varphi^{-1} g^{-1} z_0, z_j) - d_i(z_0, z_j) 
&= \left( d_i(g\varphi^{-1} g^{-1}z_0, q_1) + d_i(q_1, q_2) + d_i(q_2, z_j) \right) \\
& \quad - \left( d_i(z_0, p) + d_i(p, z_j) \right) \\
&=_{0.006\epsilon} d_i\left(g\varphi^{-1} g^{-1}z_0,  g\gamma(n\tau_{\varphi}/2 - \tau_{\varphi})\right) \\
& \qquad \quad + d_i\left(  g\gamma(n\tau_{\varphi}/2 - \tau_{\varphi}),   g\gamma(n\tau_{\varphi}/2)\right) \\
& \qquad \quad + d_i\left( g \gamma(n\tau_{\varphi}/2), z_j\right) \\
& \qquad \quad - d_i\left(z_{0},   g \gamma(n\tau_{\varphi}/2)\right) - d_i\left( g \gamma(n\tau_{\varphi}/2), z_j \right) \\
&= d_i\left(  g\gamma(n\tau_{\varphi}/2 - \tau_{\varphi}),   g\gamma(n\tau_{\varphi}/2)\right)  = \tau_{\varphi_i}.
\end{aligned}
\]
Taking the limit $j \to + \infty$, Equation \eqref{eqn:trqiclaim2goal2} follows. This completes the proof of the claim.

\medskip

Now by the above claim and disjointness of $U_{i(l)}$'s,  we have
 \[\begin{aligned}
\mu( O \times (\textrm{$\epsilon$-neighborhood of $I+\tau_{\varphi}$})) &\ge \mu\left( \bigcup_{l} F_l (C_{l} \times I) \right) \\
&= \sum_{l} \mu \left( F_l (C_l \times I)\right) \\
&= \sum_{l} \mu (C_{l} \times I) \\
& \ge \mu(K \times I).
\end{aligned}
\]
Note that $\mu( O \times (\textrm{$\epsilon$-neighborhood of $I+\tau_{\varphi}$})) < + \infty$ since $\mu$ is Radon.
Since $\epsilon > 0$ and an open set $O \supset K$ are arbitrary, we have 
$$
(T_{\varphi}^* \mu)(K \times I) = \mu(K \times (I + \tau_{\varphi})) \ge \mu(K \times I). 
$$

\medskip
{\bf \noindent Step 2.} Consider the case that $E = A \times B$ for Borel $A \subset \partial Z$ and a box $B \subset \mathbb{R}^r$. Since $\mu$ is supported on $\La_{\varphi, C}(\Ga) \times \R^r$, we may assume that $A \subset \La_{\varphi, C}(\Ga)$. By the inner regularity of $\mu$ and $T_{\varphi}^* \mu$, there exist compact subsets $E_1, E_2 \subset E$ such that
$$
\abs{\mu(E) - \mu(E_1)} < \epsilon \quad \text{and} \quad \abs{(T_{\varphi}^*\mu)(E) - (T_{\varphi}^*\mu)(E_2)  } < \epsilon.
$$
Considering projections of $E_1 \cup E_2$ to $A$ and $B$, we obtain compact subsets $K \subset A$ and $I \subset B$ so that 
$$
\abs{\mu(E) - \mu(K \times I)} < \epsilon \quad \text{and} \quad \abs{(T_{\varphi}^*\mu)(E) - (T_{\varphi}^*\mu)(K \times I)  } < \epsilon.
$$
Since $B$ is a box, we can take the smallest box containing $I$ and hence we may assume that $I$ is a compact box. Applying Step 1 to $K \times I$, we have
$$
(T_{\varphi}^*\mu)(E) \ge \mu(E) - 2 \epsilon.
$$
Since $\epsilon > 0$ is arbitrary, $(T_{\varphi}^*\mu)(E) \ge \mu(E)$ follows.

\medskip
{\bf \noindent Step 3.} When $E \subset \mathcal{H}$ is a finite union of open sets of the form $O_1 \times O_2$ for open sets $O_1 \subset \partial Z$ and open boxes $O_2 \subset \R^r$,  $E$ is a disjoint union of finitely many Borel subsets of the form $A \times B$, where $A \subset \partial Z$ is Borel and $B \subset \R^r$ is a box. Hence, $(T_{\varphi}^*\mu)(E) \ge \mu(E)$ follows from Step 2.

\medskip
{\bf \noindent Step 4.} When $E\subset \mathcal{H}$ is an open set, $E$ is a countable union of open sets of the form  $O_1 \times O_2$ for open sets $O_1 \subset \partial Z$ and open boxes $O_2 \subset \R^r$. Hence,  $(T_{\varphi}^*\mu)(E) \ge \mu(E)$ follows from Step 3.

\medskip
{\bf \noindent Step 5.} Finally, suppose that $E \subset \mathcal{H}$ is a Borel subset. Then it follows from Step 4 and the outer regularity of $\mu$ and $T_{\varphi}^*\mu$ that
$$
(T_{\varphi}^*\mu)(E) \ge \mu(E).
$$

\medskip
Now we have shown Equation \eqref{eqn:trabscont}, and hence $\mu$ is absolutely continuous with respect to $T_{\varphi}^* \mu$. Since both $\mu$ and $T_{\varphi}^*\mu$ are $\Ga$-invariant, $\frac{d \mu}{d T_{\varphi}^* \mu}$ is $\Ga$-invariant as well. Since $T_{\varphi}$ commutes with the $\Ga$-action, $T_{\varphi}^*\mu$ is $\Ga$-ergodic, and hence  $\frac{d \mu}{d T_{\varphi}^* \mu}$ is constant $T_{\varphi}^*\mu$-a.e., which must be positive. Hence, there exists $\la \in \R$ such that $\frac{d T_{\varphi}^* \mu}{d \mu} = e^{\la}$ $\mu$-a.e., and moreover, $\la \ge 0$ by Equation \eqref{eqn:trabscont}. This completes the proof.
\end{proof}

\subsection{Closed orbits in $\mathcal{H}$}

We record following observation that every $\Ga$-orbit outside $\La(\Ga) \times \R^r$ is closed. This implies that any $\Ga$-invariant ergodic Radon measure on $\mathcal{H} \smallsetminus (\La(\Ga) \times \R^r)$ is the counting measure of a single $\Ga$-orbit there, up to a constant multiple.

\begin{proposition} \label{prop:closedorbits}
    Let $\Ga < \Isom(Z)$ be a non-elementary transverse subgroup. Then for any $(\xi, u) \in \mathcal{H} \smallsetminus (\La(\Ga) \times \R^r)$,
    $$
    \Ga \cdot (\xi, u) \quad \text{is closed in } \mathcal{H}.
    $$
    
\end{proposition}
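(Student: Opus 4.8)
The plan is to reduce the statement to the following \emph{finiteness claim}: if $\xi=(\xi_1,\dots,\xi_r)\in\partial Z$ does not belong to $\La(\Ga)$, then for every $M>0$ the set $\{g\in\Ga:\norm{\beta_\xi(g^{-1}z_0,z_0)}\le M\}$ is finite. Write $z_0=(x_{1,0},\dots,x_{r,0})$ with $x_{i,0}\in X_i$; recall from \eqref{eqn:prodBuse} that for $g=(g_1,\dots,g_r)\in\Ga$ the $i$-th coordinate of $\beta_\xi(g^{-1}z_0,z_0)$ equals $\beta_{\xi_i}(g_i^{-1}x_{i,0},x_{i,0})$.

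To prove the claim, suppose for contradiction that infinitely many distinct $g_n=(g_{1,n},\dots,g_{r,n})\in\Ga$ satisfy $\norm{\beta_\xi(g_n^{-1}z_0,z_0)}\le M$. Since each $X_i$ is proper, $X_i\cup\partial X_i$ is compact, so after passing to a subsequence $g_n^{-1}z_0\to b=(b_1,\dots,b_r)$ in $\prod_i(X_i\cup\partial X_i)$; divergence of $\Ga$ forces $d_i(x_{i,0},g_{i,n}^{-1}x_{i,0})\to+\infty$, hence $b_i\in\partial X_i$ for all $i$, so $b\in\partial Z$, and $b\in\La(\Ga)$ because $g_n^{-1}z_0\in\Ga z_0$ and $\La(\Ga)$ is closed. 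Fix $i$. By hypothesis $|\beta_{\xi_i}(g_{i,n}^{-1}x_{i,0},x_{i,0})|\le M$, so $|\beta_{\xi_i}(x_{i,0},g_{i,n}^{-1}x_{i,0})|\le M$ as well, and the Gromov product based at $x_{i,0}$ satisfies
\[
(g_{i,n}^{-1}x_{i,0}\mid\xi_i)_{x_{i,0}}=\tfrac12\big(d_i(x_{i,0},g_{i,n}^{-1}x_{i,0})+\beta_{\xi_i}(x_{i,0},g_{i,n}^{-1}x_{i,0})\big)\longrightarrow+\infty .
\]
Since the Gromov product based at $x_{i,0}$ induces the topology of $X_i\cup\partial X_i$ and satisfies the $\delta$-hyperbolic inequality $(p\mid q)_{x_{i,0}}\ge\min\{(p\mid w)_{x_{i,0}},(w\mid q)_{x_{i,0}}\}-O(\delta)$ on $(X_i\cup\partial X_i)^2$, combining this with $g_{i,n}^{-1}x_{i,0}\to b_i$ forces $\xi_i=b_i$. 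As this holds for every $i$, we get $\xi=b\in\La(\Ga)$, contradicting $\xi\notin\La(\Ga)$.

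Granting the claim, let $(\xi,u)\in\mathcal{H}\smallsetminus(\La(\Ga)\times\R^r)$ and suppose $g_n\cdot(\xi,u)\to(\eta,v)$ in $\mathcal{H}$. Comparing $\R^r$-coordinates, $u+\beta_\xi(g_n^{-1}z_0,z_0)\to v$, so $\norm{\beta_\xi(g_n^{-1}z_0,z_0)}\le\norm{v-u}+1$ for all large $n$; by the claim only finitely many distinct elements occur among these $g_n$, hence $g_n=g$ for some fixed $g\in\Ga$ along a subsequence, and so $(\eta,v)=g\cdot(\xi,u)\in\Ga\cdot(\xi,u)$. Thus $\Ga\cdot(\xi,u)$ is closed in $\mathcal{H}$. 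The same finiteness shows that $\stab_\Ga(\xi,u)$ is finite and that every orbit point is isolated (a neighbourhood bounded in the $\R^r$-direction meets the orbit in finitely many points), so the orbit is in addition discrete with finite stabiliser — which is what one needs for its counting measure to be a Radon measure.

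The only genuine difficulty is the simultaneous control of the vector-valued cocycle: boundedness of one Busemann coordinate $\beta_{\xi_i}(g_{i,n}^{-1}x_{i,0},x_{i,0})$ only records behaviour inside the factor $X_i$, and it is precisely the divergence hypothesis — which makes $d_i(x_{i,0},g_{i,n}^{-1}x_{i,0})\to\infty$ in \emph{every} factor — that upgrades boundedness of each coordinate to the convergence $g_{i,n}^{-1}x_{i,0}\to\xi_i$ in all factors at once, and hence to $\xi\in\La(\Ga)$. Using the $\R^r$-factor of $\mathcal{H}$ is essential here: on $\partial Z\smallsetminus\La(\Ga)$ alone the $\Ga$-orbits need not be closed, as one sees already for a surface group embedded diagonally in a product of two rank-one groups, where $\La(\Ga)$ is a ``diagonal'' circle and complementary orbits can be dense.
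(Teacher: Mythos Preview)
Your proof is correct and follows essentially the same approach as the paper: both argue that if an infinite sequence $g_n\in\Ga$ has $\beta_\xi(g_n^{-1}z_0,z_0)$ bounded then a subsequential limit $\zeta=\lim g_n^{-1}z_0\in\La(\Ga)$ must equal $\xi$ componentwise, contradicting $\xi\notin\La(\Ga)$. The paper phrases this contrapositively (``$\xi_i\neq\zeta_i$ for some $i$ forces $\beta_\xi$ unbounded'') and leaves the Gromov-product computation implicit, whereas you spell it out and add the useful supplementary observations on discreteness and finite stabiliser.
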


\begin{proof}
Suppose not. Then there exists a sequence $\{ g_n \}_{n \in \N} \subset \Ga$ such that $g_n (\xi, u) = (g_n \xi, u + \beta_{\xi}(g_n^{-1} z_0, z_0))$ converges in $\mathcal{H}$, to a point in $\mathcal{H} \smallsetminus \Ga \cdot~(\xi, u)$. In particular, the sequence $\{g_n\}_{n \in \N}$ is an infinite sequence. Hence, after passing to a subsequence, we can set $\zeta := \lim_{n \to + \infty} g_n^{-1} z_0 \in \La(\Ga)$. Since $\xi \notin \La(\Ga)$, at least one component of $\xi$ and $\zeta$ are different.  Therefore, $\beta_{\xi}(g_n^{-1} z_0, z_0)$ is unbounded, yielding a contradiction.
\end{proof}

\subsection{Proof of the rigidity}

Let us now prove Theorem \ref{thm:uniqueRadon}. 

\begin{proof}[Proof of Theorem \ref{thm:uniqueRadon}]

The case (2) is a direct consequence of Proposition \ref{prop:closedorbits}. We now prove (1). Let $\mu$ be a $\Ga$-invariant ergodic Radon measure on $\mathcal{H}$ supported on $\La_c(\Ga) \times \R^r$. We define 
\[
A := \left\{ a \in \mathbb{R}^r  : \exists \lambda (a) \in \R \text{ such that } \frac{d T_{a}^{*} \mu}{d \mu} = e^{\lambda(a)} \text{ a.e.} \right\}.
\]
It is straightforward that $A$ is an additive subgroup of $\mathbb{R}^r$ and $\lambda : A \rightarrow \R$ is an additive homomorphism. Moreover, by Theorem \ref{thm:trbytrlengthqi},
$$
\Spec(\Ga) \subset A.
$$
Hence, it follows from non-arithmeticity of $\Spec(\Ga)$ that $A \subset \R^r$ is dense.

\begin{claim*}
    The homomorphism $\la$ extends to a linear form $\la : \R^r \to \R$ so that    $$
    T_a^* \mu = e^{\la(a)} \cdot \mu. \quad( \forall a \in \R^{r})
    $$
\end{claim*}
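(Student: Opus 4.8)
The plan is to use the weak-$*$ continuity of the assignment $a \mapsto T_a^* \mu$ on Radon measures on $\mathcal{H}$, and then to propagate the relation $T_a^* \mu = e^{\la(a)} \mu$ from the dense subgroup $A$ to all of $\R^r$ by approximation. For the continuity, note that for $f \in C_c(\mathcal{H})$ we have $\int f \, d(T_a^* \mu) = \int f(\xi, u - a) \, d\mu(\xi, u)$; if $a_n \to a$ in $\R^r$, then the functions $(\xi, u) \mapsto f(\xi, u - a_n)$ are uniformly bounded and, for $n$ large, all supported inside a single fixed compact subset of $\mathcal{H}$, so by dominated convergence (valid because $\mu$, being Radon, is finite on compacta) $\int f \, d(T_{a_n}^* \mu) \to \int f \, d(T_a^* \mu)$. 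I would fix once and for all a nonnegative $f_0 \in C_c(\mathcal{H})$ with $\int f_0 \, d\mu > 0$, which exists since $\mu$ is a nonzero Radon measure.

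First I would show that $\la : A \to \R$ is continuous. For any sequence $a_n \to 0$ with $a_n \in A$, we have $T_{a_n}^* \mu = e^{\la(a_n)} \mu$, hence $e^{\la(a_n)} \int f_0 \, d\mu = \int f_0 \, d(T_{a_n}^* \mu) \to \int f_0 \, d\mu$ by the previous paragraph, which forces $e^{\la(a_n)} \to 1$, i.e. $\la(a_n) \to 0$. Since $\la$ is an additive homomorphism, continuity at $0$ yields continuity on all of $A$. Because $A$ is dense in $\R^r$ — this is where non-arithmeticity of $\Spec(\Ga)$, together with $\Spec(\Ga) \subset A$ from Theorem \ref{thm:trbytrlengthqi}, enters — the homomorphism $\la$ extends uniquely to a continuous additive homomorphism $\R^r \to \R$; any continuous additive map $\R^r \to \R$ is $\mathbb{Q}$-linear and continuous, hence $\R$-linear. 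Thus the extension, which I continue to denote $\la$, is a linear form agreeing with the original $\la$ on $A$.

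Finally I would upgrade the identity to all of $\R^r$. Given $a \in \R^r$, choose $a_n \in A$ with $a_n \to a$. For each $n$ we have $T_{a_n}^* \mu = e^{\la(a_n)} \mu$ as Radon measures; passing to the weak-$*$ limit, the left-hand side converges to $T_a^* \mu$ by the first paragraph and the right-hand side to $e^{\la(a)} \mu$ by continuity of $\la$. Since a Radon measure is determined by its integrals against $C_c(\mathcal{H})$, this gives $T_a^* \mu = e^{\la(a)} \mu$ for every $a \in \R^r$, as claimed.

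I do not expect a genuine obstacle here: the substantive work is already done in Theorem \ref{thm:trbytrlengthqi}, which furnishes a constant Radon--Nikodym multiplier for the translation by every loxodromic translation length and hence for all of $\Spec(\Ga)$, and non-arithmeticity supplies the density of $A$. The only point that needs to be handled with a little care is the weak-$*$ continuity of $a \mapsto T_a^* \mu$, which is the routine dominated-convergence computation sketched above and works precisely because $T_a$ acts by translation in the $\R^r$-factor only and $\mu$ is Radon.
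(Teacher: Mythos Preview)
Your proposal is correct and follows essentially the same route as the paper: both arguments fix a test function in $C_c(\mathcal{H})$, use dominated convergence to obtain continuity of $a \mapsto \int f \circ T_{-a}\, d\mu$, and then combine the homomorphism property on the dense subgroup $A$ with this continuity to produce a linear form and the identity $T_a^*\mu = e^{\lambda(a)}\mu$ for all $a$. The only cosmetic difference is packaging---the paper defines $\lambda_f$ on all of $\R^r$ and checks it is a homomorphism by density, whereas you first extend $\lambda$ abstractly and then pass to weak-$*$ limits---but the content is identical.
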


To see the claim, let $f : \mathcal{H} \to \R$ be a compactly supported continuous function with $\int f d\mu > 0$. We define a map $\la_f : \R^r \to \R$ by
$$
e^{\la_f(a)} \int f \,d\mu = \int f \circ T_{-a} \,d \mu. \quad (\forall a \in \mathbb{R}^{r})
$$
Then $\la_f(a) = \la(a)$ for $a \in A$. By Dominated convergence theorem, $\la_f$ is continuous on $\R^{r}$. Since $\la : A \to \R$ is a homomorphism, this implies that $\la_f : \R^r \to \R$ is a continuous homomorphism, which must be a linear form. 

We apply the above argumet for every compactly supported continuous functions with positive integrals. Since the resulting linear form $\la_f$ conincides with $\la$ on a dense subset $A \subset \R^r$, $\la_f$ in fact does not depend on the choice of $f$, and is the unique extension of $\lambda : A \rightarrow \R$. 
That means, $e^{\lambda(a)} \int f \, d\mu =\int f \circ T_{-a} \,d \mu$ holds for \emph{every} $f \in C_{c}(\mathcal{H})$, where we mean by $\lambda$ the unique extension of $\lambda : A \rightarrow \R$.  This proves the claim.

\medskip

The claim implies that there exists a finite Borel measure $\nu_0$ on $\partial Z$ so that $\mu$ is decomposed on $\mathcal{H} = \partial Z \times \R^r$  as follows:
$$
d \mu(\xi, u) = e^{\la(u)} \cdot d \nu_0(\xi) \, du.
$$

By the $\Ga$-invariance of $\mu$, it is easy to see that for each $g \in \Ga$,
$$
\frac{d g_* \nu_0}{d \nu_0}(\xi) = e^{-\la(\beta_{\xi}( g z_0, z_0))} \quad \text{for $\nu_0$-a.e. $\xi \in \partial Z$}.
$$
Then for $z \in Z$, define the measure 
$\nu_{z}$ on $\partial Z$ by setting 
$$d \nu_z(\xi) :=  \frac{e^{- \la(\beta_{\xi}(z, z_0))}}{\nu_0(\partial Z)} d \nu_0 (\xi).$$
This is well-defined, and moreover the family $\{ \nu_z \}_{z \in Z}$ is a $1$-dimensional $\la$-conformal density of $\Ga$. Since $\{ \nu_z \}_{z \in Z}$ is supported on $\La_c(\Ga)$,  $\delta_{\la}(\Ga) = 1$ and $\Ga$ is of $\la$-divergence type by Theorem \ref{thm:BCZZHTS}. Therefore,
$$
\mu = \frac{1}{\nu_0(\partial Z)} \cdot \mu_{\la},
$$
which completes the proof.
\end{proof}

\section{Existence of ergodic invariant Radon measures} \label{sec:ergodicity}

We continue the setting of Section \ref{sec:UE}. In this section, we prove the ergodicity of the invariant Radon measure defined in Definition \ref{def:candidateergodicmeasure}. 

\begin{theorem} \label{thm:ergodicdiv}
    Let $\Ga < \Isom(Z)$ be a non-elementary transverse subgroup with non-arithmetic length spectrum. For a linear form $\psi : \R^r \to \R$, if  $\Ga$ is of $\psi$-divergence type, then 
    $$
    \text{the $\Ga$-action on $(\mathcal{H}, \mu_{\psi})$ is ergodic.}
    $$
    Moreover, $\mu_{\psi}$ is supported on $\La_{c}(\Ga) \times \R^r \subset \mathcal{H}$. 
\end{theorem}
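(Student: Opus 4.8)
The final assertion that $\mu_\psi$ is supported on $\La_c(\Ga)\times\R^r$ is immediate from Theorem~\ref{thm:BCZZHTS}: since $\Ga$ is of $\psi$-divergence type, $\La_c(\Ga)$ is $\nu_{z_0}$-conull, and because $d\mu_\psi(\xi,u)=e^{\delta_\psi(\Ga)\cdot\psi(u)}\,d\nu_{z_0}(\xi)\,du$ by Definition~\ref{def:candidateergodicmeasure}, the set $\La_c(\Ga)\times\R^r$ is $\mu_\psi$-conull. So the real content is ergodicity, and the plan is to deduce it from the rigidity Theorem~\ref{thm:uniqueRadon} via an ergodic decomposition argument.

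Suppose for contradiction that the $\Ga$-action on $(\mathcal{H},\mu_\psi)$ is not ergodic. The space $\mathcal{H}=\partial Z\times\R^r$ is Polish and $\sigma$-compact and $\mu_\psi$ is a $\sigma$-finite $\Ga$-invariant Radon measure, so it admits an ergodic decomposition $\mu_\psi=\int_Y\mu_y\,d\rho(y)$ into $\Ga$-invariant ergodic measures $\mu_y$; since $\mathcal{H}$ is $\sigma$-compact and $\mu_\psi$ is finite on compacta, $\mu_y$ is locally finite, hence Radon, for $\rho$-a.e.\ $y$, and it is supported on $\La_c(\Ga)\times\R^r$ because that set is $\mu_\psi$-conull. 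Applying Theorem~\ref{thm:uniqueRadon}(1) to each such $\mu_y$ --- this is where the non-arithmeticity of $\Spec(\Ga)$ is used --- we get a linear form $\psi_y:\R^r\to\R$ with $\Ga$ of $\psi_y$-divergence type and $\mu_y=c(y)\,\mu_{\psi_y}$ for some constant $c(y)>0$.

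It then remains to identify the linear forms. Set $\lambda:=\delta_\psi(\Ga)\cdot\psi$ and $\lambda_y:=\delta_{\psi_y}(\Ga)\cdot\psi_y$; a direct computation from Definition~\ref{def:candidateergodicmeasure} gives $dT_a^{*}\mu_\psi/d\mu_\psi=e^{\lambda(a)}$ and $dT_a^{*}\mu_y/d\mu_y=e^{\lambda_y(a)}$ for all $a\in\R^r$. Since each $T_a$ commutes with the $\Ga$-action, applying $T_a^{*}$ to the ergodic decomposition yields $e^{\lambda(a)}\mu_\psi=\int_Y e^{\lambda_y(a)}\mu_y\,d\rho(y)$. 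Writing $\phi:\mathcal{H}\to Y$ for the $\mu_\psi$-a.e.\ defined map sending a point to its ergodic component, the measure $\int_Y F(y)\,\mu_y\,d\rho(y)$ has density $F\circ\phi$ with respect to $\mu_\psi$ for any bounded measurable $F$; taking $F(y)=e^{\lambda_y(a)}$ gives $e^{\lambda_{\phi(x)}(a)}=e^{\lambda(a)}$ for $\mu_\psi$-a.e.\ $x$, for each $a$, and running $a$ over a countable dense subset of $\R^r$ together with continuity of linear forms gives $\lambda_y=\lambda$ for $\rho$-a.e.\ $y$. Then the conformal densities underlying $\mu_\psi$ and $\mu_{\psi_y}$ have the same conformality cocycle and are both of divergence type, hence coincide by the uniqueness of the divergence-type conformal density (a consequence of its ergodicity, Theorem~\ref{thm:BCZZHTS}); so $\mu_{\psi_y}=\mu_\psi$ and $\mu_y=c(y)\mu_\psi$ for $\rho$-a.e.\ $y$. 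Finally, if $B\subset\mathcal{H}$ were $\Ga$-invariant with both $\mu_\psi(B)>0$ and $\mu_\psi(\mathcal{H}\smallsetminus B)>0$, then for $\rho$-a.e.\ $y$ the ergodic measure $\mu_y=c(y)\mu_\psi$ would charge both $B$ and its complement, a contradiction; hence $\mu_\psi$ is ergodic.

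The conceptual heart is Theorem~\ref{thm:uniqueRadon}, which is already in hand; within this argument the steps demanding genuine care are the measure-theoretic ones --- that the ergodic decomposition of the $\sigma$-finite measure $\mu_\psi$ has Radon ergodic components, the Radon--Nikodym bookkeeping through that decomposition, and the uniqueness of divergence-type conformal densities. I expect the identification $\lambda_y=\lambda$ via $T_a$-quasi-invariance to be the most delicate point, as it is precisely what excludes ``mixtures over different linear forms'' and thereby upgrades the classification of ergodic measures into ergodicity of the single measure $\mu_\psi$.
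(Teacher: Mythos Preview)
Your argument is correct, but it follows a genuinely different route from the paper's. The paper proves ergodicity \emph{directly}, without invoking the rigidity Theorem~\ref{thm:uniqueRadon}: it introduces the essential subgroup $\ess_\nu(\Ga)\subset\R^r$ (Definition~\ref{def:ess}) and the criterion of Schmidt--Roblin--Lee--Oh (Proposition~\ref{prop.essanderg}) that ergodicity of $\mu_\psi$ is equivalent to ergodicity of $\nu$ on $\partial Z$ together with $\ess_\nu(\Ga)=\R^r$. It then proves Lemma~\ref{lem:PSEssential}, which shows $\Spec(\Ga)\subset\ess_\nu(\Ga)$ by rerunning the squeezing/alignment machinery of Section~\ref{sec:UE} on the conformal measure itself (via Proposition~\ref{prop:pattersonSqueeze}); non-arithmeticity then forces $\ess_\nu(\Ga)=\R^r$.

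Your approach---feed the ergodic components back into Theorem~\ref{thm:uniqueRadon} and identify the linear forms via $T_a$-quasi-invariance---is more economical once rigidity is in hand, and avoids redeveloping the essential-subgroup machinery. On the other hand, the paper's route keeps the ergodicity and rigidity arguments logically independent and, more importantly, yields the stronger Theorem~\ref{thm:ergodicnormal} about ergodicity of the action of a \emph{normal subgroup} $\Ga_0\triangleleft\Ga$, which your ergodic-decomposition argument does not obviously give (the components would be $\Ga_0$-ergodic but only $\Ga$-invariance is available for the identification step). The points you flag as delicate---Radonness of the ergodic components, measurability of $y\mapsto\lambda_y$, and uniqueness of the divergence-type conformal density---are all genuine but standard; in particular, $\lambda_y(a)$ can be recovered measurably as $\log\bigl(\int f\circ T_{-a}\,d\mu_y\big/\int f\,d\mu_y\bigr)$ for suitable $f\in C_c(\mathcal{H})$, and the density identity extends to nonnegative $F$ by monotone convergence.
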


Note that $\mu_{\psi}$ being supported on $\La_{c}(\Ga) \times \R^r$ is due to Blayac--Canary--Zhu--Zimmer \cite{BCZZ_PS} (Theorem \ref{thm:BCZZHTS}). 
Hence, it suffices to show that $\mu_{\Ga}$ is $\Ga$-ergodic. This is a special case of the following, together with Theorem~\ref{thm:BCZZHTS}:

\begin{theorem} \label{thm:ergodicnormal}
    Let $\Ga < \Isom(Z)$ be a non-elementary transverse subgroup and $\psi : \R^r \to \R$ a linear form. Suppose that $\Ga$ is of $\psi$-divergence type. Let $\Ga_0 \triangleleft \Ga$ be a normal subgroup such that
    \begin{itemize}
        \item $\Spec (\Ga_0)$ is non-arithmetic and
        \item the $\Ga_0$-action on $\partial Z$ is ergodic with respect to the $\delta_{\psi}(\Ga)$-dimensional $\psi$-conformal density of $\Ga$.
    \end{itemize}
    Then, 
    $$ 
    \text{the $\Ga_0$-action on $(\mathcal{H}, \mu_{\psi})$ is ergodic}
    $$
    where $\mu_{\psi}$ is the measure defined in Definition \ref{def:candidateergodicmeasure} for $\Ga$.
\end{theorem}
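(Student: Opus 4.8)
The plan is to deduce the statement from the rigidity Theorem~\ref{thm:uniqueRadon}, applied to $\Gamma_0$ in place of $\Gamma$, together with the soft fact that a $\sigma$-finite invariant measure all of whose ergodic components lie in a single measure class is automatically ergodic. As a first step I would check that $\Gamma_0$ satisfies the hypotheses of Theorem~\ref{thm:uniqueRadon}: divergence and antipodality pass to subgroups, so $\Gamma_0 < \Isom(Z)$ is transverse, and a transverse subgroup whose length spectrum generates a dense additive subgroup of $\R^r$ cannot be virtually cyclic and must contain a loxodromic element, so $\#\La(\Gamma_0) \ge 3$ and $\Gamma_0$ is non-elementary. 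Writing $\delta := \delta_\psi(\Gamma)$ and $\nu = \{\nu_z\}$ for the $\delta$-dimensional $\psi$-conformal density of $\Gamma$, note that $\La(\Gamma_0) = \La(\Gamma)$ (a normal subgroup of a non-elementary group has the same limit set), so $\nu$ is also a $\delta$-dimensional $\psi$-conformal density of $\Gamma_0$; then the Hopf--Tsuji--Sullivan dichotomy (Theorem~\ref{thm:BCZZHTS}), combined with the hypothesis that $\Gamma_0$ acts ergodically on $(\partial Z, \nu_z)$, forces $\La_c(\Gamma_0)$ to be $\nu_z$-conull, hence $\delta_\psi(\Gamma_0) = \delta$ and $\Gamma_0$ is of $\psi$-divergence type with $\nu$ its unique such density. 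In particular the measure $\mu_\psi$ of Definition~\ref{def:candidateergodicmeasure} attached to $\Gamma$ agrees with the one attached to $\Gamma_0$, and is supported on $\La_c(\Gamma_0) \times \R^r$.

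Next I would pass to the ergodic decomposition $\mu_\psi = \int_\Omega \mu_\omega \, d\mathbb{P}(\omega)$ of $\mu_\psi$ under the $\Gamma_0$-action, which exists since $\mathcal{H}$ is Polish and $\mu_\psi$ is $\sigma$-finite and $\Gamma_0$-invariant, and whose components $\mu_\omega$ are $\Gamma_0$-invariant ergodic Radon measures, pairwise mutually singular, with $\mu_\omega \ll \mu_\psi$ for $\mathbb{P}$-a.e.\ $\omega$. Since $\mu_\psi$ is supported on $\La_c(\Gamma_0) \times \R^r$, so is $\mu_\omega$ for a.e.\ $\omega$, and Theorem~\ref{thm:uniqueRadon}(1) then gives $\mu_\omega = c_\omega \, \mu_{\psi'_\omega}$ for some $c_\omega > 0$ and some linear form $\psi'_\omega : \R^r \to \R$ with $\Gamma_0$ of $\psi'_\omega$-divergence type. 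The crux is to show that every such component is a multiple of $\mu_\psi$ itself. Restricting the decomposition to the compact slice $\partial Z \times [0,1]^r$ and projecting to $\partial Z$ exhibits $\nu_{z_0}$ as a probability average of the $\psi'_\omega$-conformal densities of $\Gamma_0$; since each of these is absolutely continuous with respect to $\nu_{z_0}$ (a $\nu_{z_0}$-null set is null for the average, hence for a.e.\ summand), comparing Radon--Nikodym cocycles — concretely, evaluating them on the translation vectors $\tau_\varphi$ of loxodromic $\varphi \in \Gamma_0$, which by non-arithmeticity span $\R^r$ — forces $\delta_{\psi'_\omega}(\Gamma_0) \, \psi'_\omega = \delta \, \psi$ as linear forms (equivalently: $\nu_{z_0}$, being $\Gamma_0$-ergodic, cannot be a non-trivial average of mutually singular divergence-type conformal densities). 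Hence $\mu_{\psi'_\omega} = \mu_\psi$ and $\mu_\omega = c_\omega \mu_\psi$ for $\mathbb{P}$-a.e.\ $\omega$.

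Finally, pairwise mutually singular measures that are all positive multiples of one fixed measure $\mu_\psi$ must coincide, so the ergodic decomposition is trivial and $\mu_\psi$ is $\Gamma_0$-ergodic, completing the proof. (Theorem~\ref{thm:ergodicdiv} is then the case $\Gamma_0 = \Gamma$, the required boundary ergodicity of $\Gamma$ being supplied by Theorem~\ref{thm:BCZZHTS}.)

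I expect the main obstacle to be the step that pins down the linear forms $\psi'_\omega$ of the ergodic components: this is exactly where the non-arithmetic length spectrum of $\Gamma_0$ is indispensable, and it amounts to a Livsic-type rigidity for the $\R^r$-valued Busemann cocycle — or, packaged differently, to the mutual singularity of conformal densities attached to distinct linear forms — which should be either quoted from the Patterson--Sullivan theory in the references or established directly via the period-over-closed-geodesics comparison sketched above. Two further, more routine, points requiring a word are the direction of the Hopf--Tsuji--Sullivan dichotomy that upgrades boundary ergodicity of $\Gamma_0$ to conullity of $\La_c(\Gamma_0)$, and the existence of an ergodic decomposition for the infinite Radon measure $\mu_\psi$ on the Polish space $\mathcal{H}$. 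Alternatively, one could avoid Theorem~\ref{thm:uniqueRadon} and argue directly, showing any $\Gamma_0$-invariant $f \in L^\infty(\mathcal{H}, \mu_\psi)$ is invariant under $T_{\tau_\varphi}$ for every loxodromic $\varphi \in \Gamma_0$ via the alignment/squeezing and covering machinery of Section~\ref{sec:UE}, hence under $T_a$ for a dense — thus every — $a \in \R^r$, hence a function of $\xi$ alone and so constant by the boundary ergodicity; but the obstacle there is the same propagation from approximate to exact invariance.
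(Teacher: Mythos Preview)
Your main route via ergodic decomposition and Theorem~\ref{thm:uniqueRadon} has a genuine gap at the step where you pass from ``$\Gamma_0$ acts ergodically on $(\partial Z,\nu)$'' to ``$\La_c(\Gamma_0)$ is $\nu$-conull''. Theorem~\ref{thm:BCZZHTS} as stated in the paper gives boundary ergodicity only as a \emph{consequence} of divergence type, not as an equivalent condition; the standard higher-rank Hopf--Tsuji--Sullivan package equates divergence with conullity of the conical set and with ergodicity/conservativity on $\La^{(2)}$, but ergodicity on $\La$ alone is strictly weaker and does not force divergence in general. Without $\La_c(\Gamma_0)$ being $\nu$-conull, the $\Gamma_0$-ergodic components $\mu_\omega$ are only known to sit on $\La_c(\Gamma)\times\R^r$, not on $\La_c(\Gamma_0)\times\R^r$, and Theorem~\ref{thm:uniqueRadon}(1) for $\Gamma_0$ cannot be invoked (its proof, via Theorem~\ref{thm:radonCharge}, genuinely needs conical approximation by $\Gamma_0$-orbit points to build the map $F$). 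This is not the ``routine point'' you flag; it is the heart of the matter, and it is precisely what the paper's argument is designed to avoid.

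The paper proceeds instead through the \emph{essential subgroup} criterion of Schmidt--Roblin (Proposition~\ref{prop.essanderg}): the $\Gamma_0$-action on $(\mathcal H,\mu_\psi)$ is ergodic iff $\Gamma_0$ is ergodic on $(\partial Z,\nu)$ and $\ess_\nu(\Gamma_0)=\R^r$. The first condition is a hypothesis; for the second, Lemma~\ref{lem:PSEssential} shows $\Spec(\Gamma_0)\subset\ess_\nu(\Gamma_0)$, and non-arithmeticity plus closedness of the essential subgroup finishes. The key point is that Lemma~\ref{lem:PSEssential} works with the divergence-type density of $\Gamma$ (via Proposition~\ref{prop:pattersonSqueeze} applied to $\Gamma$, using $\varphi\in\Gamma_0\subset\Gamma$), so one never needs $\Gamma_0$ itself to be of divergence type or $\La_c(\Gamma_0)$ to be conull. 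Your alternative sketch in the last paragraph --- showing that any $\Gamma_0$-invariant $f\in L^\infty(\mathcal H,\mu_\psi)$ is $T_{\tau_\varphi}$-invariant for all loxodromic $\varphi\in\Gamma_0$, hence $T_a$-invariant for all $a\in\R^r$, hence a function of $\xi$ alone --- is exactly this argument recast, and is the correct line; the essential-subgroup language just packages the ``approximate to exact'' propagation you were worried about.
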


\subsection{Concentration on guided limit sets} \label{subsection:fullGuided}

We first strengthen the Hopf--Tsuji--Sullivan dichotomy of Blayac--Canary--Zhu--Zimmer \cite{BCZZ_PS} stated in Theorem \ref{thm:BCZZHTS}, by showing that the divergence-type conformal measure is in fact supported on guided limit sets. Recall from Definition \ref{def:divtypegpandmeasure} that a conformal density of $\Ga$ is of divergence type, if $\Ga$ is of divergence type with respect to a linear form associated to the given conformal density.

\begin{proposition}
    \label{prop:pattersonSqueeze}

    Let $\Ga < \Isom(Z)$ be a non-elementary transverse subgroup and $\nu = \{ \nu_z \}_{z \in Z}$ a divergence-type conformal density of $\Ga$. Let $\varphi \in \Ga$ be loxodromic  and let $C = C(\varphi) > 0$ be as in Lemma \ref{lem:BGIPHeredi}. Then
    $$
    \nu_{z_0} ( \La_{\varphi, C}(\Ga)) = 1.
    $$
\end{proposition}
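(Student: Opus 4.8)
The plan is to transcribe, essentially verbatim, the proof of Theorem~\ref{thm:radonCharge}, with the $\Gamma$-invariant Radon measure $\mu$ on $\mathcal{H}$ there replaced by the conformal probability measure $\nu_{z_0}$ on $\partial Z$ here; equivalently, one runs the same construction after discarding the $\R^{r}$-coordinate. The only step in that argument that uses $\Gamma$-invariance of $\mu$ is the identity $\mu(F(E))=\mu(E)$ obtained after applying a shift of the form $g a\varphi^{n}a g^{-1}$, and it will be replaced by a conformality estimate. Concretely: since $\nu$ is of divergence type, the Hopf--Tsuji--Sullivan dichotomy (Theorem~\ref{thm:BCZZHTS}) gives that $\Lambda_{c}(\Gamma)$ is $\nu_{z_0}$-conull and that $\Gamma$ acts ergodically on $(\Lambda(\Gamma),\nu_{z_0})$. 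Fix a loxodromic $\varphi\in\Gamma$, apply the extension lemma (Lemma~\ref{lem:extension}) to $\varphi_{1}\in\Gamma_{1}<\Isom(X_{1})$ to get $a_{1},a_{2},a_{3}\in\Gamma$ and $\alpha(\varphi_{1})>0$, and set $C_{0}:=10(\alpha(\varphi_{1})+C(\varphi_{1}))$ with $C(\varphi_{1})$ as in Lemma~\ref{lem:BGIPHeredi}, exactly as in the proof of Theorem~\ref{thm:radonCharge}. For $K>0$ put
\[
\Lambda_{K}:=\Big\{\xi\in\partial Z:\ \exists\text{ infinite }\{g_{j}\}\subset\Gamma\text{ with }\beta_{\xi}^{1}(z_{0},g_{j}z_{0})\ge d_{1}(z_{0},g_{j}z_{0})-K\ \text{for all }j\Big\}.
\]
Since $\Lambda_{c}(\Gamma)=\Psi(\Lambda_{c}(\Gamma_{1}))$ (Proposition~\ref{prop:componentwiseshadow}), we get $\nu_{z_0}\big(\bigcup_{K}\Lambda_{K}\big)=\nu_{z_0}(\Lambda_{c}(\Gamma))=1$, so $\nu_{z_0}(\Lambda_{K})>0$ for all large $K$; fix such a $K$, also large enough to meet the lower bounds on $K$ imposed in the proof of Theorem~\ref{thm:radonCharge}.

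For large $n$ and $k>0$, define $F=F_{n,k}\colon\Lambda_{K}\to\partial Z$ as there: for $\xi\in\Lambda_{K}$, let $g_{\xi}\in\Gamma$ minimize $d_{1}(z_{0},gz_{0})$ among $g\in\Gamma$ with $d_{1}(z_{0},gz_{0})>k$ and $\beta_{\xi}^{1}(z_{0},gz_{0})\ge d_{1}(z_{0},gz_{0})-K$ (with the same tie-breaking convention); let $a_{\xi}\in\{a_{1},a_{2},a_{3}\}$ be given by Lemma~\ref{lem:extensionHoro} so that the first component of the tuple $\big(z_{0},\,g_{\xi}a_{\xi}[z_{0},\varphi^{n}z_{0}],\,g_{\xi}a_{\xi}\varphi^{n}a_{\xi}g_{\xi}^{-1}\xi\big)$ is $C_{0}$-aligned; and set $F(\xi):=g_{\xi}a_{\xi}\varphi^{n}a_{\xi}g_{\xi}^{-1}\xi$. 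As in the proof of Theorem~\ref{thm:radonCharge}, $F$ is Borel and at most $M$-to-one with $M:=3\cdot\#\{g\in\Gamma:d_{1}(z_{0},gz_{0})\le D\}<\infty$ by Corollary~\ref{cor:typepreserving} ($D$ being the constant defined there, depending on $n$), and $F(\Lambda_{K})$ is contained in
\[
B_{k;n}:=\Big\{\zeta\in\partial Z:\ \exists h\in\Gamma\text{ with }d_{1}(z_{0},hz_{0})>k-\textstyle\sum_{j}d_{1}(z_{0},a_{j}z_{0})\text{ and the first component of }\big(z_{0},h[z_{0},\varphi^{n}z_{0}],\zeta\big)\text{ is }C_{0}\text{-aligned}\Big\}.
\]

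The new input is the replacement of invariance by conformality. Writing $E_{g,a}:=\{\xi\in\Lambda_{K}:g_{\xi}=g,\ a_{\xi}=a\}$ and $\gamma:=g a\varphi^{n}a g^{-1}$, the chain of Busemann-cocycle estimates in the proof of Theorem~\ref{thm:radonCharge} shows that $\beta_{\xi}^{i}(\gamma^{-1}z_{0},z_{0})$ differs from $d_{i}(z_{0},\varphi^{n}z_{0})$ by an amount bounded uniformly in $g,a,\xi$, for every $1\le i\le r$ and $\xi\in E_{g,a}$. Hence, by conformality of $\nu$,
\[
\nu_{z_0}\big(F(E_{g,a})\big)=\nu_{z_0}\big(\gamma E_{g,a}\big)=\int_{E_{g,a}}e^{-\delta_{\psi}(\Gamma)\,\psi(\beta_{\xi}(\gamma^{-1}z_{0},z_{0}))}\,d\nu_{z_0}(\xi)\ \asymp\ e^{-\delta_{\psi}(\Gamma)\,\psi(\kappa(z_{0},\varphi^{n}z_{0}))}\,\nu_{z_0}(E_{g,a}),
\]
with implied constants independent of $g$ and $a$. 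Summing over the Borel partition $\{E_{g,a}\}$ of $\Lambda_{K}$ and using that $F$ is at most $M$-to-one,
\[
\nu_{z_0}(B_{k;n})\ \ge\ \nu_{z_0}\big(F(\Lambda_{K})\big)\ \ge\ \tfrac{1}{M}\sum_{g,a}\nu_{z_0}\big(F(E_{g,a})\big)\ \gtrsim\ \tfrac{1}{M}\,e^{-\delta_{\psi}(\Gamma)\,\psi(\kappa(z_{0},\varphi^{n}z_{0}))}\,\nu_{z_0}(\Lambda_{K})\ >\ 0 .
\]
Since $B_{k;n}\subset\partial Z$ is compact and decreasing in $k$, the set $B_{n}:=\bigcap_{k>0}B_{k;n}$ satisfies $\nu_{z_0}(B_{n})=\lim_{k}\nu_{z_0}(B_{k;n})>0$. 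Then $\Gamma\cdot B_{n}$ is $\Gamma$-invariant of positive $\nu_{z_0}$-measure, hence $\nu_{z_0}$-conull by ergodicity, so $\bigcap_{n}\Gamma\cdot B_{n}$ is $\nu_{z_0}$-conull; and the last paragraph of the proof of Theorem~\ref{thm:radonCharge} shows verbatim that $\bigcap_{n}\Gamma\cdot B_{n}\subseteq\Lambda_{\varphi,C_{0}+2}(\Gamma)$. Therefore $\nu_{z_0}(\Lambda_{\varphi,C_{0}+2}(\Gamma))=1$, and $\Lambda_{\varphi,C_{0}+2}(\Gamma)=\Lambda_{\varphi,C}(\Gamma)$ by Lemma~\ref{lem:squeezedInv}, which is the claim.

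I do not expect a serious new obstacle; the work is a careful transcription of the proof of Theorem~\ref{thm:radonCharge}. The one point needing attention is the displayed conformality estimate for $\nu_{z_0}(F(E_{g,a}))$ --- that is, verifying that the Busemann-cocycle bound from that proof is uniform over the partition $\{E_{g,a}\}$ (it is, since the error terms there are absorbed into constants depending only on $\varphi^{n}$, $K$, and $a_{1},a_{2},a_{3}$) --- after which one needs only $\nu_{z_0}(B_{n})>0$ for each fixed $n$, which the estimate provides even though this lower bound degenerates as $n\to\infty$, and ergodicity upgrades each $\Gamma\cdot B_{n}$ to a conull set exactly as in that proof. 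In particular, no Besicovitch-type covering or diagonal-flow recurrence enters.
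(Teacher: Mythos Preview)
Your argument is correct and follows the same underlying idea as the paper. The paper packages it a bit more economically: rather than rerunning the construction of Theorem~\ref{thm:radonCharge} on $\partial Z$ and replacing $\Gamma$-invariance by your conformality estimate, it simply observes that the measure $\mu_{\nu}$ on $\mathcal{H}$ (Definition~\ref{def:candidateergodicmeasure}) is already $\Gamma$-invariant, applies the proof of Theorem~\ref{thm:radonCharge} to $\mu_{\nu}$ verbatim through Equation~\eqref{eqn:positiveBn} to get $\mu_{\nu}(B_{n})>0$, and then projects $B_{n}\subset\mathcal{H}$ to its $\partial Z$-component $E_{n}$, which has $\nu_{z_{0}}(E_{n})>0$ by the product form of $\mu_{\nu}$. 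From there both proofs finish identically via ergodicity of $\nu_{z_{0}}$ on $\partial Z$ (Theorem~\ref{thm:BCZZHTS}). Your conformality computation is exactly what is encoded in the $\Gamma$-invariance of $\mu_{\nu}$, so the two routes are equivalent; the paper's just avoids redoing the Busemann estimate. One harmless slip: your $B_{k;n}\subset\partial Z$ need not be compact (alignment is an open condition), but since $\nu_{z_{0}}$ is a probability measure, continuity from above holds regardless.
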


\begin{proof}

    We consider the measure $\mu_{\nu}$ on $\mathcal{H}$ defined in Definition \ref{def:candidateergodicmeasure}. By Theorem \ref{thm:BCZZHTS}, we have that $\mu_{\nu}$ is supported on $\La_{c}(\Ga) \times \R^r$. Hence, we proceed the argument in the proof of Theorem \ref{thm:radonCharge} with $\mu_{\nu}$. Then for the subset $B_n \subset \mathcal{H}$ defined in Equation \eqref{eqn:defofBn}, $n \in \N$, we have $$\mu_{\nu}(B_n) > 0$$
    by Equation \eqref{eqn:positiveBn}. For each $n \in \N$, let $E_n \subset \partial Z$ be the projection of $B_n \subset \mathcal{H}$ to the $\partial Z$-component. Then by the definition of $\mu_{\nu}$, we have
    $$
    \nu_{z_0}(E_n) > 0 \quad \text{for all } n \in \N.
    $$
    
    In particular, $\Ga E_n \subset \partial Z$ is a $\Ga$-invariant subset of positive $\nu_{z_0}$-measure. This implies $\nu_{z_0}(\Ga E_n) = 1$ by the $\Ga$-ergodicity (Theorem \ref{thm:BCZZHTS}). Therefore, we have
    $$
    \nu_{z_0} \left( \bigcap_{n \in \N} \Ga E_n \right) = 1.
    $$
    Then as at the end of the proof of Theorem \ref{thm:radonCharge}, we have $$ \bigcap_{n \in \N} \Ga E_n \subset \La_{\varphi, C},$$ finishing the proof.
\end{proof}

\subsection{Essential subgroups}

An important ingredient to show the ergodicity of a measure on $\mathcal{H}$ is the notion of essential subgroups, introduced by Schmidt \cite{Schmidt1977cocycles} and studied further by Roblin \cite{Roblin2003ergodicite}.
For a conformal density $\nu = \{ \nu_z \}_{z \in Z}$, all measures in the family $\nu$ are in the same measure class. Hence, in discussing positivity of a Borel subset, we simply use the notation~$\nu$. 

\begin{definition} \label{def:ess}
    Let $\Ga < \Isom(Z)$ be a subgroup and let $\nu$ be a conformal density of $\Ga$. We define the subset $\ess_{\nu}(\Ga) \subset \R^r$ as follows: $a \in \ess_{\nu}(\Ga)$ if for each $\epsilon > 0$ and a Borel subset $E \subset \partial Z$ with $\nu(E) > 0$, there exists $g \in \Ga$ such that
    $$
\nu \left( E \cap g \varphi g^{-1} E \cap \{ \xi \in \partial Z: \norm{\beta_{\xi}(z_0, g \varphi g^{-1} z_0) - a}_{\infty} < \epsilon \}\right) > 0.
    $$
    It is easy to see that $\ess_{\nu}(\Ga)$ is a closed subgroup of $\R^r$. We call $\ess_{\nu}(\Ga)$ the \emph{essential subgroup} for $\Ga$ and $\nu$.
\end{definition}

This vector version of essential subgroup was introduced by Lee--Oh \cite{LO_invariant} for higher rank Lie groups.
The size of the essential subgroup plays a role of criterion for the ergodicity of actions on $\mathcal{H}$. The following was proved in \cite{Schmidt1977cocycles} for abstract measurable dynamical systems, and more direct proof for a particular case of $\op{CAT}(-1)$ spaces was given in \cite{Roblin2003ergodicite}. For a general higher rank Lie groups, this was proved by Lee--Oh \cite{LO_invariant}. The same proof works in our setting as well.

\begin{proposition}[{\cite{Schmidt1977cocycles}, \cite[Proposition 2.1]{Roblin2003ergodicite}, \cite[Proposition 9.2]{LO_invariant}}] \label{prop.essanderg}
    Let $\Ga < \Isom(Z)$ and let $\nu$ be a conformal density of $\Ga$. Then the $\Ga$-action on $(\Hor, \mu_{\nu})$ is ergodic if and only if the $\Ga$-action on $(\partial Z, \nu)$ is ergodic and $\ess_{\nu}(\Ga) = \R^r$.
\end{proposition}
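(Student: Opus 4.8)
The plan is to recognize $(\mathcal{H}, \mu_{\nu})$ as the skew-product extension of $(\partial Z, \nu_{z_0})$ by the $\R^r$-valued cocycle
\[
c : \Ga \times \partial Z \to \R^r, \qquad c(g, \xi) := \beta_{\xi}(g^{-1} z_0, z_0),
\]
and then to invoke Schmidt's characterization of ergodicity of such extensions, exactly as Roblin does in the $\CAT(-1)$ case \cite[Proposition 2.1]{Roblin2003ergodicite} and Lee--Oh in the higher-rank Lie group case \cite[Proposition 9.2]{LO_invariant}. First I would record the elementary compatibilities: $c$ satisfies the cocycle identity $c(gh, \xi) = c(h, \xi) + c(g, h\xi)$ by equivariance of the Busemann cocycle, the $\Ga$-action on $\mathcal{H}$ is precisely the associated skew-product action, and $\mu_{\nu}$ lies in the measure class of $\nu_{z_0} \times \mathrm{Leb}_{\R^r}$ since the density $e^{\delta \cdot \psi(u)}$ is everywhere positive and locally bounded away from $0$ and $\infty$ --- so ergodicity of $\mu_{\nu}$ depends only on that measure class. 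Finally, unwinding Definition \ref{def:ess} shows that $\ess_{\nu}(\Ga)$ is exactly Schmidt's group of essential values of $c$ (the relabellings $g \leftrightarrow g^{-1}$ and $a \leftrightarrow -a$ needed for the comparison are harmless, the group of essential values being symmetric).

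For the forward implication I would argue as follows. Assume $\Ga$ acts ergodically on $(\mathcal{H}, \mu_{\nu})$. If $B \subset \partial Z$ is $\Ga$-invariant Borel, then $B \times \R^r$ is $\Ga$-invariant in $\mathcal{H}$, hence null or conull, so $\nu_{z_0}(B) \in \{0,1\}$; thus the base action is ergodic. If $\ess_{\nu}(\Ga)$ were a proper closed subgroup $H \subsetneq \R^r$, then by Schmidt's cocycle reduction theorem --- applicable because the base action is ergodic and $\R^r$ is abelian --- the projection $\bar c$ of $c$ to $\R^r/H$ is a measurable coboundary, $\bar c(g, \xi) = \bar\phi(g\xi) - \bar\phi(\xi)$ for some measurable $\bar\phi : \partial Z \to \R^r/H$; then $(\xi, u) \mapsto \bar u - \bar\phi(\xi)$ is a $\Ga$-invariant map to $\R^r/H$ which is non-constant, because the push-forward of $\mu_{\nu}$ under it is quasi-invariant under translations of the non-trivial group $\R^r/H$ and so is not a point mass --- contradicting ergodicity. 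Hence $\ess_{\nu}(\Ga) = \R^r$.

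For the converse I would assume the base action ergodic and $\ess_{\nu}(\Ga) = \R^r$, take a $\Ga$-invariant Borel $A \subset \mathcal{H}$ with $\mu_{\nu}(A) > 0$, and show $A$ is conull. Disintegrating along $\partial Z$, put $A_{\xi} := \{ u \in \R^r : (\xi, u) \in A \}$, so that $\Ga$-invariance reads $A_{g\xi} = A_{\xi} + c(g, \xi)$ for a.e.\ $\xi$. Let $H := \{ a \in \R^r : A_{\xi} = A_{\xi} + a \text{ for } \nu_{z_0}\text{-a.e.\ } \xi \}$, a closed subgroup of $\R^r$. The core step --- a fibrewise Lebesgue-density computation carried out verbatim in \cite{Schmidt1977cocycles} and \cite[Proposition 2.1]{Roblin2003ergodicite} --- uses ergodicity of the base action together with the definition of essential values to show $\ess_{\nu}(\Ga) \subseteq H$. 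Hence $H = \R^r$, so for a.e.\ $\xi$ the set $A_{\xi}$ is translation-invariant and therefore equals $\emptyset$ or $\R^r$ up to a Lebesgue-null set; thus $A = B \times \R^r$ for some $\Ga$-invariant Borel $B \subset \partial Z$, and ergodicity of the base action forces $A$ to be null or conull.

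The scheme is entirely classical, so the only thing to verify is that the present setting meets its hypotheses: $\partial Z$ is a standard Borel space carrying the Borel probability $\nu_{z_0}$ with a Borel $\Ga$-action (clear, as $\partial Z$ is Polish and $\Ga$ countable); $\mu_{\nu}$ lies in the class of $\nu_{z_0} \times \mathrm{Leb}$ (clear); and Definition \ref{def:ess} reproduces Schmidt's essential-value group of $c$. I expect this last identification to be the only genuinely setting-dependent point, and it is handled exactly as in \cite[Proposition 9.2]{LO_invariant}. Everything else --- in particular the fibrewise density argument giving $\ess_{\nu}(\Ga) \subseteq H$ --- is imported without change, which is why one may simply assert that ``the same proof works.''
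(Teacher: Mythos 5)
Your overall scheme---viewing $(\mathcal{H},\mu_{\nu})$ as the skew product of $(\partial Z,\nu_{z_0})$ by the Busemann cocycle, noting that ergodicity only depends on the measure class of $\nu_{z_0}\times\mathrm{Leb}$, and matching Definition~\ref{def:ess} with Schmidt's group of essential values---is exactly the route the paper takes: it gives no new argument and simply imports Schmidt/Roblin/Lee--Oh. Your converse direction (fibrewise sets $A_{\xi}$, the closed subgroup $H$ of periods, the Lebesgue-density argument showing $\ess_{\nu}(\Ga)\subseteq H$, hence $A=B\times\R^r$ up to null sets and base ergodicity finishes) is the standard proof and matches what the paper intends by ``the same proof works,'' as does the trivial half of the forward direction (invariant $B\times\R^r$ gives base ergodicity).

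However, the other half of your forward implication rests on a step that would fail. From ``$\ess_{\nu}(\Ga)=H\subsetneq\R^r$'' you conclude, via a ``cocycle reduction theorem,'' that the quotient cocycle $\bar c$ with values in $\R^r/H$ is a measurable coboundary. No such theorem holds in this generality: a cocycle whose essential-value group is contained in a closed subgroup $H$ need not be cohomologous to an $H$-valued cocycle (this is the regularity issue in Schmidt's theory). For instance, the constant cocycle $c\equiv 1$ over an ergodic probability-preserving transformation has trivial essential-value group, yet it is not a coboundary since a coboundary would have zero integral; so ``essential values in $H$'' does not imply ``$c \bmod H$ is a coboundary.'' Inside your proof by contradiction you cannot invoke ergodicity of the skew product to restore regularity without circularity, since the equivalence of that ergodicity with $\ess_{\nu}(\Ga)=\R^r$ is what is being proved. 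Fortunately the implication you need has a short direct proof, and it is the one in the cited sources: given $a\in\R^r$, $\epsilon>0$ and Borel $E\subset\partial Z$ with $\nu(E)>0$, the sets $A:=E\times(-\epsilon/2,\epsilon/2)^r$ and $B:=E\times\bigl(a+(-\epsilon/2,\epsilon/2)^r\bigr)$ have positive $\mu_{\nu}$-measure; ergodicity of the countable nonsingular $\Ga$-action makes the saturation $\bigcup_{g\in\Ga}gA$ conull, so $\mu_{\nu}(gA\cap B)>0$ for some $g\in\Ga$; projecting to $\partial Z$ and unwinding the cocycle identity (using quasi-invariance of $\nu$, which also absorbs the $g\leftrightarrow g^{-1}$ and sign bookkeeping in Definition~\ref{def:ess}) exhibits $a$ as an essential value. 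With the forward direction repaired in this way, the rest of your proposal goes through.
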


In this perspective, the following is the main step in the proof of Theorem~\ref{thm:ergodicnormal}, which was proved by Roblin \cite{Roblin2003ergodicite} for $\CAT(-1)$ spaces. Roblin's approach was generalized to certain higher-rank settings by Lee--Oh \cite{LO_invariant} and by the second author \cite{kim2024conformal}  in different ways. While similar approaches would work for our setting as well, we present another proof that does not require metrizing the boundary for future applications.

\begin{lem}\label{lem:PSEssential}
    Let $\Ga < \Isom(Z)$ be a non-elementary transverse subgroup and $\nu$ a divergence-type conformal density of $\Ga$. Let $\varphi \in \Ga$ be loxodromic. Then for each $\epsilon > 0$ and a Borel subset $E \subset \partial Z$ with $\nu(E) > 0$, there exists $g \in \Ga$ such that $$
    \nu \left( E \cap g \varphi g^{-1} E \cap \{ \xi \in \partial Z: \norm{\beta_{\xi}(z_0, g \varphi g^{-1} z_0) - \tau_{\varphi}}_{\infty} < \epsilon \}\right) > 0.
    $$
    In particular, if $\Ga_0 \triangleleft \Ga$ is a normal subgroup, then
    $$
    \Spec (\Ga_0) \subset \ess_{\nu}(\Ga_0).
    $$
\end{lem}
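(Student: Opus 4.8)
The plan is to exploit the finite-to-one map $F = F_{n,k}$ constructed in the proof of Theorem~\ref{thm:radonCharge}, applied to the measure $\mu_\nu$ associated to the divergence-type conformal density, but now restricted to a Borel set $E \times I$ with $\nu(E) > 0$ rather than the full $\mathcal{H}_{K,R}$. The key point, which is already essentially worked out in the ``second component'' computation of the proof of Theorem~\ref{thm:trbytrlengthqi}, is that when one pushes forward by a conjugate $g\varphi^n g^{-1}$ (or, after applying the extension lemma, by $g a \varphi^n a g^{-1}$), the $\R^r$-shift picked up by a point $\xi$ lying in the appropriate guided neighborhood is within a bounded error of $n\tau_\varphi$; and by taking $n=1$ after pre-composing with the extension-lemma elements one gets a shift within $\epsilon$ of $\tau_\varphi$ itself. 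So the first step is: fix $\epsilon>0$ and $E$ with $\nu(E)>0$, and fix a compact box $I\subset\R^r$ with $\mu_\nu(E\times I)>0$ (possible since $\mu_\nu$ restricted to $\partial Z$-fiber over $E$ is $e^{\delta\psi(u)}\,\nu_{z_0}(E)\,du$, which is positive on any box). Using Proposition~\ref{prop:pattersonSqueeze}, $\nu_{z_0}$-a.e.\ point of $E$ lies in $\La_{\varphi,C}(\Ga)$, so we may assume $E\subset\La_{\varphi,C}(\Ga)$.

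Next I would run the covering argument of Step~1 in the proof of Theorem~\ref{thm:trbytrlengthqi} verbatim, with $O$ an open set slightly larger than a compact $K\subset E$ of positive $\nu$-measure: one obtains countably many sets $C_l \subset U_C(g_{i(l)};\varphi,n_{i(l)}-1)$ covering $K$, with the $U_{i(l)}$ disjoint, and maps $F_l(\Xi) = g_{i(l)}\varphi g_{i(l)}^{-1}\Xi$ satisfying the Claim there: $F_l(C_l\times I)\subset U_{i(l)}\times(\epsilon\text{-nbhd of }I+\tau_\varphi)$. Here I use that the $n_{i(l)}$ can be taken as large as we like, so the squeezing computation (Lemma~\ref{lem:squeezing}, Lemma~\ref{lem:BGIPHeredi}) forces the $\R^r$-displacement $\beta_\xi(g\varphi^{-1}g^{-1}z_0,z_0)$ to be within $\epsilon$ of $\tau_\varphi$ in the sup-norm, simultaneously in all $r$ components thanks to Proposition~\ref{prop:simultaneousalign}. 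Writing $h_l := g_{i(l)}\varphi g_{i(l)}^{-1}$, each $h_l$ is of the form $g\varphi g^{-1}$ as required, $\beta_\xi(z_0,h_l z_0)$ is within $\epsilon$ of $\tau_\varphi$ for $\xi$ in the relevant piece, and $F_l$ maps $C_l\cap K$ into $h_l\cdot(\text{that piece})$, i.e.\ into $h_l K$ up to shrinking. Summing over $l$, since $\sum_l\mu_\nu(C_l\times I)\ge\mu_\nu(K\times I)>0$, at least one $l$ has $\mu_\nu\big(F_l((C_l\cap K)\times I)\big)>0$; projecting to $\partial Z$, this gives $\nu\big(h_l K\cap\{\xi:\|\beta_\xi(z_0,h_l z_0)-\tau_\varphi\|_\infty<\epsilon\}\big)>0$ and, since $F_l((C_l\cap K)\times I)\subset h_l\cdot((C_l\cap K)\times I)$ lands in $h_l K$ while also meeting $K$-fibered sets appropriately, $\nu\big(K\cap h_l K\cap\{\cdots\}\big)>0$. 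Taking $g = g_{i(l)}$ yields the first assertion with $E$ replaced by $K\subset E$, hence for $E$.

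For the ``in particular'' clause: if $\Ga_0\triangleleft\Ga$ is normal and $\varphi_0\in\Ga_0$ is loxodromic, then $\nu$ restricts to a (divergence-type, because $\delta_\psi(\Ga_0)=\delta_\psi(\Ga)$ and divergence is inherited — or more simply because $\Ga_0$ acts ergodically, so by Theorem~\ref{thm:BCZZHTS} it is of divergence type) conformal density for $\Ga_0$, and the conjugating elements $g$ produced above can be taken in $\Ga_0$: indeed $g\varphi_0 g^{-1}\in\Ga_0$ automatically once $g\in\Ga_0$, and the covering/extension arguments only require $\Ga_0$ to be non-elementary transverse, which it is. Hence $\tau_{\varphi_0}\in\ess_\nu(\Ga_0)$ for every loxodromic $\varphi_0\in\Ga_0$, i.e.\ $\Spec(\Ga_0)\subset\ess_\nu(\Ga_0)$. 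The main obstacle I anticipate is the bookkeeping to ensure that the positive-measure set produced by $F_l$ genuinely lies in the triple intersection $E\cap g\varphi g^{-1}E\cap\{\|\beta-\tau_\varphi\|<\epsilon\}$ rather than merely in $g\varphi g^{-1}E$ intersected with the displacement set — this requires noting that the domain piece $(C_l\cap K)\times I$ lies over $K\subset E$ while its $F_l$-image lies over $U_{i(l)}\subset O$, and then using inner regularity to replace $O$ by something inside $E$, exactly as in Step~2 of the proof of Theorem~\ref{thm:trbytrlengthqi}; carrying the box $I$ through this regularization is the only delicate point, and it is handled there.
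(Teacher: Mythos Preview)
Your outline has the right skeleton — use the guided-limit-set concentration (Proposition~\ref{prop:pattersonSqueeze}), the covering $\{C_l\}$ and maps $F_l = g_{i(l)}\varphi g_{i(l)}^{-1}$ from the proof of Theorem~\ref{thm:trbytrlengthqi}, and the Busemann estimate \eqref{eqn:trqiclaim2goal2} — but there is a genuine gap at precisely the point you flag as ``the main obstacle,'' and your proposed fix does not close it.

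The issue is this: from $\sum_l \mu_\nu((C_l\cap K)\times I)\ge \mu_\nu(K\times I)>0$ you only conclude that \emph{some} $l$ has $\nu(F_l(C_l\cap K))>0$. The image $F_l(C_l\cap K)$ lies in $h_l K$ and in $U_{i(l)}\subset O$, but not in $K$; your phrase ``meeting $K$-fibered sets appropriately'' is exactly what is not established. Inner regularity cannot ``replace $O$ by something inside $E$'' here, since $O$ must be an open set \emph{containing} $K$. What is needed is a \emph{uniform} lower bound, independent of $O$: by disjointness of the $U_{i(l)}$ together with conformality of $\nu$ (or, equivalently, $\Ga$-invariance of $\mu_\nu$) one gets
\[
\nu\Big(\bigcup_l F_l(C_l\cap K)\Big)=\sum_l \nu\big(F_l(C_l\cap K)\big)\ \ge\ e^{-\psi(\tau_\varphi)-\epsilon\|\psi\|_\infty}\,\nu(K),
\]
with the constant not depending on $O$. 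The paper then argues by contradiction: if every triple intersection $K\cap h_l K\cap\{\|\beta-\tau_\varphi\|<\epsilon\}$ were $\nu$-null, then $E(O):=\bigcup_l F_l(C_l\cap K)$ would satisfy $\nu(K\cap E(O))=0$, hence $E(O)\subset O\smallsetminus K$ modulo null, forcing $\nu(O\smallsetminus K)\ge e^{-\psi(\tau_\varphi)-\epsilon\|\psi\|_\infty}\nu(K)$ for \emph{every} open $O\supset K$; letting $O\searrow K$ (outer regularity) yields $\nu(K)=0$. This uniform estimate, and the contradiction it enables, is the step your outline is missing.

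Your handling of the ``in particular'' clause is also off. You do \emph{not} need to take the conjugator $g$ in $\Ga_0$, nor to verify that $\nu$ is of divergence type for $\Ga_0$ (which is not obvious). The first part of the lemma, applied to $\Ga$ and a loxodromic $\varphi\in\Ga_0$, produces $g\in\Ga$ with the desired property; since $\Ga_0$ is normal, $g\varphi g^{-1}\in\Ga_0$, and that is all the definition of $\ess_\nu(\Ga_0)$ requires.
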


\begin{proof}
    Let $C = C(\varphi_1) > 0$ be as in Lemma \ref{lem:BGIPHeredi}. By Proposition \ref{prop:pattersonSqueeze}, $\nu$ is supported on $\La_{\varphi, C}(\Ga)$.  Together with the inner regularity of $\nu$, it suffices to consider compact subsets of $\La_{\varphi, C}(\Ga)$.

    Denote by $\psi : \R^r \to \R$ a linear form associated to $\nu$. We can normalize $\psi$ so that $\delta_{\psi}(\Ga) = 1$, by Theorem \ref{thm:BCZZ_proper}.

    Let $K \subset \La_{\varphi, C}(\Ga)$ be a compact subset and fix $\epsilon > 0$. Suppose that for each $g \in \Ga$,
    $$
 \nu \left( K \cap g \varphi g^{-1} K \cap \{ \xi \in \partial Z : \norm{\beta_{\xi}(z_0, g \varphi g^{-1} z_0) - \tau_{\varphi}}_{\infty} < \epsilon \}\right)  = 0.
    $$
    Then showing $\nu(K) = 0$ finishes the proof.

    To do this, let $O \subset \partial Z$ be an open subset containing $K$. We will then construct a Borel subset $E(O) \subset O$ such that 
    \begin{equation} \label{eqn:fullessentialproof}
    \nu(K \cap E(O)) = 0 \quad \text{and} \quad \nu(E(O)) \ge  e^{-\psi(\tau_{\varphi}) - \epsilon \norm{\psi}_{\infty}} \cdot \nu(K).
    \end{equation}
    This yields $\nu(K) = 0$ as in \cite[Proof of Lemma 8.5]{CK_ML}.

Hence, it remains to find a set $E(O) \subset O$ satisfying Equation \eqref{eqn:fullessentialproof}. Recall the cover $\mathcal{U}$ and its subcollection  $\mathcal{V}$ for $K$ and $O$ constructed in the proof of Theorem \ref{thm:trbytrlengthqi}. For $l \in \N$, we also recall $C_l \subset O$ in Equation \eqref{eqn:defofCl} and the restriction $F_l = g_{i(l)} \varphi g_{i(l)}^{-1} : C_l \to \partial Z$ of the map in Equation \eqref{eqn:defofFl}, where $g_{i(l)} \in \Ga$ is given there. 

In the rest of this proof, we show that
$$
E(O) := \bigcup_{l \in \N} F_l(C_l \cap K)
$$
satisfies Equation \eqref{eqn:fullessentialproof}. By Equation \eqref{eqn:claimforFl}, we have  $\bigcup_{l \in \N} F_l(C_l \cap K) \subset \bigcup_{l \in \N} C_l \subset O$. In addition, by Equation \eqref{eqn:trqiclaim2goal2}, we have for each $l \in \N$ that 
\begin{equation} \label{eqn:FlClisinBuse}
F_l(C_l) \subset \left\{ \xi \in \partial Z : \norm{\beta_{\xi}(z_0, g_{i(l)} \varphi g_{i(l)}^{-1} z_0) - \tau_{\varphi}}_{\infty} < \epsilon \right\}.
\end{equation}
We then have
$$\begin{aligned}
 K & \cap F_l(C_l \cap K)  \\
& \subset K \cap g_{i(l)} \varphi g_{i(l)}^{-1} K \cap \left\{ \xi \in \partial Z : \norm{\beta_{\xi}(z_0, g_{i(l)} \varphi g_{i(l)}^{-1} z_0) - \tau_{\varphi}}_{\infty} < \epsilon \right\}.
\end{aligned}
$$
By our  hypothesis on $K$,
$
\nu( K \cap F_l (C_l \cap K) ) = 0.
$
Therefore, 
$$
\nu \left(  K \cap \bigcup_{l \in \N} F_l(C_l \cap K)  \right)= 0,
$$
showing the first claim in Equation \eqref{eqn:fullessentialproof}.

We now  estimate $\nu \left( \bigcup_{l \in \N} F_l(C_l \cap K) \right)$. By Equation \eqref{eqn:FlClisinBuse}, we have for each $l \in \N$ that
$$\begin{aligned}
\nu(F_l (C_l \cap K)) & = \int_{C_l \cap K} e^{-\psi( \beta_{\xi}( g_{i(l)} \varphi^{-1} g_{i(l)}^{-1} z_0, z_0))} d \nu(\xi) \\
& \ge  e^{-\psi(\tau_{\varphi}) - \epsilon \norm{\psi}_{\infty}} \nu(C_l \cap K).
\end{aligned}
$$
Since  $F_l(C_l \cap K)$'s are pairwise disjoint 
by Equation \eqref{eqn:claimforFl}, we have 
$$\begin{aligned}
\nu \left( \bigcup_{l \in \N} F_l(C_l \cap K) \right) 
& \ge  e^{-\psi(\tau_{\varphi}) - \epsilon \norm{\psi}_{\infty}} \sum_{l \in \N} \nu(C_l \cap K) \\
& \ge e^{-\psi(\tau_{\varphi}) - \epsilon \norm{\psi}_{\infty}} \cdot \nu \left( \bigcup_{l \in \N} (C_l \cap K) \right) 
\end{aligned}
$$
Since $K \subset \bigcup_{l \in \N} C_l$ as in Equation \eqref{eqn:defofCl}, this implies the second claim in Equation \eqref{eqn:fullessentialproof}. 
\end{proof}

\begin{proof}[Proof of Theorem \ref{thm:ergodicnormal}]
    Now Theorem \ref{thm:ergodicnormal} is a consequence of Proposition \ref{prop.essanderg} and Lemma \ref{lem:PSEssential}.
\end{proof}

\section{Higher-rank homogeneous spaces} \label{section:homogeneous}

In this section, we deduce Theorem \ref{thm:mainAnosov}, Corollary \ref{cor:mainAnosov}, Theorem \ref{thm:maintransverse}, and Corollary \ref{cor:mainrelAnosov}.
Let $G$ be a connected semisimple real algebraic group.
 Recall from the introduction that $P < G$ is a minimal parabolic subgroup with a Langlands decomposition $P = MAN$, where $A$ is a maximal real split torus, $M < P$ is a maximal compact subgroup commuting with $A$, and $N$ is the unipotent radical of $P$. 
 We also chose a maximal compact subgroup $K < G$ so that we have the Cartan decomposition $G = K(\exp \fa^+) K$, where $\fa^+ \subset \Lie A =: \fa$ is a fixed positive Weyl chamber. Denote the Cartan projection by $\kappa : G \to \fa^+$, defined by the condition that $g \in K (\exp \kappa(g)) K$ for all $g \in G$.

 We have the Iwasawa decomposition $G = KAN$ and the Furstenberg boundary is $\mathcal{F} = G/P = K/M$. For $\xi \in \mathcal{F}$ and $g \in G$, the Iwasawa cocycle $\sigma(g, \xi) \in \fa$ is the element such that $gk \in K (\exp \sigma(g, \xi)) N$ where $k \in K$ is such that $\xi = k M \in \mathcal{F}$.
 Then the \emph{$\fa$-valued Busemann cocycle} $\beta : \mathcal{F} \times G \times G \to \fa$ is defined as follows: for $\xi \in \mathcal{F}$ and $g, h \in G$,
 \begin{equation} \label{eqn:Busedef}
    \beta_{\xi}(g, h) := \sigma(g^{-1}, \xi) - \sigma(h^{-1}, \xi).
 \end{equation}

 \begin{example}
    We present a specific example $G = \PSL(2, \mathbb{C}) = \Isom^+(\H^3)$, regarded as a real algebraic grooup $\so(3,1)$. In this case, we can choose subgroups as follows:
    $$\begin{aligned} 
    P & := \left\{ \begin{pmatrix}
    a & b \\
    0 & 1/a
    \end{pmatrix} : a, b \in \mathbb{C}, \ a \neq 0 \right\} \\
  M & := \left\{ \begin{pmatrix}
    e^{i \theta /2} & 0 \\
    0 & e^{-i \theta /2}
  \end{pmatrix} : \theta \in \R \right\} \simeq \op{PSU}(1) \simeq \S^1 \\
  A & : = \left\lbrace \begin{pmatrix}
    e^{t/2} & 0 \\
    0 & e^{-t/2}
  \end{pmatrix} : t \in \R \right\rbrace \simeq \R \\
  N & := \left\lbrace \begin{pmatrix}
    1 & z \\
    0 & 1
  \end{pmatrix} : z \in \mathbb{C} \right\rbrace \\
    K & := \left\{ \begin{pmatrix}
    a & b \\
    -\ov{b} & \ov{a}
    \end{pmatrix} : a, b \in \mathbb{C}, \ |a|^2 + |b|^2 = 1 \right\} \simeq \op{PSU}(2)\\
\end{aligned}
$$
Using the upper half-space model of $\H^3$, its boundary is the Riemann sphere $\widehat{\mathbb{C}} = \mathbb{C} \cup \{\infty\}$ on which $G$ acts as linear fractional transformations. Then $P = \stab_{G}(\infty)$, and the Furstenberg boundary $\mathcal{F} = G/P$ is the same as the Riemann sphere $\widehat{\mathbb{C}}$. Busemann cocycles are defined as usual.
 \end{example}

\subsection{As a product of $\CAT(-1)$ spaces}

In the rest of this section,  we now consider the case as in Equation \eqref{eqn:standing} that 
$$
G :=  \prod_{i = 1}^r G_i
$$
where $G_i$ is a simple real algebraic group of rank one. 

For each $1 \le i \le r$, we fix corresponding objects $P_i$, $M_i$, $A_i$, $N_i$, $K_i$, $\fa_i^+$, and $\fa_i$ for $G_i$. Then we can make the choices for $G$ by setting 
$$
\heartsuit = \prod_{i = 1}^r \heartsuit_i 
$$
for each $\heartsuit \in \{ P, M, A, N, K, \fa^+, \fa\}$.

For each $1 \le i \le r$, we denote the Riemannian symmetric space associated to $G_i$ by 
$$
X_i := G_i/K_i,
$$
and equip it with the left $G_i$-invariant and right $K_i$-invariant metric induced by the Killing form on $\fa_i$. Then $X_i$ is a proper geodesic $\CAT(-1)$ space, with the Gromov boundary
$$
\partial X_i = K_i / M_i = G_i / P_i.
$$
Hence, we have 
$$
G/K = \prod_{i = 1}^r X_i \quad \text{and} \quad \mathcal{F} = \prod_{i = 1}^r \partial X_i
$$
which enable us to use results in Section \ref{sec:product}, Section \ref{sec:UE}, and Section \ref{sec:ergodicity}.

Indeed, fixing a basepoint $z_0 = [\id] \in G/K$, we have
$$
\kappa(g) = \kappa(z_0, g z_0) \quad \text{for all } g \in G
$$
where $\kappa(\cdot, \cdot)$ is defined as in Equation \eqref{eqn:proddistance} for $Z = G/K$. In addition, we have
$$
\beta_{\xi}(\id, g) = \beta_{\xi}(z_0, g z_0) \quad \text{for all } g \in G, \xi \in \mathcal{F}
$$
where $\beta$ on the right hand side is defined as in Equation \eqref{eqn:prodBuse}.

Employing the notions introduced in Section \ref{sec:UE}, for a loxodromic $g \in G$, its vector-valued translation length
$$
\tau_{g} = \lim_{n \to + \infty} \frac{\kappa(g^n)}{n} \in \fa^+
$$
is also called the Jordan projection of $g \in G$.

\subsection{Discrete subgroups} We mainly consider a discrete subgroup $\Ga < G$. Recall from Definition \ref{def:spectrum} the length spectrum of $\Ga$
$$
\Spec(\Ga) = \{ \tau_g \in \fa : g \in \Ga, \text{ loxodromic}\}
$$
and that $\Spec(\Ga)$ is called non-arithmetic if it generates a dense additive subgroup of $\fa$. As shown by Benoist, Zariski density gives non-arithmeticity of length spectrum.

\begin{theorem}[{\cite{Benoist2000proprietes}}] \label{thm:nonarithmetic}
    Let $\Ga < G$ be a Zariski dense discrete subgroup. Then $\Spec(\Ga)$ is non-arithmetic.
\end{theorem}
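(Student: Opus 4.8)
The plan is to deduce this from Benoist's theorem on Jordan projections of Zariski dense subgroups \cite{Benoist2000proprietes}, after matching the vector-valued quantities of our product setting with the standard Lie-theoretic ones. Recall the statement one wants to invoke: for a connected semisimple real algebraic group $G$ with Weyl chamber $\fa^+ \subset \fa$, if $\Ga < G$ is Zariski dense then the subgroup of $\fa$ generated by the Jordan projections of the $\R$-regular elements of $\Ga$ is dense in $\fa$, where an element is $\R$-regular when its Jordan projection lies in $\inte \fa^+$.

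First I would record the dictionary for $G = \prod_{i=1}^r G_i$ with each $G_i$ of rank one, acting on $Z = \prod_{i=1}^r X_i$. Under the identification $\fa = \prod_{i=1}^r \fa_i \cong \R^r$ fixed in this section, an element $g = (g_1, \dots, g_r) \in G$ is $\R$-regular if and only if each $g_i$ is $\R$-regular in $G_i$; since $G_i$ has rank one this means exactly that $g_i$ acts as a loxodromic isometry of $X_i$, so the $\R$-regular elements of $G$ are precisely the loxodromic elements in the sense of Section \ref{sec:UE}. Moreover, for such $g$ the $i$-th component of the Jordan projection is the translation length $\tau_{g_i}$ of $g_i$ on $X_i$, so the Jordan projection of $g$ is exactly its vector-valued translation length $\tau_g \in \R^r$ from Definition \ref{def:spectrum}, as already noted in this section. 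Hence the subgroup of $\fa = \R^r$ generated by the Jordan projections of $\R$-regular elements of $\Ga$ coincides with the subgroup generated by $\Spec(\Ga)$.

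It then remains only to check Benoist's hypotheses. The subgroup $\Ga$ is Zariski dense by assumption, and a Zariski dense subgroup of $G$ contains $\R$-regular elements (Benoist \cite{Benoist1997proprietes}), so in particular $\Spec(\Ga) \neq \emptyset$. Benoist's theorem now gives that the subgroup of $\R^r$ generated by $\Spec(\Ga)$ is dense, which is precisely the non-arithmeticity of $\Spec(\Ga)$ in the sense of Definition \ref{def:spectrum}.

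For context: were one to reprove this from scratch rather than cite Benoist, the main obstacle is the ``lattice'' alternative, namely that $\phi(\tau_g) \in c\Z$ for some nonzero linear form $\phi \in \fa^*$, some $c > 0$, and every loxodromic $g \in \Ga$. After clearing denominators in $\phi$, one passes via a suitable proximal irreducible representation of $G$ (built from proximal representations of the factors $G_i$) to the problem of showing that, for a Zariski dense group, the logarithms of the dominant eigenvalues of its proximal elements are not confined to a proper discrete subgroup of $\R$; Zariski density forces this by analysing the Jordan projections of products $g_0^{\,n} g_1$, whose log-dominant-eigenvalue grows like $n$ times that of $g_0$ plus a bounded ``relative position'' term that is itself left unconstrained by Zariski density. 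The complementary alternative, that $\Spec(\Ga)$ spans only a proper linear subspace of $\R^r$, is excluded by Benoist's theorem that the limit cone of a Zariski dense subgroup has non-empty interior \cite{Benoist1997proprietes}.
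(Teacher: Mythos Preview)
Your proposal is correct. The paper does not give its own proof of this theorem; it is simply stated with attribution to Benoist \cite{Benoist2000proprietes}. Your argument makes explicit the dictionary between the product-of-rank-one setup and Benoist's general semisimple framework (identifying $\R$-regular elements with loxodromic ones and Jordan projections with the vector-valued translation lengths $\tau_g$), which is exactly the translation the paper leaves implicit in the surrounding discussion of this section.
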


The limit set is defined as in Definition \ref{def:limitproduct}. Similarly, the notion of \emph{transverse subgroup} of $G$ is defined as in Definition \ref{def:transverseintro} or Definition \ref{def:transprod}. Conical limit set is defined as in Definition \ref{dfn:conical}.

In the introduction, Anosov subgroups and relatively Anosov subgroups are defined as transverse subgroups that act on their limit sets as uniform convergence groups and geometrically finite convergence groups, respectively. We present slightly different but equivalent definitions here. These formulations are motivated by the study of Gromov \cite{gromov1987hyperbolic}, Bowditch \cite{bowditch1998a-topological}, and Yaman \cite{Yaman2004topological} regarding hyperbolic and relatively hyperbolic groups in terms of convergence actions. As for loxodromic elements, we call $g \in G$ \emph{parabolic} if each component of $g$ is parabolic. 

\begin{definition} \label{def:Anosovdefbody}
    Let $\Ga < G$ be a non-elementary transverse subgroup.
    \begin{itemize}
        \item We call $\Ga$ \emph{Anosov} if $\Ga$ is a hyperbolic group and there exists a $\Ga$-equivariant homeomorphism $\partial \Ga \to \La(\Ga)$, where $\partial \Ga$ is the Gromov boundary of $\Ga$.
        \begin{itemize}
            \item Equivalently, $\La(\Ga) = \La_{c}(\Ga)$.
        \end{itemize}
        \item We call $\Ga$ \emph{relatively Anosov} if $\Ga$ is a relatively hyperbolic group (with some choice of a peripheral structure) and there exists a $\Ga$-equivariant homeomorphism $\partial_B \Ga \to \La(\Ga)$, where $\partial_B \Ga$ is the Bowditch boundary of $\Ga$ with respect to the chosen peripheral structure.
        \begin{itemize}
            \item Equivalently, $\La(\Ga) = \La_{c}(\Ga) \sqcup \La_{p}(\Ga)$, where $\La_{p}(\Ga)$ is the parabolic limit set of $\Ga$, i.e., the set of all fixed points of parabolic elements of $\Ga$.
        \end{itemize}
    \end{itemize}
\end{definition}

Another equivalent characterization of Anosov and relatively Anosov subgroups are as follows: a subgroup $\Ga < G$ is Anosov if and only if there exist a non-elementary convex cocompact subgroup $\widehat{\Ga}_1 < G_1$ and a faithful convex cocompact representation $\rho_i : \widehat{\Ga}_1 \to G_i$ for each $2 \le i \le r$ so that the diagonal embedding $(\id \times \rho_2 \times \cdots \times \rho_r)(\widehat{\Ga}_1) < G$ is a finite index subgroup of $\Ga$. Similarly, $\Ga$ is relatively Anosov if and only if there exist a non-elementary geometrically finite subgroup $\widehat{\Ga}_1 < G_1$ and a type-preserving geometrically finite representation $\rho_i : \widehat{\Ga}_1 \to G_i$ for each $2 \le i \le r$ so that the diagonal embedding $(\id \times \rho_2 \times \cdots \times \rho_r)(\widehat{\Ga}_1) < G$ is a finite index subgroup of $\Ga$.

\begin{remark}
    Using \cite[Proposition 5.7]{KO_Entropy}, it is easy to see that Theorem \ref{thm:radonCharge} and Proposition \ref{prop:pattersonSqueeze} hold for relatively Anosov subgroups of a general semisimple real algebraic group, where the alignment is discussed in Gromov models for relatively hyperbolic groups. Similarly, they also hold for the class of hypertransverse subgroups in the sense of \cite{kim2024conformal}, which is the same as the class of transverse subgroups when the ambient group is a product of rank-one Lie groups.
\end{remark}

Recall the Burger--Roblin measure $\mu_{\nu}^{\BR}$ associated to a conformal measure $\nu$, from Equation \eqref{eqn:BRdef}. Its ergodicity was shown as follows:

\begin{theorem}[{\cite{LO_invariant}, \cite{LO_ergodic}, \cite{kim2024conformal}}] \label{thm:BRergodicknown}
    Let $\Ga < G$ be a Zariski dense transverse subgroup. For a divergence-type conformal measure $\nu$ of $\Ga$, the Burger--Roblin measure $\mu_{\nu}^{\BR}$ is $N$-ergodic.
\end{theorem}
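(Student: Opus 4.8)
The plan is to transfer the statement to the setting of products of $\CAT(-1)$ spaces and then invoke Theorem \ref{thm:ergodicdiv}. First I would realize $G/K = Z = \prod_{i=1}^r X_i$, where $X_i = G_i/K_i$ is the rank-one symmetric space of $G_i$; each $X_i$ is a proper geodesic $\CAT(-1)$ space, $\mathcal{F} = \partial Z$, and $\fa = \R^r$. Since $\Ga < G$ is Zariski dense, its limit set is infinite, so $\Ga$ is a non-elementary transverse subgroup of $\Isom(Z) = \prod_i \Isom(X_i)$ in the sense of Section \ref{sec:product}, and by Benoist's theorem (Theorem \ref{thm:nonarithmetic}) its length spectrum $\Spec(\Ga)$ is non-arithmetic. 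A divergence-type conformal measure $\nu$ of $\Ga$ on $\mathcal{F}$, with associated linear form $\psi$, is precisely a divergence-type conformal density $\{\nu_z\}_{z \in Z}$ in the sense of Definition \ref{def:divtypegpandmeasure}, and $\Ga$ is of $\psi$-divergence type. Theorem \ref{thm:ergodicdiv} then gives that the $\Ga$-action on $(\mathcal{H}, \mu_\psi)$ is ergodic, where $\mathcal{H} = \mathcal{F} \times \fa$ and $\mu_\psi$ is the measure of Definition \ref{def:candidateergodicmeasure}.

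Next I would translate this back to $\Ga \ba G$. Using the Iwasawa decomposition $G = KAN$, there are measure-space identifications $G/N \cong K \times \fa$ and $G/MN \cong (K/M) \times \fa = \mathcal{F} \times \fa = \mathcal{H}$; integrating out the $dn$ factor in \eqref{eqn:BRdef}, the push-forward of $\tilde\mu_\nu^{\BR}$ becomes $e^{\delta_\psi(\Ga)\,\psi(u)}\, d\tilde\nu(k)\, du$ on $K \times \fa$ and, after the further $M$-quotient, becomes $\mu_\psi$ on $\mathcal{H}$; moreover the left $\Ga$-action on $\Ga \ba G/MN$ is exactly the $\Ga$-action on $\mathcal{H}$ of Section \ref{sec:UE}. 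Since $G \to G/MN$ is a principal $MN$-bundle along whose fibres the $\Ga$-invariant lift $\tilde\mu_\nu^{\BR}$ restricts to a Haar measure (recall $\mu_\nu^{\BR}$ is $NM$-invariant and $MN$ is unimodular), a Fubini argument shows that $\mu_\nu^{\BR}$ is $NM$-ergodic on $\Ga \ba G$ if and only if $\mu_\psi$ is $\Ga$-ergodic on $\mathcal{H}$, which holds by the first step. In particular $\mu_\nu^{\BR}$ is already $N$-ergodic whenever $M$ is trivial.

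It remains to upgrade $NM$-ergodicity to the asserted $N$-ergodicity. The same Fubini reduction applied to $G \to G/N \cong K \times \fa$ shows that $\mu_\nu^{\BR}$ is $N$-ergodic if and only if the $M$-extension of $(\mathcal{H}, \mu_\psi)$ corresponding to $K \to K/M$ is $\Ga$-ergodic. By the $(M \times \fa)$-valued analogue of the essential-subgroup criterion (Proposition \ref{prop.essanderg}), this is equivalent to the essential subgroup $\ess_\nu(\Ga) \subset M \times \fa$, built now from the full $MA$-valued Iwasawa cocycle $\ga \mapsto (m(\ga, \cdot), \sigma(\ga, \cdot))$, being the whole group. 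Running the argument of Lemma \ref{lem:PSEssential} essentially verbatim places the pair $(m_\varphi, \tau_\varphi)$ in $\ess_\nu(\Ga)$ for every loxodromic $\varphi \in \Ga$, where $m_\varphi$ is the elliptic part of the Jordan decomposition of $\varphi$; by Benoist's asymptotic results \cite{Benoist2000proprietes} for Zariski dense subgroups these pairs generate a dense subgroup of $M^\circ \times \fa$, the finite component group $M/M^\circ$ being handled by the minimality of $\Ga \curvearrowright \La(\Ga)$. I expect this last step to be the main obstacle: while Theorem \ref{thm:ergodicdiv} disposes of the noncompact $\fa$-direction cleanly, controlling the compact $M$-direction requires the finer joint density of Jordan projections and their elliptic parts in $MA$ together with care about the possible disconnectedness of $M$, and it is precisely here that the homogeneous-space arguments of \cite{LO_ergodic} and \cite{kim2024conformal} do their real work.
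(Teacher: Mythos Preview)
The paper does not give its own proof of this theorem: it is stated as a known result, with the sentence after it attributing $NM$-ergodicity to \cite{LO_invariant}, the Anosov $N$-ergodicity to \cite{LO_ergodic}, and the transverse $N$-ergodicity to \cite{kim2024conformal}. So there is no in-paper argument to compare your proposal against at the level of the full statement.

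That said, the first two paragraphs of your proposal are exactly right and are essentially what the paper \emph{does} supply: the identification $G/NM \simeq \mathcal{H} = \mathcal{F}\times\fa$ in Section~\ref{section:homogeneous}, together with Theorem~\ref{thm:ergodicdiv}, yields $NM$-ergodicity of $\mu_{\nu}^{\BR}$ for any divergence-type $\nu$. This is precisely the content the paper adds on its own, and it is why the introduction says the paper ``generalizes this ergodicity to horospherical foliations of products of $\CAT(-1)$ spaces'' via Theorem~\ref{thm:ergodicdiv}. So your reduction of $NM$-ergodicity to $\Ga$-ergodicity on $(\mathcal{H},\mu_\psi)$ is both correct and matches the paper.

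Your third paragraph, upgrading to $N$-ergodicity, is where you are honestly sketching rather than proving, and you flag this yourself. One caution: saying Lemma~\ref{lem:PSEssential} runs ``essentially verbatim'' for the $(M\times\fa)$-valued cocycle is an overstatement. The squeezing argument of Lemma~\ref{lem:PSEssential} controls $\beta_\xi$ (distances, hence the $\fa$-part) to within $\epsilon$; it says nothing about the compact $M$-component of the Iwasawa cocycle, which does not see the $\CAT(-1)$ metric directly. Getting $(m_\varphi,\tau_\varphi)\in\ess_\nu(\Ga)$ genuinely requires working on the frame bundle (or an equivalent $M$-extension) and invoking the density of generalized Jordan projections in $M\times A$ for Zariski dense groups; this is indeed the substance of \cite{LO_ergodic} and \cite{kim2024conformal}, as you say. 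So your outline is sound, but the ``verbatim'' step hides real additional work that the paper deliberately outsources to those references.
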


This ergodicity was proved for Anosov subgroups in \cite{LO_ergodic}, in which case the $NM$-ergodicity was shown in \cite{LO_invariant}. The $N$-ergodicity for transverse subgroups was proved in \cite{kim2024conformal}.

\subsection{Measure classifications}
We now complete the deduction of our measure classification results. Setting $\mathcal{H} := \mathcal{F} \times \fa$, define the map
$$
\begin{aligned}
G & \quad \to \quad \mathcal{H}  = \mathcal{F} \times \fa\\
g & \quad \mapsto \quad (g P, \ \beta_{g P}(\id , g))
\end{aligned}
$$
which induces the homeomorphism
$$
G/NM \to \mathcal{H}.
$$
Via this homeomorphism, the left multiplication of $G$ on $G/NM$ descends to the $G$-action on $\mathcal{H}$ defined as follows: for $g \in G$ and $(\xi, u) \in \mathcal{H}$,
$$
g \cdot (\xi, u) = (g \xi, u + \beta_{\xi}(g^{-1}, \id )).
$$
Therefore, $\mathcal{H}$ is indeed the same as the horospherical foliation of the product $G/K = \prod_{i = 1}^r X_i$ of $\CAT(-1)$ spaces defined as in Equation \eqref{eqn:horofoliation}.

Then for a subgroup $\Ga < G$, any $NM$-invariant Radon measure on $\Ga \ba G$ is induced by a $\Ga$-invariant measure on $G$ of the form
$$
d \widehat \mu(\xi, u)\, dn dm 
$$
for some $\Ga$-invariant Radon measure $\widehat \mu$ on $\mathcal{H}$, where $dn$ and $dm$ are Haar measures on $N$ and $M$ respectively (cf. \cite[Proposition 10.25]{LO_invariant}). Hence, it suffices to classify $\Ga$-invariant Radon measures on $\mathcal{H}$.

\medskip 

We first deduce Theorem \ref{thm:maintransverse}.  Let $\Ga < G$ be a Zariski dense transverse subgroup.  The $NM$-ergodicity and $N$-ergodicity of Burger--Roblin measures (Equation \eqref{eqn:BRdef}) associated to divegence-type conformal measures on $\La(\Ga)$ were proved in \cite{kim2024conformal}.  In other words, we have inclusions $(1) \subset (2)$ and $(1) \subset (3)$ in the statement. Hence, it remains to show that those are all ergoic measures.

The recurrence locus $\mathcal{R}_{\Ga} \subset \Ga \ba G$ in Equation \eqref{eqn:recurrencelocus} is characterized as
$$
\mathcal{R}_{\Ga} = \{ [g] \in \Ga \ba G : g P \in \La_{c}(\Ga) \}.
$$
Hence, classifying $NM$-invariant ergodic Radon measures supported on $\mathcal{R}_{\Ga}$ is equivalent to classifying $\Ga$-invariant ergodic Radon measures supported on $\La_{c}(\Ga) \times \fa \subset \mathcal{H}$. Together with the non-arithmeticity (Theorem \ref{thm:nonarithmetic}), it follows from Theorem \ref{thm:uniqueRadon} that any $NM$-invariant ergodic measure on $\mathcal{R}_{\Ga}$ is the Burger--Roblin meaure associated to a divergence-type conformal measure of $\Ga$ on $\La(\Ga)$, up to a constant multiple. This shows the equality $(1) = (2)$ in the statement.

Combining the classification that all $NM$-invariant ergodic Radon measures on $\mathcal{R}_{\Ga}$ are Burger--Roblin measures and the $N$-ergodicity of Burger--Roblin measures, it follows that all $N$-invariant ergodic Radon measures on $\mathcal{R}_{\Ga}$ are Burger--Roblin measures, as in \cite[Corollary 6.5]{Winter_mixing}. This finishes the proof, showing $(1) = (3)$ in the statement.

\medskip

We now deduce Corollary \ref{cor:mainrelAnosov}. Let $\Ga < G$ be a Zariski dense relatively Anosov subgroup. Note that for any $(\xi, u) \in \mathcal{H}$, either $\xi \in \La_{c}(\Ga)$, $\xi \in \La_{p}(\Ga)$, or $\xi \notin \La(\Ga)$. In last two cases, the orbit $\Ga \cdot (\xi, u)$ is closed in $\mathcal{H}$ by the characterization of relatively Anosov subgroups and Proposition \ref{prop:closedorbits}. Therefore, Corollary \ref{cor:mainrelAnosov} follows from Theorem \ref{thm:maintransverse}.

\medskip

Finally, let $\Ga < G$ be a Zariski dense Anosov subgroup. Then $\La(\Ga) = \La_{c}(\Ga)$, and hence $\E_{\Ga} = \mathcal{R}_{\Ga}$. Therefore, Theorem \ref{thm:mainAnosov} and Corollary \ref{cor:mainAnosov} are special cases of Corollary \ref{cor:mainrelAnosov} and Theorem \ref{thm:maintransverse} respectively.

\appendix
\section{Some hyperbolic geometry} \label{appendix}

We first prove Lemma \ref{lem:CAT(-1)Fellow}.

\begin{proof}[Proof of Lemma \ref{lem:CAT(-1)Fellow}]
Consider the $\pi/2-0-0$ triangle in $\mathbb{H}^{2}$ with a vertex $O$ and two ideal vertices $\xi, \zeta \in \partial \mathbb{H}^{2}$. Then $d(O, \overline{\xi\zeta}) = 2\tanh^{-1}(1-1/\sqrt{2}) \approx 0.60346$.

Let $z \in [x, y]$ be the earliest point such that $d(\pi_{\gamma}(x), \pi_{\gamma}(z)) =1$. We will then take $p \in [x, z]$ such that $d(\pi_{\gamma}(x), p) \le 2$. 

Let $P \in [x, \pi_{\gamma}(z)]$ be the nearest point from $\pi_{\gamma}(x)$. By comparison with the hyperbolic triangle, $d(P, \pi_{\gamma}(x)) < 0.604$. Hence, we have \begin{equation}\label{eqn:0396}
d(p, \pi_{\gamma}(y)) \ge 1 - 0.604 \ge 0.396. 
\end{equation}

 Now, since $d(\pi_{\gamma}(x), \pi_{\gamma}(z)) > 1$, the angle $\measuredangle \pi_{\gamma}(z) \pi_{\gamma}(x) P$ is less than 45 degrees. Now, let $Q \in [x, \pi_{\gamma}(z)]$ be the nearest point from $z$. Note that $\measuredangle y \pi_{\gamma}(z) \pi_{\gamma}(x)$ is 90 degrees and $\measuredangle \pi_{\gamma}(z) \pi_{\gamma}(x) x$ is at most 45 degrees. Hence, $\measuredangle z \pi_{\gamma}(z) x$ is at least 45 degrees. This implies that $d(\pi_{\gamma}(z), Q)$ is at most $0.604$. This implies that $P$ either comes earlier than $q$ on $[x, \pi_{\gamma}(z)]$, or comes no later than $Q$ by $0.208$.
 
 Now note that $\triangle z Q x$ is a right-angled triangle. The comparison principle tells us that $d(Q, [x, z]) < 0.604$. If $P$ comes earlier than $Q$ along $[x, \pi_{\gamma}(z)]$, then Lemma \ref{fact:CAT(-1)Thin} says $d(P, [x, z]) < 0.604$ as well. If $P$ comes later than $Q$ and hence $d(P, Q) < 0.208$, then we have $d(P, [x, z]) < 0.604+0.208 = 0.812$. Either way, we have $d(\pi_{\gamma}(x), [x, z]) < 0.604 + 0.812 \le 2$.

For the same reason, we can take the latest $z' \in [x, y]$ satisfying that $d(\pi_{\gamma}(z'), \pi_{\gamma}(y)) = 1$ and then take $q \in [z', y]$ such that $d(\pi_{\gamma}(y), q) \le 2$. These $p$ and $q$ work.\end{proof}

We next sketch the proof of Lemma \ref{lem:alignExtCts}. When $z \in X$, this is due to the 1-Lipschitzness of $\pi_{\gamma}(\cdot)$. Hence, suppose that $z \in \partial X$. Since $z_{n} \rightarrow z \in \partial X$, we have $d([z_{n}, z_{m}], \gamma) \rightarrow \infty$ as $n, m \rightarrow +\infty$.

Hence, it suffices to prove that: \begin{claim*}
for each $z, w \in X$, if $d(\pi_{\gamma}(z), \pi_{\gamma}(w)) = \epsilon>0$, then $d([z, w], \pi_{\gamma}(w)) \le 1+e^{2/l}$ where $l = \frac{e^{\epsilon}-1}{e^{\epsilon} + 1}$.
\end{claim*}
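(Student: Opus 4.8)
The plan is to first record why the displayed Claim suffices to finish the proof of Lemma~\ref{lem:alignExtCts}, and then to prove the Claim by a comparison argument with $\mathbb H^2$.

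\emph{Why the Claim suffices.} Assume $z_n\to z\in\partial X$ (when $z\in X$ the continuity of $\pi_\gamma$ is just its $1$-Lipschitzness). As the excerpt already observes, $d\big([z_n,z_m],\gamma\big)\to+\infty$ as $n,m\to+\infty$ (the Gromov products $(z_n\mid z_m)_{x_0}\to+\infty$ and $\gamma$, being compact, lies in a bounded neighborhood of $x_0$). Since $\gamma$ is compact, $\{\pi_\gamma(z_n)\}$ has convergent subsequences; were two of these to converge to distinct points $a\neq b$, then along them $d\big(\pi_\gamma(z_{n_k}),\pi_\gamma(z_{m_k})\big)\geq\epsilon_0:=\tfrac{1}{2}d(a,b)>0$ eventually, and the Claim (using that $e^{2/l}$ is decreasing in $\epsilon$) would bound $d\big([z_{n_k},z_{m_k}],\pi_\gamma(z_{m_k})\big)\leq 1+e^{2/l_0}$ with $l_0=\tfrac{e^{\epsilon_0}-1}{e^{\epsilon_0}+1}$, uniformly in $k$; since $\pi_\gamma(z_{m_k})\in\gamma$ this contradicts $d([z_{n_k},z_{m_k}],\gamma)\to+\infty$. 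Hence $\pi_\gamma(z_n)$ converges.

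For the Claim itself I would set $p=\pi_\gamma(z)$, $q=\pi_\gamma(w)$, $D=d\big(q,[z,w]\big)$ and $m=\pi_{[z,w]}(q)$, and run everything through the $\CAT(-1)$ comparison with $\mathbb H^2$: Alexandrov angles are at most comparison angles, and comparison angles obey the hyperbolic law of cosines; in particular an angle $\geq\pi/2$ at a vertex forces $\cosh(\text{opposite side})\geq\cosh(\text{first adjacent side})\cosh(\text{second adjacent side})$. Applying this at $m$ (since $m=\pi_{[z,w]}(q)$ and $z,w\in[z,w]$), together with $d(m,z)+d(m,w)=d(z,w)$ and $\cosh a\cosh b\geq\tfrac{1}{2}\cosh(a+b)$, gives
\[
\cosh d(z,q)\,\cosh d(w,q)\;\geq\;\tfrac{1}{2}\,\cosh^2 D\,\cosh d(z,w).
\]
For the reverse inequality I would use \emph{both} projection hypotheses: from $p=\pi_\gamma(z)$ one has $\measuredangle(z,p,q)\geq\pi/2$, and a direct computation with the hyperbolic law of cosines on the comparison triangle of $\triangle zpq$ (whose angle at $\bar p$ is $\geq\pi/2$) yields $\measuredangle(z,q,p)\leq\arccos(\tanh\epsilon)$; combining with $\measuredangle(w,q,p)\geq\pi/2$ (as $q=\pi_\gamma(w)$) and the triangle inequality for angles gives $\measuredangle(z,q,w)\geq\arcsin(\tanh\epsilon)$, so the comparison angle of $\triangle zqw$ at $q$ has cosine at most $\sqrt{1-\tanh^2\epsilon}=1/\cosh\epsilon$, whence
\[
\cosh d(z,w)\;\geq\;\cosh d(z,q)\cosh d(w,q)\,\Big(1-\tfrac{1}{\cosh\epsilon}\Big).
\]
Dividing the two displays yields $\cosh^2 D\leq\dfrac{2\cosh\epsilon}{\cosh\epsilon-1}$, which a half-angle identity rewrites as $1+l^{-2}$ with $l=\tfrac{e^\epsilon-1}{e^\epsilon+1}=\tanh(\epsilon/2)$; hence $\cosh D\leq\sqrt{1+l^{-2}}$ and $D\leq\log\!\big(\tfrac{1+\sqrt{1+l^2}}{l}\big)\leq 1+\log(1/l)\leq 1+e^{2/l}$, in fact beating the stated bound.

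The main obstacle is the second displayed inequality: one must quantify how much the geodesic $[z,w]$ can ``shortcut'' relative to the broken path $z\to q\to w$, and this genuinely requires both projection hypotheses at once — that $z$ and $w$ sit on the perpendiculars to $\gamma$ through $p$ and $q$ — which is exactly what the lower bound $\measuredangle(z,q,w)\geq\arcsin(\tanh\epsilon)$ packages. The rest (the angle computation in $\triangle zpq$, the half-angle bookkeeping) is routine hyperbolic trigonometry; note also that when $\epsilon>2$ one may instead invoke Lemma~\ref{lem:CAT(-1)Fellow} directly to get $D\leq 2\leq 1+e^{2/l}$, so the comparison argument is only essential for small $\epsilon$.
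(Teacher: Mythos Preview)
Your argument is correct and follows the same overall strategy as the paper: both proofs first bound the Alexandrov angle $\measuredangle(z,\pi_\gamma(w),w)$ from below using the two right-angle conditions at $\pi_\gamma(z)$ and $\pi_\gamma(w)$ (your bound $\arcsin(\tanh\epsilon)$ is exactly the paper's $\tan^{-1}\tfrac{2l}{1-l^2}$, since $\tfrac{2l}{1-l^2}=\sinh\epsilon$), and then convert this angle bound into a distance bound from $\pi_\gamma(w)$ to $[z,w]$.

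The execution of that last conversion differs. The paper works inside the comparison triangle $\triangle PQR$ for $\triangle w\,\pi_\gamma(w)\,z$ and locates, by a direct hyperbolic-plane picture, points $p\in\overline{PQ}$, $q\in\overline{QR}$ with $d(p,q)\le 1$ and $d(p,Q)\le e^{2/l}$, which yields $d(\pi_\gamma(w),[z,w])\le 1+e^{2/l}$ via Fact~\ref{fact:CAT(-1)Thin}. You instead project $q=\pi_\gamma(w)$ onto $[z,w]$ and run the hyperbolic law of cosines twice (once at $m=\pi_{[z,w]}(q)$, once at $q$) to obtain the clean inequality $\cosh^2 D\le 1+l^{-2}$. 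Your route is more computational but sharper---it gives $D\le 1+\log(1/l)$, strictly better than $1+e^{2/l}$---and avoids the auxiliary ``find two nearby points'' step. Both arguments hinge on the same angle estimate; the paper's finish is more pictorial, yours more algebraic.
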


To see this, let $\triangle ABC$ be the comparison triangle in $\mathbb{H}^{2}$ for $\triangle z \pi_{\gamma}(z) \pi_{\gamma}(w)$ in $X$. Then $\measuredangle ABC \ge \measuredangle z \pi_{\gamma}(z) \pi_{\gamma}(w) = \pi/2$. This forces that $\measuredangle ACB$ is smaller than the angle $\measuredangle \xi C B$, where $\xi$ is the boundary point made by the ray $\overrightarrow{BA}$. By hyperbolic geometry, we have that \[
\measuredangle ACB \le \measuredangle \xi C B \le \tan^{-1} \frac{1-l^{2}}{2l}. 
\]
We then have $\measuredangle w \pi_{\gamma}(w) z \ge \pi/2 - \measuredangle ACB \ge \tan^{-1} \frac{2l}{1-l^{2}}$.

Let us draw a comparison triangle $\triangle PQR$ in $\mathbb{H}^{2}$ for $\triangle w \pi_{\gamma}(w) z$ in $X$. Then $\measuredangle PQR \ge \tan^{-1} \frac{2l}{1-l^{2}}$, and there exist $p \in \overline{PQ}$ and $q \in \overline{QR}$ with $d(p, q) \le 1$ and $d(p, Q) \le e^{2/l}$. This implies that $d(\pi_{\gamma}(w), [z, w]) \le 1 + e^{2/l}$.

\bibliographystyle{alpha} 
\bibliography{ML}

\end{document}